\DeclareRobustCommand{\bigO}{%
  \text{\usefont{OMS}{cmsy}{m}{n}O}%
}
\numberwithin{equation}{section}
\newtheorem{theorem}{Theorem}[section] 
\newtheorem{lemma}[theorem]{Lemma} 
\newtheorem{prop}[theorem] {Proposition} 
\newtheorem{remark}[theorem]  {Remark}
\theoremstyle{definition}
\DeclareMathAlphabet{\mathpzc}{OT1}{pzc}{m}{it}
\DeclarePairedDelimiter{\abs}{\lvert}{\rvert}
\DeclarePairedDelimiter{\norm}{\lVert}{\rVert}
\newcommand{\blambda}{\boldsymbol{\lambda}}
\newcommand{\bmu}{\boldsymbol{\mu}}
\renewcommand{\L} {\Lambda} 
\newcommand{\brho}{\boldsymbol{\rho}}
\def\d{\delta} 
\newcommand{\e} {\varepsilon} 
\newcommand{\eps}{\varepsilon}
\font\tenBbb=msbm10 
\font\sevenBbb=msbm7 
\font\fiveBbb=msbm5 
\newcommand{\R}     {\mathbb{R}} 
\newcommand{\Z}     {\mathbb{Z}} 
\newcommand{\N}     {\mathbb{N}} 
\renewcommand{\P}   {\mathbb{P}} 
\newcommand{\E}     {\mathbb{E}}
\def\1{{\mathchoice {1\mskip-4mu\mathrm l}      
{1\mskip-4mu\mathrm l} 
{1\mskip-4.5mu\mathrm l} {1\mskip-5mu\mathrm l}}} 
\newcommand{\ssup}[1] {{\scriptscriptstyle{({#1}})}}
\newcommand{\PMF} {\ssup{\scaleto{\scriptscriptstyle{PMF}}{2.55pt}}}
\newcommand{\CMF} {\ssup{\scaleto{CMF}{2.55pt}}}
\newcommand{\HYL} {\ssup{\scaleto{HYL}{2.55pt}}}
\newtheoremstyle{thm}{2ex}{2ex}{\itshape\rmfamily}{} 
{\bfseries\rmfamily}{}{1.7ex}{} 
\newtheoremstyle{rem}{1.3ex}{1.3ex}{\rmfamily}{} 
{\itshape\rmfamily}{}{1.5ex}{} 
\newenvironment{proofsect}[1] 
{\vskip0.1cm\noindent{\scshape #1.}\hskip0.5cm} 
 \def\e{{\rm e}}
\newcommand{\Ccal}   {{\mathcal C }}
\newcommand{\Hcal}   {{\mathcal H }}
\newcommand{\Ncal}   {{\mathcal N }}
\newcommand{\Ucal}   {{\mathcal U }}
 \newcommand{\ex}{{\rm e}} 
\renewcommand{\d}{{\rm d}} 
\newcommand{\per}{{\text{\rm per}}} 
\newcommand{\Li}{\operatorname{Li}\,}
\newcommand{\Leb}{{\rm Leb}} 
\newcommand{\Sym}{\mathfrak{S}}
\newcommand{\Dir}{{\operatorname {Dir}\,}}
\newcommand{\Exp}{\mathscr{E}\kern-0.2mm{\operatorname{xp}}}
\newcommand{\Log}{\mathscr{L}\kern-0.2mm{\operatorname{og}}}
\newcommand{\heap}[2]{\genfrac{}{}{0pt}{}{#1}{#2}} 
\newcommand{\bc}{{\operatorname{bc}}}
\renewcommand{\emptyset} {\varnothing}
\begin{document}

\title[\hfill \hfill]
{Large deviation analysis for classes of interacting  Bosonic cycle counts}


\author{Stefan Adams and Matthew Dickson}
\address{Mathematics Institute, University of Warwick, Coventry CV4 7AL, United Kingdom}
\email{S.Adams@warwick.ac.uk, M.Dickson.1@warwick.ac.uk}

\thanks{M.D. is supported by EPSRC as part of the MASDOC CDT at the University of Warwick, Grant No. EP/HO23364/1}
  
\date{}

\subjclass[2000]{Primary 60F10; 60J65; 82B10; 81S40}
 
\keywords{Marked Poisson point process, large deviations, empirical cycle counts, variational formula, pressure representation, Bose-Einstein condensation (BEC)}  


\begin{abstract}
This paper studies  probabilistic mean-field models for interacting bosons at a positive temperature in the thermodynamic limit with random particle density. In particular, we prove large deviation principles for empirical cycle counts in all our models, and as such generalise recent work in \cite{ACK} where upper and lower large deviation bounds do not match. Namely, on the one hand we generalise to the so-called \textit{grand canonical ensemble}, and on the other hand, we consider classes of interaction potentials depending on the cycle counts  which are not restricted to be positive. 
Our  large deviation results provide representation formulae for the thermodynamic limit of the pressure which in turn leads to proving \textit{Bose-Einstein-condensation} (BEC)  in all our models.
 A primary focus and novelty is the  pressure representation  via  extended large deviation analysis  for the so-called hard-sphere, or HYL-model (Huang-Yang-Luttinger) studied in \cite{Lew86}. This model has negative counter terms in the Hamiltonian and shows BEC depending on the coupling constants.

\end{abstract} 

\maketitle

\section{Introduction and main results}\label{Intro}

\noindent In this paper, we study  probabilistic mean-field models for interacting bosons at positive temperature in the thermodynamic limit with random particle density. See Section~\ref{sec-BEC} for the physical background. Our aim is to prove large deviation principles for empirical cycle counts and to prove the so-called Bose-Einstein condensation (BEC) phase transition in terms of the behaviour of the minimisers (zeroes) of our rate functions. This study is a direct continuation of recent work in \cite{ACK}, namely, on the one hand we generalise to the so-called \textit{grand canonical ensemble}, and on the other hand we consider classes of interaction potentials depending solely on the cycle counts which are not restricted to be positive. This approach allows us to prove BEC for our models as well as improving the analysis in \cite{ACK} in that we obtain matching upper and lower large deviation bounds. The main focus and novelty is to obtain configurational cycle weights and to obtain a detailed and extended large deviation analysis  for the so-called hard-sphere or HYL-model (Huang-Yang-Luttinger) studied in \cite{Lew86}. In Section~\ref{sec-BEC} we provide discussion on how our results relate to the ones in \cite{BLP} and \cite{BCMP05}.

\subsection{The model}\label{sec-model} The main object is 
the following symmetrised sum of Brownian bridge expectations,
\begin{equation}\label{defpartition}
Z_\L^{\ssup{\rm bc}}(\beta,\mu)=\sum_{N=0}^\infty \frac{\ex^{\beta\mu N}}{N!}\sum_{\sigma\in\Sym_N}\int_\L\d x_1\cdots\int_\L\d x_N\bigotimes_{i=1}^N\bmu_{x_i,x_{\sigma(i)}}^{\ssup{\rm bc,\beta}}\Big[\exp\Big\{-\sum_{1\le i<j\le N}\int_0^\beta v(|B_s^{\ssup{i}}-B_s^{\ssup{j}}|)\,\d s\Big\}\Big].
\end{equation} 
Here $ \bmu_{x,y}^{\ssup{\rm bc,\beta}} $ is the canonical Brownian bridge measure with boundary condition ${\rm bc }\in\{ \emptyset, {\rm per},{\rm Dir}\} $, time horizon $ \beta>0 $, initial point $ x\in\L $ and terminal point $ y\in\L $. The sum is on permutations $ \sigma\in\Sym_N $ of $ 1,\dots,N$ and $\mu\in\R $ is the \textit{chemical potential}. The {\it interaction potential} $ v\colon\R\to[0,\infty] $ is measurable, decays sufficiently fast at infinity and is possibly infinite close to the origin.  We assume that $\L$ is a measurable subset of $\R^d$ with finite volume.   The boundary condition ${\rm bc }=\emptyset $ refers to the standard Brownian bridge, whereas for $ {\rm bc }={\rm Dir } $, the expectation is on those Brownian bridge paths which stay in $ \L $ over the time horizon $[0,\beta]$. In the case of periodic boundary condition, $ {\rm bc }={\rm per }$, we consider Brownian bridges on the torus $\L= (\R/L\Z)^d $ with side length $L$.
 
Our main motivation to study the quantity $Z_\L^{\ssup{\rm bc}}(\beta,\mu)$ is the fact that it is related to all $N$-body Hamilton operators
\begin{equation}\label{Hdef}
\Hcal_{N,\L}^{\ssup{\rm bc}} =-\sum_{i=1}^N \Delta_{i}^{\ssup{\rm bc}} +\sum_{1\leq i<j\leq N} v(|x_i-x_j|),\qquad x_1,\dots,x_N\in\L, \qquad \rm {bc}\in\{{\rm Dir }, {\rm per}\}, N\in\N,
\end{equation}
and $N$-particle Hilbert spaces $ L_2(\L)^{\otimes N} $, 
where  $\Delta_{i}^{\ssup{\rm bc}}$ stands for the Laplacian with bc boundary condition. 
More precisely, $Z_\L^{\ssup{\rm bc}}(\beta,\mu)$ is equal to the trace of the projection of the direct sum of the operators $\exp{\{-\beta \Hcal_{N,\L}^{\ssup{\rm bc}}\}}$ to the set of all symmetric (i.e., permutation invariant) functions $(\R^d)^N\to\R, N\in\N$. This statement is proven via the Feynman-Kac formula, see \cite{G70} or \cite{BR97}. Hence, we call $Z_\L^{\ssup{\rm bc}}(\beta,\mu)$ a partition function. The cycle expansion of the partition function in Section~\ref{sec-notation} for vanishing interaction provides  the cycle weights for our reference Poisson point process.  

It is the main purpose of this paper to derive variational expressions for the {\it limiting pressure} 
\begin{equation}\label{limpressure}
p^{\ssup {\rm bc}}(\beta,\mu)=\frac{1}{\beta}\lim_{\L\uparrow\R^d}\frac{1}{|\L|}\log Z_{\L}^{\ssup {\rm bc}}(\beta,\mu),
\end{equation}
where $\beta\in(0,\infty), \mu\in\R $, $d\in\N$ and  ${\rm bc }\in\{\emptyset, {\rm per},{\rm Dir }\}$ for classes of interacting models. The existence of the thermodynamic limit in \eqref{limpressure} with $ {\rm bc}\in\{ {\rm per},{\rm Dir }\} $ under suitable assumptions on the interaction potential $v$ can be shown by standard methods, see for example \cite[Th.~3.58]{Rue69} and \cite{BR97}. We achieve this via large deviation principles for empirical cycle counts under mean-field cycle weights in Section~\ref{sec-cycle}. Our large deviation results generalise previous results in  \cite{BLP} for the HYL model in the way that our empirical cycle counts provide deeper information (higher level of large deviation analysis) and that we can study all possible interaction parameter choices. The analysis of the minimiser (zeroes) of our rate functions in Section~\ref{sec-minimiser}  enables us to prove BEC in our various models using the particle density of the minimiser and the derivative of the pressure.

Our approach and the remainder of Section~\ref{Intro} can be summarized as follows. Since any permutation decomposes into cycles, and using the Markov property, the family of the $N$ bridges in \eqref{defpartition} decomposes into cycles of various lengths, i.e. into bridges that start and end at the same site, which is uniformly distributed over $\L$. We conceive these initial-terminal sites as the points of a standard Poisson point process on $\R^d$ and the cycles as marks attached to these points; see Section~\ref{sec-notation} for the relevant notation. In Proposition~\ref{lem-rewrite} below,  we rewrite $Z_{\L}^{\ssup{\rm bc}}(\beta,\mu)$ in terms of an expectation over a reference process: the marked Poisson point process $\omega_{\rm P}$. 

In Section~\ref{sec-cycle}, we present our results on the large deviation principles for empirical cycle counts. Along the way, we also obtain representations of the limiting pressure in \eqref{limpressure} for our models. The proofs for our large deviation principles are in Section~\ref{sec-LDPproofs}. In Section~\ref{sec-minimiser} we analyse all our rate functions and obtain expressions for the zeroes of the rate functions. This analysis leads to pressure representation formulae and to our results on BEC in Section~\ref{sec-pressure}.  All proofs for this analysis are in Section~\ref{sec-zeroLDP} and Section~\ref{sec-pressureBEC}, respectively.   The physical interpretation, motivation and relevance are discussed in Section~\ref{sec-BEC}.  Definitions and details for the different boundary conditions are given in Appendix~\ref{app-A}, and in Appendix~\ref{app-Bose} we summarise definition and main properties of the Bose functions, and in Appendix~\ref{app-Lambert} we give definitions and main properties of the Lambert function.

\subsection{Representation of the partition function}\label{sec-notation}

\noindent In this section, we introduce our representation of the partition function $Z_{\L}^{\ssup{\rm bc}}(\beta,\mu)$ for each boundary condition $ {\rm bc }\in\{\emptyset, {\rm per},{\rm Dir }\} $ in terms of an expectation over a marked Poisson point process. The main result of this section is Proposition~\ref{lem-rewrite}. We have to introduce some notation. 

We begin with the mark space. The space of marks is defined as
\begin{equation}
E^{\ssup{\rm bc}}=\bigcup_{k\in\N} \Ccal^{\ssup{\rm bc}}_{k,\L},  \qquad {\rm bc} \in \{\emptyset,{\rm per},  {\rm Dir}\},
\end{equation}
where, for $k\in\N$, we denote by $\Ccal_k=\Ccal^{\ssup \emptyset}_{k,\L}$  the set of continuous functions $f\colon [0,k\beta] \to \mathbb{R}^d$ satisfying $f(0)=f(k\beta)$,  equipped with the topology of uniform convergence. Moreover,
$  \Ccal^{\ssup{\rm Dir}}_{k,\L} $, (respectively $\Ccal_{k,\L}^{\ssup{\rm per}}$)  is the space of continuous functions in $ \L $ (respectively on the torus $\L=(\R \slash L \Z)^d$) with time horizon $ [0,k\beta] $.  We sometimes call the marks {\it cycles}. By  $\ell\colon E^{\ssup{\rm bc}}\to\N $  we denote the canonical map defined by $\ell(f)=k$ if $f\in \Ccal^{\ssup {\rm bc}}_{k, \L}$. We  call $\ell(f)$ the {\it length} of $f\in E$. When dealing with the empty boundary condition, we sometimes drop the superscript $\emptyset$.

We consider spatial configurations that consist of a locally finite set $\xi\subset\R^d$ of particles, and to each particle $x\in\xi$ we attach a mark $f_x\in E^{\ssup {\rm bc}}$satisfying $f_x(0)=x$. Hence, a configuration is described by the counting measure
$$
\omega=\sum_{x\in\xi}\delta_{(x,f_x)}
$$ 
on $\R^d\times E$ for the empty boundary condition (respectively on $\L \times E^{\ssup {\rm bc}}$ for ${\rm bc} \in \{{\rm per},  {\rm Dir}\}$).

We now introduce three marked Poisson point processes for the three boundary conditions. The one for the empty boundary condition will later serve as a reference process and is introduced separately first. The remaining two processes are defined in Appendix~\ref{app-A}. 

\noindent\textbf{Reference process.}

\noindent Consider on $\Ccal=\Ccal_1$ the canonical Brownian bridge measure
\begin{equation}\label{nnBBM}
\bmu^{\ssup{\emptyset, \beta}}_{x,y}(A)=\bmu^{\ssup \beta}_{x,y}(A)=\frac{\P_x(B\in A;B_\beta\in\d y)}{\d y},\qquad A\subset\Ccal\mbox{ measurable}.
\end{equation}
Here $B=(B_t)_{t\in[0,\beta]}$ is a Brownian motion in $\R^d$ with generator $\Delta$, starting from $x$ under $\P_x$. Then $ \mu^{\ssup \beta}_{x,y}$ is a regular Borel measure on $\Ccal$ with total mass equal to the Gaussian density,
\begin{equation}\label{Gaussian}
\bmu_{x,y}^{\ssup \beta}(\Ccal)=g_{\beta}(x,y)=\frac {\P_x(B_\beta\in\d y)}{\d y}=(4\pi\beta)^{-d/2}{\rm e}^{-\frac 1{4\beta}|x-y|^2}.
\end{equation}
We write $ \P_{x,y}^{\ssup \beta}=\bmu_{x,y}^{\ssup \beta}/g_{\beta}(x,y)$ for the normalized Brownian bridge measure on $\Ccal$.
 Let
$$
\omega_{\rm P} = \sum_{x \in \xi_{ \rm P}} \delta_{(x,B_x)},
$$
be a Poisson point process on $\R^d\times E$ with intensity measure equal to $ \nu $, whose projection onto $ \R^d\times\Ccal_k $ is equal to 
\begin{equation}\label{nudef}
\nu_k(\d x,\d f)=\frac{1}{k}\Leb(\d x)\otimes\ex^{\beta\mu k}\mu_{x,x}^{\ssup{k\beta}}(\d f),\qquad k\in\N.
\end{equation}
Alternatively, we can conceive $\omega_{\rm P}$ as a marked Poisson point process  on $\R^d$, based on some Poisson point process $\xi_{\rm P}$ on $\R^d$, and a family $(B_x)_{x\in \xi_{\rm P}}$ of  i.i.d.~marks, given $\xi_{\rm P}$. The intensity of $\xi_{\rm P}$ is 
\begin{equation}\label{q*def}
\overline{q}^{\ssup{\mu}}=\sum_{k\in\N}q_k^{\ssup{\mu}},\qquad\mbox{ with }\quad q_k^{\ssup{\mu}}=\frac{\ex^{\beta\mu k}}{(4\pi\beta)^{d/2}k^{1+d/2}},\qquad k\in\N.
\end{equation}
Conditionally given $\xi_{\rm P}$, the length $\ell(B_x)$ is an $\N$-valued random variable with distribution $(q_k^{\ssup{\mu}}/\overline{q}^{\ssup{\mu}})_{k\in\N}$, and, given $\ell(B_x)=k$,  $B_x$ is in distribution equal to a Brownian bridge with time horizon  $[0,k\beta]$, starting and ending at $x$. Let $\tt Q$ denote the distribution of $\omega_{\rm P}$ and denote by $\tt E$ the corresponding expectation (respectively, write $\tt Q^{\ssup{\rm bc}} $ for $ {\rm bc}\in\{{\rm per},{\rm Dir}\} $). Hence, $\tt Q$ is a probability measure on the set $\Omega$ of all locally finite counting measures on $ \R^d\times E $. Note that our reference process is a countable superposition of Poisson point processes, and, as long as $ \overline{q}^{\ssup{\mu}} <\infty $ is finite, this reference process is a Poisson point process as well. In accordance with the existence of the limiting pressure for the reference process, we consider $ \mu\le 0 $ in the following. Our other models allow positive values of the chemical potential $ \mu\in\R $.

We now formulate our first main result, a presentation of the partition function defined in \eqref{defpartition}  in $\L\subset\R^d$ with $ |\L|<\infty $ and boundary condition $ {\rm bc}\in\{\emptyset,{\rm per},{\rm Dir}\}$. We provide a  general form of this representation for future reference. We introduce a functional on $\Omega$ that expresses the interaction between particles in $ \L\subset\R^d$, or more precisely, between their marks.  Define the {\em Hamiltonian} $ H_\L\colon\Omega\to [0,\infty] $ by
\begin{equation}\label{Hamiltonian}
H_\L(\omega)=\sum_{x,y\in\xi\cap\L}T_{x,y}(\omega),\qquad\mbox{where }\omega=\sum_{x\in\xi}\delta_{(x,f_x)}\in\Omega,
\end{equation}
where we abbreviate
\begin{equation}\label{Tdef}
T_{x,y}(\omega)=\frac{1}{2}\sum_{i=0}^{\ell(f_x)-1}\sum_{j=0}^{\ell(f_y)-1}\1_{\{(x,i)\not=(y,j)\}}\int_0^\beta v(|f_x(i\beta+s)-f_y(j\beta+s)|)\,\d s, \quad \omega\in\Omega,x,y\in\xi. 
\end{equation}

The function $H_\L(\omega)$ summarises the interaction between different marks of the point process and between different legs of the same mark; here we call the restriction of a mark $f_x$ to the interval $[i\beta,(i+1)\beta)]$ with $i\in\{0,\dots,\ell(f_x)-1\}$ a leg of the mark. 
When $ {\rm bc} \in \{{\rm per},  {\rm Dir}\}$, we replace $\overline{q}^{\ssup{\mu}} $ and all $ q_k^{\ssup{\mu}} $ by $ \overline{q}^{\ssup{{\rm bc},\mu}} $ and $ q_k^{\ssup{{\rm bc},\mu}} $ respectively, defined in \eqref{qdefbc} in Appendix~\ref{app-A}. 
\begin{prop}\label{lem-rewrite}
Fix $\beta\in(0,\infty)$. Let $ v\colon[0,\infty)\to (-\infty,\infty] $ be measurable and bounded from below and let $ \L\subset\R^d$ be measurable with finite volume (assumed to be a torus for periodic boundary condition). Then, for any $ {\rm bc}\in\{\emptyset,{\rm per},{\rm Dir}\} $,
\begin{equation}\label{rewrite}
\begin{aligned}
Z_\L^{\ssup{\rm bc}}(\beta,\mu)&=\e^{\abs{\L}\overline{q}^{\ssup{{\rm bc},\mu}}}{\tt E}^{\ssup{\rm bc}}\big[{\rm e}^{-H_\L(\omega_{\rm P})}\}\big].
\end{aligned}
\end{equation}
\end{prop}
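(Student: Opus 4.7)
The plan is to match the two sides of \eqref{rewrite} by (i) decomposing each permutation on the left into disjoint cycles, (ii) using the Markov/semigroup property of Brownian bridges to collapse each cycle of length $k$ into a single closed loop of time horizon $k\beta$, and (iii) recognising the resulting cycle-type expansion as the Janossy/Campbell expansion of the marked Poisson point process on the right.

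I would proceed in four steps. \emph{Step 1 (reindex by cycle type):} rewrite $\sum_{N\ge 0}\tfrac{1}{N!}\sum_{\sigma\in\Sym_N}$ as $\sum_{\mathbf{r}}\prod_{k\in\N}\tfrac{1}{r_k!\,k^{r_k}}$, where $\mathbf{r}=(r_k)_{k\in\N}$ ranges over cycle types with $N=\sum_kkr_k$; this uses only the classical count $|\{\sigma\in\Sym_N:\text{type }\mathbf{r}\}|=N!/\prod_k(k^{r_k}r_k!)$, which cancels the prefactor $1/N!$. \emph{Step 2 (concatenate bridges):} for each cycle $(i_1,\dots,i_k)$, iterate the semigroup identity
$$
\int\d y\,\bmu^{\ssup{{\rm bc},\beta}}_{x,y}\otimes\bmu^{\ssup{{\rm bc},\beta}}_{y,z}=\bmu^{\ssup{{\rm bc},2\beta}}_{x,z}
$$
(in the appropriate state space for each boundary condition) to collapse $\bigotimes_j\bmu^{\ssup{{\rm bc},\beta}}_{x_{i_j},x_{i_{j+1}}}$ into the single loop measure $\bmu^{\ssup{{\rm bc},k\beta}}_{x_{i_1},x_{i_1}}$; the remaining integration $\int_\L\d x_{i_1}$ feeds the Lebesgue factor of the intensity, and the $1/k$ in $\nu_k^{\ssup{\rm bc}}$ absorbs the $k$-fold freedom in the choice of basepoint along the loop. \emph{Step 3 (match interactions):} identify each original bridge $B^{\ssup i}$ with a leg $(x,l)$ of the loop $f_x$ it belongs to ($l\in\{0,\dots,\ell(f_x)-1\}$, $x\in\xi\cap\L$); the pair sum $\sum_{1\le i<j\le N}\int_0^\beta v(|B_s^{\ssup i}-B_s^{\ssup j}|)\,\d s$ then rewrites as the sum over unordered distinct leg-pairs of the integrand in \eqref{Tdef}, and the factor $\tfrac12$ there converts this unordered sum into the ordered double sum $\sum_{x,y\in\xi\cap\L}$, producing exactly $H_\L(\omega)$. \emph{Step 4 (Campbell/Janossy expansion):} since $H_\L$ depends only on points in $\L\times E^{\ssup{\rm bc}}$ and the total intensity of that set, $|\L|\overline{q}^{\ssup{{\rm bc},\mu}}$, is finite,
$$
e^{|\L|\overline{q}^{\ssup{{\rm bc},\mu}}}\,{\tt E}^{\ssup{\rm bc}}\bigl[e^{-H_\L(\omega_{\rm P})}\bigr]=\sum_{n\ge 0}\frac{1}{n!}\int_{(\L\times E^{\ssup{\rm bc}})^n}\bigotimes_{i=1}^n\nu^{\ssup{\rm bc}}(\d x_i,\d f_i)\,e^{-H_\L},
$$
and splitting $\nu^{\ssup{\rm bc}}=\sum_k\nu_k^{\ssup{\rm bc}}$ and grouping the $n$ marks by length reproduces exactly the weight $\prod_k\tfrac{1}{r_k!}\bigl(\tfrac{e^{\beta\mu k}}{k}\bigr)^{r_k}$ already obtained in Steps~1--2.

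The combinatorial identities and the semigroup concatenation are routine; the careful point will be in Step 2, where I must verify that the integration domain for intermediate anchor points (the torus for periodic bc; $\L$ for Dirichlet bc, where the bridges are confined by construction; and the underlying state space reflected in the intensity $\nu_k$ for the empty case) matches the integration $\int_\L\d x_j$ appearing in the original definition of $Z_\L^{\ssup{\rm bc}}(\beta,\mu)$. Once this matching is set up and the combinatorial weights have been aligned, Steps~1--2 together with Step~4 give the identity \eqref{rewrite} term-by-term.
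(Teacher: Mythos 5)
Your four-step plan is exactly the route the paper has in mind (it omits the proof and refers to \cite[Proposition~1.1]{ACK}): cycle-type reindexing with the count $N!/\prod_k(k^{r_k}r_k!)$, Chapman--Kolmogorov concatenation of the $k$ bridges of a $k$-cycle, leg-by-leg matching of the interaction, and the Campbell/Janossy expansion of the restricted Poisson process. Steps~1, 3 and~4 are correct as outlined, including the placement of the $1/k$ from $\nu_k$ against the $k^{r_k}$ and the $r_k!$ against the multinomial in the Janossy expansion.

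The point you defer at the end of Step~2 is not routine bookkeeping for $\mathrm{bc}=\emptyset$, and your argument as written does not close there. In \eqref{defpartition} every $x_i$ is integrated over $\L$, so after concatenating a $k$-cycle the resulting loop is forced to revisit $\L$ at every intermediate anchor time $j\beta$, $j=1,\dots,k-1$. The Chapman--Kolmogorov identity $\int\d y\,\bmu^{\ssup{\beta}}_{x,y}\otimes\bmu^{\ssup{\beta}}_{y,z}=\bmu^{\ssup{2\beta}}_{x,z}$ requires the $y$-integral to run over the full state space $\R^d$; replacing $\int_{\R^d}$ by $\int_\L$ produces a strictly smaller measure whenever $\L\subsetneq\R^d$. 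By contrast, the intensity $\nu_k$ in \eqref{nudef} carries the \emph{unconstrained} loop measure $\mu_{x,x}^{\ssup{k\beta}}$, and $H_\L$ in \eqref{Hamiltonian}--\eqref{Tdef} charges every leg of a loop as soon as its basepoint $f_x(0)$ lies in $\L$, regardless of whether the intermediate anchors do. Setting $v\equiv0$ makes this concrete: the $k$-cycle weight that Steps~1--2 produce on the left is $\int_{\L^k}g_\beta(y_0,y_1)\cdots g_\beta(y_{k-1},y_0)\,\d y_0\cdots\d y_{k-1}$, while the right-hand side of \eqref{rewrite} supplies $|\L|\,g_{k\beta}(0,0)$, and these differ for every $k\ge2$. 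For $\mathrm{per}$ and $\mathrm{Dir}$ there is nothing to verify: the state space for the $y$-integral already \emph{is} $\L$ (the torus for periodic; for Dirichlet $g_\beta^{\ssup{\rm Dir}}(x,y)=0$ when $y\notin\L$, so $\int_\L=\int_{\R^d}$), and your Step~2 closes. For the empty boundary condition you must either work with a reference loop measure whose anchor times are confined to $\L$ --- which is what your Steps~1--2 actually produce --- or supply an argument that the mismatch vanishes; leaving it as ``the careful point'' without resolution is a genuine gap.
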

Formula \eqref{rewrite} expresses the partition function as an expectation of the Boltzmann factor. The proof of Proposition~\ref{lem-rewrite} is a direct adaptation of the proof of \cite[Proposition~1.1]{ACK} and is omitted here. This formula has also been recently obtained for Poisson loop processes on graphs, see \cite{AV17}.

\subsection{Large deviations principles for empirical cycle counts}\label{sec-cycle}
We let $ \L_N=[-N,N]^d, N\in\N $, be a sequence of boxes $ \L_N\uparrow\R^d $ as $ N\to\infty $. We shall be interested in the countable set of random variables
\begin{equation}
	\mathcal{N}_{k}(\omega)=\#\big\{x\in\xi\cap\Lambda_N\colon  \ell(f_x)=k\big\}, \qquad k\in\N,
\end{equation}
being the number of points in $ \L_N $ (whose marks do not have to be contained in $\L_N$) with mark length equal to $k$.
These are independent Poisson random variables with respective means $\abs{\L_N}q^{\ssup{\mu}}_k$. The \textit{empirical cycle count} is the empirical density of numbers of cycles of all lengths,
\begin{equation}
	\blambda_{N}(\omega) = \big(\blambda^{\ssup{k}}_N(\omega)\big)_{k\in\N}= \left(\frac{1}{\abs{\Lambda_N}}\mathcal{N}_{k}\left(\omega\right)\right)_{k\in\N},
\end{equation}
taking values in $\ell_1\left(\R_+\right)$.
Suppose
\begin{equation}
	\left(\lambda_k\right)_{k\in\N}\in \mathcal{M}_{\Lambda_N}:=\Big\{\lambda\in\R^\N_+ : \sum_{k\in\N}k\lambda_k<\infty; \left|\Lambda_N\right|\lambda_k\in\N_0\Big\}\subset\ell_1(\R_+).
\end{equation}
Then, using the independence and the Poisson nature of our reference process, we compute the probability
\begin{equation}
	{\tt Q}\big(\blambda_{N} =(\lambda_k)_{k\in\N}\big) = \prod_{k\in\N}\frac{\e^{-\abs{\Lambda_N}q^{\ssup{\mu}}_k}\left(\abs{\Lambda_N}q^{\ssup{{\rm bc},\mu}}_k\right)^{\abs{\Lambda_N}\lambda_k}}{\left(\abs{\Lambda_N}\lambda_k\right)!}.
\end{equation}
The distribution on $ \ell_1(\R_+) $ is denoted 
\begin{equation*}
	\nu_{N,\mu}^{\ssup{\bc}}:= {\tt Q}^{\ssup{\rm bc}}\circ\blambda^{-1}_{N},
\end{equation*}
which  in fact has support $\mathcal{M}_{\Lambda_N}$.  We shall derive large deviation principles for the empirical cycle counts under various measures. The result for $ (\nu_{N,\mu})_{N\in\N} $ is the case for the ideal Bose gas, that is, with vanishing interaction  $ v\equiv 0 $.

\begin{prop}[\textbf{Ideal Bose gas, grand canonical ensemble}]
\label{THM-Ideal}
	For $d\in\N,  \beta>0  $, $\mu\leq0$ and $ \bc\in\{\emptyset,\per, \Dir\} $, the sequence $(\nu_{N,\mu}^{\ssup{\bc}})_{N\in\N}$ satisfies a large deviation principle (LDP) on $\ell_1(\R)$ with rate $\abs{\Lambda_N} $ and rate function
	\begin{equation*}
		I_\mu\left(x\right) = \begin{cases}
		\sum^\infty_{k=1}x_k\left(\log\frac{x_k}{q^{\ssup{\mu}}_k} - 1\right) + \bar{q}^{\ssup{\mu}}, & \text{for }x\in\ell_1\left(\R_+\right),\\
		+\infty, & \text{otherwise}.
		\end{cases}
	\end{equation*}
\end{prop}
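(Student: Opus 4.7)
The plan is to reduce to a finite-dimensional Poisson LDP for truncations of $\blambda_N$ and then lift it to $\ell_1$ in two stages: first the Dawson--Gärtner theorem for the product topology on $\R_+^\N$, and then exponential tightness to upgrade to the $\ell_1$-norm topology. The starting observation is that under ${\tt Q}^{\ssup{\bc}}$ the counts $\mathcal N_k(\omega)$, $k\in\N$, are independent Poisson random variables with means $|\Lambda_N| q_k^{\ssup{\bc,\mu}}$, and $q_k^{\ssup{\bc,\mu}}\to q_k^{\ssup{\mu}}$ as $N\to\infty$ by the boundary-comparison estimates in Appendix~\ref{app-A}.

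For each $K\in\N$, the vector $(\lambda_N^{\ssup{1}},\dots,\lambda_N^{\ssup{K}})$ is a product of independent Poisson empirical means whose normalised log-moment generating function converges to $\sum_{k=1}^K q_k^{\ssup{\mu}}(\e^{\phi_k}-1)$. By the classical Poisson LDP (or Cramér's theorem) together with independence, one obtains an LDP on $\R_+^K$ with rate $|\Lambda_N|$ and rate function $I_\mu^{(K)}(x_1,\dots,x_K)=\sum_{k=1}^K f(x_k;q_k^{\ssup{\mu}})$, where $f(x;q):=x\log(x/q)-x+q\ge 0$ (with $f(0;q)=q$). Applying the Dawson--Gärtner theorem to $\R_+^\N=\varprojlim_K \R_+^K$ with the product topology yields an LDP whose rate function is
\begin{equation*}
\widetilde I_\mu(x)=\sup_{K}I_\mu^{(K)}(\pi_K x)=\sum_{k\in\N}f(x_k;q_k^{\ssup{\mu}})
\end{equation*}
by monotone convergence. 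To identify $\widetilde I_\mu$ with $I_\mu$, note that $f(x;q)\ge x$ whenever $x\ge e^2 q$: splitting $\N=S\cup S^c$ with $S=\{k:x_k<e^2q_k^{\ssup{\mu}}\}$ gives $\sum_{k\in S}x_k\le e^2\bar q^{\ssup{\mu}}$, so $\sum_k x_k=\infty$ forces $\sum_{k\in S^c}f(x_k;q_k^{\ssup{\mu}})\ge\sum_{k\in S^c}x_k=\infty$. Hence $\widetilde I_\mu$ coincides with $I_\mu$ on $\ell_1(\R_+)$ and equals $+\infty$ elsewhere.

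It remains to upgrade the topology. For any sequence $(\alpha_k)_{k\in\N}$ with $\alpha_k\to\infty$ and $\sum_k q_k^{\ssup{\mu}}\e^{\alpha_k}<\infty$, the set $K_M:=\{x\in\ell_1(\R_+):\sum_k\alpha_k x_k\le M\}$ is compact in $\ell_1$, and a Chernoff bound applied to the independent Poisson variables gives
\begin{equation*}
{\tt Q}^{\ssup{\bc}}\big(\blambda_N\notin K_M\big)\le\exp\Big(-|\Lambda_N|\Big(M-\sum_k q_k^{\ssup{\bc,\mu}}(\e^{\alpha_k}-1)\Big)\Big),
\end{equation*}
which furnishes exponential tightness at rate $|\Lambda_N|$. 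For $\mu<0$ the choice $\alpha_k=-\beta\mu k/2$ works; for $\mu=0$ one takes $\alpha_k=(d/4)\log k$, using that $q_k^{\ssup{0}}\asymp k^{-1-d/2}$ so that $\sum_k q_k^{\ssup{0}}k^{d/4}<\infty$ for every $d\ge 1$. Combining this tightness with the product-topology LDP and the $\ell_1$-lower semicontinuity of $I_\mu$ promotes the result to the $\ell_1$-topology.

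The main obstacle is the exponential tightness step in the critical regime $\mu=0$: linear growth of $\alpha_k$ fails because $q_k^{\ssup{0}}$ decays only polynomially, so one must tune $\alpha_k$ carefully to have finite reference intensity against $\e^{\alpha_k}$ while still forcing $\ell_1$-compactness of the sublevel sets. A secondary technical point is verifying that the boundary corrections $q_k^{\ssup{\bc,\mu}}-q_k^{\ssup{\mu}}$ do not shift the rate function in the limit, which follows from the uniform estimates collected in Appendix~\ref{app-A}.
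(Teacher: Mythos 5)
Your plan is correct in substance but takes a genuinely different route from the paper. The paper proves the LDP directly in $\ell_1(\R)$ by verifying the hypotheses of Baldi's theorem: it computes the limiting cumulant generating function $\Lambda(t)=\sum_k q_k^{\ssup{\mu}}(\e^{t_k}-1)$ on $\ell_\infty(\R)$, checks that it is finite, Gâteaux differentiable, lower semicontinuous and strictly convex, separately establishes exponential tightness by constructing an explicit compact set of the form $K=\{y:|y_k|\le|x_k|\,\forall k\}$ via coordinate-wise Chernoff bounds, and then identifies the rate function as the Legendre--Fenchel transform $\Lambda^*=I_\mu$. Your route instead goes through the classical Cramér/Poisson LDP in each finite block, lifts to the product topology on $\R_+^\N$ by Dawson--Gärtner, identifies the rate function (your $f(x;q)\ge x$ for $x\ge \e^2 q$ tail argument correctly forces $+\infty$ outside $\ell_1$), and then upgrades to the $\ell_1$-norm topology by exponential tightness and the inverse contraction principle. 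The Baldi route is shorter once one has the Legendre machinery and avoids any topology change, but it is delicate in identifying $\Lambda^*$ in infinite dimensions (the paper spends effort constructing near-optimisers in $\ell_\infty$); your projective-limit route is more modular and uses only elementary one-dimensional Cramér plus a single global Chernoff bound for tightness.

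Two small repairs are needed in your tightness step. First, for $\mu=0$ your choice $\alpha_k=(d/4)\log k$ has $\alpha_1=0$, so $K_M=\{x:\sum_k\alpha_k x_k\le M\}$ fails to be bounded in the first coordinate and is not $\ell_1$-compact; shift to $\alpha_k=1+(d/4)\log k$ (which keeps $\sum_k q_k^{\ssup{0}}\e^{\alpha_k}<\infty$) or add a $\sum_k x_k$ term to the constraint. Second, although you note $q_k^{\ssup{\bc,\mu}}\to q_k^{\ssup{\mu}}$, in the Chernoff bound you should confirm that $\sup_N\sum_k q_k^{\ssup{\bc,\mu}}(\e^{\alpha_k}-1)<\infty$ (uniformly in the volume, not just termwise), which is what the boundary comparisons in Appendix~\ref{app-A} provide; without that uniformity the bound does not give exponential tightness at rate $|\Lambda_N|$. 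With these adjustments the argument is sound.
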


We shall now define our various interaction models. We restrict ourselves to interaction potentials which are functions of the empirical cycle counts. The first class are generalisations of mean-field models studied in the physics literature. In these models one adds an energy term proportional to the square of the particle/point number (density). The number of points  for the reference process equals just the number of cycles and is just $ N_{\L_N}(\omega)=\sum_{k=1}^\infty\Ncal_{k}(\omega) $. On the other hand, the number of ``physical particles'' is the total length of the marks of the particles in $ \L_N$,
\begin{equation}\label{particles}
N_{\L_N}^{\ssup{\ell}}(\omega)=\sum_{x\in\xi\cap\L_N}\ell(f_x)=\sum_{k=1}^\infty k\Ncal_{k}(\omega),
\end{equation}
because a cycle with time horizon $ k\beta $ represents exactly $k$ Boson particles. The functional $ N_{\L_N} $ is continuous with respect to the reference process and continuous in the empirical cycle counts. The number of ``physical particles'' however, is only lower semicontinuous and not upper semicontinuous (see \cite{ACK}). For $ a\ge 0 $, define the cycle-mean-field model (CMF),
\begin{equation}\label{CMF}
\begin{aligned}
H^{\CMF}(x)& =\frac{a}{2}\Big(\sum_{k=1}^\infty x_k\Big)^2,\qquad x\in\ell_1(\R), \\
\nu_{N,\mu}^{\CMF} (\d x) &= \frac{\exp(-\abs{\L_N}\beta H^{\CMF}(x))}{Z_N^{\CMF}(\beta,\mu)}\nu_{N,\mu}^{\ssup{\bc}}(\d x) ,\quad \mu\le 0,
\end{aligned}
\end{equation}
with partition function 
$$  
Z_N^{\CMF}(\beta,\mu)=\E_{\nu_{N,\mu}^{\ssup{\bc}}}\big[\e^{-\abs{\L_N}\beta H^{\CMF}(x)}\big]={\tt E}^{\ssup{\bc}}\Big[\exp\big\{-\frac{\beta a}{2\abs{\L_N}}\big(N_{\L_N})^2\big\}\Big],
$$
and the particle-mean-field model (PMF), 
\begin{equation}\label{PMF}
\begin{aligned}
H^{\PMF}_\mu(x)& =  -\mu\sum_{k=1}^\infty k  x_k +    \frac{a}{2}\Big(\sum_{k=1}^\infty kx_k\Big)^2,\qquad x\in\ell_1(\R), \\
\nu_{N,\mu,\alpha}^{\ssup{\scaleto{PMF}{2.55pt}}} (\d x) &= \frac{\exp(-\abs{\L_N}\beta H^{\PMF}_\mu(x))}{Z_N^{\PMF}(\beta,\mu,\alpha)}\nu_{N,\alpha}^{\ssup{\bc}}(\d x),\quad \alpha\le 0, \mu\in\R, 
\end{aligned}
\end{equation}
with partition function 
$$
\begin{aligned}
Z_N^{\PMF}(\beta,\mu,\alpha)=\E_{\nu_{N,\alpha}^{\ssup{\bc}}}\big[\e^{-\abs{\L_N}\beta H^{\PMF}_\mu(x)}\big]={\tt E}^{\ssup{\bc}}\Big[\exp\big\{\beta\mu N_{\L}^{\ell}-\frac{\beta a}{2\abs{\L_N}}\big( N_{\L_N}^{\ell}\big)^2\big\}\Big].
\end{aligned}
$$
The following large deviation principles hold.

\begin{theorem}[\textbf{Large deviations principle  for CMF  models}]\label{THM-CMF}
For any $ d\in\N,  a>  0 $ and $ \mu\leq0$ and $ \bc\in\{\emptyset,\Dir,\per\} $ the following holds. The sequence $ \big(\nu_{N,\mu}^{\CMF}\big)_{N\ge 1}   $ satisfies an LDP on $ \ell_1(\R) $ with rate $ \abs{\L_N} $ and rate function 
\begin{equation}
I^{\CMF}_\mu(x) = \beta H^{\CMF}(x)+ I_\mu(x) - \inf_{y\in \ell_1\left(\R\right)}\{\beta H^{\CMF}(y)+I_\mu(y)\}.
\end{equation}
\end{theorem}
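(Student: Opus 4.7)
The approach is to view $\nu_{N,\mu}^{\CMF}$ as an exponential tilt of the reference family $\nu_{N,\mu}^{\ssup{\bc}}$ whose LDP has just been established in Proposition~\ref{THM-Ideal}, and then invoke the standard tilted LDP / Laplace--Varadhan principle. Concretely, I would verify the hypotheses needed to transfer the LDP across the change of measure and then read off the rate function.

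The first step is to check that $H^{\CMF}\colon \ell_1(\R)\to[0,\infty)$ is continuous in the $\ell_1$-topology. This is immediate: the functional $x\mapsto \sum_{k\in\N} x_k$ is a continuous linear form on $\ell_1(\R)$ of operator norm $1$, hence $H^{\CMF}(x)=\frac{a}{2}\bigl(\sum_k x_k\bigr)^2$ is continuous. In particular $H^{\CMF}\ge 0$, so the tilting factor $\exp(-|\Lambda_N|\beta H^{\CMF}(x))$ lies in $[0,1]$ and the moment (or integrability) condition in Varadhan's lemma is satisfied trivially, uniformly in $N$.

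The second step is to apply Varadhan's lemma in the form valid on a Polish space for a continuous, bounded--below integrand to the sequence $(\nu_{N,\mu}^{\ssup{\bc}})_{N}$ with integrand $\beta H^{\CMF}$; this yields
\[
\lim_{N\to\infty}\frac{1}{|\Lambda_N|}\log Z_N^{\CMF}(\beta,\mu)\;=\;-\inf_{y\in\ell_1(\R)}\bigl\{\beta H^{\CMF}(y)+I_\mu(y)\bigr\}.
\]
The companion statement (tilted LDP) then immediately implies that $(\nu_{N,\mu}^{\CMF})_{N}$ satisfies an LDP on $\ell_1(\R)$ with rate $|\Lambda_N|$ and rate function $\beta H^{\CMF}+I_\mu$, minus this infimum, which is precisely $I^{\CMF}_\mu$ as stated.

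The only point requiring genuine care is goodness of the rate function $I_\mu$, i.e.\ compactness of its sublevel sets in $\ell_1(\R)$, since the cleanest version of the tilted LDP requires this in order to handle the unboundedness of $H^{\CMF}$ on $\ell_1(\R)$. I would establish this directly from the entropy-type expression for $I_\mu$: the independence and Poisson nature of the coordinates $\mathcal N_k$ together with the super-exponential decay of $q_k^{\ssup{\mu}}$ in $k$ (for $\mu\le 0$) yield exponential tightness of $(\nu_{N,\mu}^{\ssup{\bc}})$ in the $\ell_1$ norm, and hence goodness of $I_\mu$. Once this is in place, $\beta H^{\CMF}+I_\mu$ inherits goodness from continuity and non-negativity of $H^{\CMF}$, and the conclusion follows. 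The identification of the infimum and any analysis of its minimisers is deferred to Section~\ref{sec-minimiser}.
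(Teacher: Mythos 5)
Your proof is correct and follows essentially the same route as the paper: establish $\ell_1$-continuity of $H^{\CMF}$, observe that $H^{\CMF}\ge 0$ makes the moment/tail condition in Varadhan's lemma trivial, and then invoke Varadhan's lemma (equivalently, the tilted LDP) together with Proposition~\ref{THM-Ideal} to read off the rate function. One small inaccuracy: for $\mu=0$ the weights $q_k^{\ssup{0}}$ decay only polynomially (not super-exponentially); exponential tightness nevertheless holds, as already established in Lemma~\ref{exptight}, so your conclusion about goodness of $I_\mu$ is unaffected.
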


In the following, we denote  $ H^\PMF_{\mu,l.s.c.} $ the    lower semicontinuous regularisation of $ H^\PMF_{\mu} $ and  $D(x)=\sum_{k=1}^\infty k x_k $ the \textit{density} for any sequence $ x\in\ell_1(\R_+) $, and we write $ I\equiv I_0 $. 

\begin{theorem}[\textbf{Large deviation principle for PMF models}]\label{THM-PMF}
For any $ d\in\N,  a>  0, \alpha\le 0 $, $ \mu\in\R$, and $ \bc\in\{\emptyset,\Dir,\per\} $    the following holds. The sequence $ \big(\nu_{N,\mu,\alpha}^{\PMF}\big)_{N\ge 1}   $ satisfies an LDP on $ \ell_1(\R) $ with rate $ \abs{\L_N} $ and rate function 
\begin{equation}
I^\PMF_{\mu,\alpha}(x) = \beta H^\PMF_{\mu+\alpha,l.s.c.}(x)+ I(x) - \inf_{y\in \ell_1\left(\R\right)}\{\beta H^{\PMF}_{\mu+\alpha,l.s.c.}(y)+I(y)\},
\end{equation}
with
\begin{equation}\label{lscreg}
H^\PMF_{\mu,l.s.c.}(x)=H^\PMF_\mu(x)-\frac{1}{2a}\big(\mu-aD(x)\big)_+^2=\begin{cases} -\mu D(x)+\frac{a}{2}D(x)^2 &,D(x)\ge \frac{\mu}{a},\\-\frac{\mu^2}{2a}& ,D(x)<\frac{\mu}{a}.\end{cases}
\end{equation}

\begin{remark}
For $ \mu+\alpha\le 0 $,  the rate function in Theorem~\ref{THM-PMF} reads
$$
I^{\PMF}_{\mu,\alpha}(x) = \beta H^{\PMF}_{\mu+\alpha}(x)+ I(x) - \inf_{y\in \ell_1\left(\R\right)}\{\beta H^{\PMF}_{\mu+\alpha}(y)+I(y)\}.
$$
\hfill $ \diamond $
\end{remark}

\end{theorem}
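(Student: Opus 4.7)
The plan is to mimic the tilting argument behind Theorem~\ref{THM-CMF}, applied to the reference LDP of Proposition~\ref{THM-Ideal}, but with one crucial modification: the functional $D(x) = \sum_k k x_k$ is only lower semicontinuous on $\ell_1(\R_+)$ (an $\ell_1$-small perturbation in coordinate $k$ changes $D$ by a factor of $k$), so $H^{\PMF}_{\mu+\alpha}$ inherits this defect and the l.s.c.\ regularisation $H^{\PMF}_{\mu+\alpha, l.s.c.}$ of \eqref{lscreg} naturally appears in the rate function.

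The first step is a reduction to the $\alpha = 0$ reference by a change of measure. Comparing the product-Poisson formulas for $\nu_{N,\alpha}^{\ssup{\bc}}$ and $\nu_{N,0}^{\ssup{\bc}}$ and using $q_k^{\ssup{\alpha,\bc}} = e^{\beta\alpha k} q_k^{\ssup{0,\bc}}$, one obtains
$$\frac{d\nu_{N,\alpha}^{\ssup{\bc}}}{d\nu_{N,0}^{\ssup{\bc}}}(x) = \exp\big\{|\L_N|\big[\beta\alpha D(x) - (\bar q^{\ssup{\alpha,\bc}} - \bar q^{\ssup{0,\bc}})\big]\big\},$$
so after combining with the PMF tilt,
$$\nu^{\PMF}_{N,\mu,\alpha}(dx) = \frac{\exp(-|\L_N|\beta H^{\PMF}_{\mu+\alpha}(x))}{\tilde Z_N}\, \nu_{N,0}^{\ssup{\bc}}(dx), \qquad H^{\PMF}_{\mu+\alpha}(x) = -(\mu+\alpha)D(x) + \frac{a}{2}D(x)^2,$$
with $\tilde Z_N$ a deterministic prefactor. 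This reduces the theorem to an LDP for this tilt of $\nu_{N,0}^{\ssup{\bc}}$ together with the identification of $\lim (1/|\L_N|)\log \tilde Z_N$.

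For the LDP upper bound on a closed set $C$, I would use the pointwise inequality $H^{\PMF}_{\mu+\alpha} \geq H^{\PMF}_{\mu+\alpha, l.s.c.}$ to dominate the integrand by $\exp(-|\L_N|\beta H^{\PMF}_{\mu+\alpha, l.s.c.})$, which is upper semicontinuous and bounded above by $\exp(|\L_N|\beta(\mu+\alpha)^2/(2a))$; the generalised Varadhan upper bound (applicable thanks to the goodness of $I$ and the quadratic coercivity of $H^{\PMF}_{\mu+\alpha, l.s.c.}$ in $D$) then yields
$$\limsup \frac{1}{|\L_N|} \log \int_C e^{-|\L_N|\beta H^{\PMF}_{\mu+\alpha}}\, d\nu_{N,0}^{\ssup{\bc}} \leq -\inf_C\{\beta H^{\PMF}_{\mu+\alpha, l.s.c.} + I\}.$$

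For the matching lower bound (and, by specialisation to $O = \ell_1(\R)$, the identification of $\tilde Z_N$), I would construct for each $y$ with $\beta H^{\PMF}_{\mu+\alpha, l.s.c.}(y) + I(y) < \infty$ an approximating sequence $y^K \to y$ in $\ell_1$ attaining the l.s.c.\ value of the energy at vanishing entropic cost. If $D(y) \geq (\mu+\alpha)/a$ take $y^K = y$; otherwise set $y^K = y + \epsilon_K e_K$ with $e_K$ the $K$-th standard basis vector and $\epsilon_K = ((\mu+\alpha)/a - D(y))/K$, so that $D(y^K) = (\mu+\alpha)/a$ and $H^{\PMF}_{\mu+\alpha}(y^K) = -(\mu+\alpha)^2/(2a) = H^{\PMF}_{\mu+\alpha, l.s.c.}(y)$; the polynomial decay $q_K^{\ssup{0,\bc}} \sim K^{-(1+d/2)}$ combined with $\epsilon_K = O(1/K)$ gives $I(y^K) - I(y) = O(K^{-1}\log K) \to 0$. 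The main obstacle will be that ordinary $\ell_1$-balls around $y^K$ cannot be used to convert this into an LDP lower bound, because the l.s.c.\ nature of $D$ forces $\sup_{B_\delta(y^K)} H^{\PMF}_{\mu+\alpha} = +\infty$ in every such ball (a vanishingly small perturbation at an arbitrarily long cycle inflates $D$ without bound). I would circumvent this by invoking the pointwise Poisson atoms of $\nu_{N,0}^{\ssup{\bc}}$: for any $\lambda^0$ of finite support with $|\L_N|\lambda^0_k \in \N_0$, the exact product formula together with Stirling yields $(1/|\L_N|)\log \nu_{N,0}^{\ssup{\bc}}(\{\lambda^0\}) \to -I(\lambda^0)$; choosing $\lambda^0 \in O$ to be a truncation-and-rounding of $y^K$ so that $\lambda^0 \to y^K$ in both $\ell_1$-norm and in $D$-value, the trivial bound $\int_O e^{-|\L_N|\beta H^{\PMF}_{\mu+\alpha}} d\nu_{N,0}^{\ssup{\bc}} \geq \nu_{N,0}^{\ssup{\bc}}(\{\lambda^0\})\, e^{-|\L_N|\beta H^{\PMF}_{\mu+\alpha}(\lambda^0)}$ delivers $\liminf (1/|\L_N|)\log \geq -\beta H^{\PMF}_{\mu+\alpha}(y^K) - I(y^K)$; letting $K \to \infty$ and optimising over $y \in O$ closes the argument. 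The special case $\mu + \alpha \leq 0$ recorded in the Remark is then immediate, since $\mu/a \leq 0 \leq D(x)$ always forces $H^{\PMF}_{\mu+\alpha, l.s.c.} = H^{\PMF}_{\mu+\alpha}$ throughout $\ell_1(\R_+)$.
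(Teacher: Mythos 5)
Your proposal is correct and shares the structural insight with the paper — the need to replace $H^\PMF_{\mu}$ by its lower semicontinuous regularisation, the pointwise inequality $H^\PMF_{\mu+\alpha}\ge H^\PMF_{\mu+\alpha,l.s.c.}$ for the upper bound via \cite[Lemma~4.3.6]{DZ09}, and the one-coordinate perturbation $y\mapsto y+\epsilon_K e_K$ with $\epsilon_K\sim K^{-1}$ to reach density $(\mu+\alpha)/a$ at $O(K^{-1}\log K)$ entropic cost. But your \emph{lower} bound takes a genuinely different route from the paper's. The paper (Proposition~\ref{PMFlower}) first restricts the mark space to cycle lengths $\le K$, pulls an LDP on $\R^K$ through the contraction principle (Lemma~\ref{Thm:LDPidealRJ}), exploits that $H^\PMF_{\mu,K}$ is \emph{continuous} on that finite-dimensional slice so that Varadhan's lower bound \cite[Lemma~4.3.4]{DZ09} applies cleanly, and only then removes the cut-off using the approximating sequence $\widehat{x}^K$. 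You instead dispense with Varadhan entirely on the lower side: you drop to a single support atom $\lambda^0$ of the product-Poisson reference measure, use the explicit factorised formula together with Stirling to show $|\L_N|^{-1}\log\nu_{N,0}^{\ssup\bc}(\{\lambda^0\})\to -I(\lambda^0)$, and bound the integral from below by that one atom's contribution. Both approaches are sound; yours is more elementary (it avoids the contraction-principle lemma and the second invocation of Varadhan), at the price of a more delicate two-parameter diagonal argument — the truncation-and-rounding $\lambda^0=\lambda^0_{N}$ must live on the lattice $|\L_N|^{-1}\N_0$ with finite support $\{1,\dots,M_N\}$, $M_N\to\infty$ slowly enough that the accumulated Stirling errors $O(M_N\log|\L_N|/|\L_N|)$ vanish while $\lambda^0_N\to y^K$ in both $\ell_1$ and $D$. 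You should also note that for $\bc\in\{\per,\Dir\}$ the weights $q_k^{\ssup{\bc,0}}$ depend on $\L_N$ and the reference process has a length cut-off at $N$ (see \eqref{qdefbc} and Appendix~\ref{app-A}), so the atom computation requires the uniform convergence $q_k^{\ssup{\bc,0}}\to q_k^{\ssup 0}$; the paper sidesteps this by quoting the boundary-condition independence results. Lastly, a small slip: in your justification of the Remark you write $\mu/a\le 0$ where it should read $(\mu+\alpha)/a\le 0$, but the conclusion is unchanged.
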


The major novelty of our large deviation analysis concerns an substantial extension of the so-called HYL-model (Hunag-Yang-Luttinger-model) studied in \cite{BLP}. On one hand we replace the cycle weights originating from the energy representation in \cite{BLP} by our cycle weights stemming from spatial representation of the partition function, and on the other hand we obtain higher level large deviation principles allowing a detailed insight in the structure of the minimiser and possible phases and phase transitions. 
For any $ a\ge b> 0 $ and any $ \alpha\le 0, \mu\in\R $, define the HYL-model by
\begin{equation}
\begin{aligned}
H^\HYL_\mu(x)&=-\mu\sum_{k=1}^\infty k x_k +\frac{a}{2}\Big(\sum_{k=1}^\infty k x_k\Big)^2-\frac{b}{2}\sum_{k=1}^\infty k^2 x_k^2,\qquad x\in\ell_1(\R_+),\\
\nu^\HYL_{N,\mu,\alpha}(\d x)&=\frac{\exp(-\beta\abs{\L_N} H^\HYL_\mu(x))}{Z^\HYL_N(\beta,\mu,\alpha)}\,\nu_{N,\alpha}^{\ssup{\bc}}(\d x),
\end{aligned}
\end{equation}
with partition function
$$
\begin{aligned}
Z_N^{\HYL}(\beta,\mu,\alpha)=\E_{\nu_{N,\alpha}^{\ssup{\bc}}}\big[\e^{-\abs{\L_N}\beta H^{\HYL}_\mu(x)}\big]={\tt E}^{\ssup{\bc}}\big[\exp\big\{\beta \mu N_{\L}^{\ell}-\frac{\beta a}{2\abs{\L_N}}\big( N_{\L}^{\ell}\big)^2 +\frac{\beta b}{2\abs{\L_N}}\sum_{k=1}^\infty k^2\Ncal_{k}^2\big\}\big].
\end{aligned}
$$

\medskip

In this paper we will focus our attention on the $a>b$ case. For a discussion of $a=b$, see \cite{AD2018b}.

\begin{theorem}[\textbf{Large deviations principle for HYL-models}]\label{Thm:HYL}

For any $ d\in\N,  a>b\geq 0 , \alpha\le 0 $, $ \mu\in\R$,   and $ \bc\in\{\emptyset,\Dir,\per\} $    the following holds. The sequence $ \big(\nu_{N,\mu,\alpha}^{\HYL}\big)_{N\ge 1}   $ satisfies an LDP on $ \ell_1(\R) $ with rate $ \abs{\L_N} $ and rate function 
\begin{equation}
I^{\HYL}_{\mu,\alpha}(x) = \beta H^{\HYL}_{\mu+\alpha,l.s.c.}(x)+ I(x) - \inf_{y\in \ell_1\left(\R\right)}\{\beta H^{\HYL}_{\mu+\alpha,l.s.c.}(y)+I(y)\} ,
\end{equation}
with
\begin{equation}
\begin{aligned}
H^{\HYL}_{\mu,l.s.c.}(x)&=H^\HYL(x)-\frac{1}{2(a-b)}\big(\mu-aD(x)\big)_+^2\\ &= -\frac{b}{2}\sum_{k=1}^\infty k^2 x_k^2 + \begin{cases} -\mu D(x)+\frac{a}{2}D(x)^2 &,D(x)\ge \frac{\mu}{a},\\ - \frac{b}{a-b}\left(-\mu D(x)+\frac{a}{2}D(x)^2\right) -\frac{\mu^2}{2(a-b)} &, D(x)<\frac{\mu}{a}.\end{cases}
\end{aligned}
\end{equation}

\end{theorem}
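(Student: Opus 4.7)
The plan is to derive the HYL LDP by applying Varadhan's lemma (in a form allowing discontinuous tilts) to the ideal-gas LDP of Proposition~\ref{THM-Ideal}. Writing
\begin{equation*}
\nu^\HYL_{N,\mu,\alpha}(\d x) = \frac{\e^{-\beta|\L_N| H^\HYL_\mu(x)}}{Z^\HYL_N(\beta,\mu,\alpha)/Z^{\ssup\bc}_N(\beta,\alpha)}\, \nu^{\ssup\bc}_{N,\alpha}(\d x),
\end{equation*}
and using $q_k^{\ssup\alpha} = \e^{\beta\alpha k}\, q_k^{\ssup 0}$, a short computation gives $I_\alpha(x) = I(x) - \beta\alpha D(x) + C(\alpha)$ for an additive constant $C(\alpha) = \overline{q}^{\ssup\alpha} - \overline{q}^{\ssup 0}$. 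Since $H^\HYL_\mu$ depends on $\mu$ only through the linear term $-\mu D(x)$, the sum $\beta H^\HYL_\mu(x) + I_\alpha(x)$ equals $\beta H^\HYL_{\mu+\alpha}(x) + I(x)$ up to a constant, as required. The real obstacle is that $H^\HYL_\mu$ is neither upper- nor lower-semicontinuous on $\ell_1(\R_+)$ in the product topology underlying Proposition~\ref{THM-Ideal}: $\tfrac{a}{2}D(x)^2$ is l.s.c., while $-\tfrac{b}{2}\sum_k k^2 x_k^2$ is u.s.c.

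The crucial step is to verify the explicit formula for the lower semicontinuous regularisation. Given $x \in \ell_1(\R_+)$ and $y \ge 0$, define the long-cycle perturbation $x^{(\Delta,y)}$ by $(x^{(\Delta,y)})_k = x_k$ for $k \ne \Delta$ and $(x^{(\Delta,y)})_\Delta = x_\Delta + y/\Delta$. For $\Delta$ beyond the effective support of $x$, one has $D(x^{(\Delta,y)}) = D(x) + y$ and $\sum k^2 (x^{(\Delta,y)}_k)^2 = \sum k^2 x_k^2 + y^2$, yielding
\begin{equation*}
H^\HYL_\mu(x^{(\Delta,y)}) - H^\HYL_\mu(x) = -\mu y + a D(x)\, y + \tfrac{a-b}{2}\, y^2,
\end{equation*}
while $x^{(\Delta,y)} \to x$ in the product topology and $I(x^{(\Delta,y)}) - I(x) = \mathcal{O}((\log \Delta)/\Delta) \to 0$ as $\Delta \to \infty$, thanks to the polynomial decay $q_k^{\ssup 0} \sim c\, k^{-(1+d/2)}$. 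Minimising in $y \ge 0$ at $y^\star = (\mu - a D(x))_+/(a-b)$, well-defined precisely because $a > b$, gives the reduction $-(\mu - aD(x))_+^2/(2(a-b))$, matching the stated formula. The matching inequality $H^\HYL_{\mu,l.s.c.}(x) \ge H^\HYL_\mu(x) - (\mu - aD(x))_+^2/(2(a-b))$ follows from the observation that for any non-negative tail perturbation $z = (z_k)$ supported at large $k$, the quantity $\tfrac{a}{2}(\sum k z_k)^2 - \tfrac{b}{2}\sum k^2 z_k^2$ is minimised exactly when all mass is concentrated on a single long cycle (saturating $\sum k^2 z_k^2 \le (\sum k z_k)^2$ for $z_k \ge 0$), so the long-cycle ansatz is globally optimal.

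With $H^\HYL_{\mu,l.s.c.}$ identified, the remaining LDP argument parallels the proof of Theorem~\ref{THM-PMF} for the PMF case. Exponential tightness of $(\nu^{\ssup\bc}_{N,\alpha})$ transfers to $(\nu^\HYL_{N,\mu,\alpha})$ once one controls the partition-function ratio from below by evaluating at any fixed configuration with bounded density. For the upper bound at $x$, any small open neighbourhood $U$ of $x$ satisfies $\inf_U H^\HYL_\mu \ge H^\HYL_{\mu,l.s.c.}(x) - \epsilon$ by definition of the regularisation, and applying the LDP upper bound of Proposition~\ref{THM-Ideal} to $U$ yields the right exponential rate. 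For the lower bound at $x$, take a small neighbourhood of the long-cycle approximant $x^{(\Delta,y^\star)}$, on which the truncated-at-$K$ Hamiltonian $H^{\HYL,K}_\mu(x') := -\mu\sum_{k\le K} k x'_k + \tfrac{a}{2}(\sum_{k\le K} k x'_k)^2 - \tfrac{b}{2}\sum_{k\le K} k^2 (x'_k)^2$ is jointly continuous for any $K \ge \Delta$, and apply the LDP lower bound of Proposition~\ref{THM-Ideal}; sending $\Delta \to \infty$ recovers the claimed rate function. Normalisation by the infimum follows from applying both bounds over the full space. The principal obstacle remains the optimality of the long-cycle ansatz, which depends delicately on the strict inequality $a > b$: at $a = b$ the effective quadratic $\tfrac{a-b}{2} y^2$ degenerates and a genuinely different analysis is required, carried out separately in \cite{AD2018b}.
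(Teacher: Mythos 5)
Your reduction of the problem to identifying the lower semicontinuous regularisation, your long-cycle perturbation $x^{(\Delta,y)}$, and the optimisation $y^\star = (\mu - aD(x))_+/(a-b)$ all match the paper's computation (the sequence in \eqref{seqeps} of Lemma~\ref{L:regHYL} is exactly your $x^{(\Delta,y^\star)}$), and the upper bound is identical. The genuine divergence is in the lower bound, which is the nontrivial part.

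The paper does not redo the PMF-style truncation for HYL. Instead it observes that $H^\HYL_\mu = H^\PMF_{\mu,a} + H_-$ with $H_- := -\tfrac{b}{2}\sum_k k^2 x_k^2$ upper semicontinuous on $\ell_1(\R_+)$, writes the partition-function identity
\begin{equation*}
\E_{\nu_{N,\alpha}}\bigl[\e^{-\beta|\L_N| H^\HYL_\mu}\bigr]
=\E_{\nu_{N,\mu,\alpha}^\PMF}\bigl[\e^{-\beta|\L_N| H_-}\bigr]\, Z^\PMF_N(\beta,\mu,\alpha),
\end{equation*}
and applies the Varadhan lower bound to the already-proved PMF LDP with the u.s.c.\ tilt $H_-$ (for which the lower bound requires no moment condition). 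A short auxiliary Lemma~\ref{regularisation} then shows $(H^\PMF_{\mu,a,l.s.c.} + H_-)_{l.s.c.} = H^\HYL_{\mu,l.s.c.}$, closing the argument. This completely avoids redoing the cut-off removal with a negative $k^2 x_k^2$ contribution. Your route, by contrast, attempts the truncation argument directly for HYL, which requires checking that the cut-off step works in the presence of the $-\tfrac{b}{2}\sum k^2 x_k^2$ term; you have not done this verification, and it is exactly the part the paper's decomposition is designed to sidestep.

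There is also a concrete gap in your lower-bound sketch as written. You take an $\ell_1$-neighbourhood $U$ of $x^{(\Delta,y^\star)}$ and ``apply the LDP lower bound of Proposition~\ref{THM-Ideal}.'' But $H^\HYL_\mu$ is \emph{not} bounded above on any $\ell_1$-neighbourhood: the term $\tfrac{a}{2}D(x)^2$ is only lower semicontinuous, and configurations with a tiny bit of mass on very long cycles live arbitrarily close to $x^{(\Delta,y^\star)}$ in $\ell_1$ while having $D$ unboundedly large. Hence $\e^{-\beta|\L_N|H^\HYL_\mu}$ has no uniform lower bound on $U$, and the naive estimate $\E[\e^{-\beta|\L_N|H^\HYL_\mu}\1_U] \gtrsim \e^{-\beta|\L_N|(H^\HYL_\mu(x^{(\Delta,y^\star)})+\epsilon)}\nu_N(U)$ fails. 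The fix is what the PMF proof actually does: insert the indicator $\1\{\nu_N(\R^K)=1\}$ (no cycles longer than $K$) before cutting the Hamiltonian so that $H^\HYL_\mu$ \emph{equals} $H^{\HYL,K}_\mu$ on that event, then apply the finite-dimensional LDP of Lemma~\ref{Thm:LDPidealRJ}, not the infinite-dimensional one, and finally carry through the cut-off removal (Step 3) adapted to HYL. You reference the PMF template, so you may have intended this, but your explicit sketch substitutes $H^{\HYL,K}_\mu$ for $H^\HYL_\mu$ on an open set in $\ell_1$, which does not hold without the conditioning, and your cut-off removal is asserted but not checked for the HYL Hamiltonian.

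Finally, a small point: the claim that the long-cycle ansatz is ``globally optimal'' as justification for the reverse inequality $H^\HYL_{\mu,l.s.c.}(x)\ge H^\HYL_\mu(x) - (\mu - aD(x))_+^2/(2(a-b))$ is essentially right, but your argument bounds only the tail-escape contribution. To actually prove lower semicontinuity of the candidate you must handle a general $\ell_1$-convergent sequence $x^{(n)}\to x$, including local rearrangements that do not separate cleanly into ``fixed part plus escaping mass.'' The paper's Lemma~\ref{L:regHYL} spends most of its effort on exactly this (splitting $\eps^{(n)}$ into positive and negative parts, controlling cross terms $\sum k^2 x_k \eps^{(n)}_k$, etc.); you should not treat it as a one-line consequence of $\sum k^2 z_k^2 \le (\sum k z_k)^2$.
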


\begin{remark}
	In the PMF and HYL models we have made a distinction between the chemical potential arising in the reference measure, $\alpha$, and the chemical potential arising in the relevant Hamiltonian, $\mu$. Because the ideal Bose gas partition function diverges for $\alpha>0$, we require $\alpha\leq0$, but no such bound is needed for $\mu$. Note though, that a model with $ \alpha=\alpha_1\le 0 $ and $ \mu=\mu_1\in \R $ coincides with same model but with $ \alpha=0 $ and $ \mu=\mu_1+\alpha_1 $. In many of our proofs we will - without loss of generality - use the $ \alpha=0 $ and $ \mu=\mu_1+\alpha_1 $ arrangement.\hfill $ \diamond $
\end{remark}

Following \cite{BLP} and \cite{Lew86}  one might be interested in large deviation principles for the empirical  particle density
\begin{equation}
\brho_{N}:=\frac{1}{\abs{\L_N}}N_{\L_N}^{\ssup{\ell}}=\frac{1}{\abs{\L_N}}\sum_{k=1}^\infty k\Ncal_k.
\end{equation}
As the particle number $ N_{\L_N}^{\ssup{\ell}} $ is only lower semicontinuous and not upper semicontinuous, a proof via the contraction principle is only feasible if one considers cut-off versions of the empirical particle density $ \brho_N^{\ssup{K}}=\frac{1}{\abs{\L_N}}\sum^K_{k=1}k\Ncal_k $ followed  by analysing the limit $ K\to\infty $ for the corresponding rate functions. We do not follow this approach here and briefly outline a direct approach as follows.

\begin{prop}\label{logmomentdensity}
Let $ \alpha\le 0 $, then for all $ t\in\R $,
\begin{equation}
\begin{aligned}
\L(t)&:=\lim_{N\to\infty} \frac{1}{\abs{\L_N}}\log\E_{\nu_{N,\alpha}^{\ssup{\bc}}}\Big[\ex^{t\sum_{k=1}^\infty k\blambda_N^{\ssup{k}}}\Big]=  \lim_{N\to\infty} \frac{1}{\abs{\L_N}}\log{\tt E}^{\ssup{\bc}}\Big[ \ex^{t\sum_{k=1}^\infty k\Ncal_k}\Big]\\
& = \sum_{k=1}^\infty q_k^{\ssup{\alpha}}\big(\ex^{tk}-1\big)=\begin{cases} +\infty &, \mbox{ if } t>\abs{\alpha}\beta,\\
\in \R & ,\mbox{ if } \alpha\beta+t\le 0.
\end{cases}
\end{aligned}
\end{equation}
\end{prop}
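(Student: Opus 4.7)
The approach is to exploit the Poisson structure of the cycle counts under the reference law. By construction, under ${\tt Q}^{\ssup{\rm bc}}$ the random variables $(\mathcal{N}_k)_{k\in\N}$ are independent Poisson with means $\abs{\L_N}q_k^{\ssup{\rm bc,\alpha}}$, where $q_k^{\ssup{\emptyset,\alpha}}=q_k^{\ssup{\alpha}}$ and, for ${\rm bc}\in\{{\rm per},{\rm Dir}\}$, one has the pointwise convergence $q_k^{\ssup{\rm bc,\alpha}}\to q_k^{\ssup{\alpha}}$ as $N\to\infty$ together with a domination $q_k^{\ssup{\rm bc,\alpha}}\le C\,q_k^{\ssup{\alpha}}$; these properties are collected in Appendix~\ref{app-A}.

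The first step is an exact computation at finite $N$. Using independence and the Poisson moment generating function $\E[\ex^{sY}]=\exp(\lambda(\ex^{s}-1))$ for $Y\sim\mathsf{Poi}(\lambda)$, one obtains
\begin{equation*}
\frac{1}{\abs{\L_N}}\log{\tt E}^{\ssup{\rm bc}}\Big[\ex^{t\sum_{k\in\N}k\mathcal{N}_k}\Big]=\sum_{k=1}^\infty q_k^{\ssup{\rm bc,\alpha}}\big(\ex^{tk}-1\big),
\end{equation*}
which, by the definition of $\nu_{N,\alpha}^{\ssup{\rm bc}}$, coincides with $\frac{1}{\abs{\L_N}}\log\E_{\nu_{N,\alpha}^{\ssup{\rm bc}}}\big[\ex^{t\sum_k k\blambda_N^{\ssup{k}}}\big]$.

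The second step is passing $N\to\infty$ and identifying the two regimes. The explicit form $q_k^{\ssup{\alpha}}=\ex^{\beta\alpha k}/[(4\pi\beta)^{d/2}k^{1+d/2}]$ yields
\begin{equation*}
q_k^{\ssup{\alpha}}\big(\ex^{tk}-1\big)=\frac{\ex^{(\beta\alpha+t)k}-\ex^{\beta\alpha k}}{(4\pi\beta)^{d/2}k^{1+d/2}}.
\end{equation*}
If $t+\beta\alpha\le 0$, both exponential factors are bounded by $1$ and the series is majorised by $2(4\pi\beta)^{-d/2}\sum_k k^{-1-d/2}<\infty$; combined with the domination from Appendix~\ref{app-A} this enables dominated convergence and yields the finite limit $\sum_k q_k^{\ssup{\alpha}}(\ex^{tk}-1)\in\R$. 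If $t>\abs{\alpha}\beta$, i.e.\ $t+\beta\alpha>0$, the first exponential grows in $k$ and the pointwise limit equals $+\infty$; for ${\rm bc}=\emptyset$ this is immediate because the finite-$N$ sum is already $+\infty$, while for ${\rm bc}\in\{{\rm per},{\rm Dir}\}$ the nonnegativity of each summand (since $t>0$) allows Fatou's lemma, giving $\liminf_{N\to\infty}\sum_k q_k^{\ssup{\rm bc,\alpha}}(\ex^{tk}-1)\ge\sum_k q_k^{\ssup{\alpha}}(\ex^{tk}-1)=+\infty$.

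The main obstacle is the interchange of limit and infinite sum for the non-trivial boundary conditions: it relies on the pointwise convergence and the uniform domination $q_k^{\ssup{\rm bc,\alpha}}\le Cq_k^{\ssup{\alpha}}$. These follow from standard heat-kernel comparisons (reflection/periodisation on the torus for the periodic boundary, and domination by the free heat kernel for Dirichlet), which is precisely what is recorded in Appendix~\ref{app-A}. Once these inputs are in hand, the proof reduces to the dominated-convergence / Fatou dichotomy above.
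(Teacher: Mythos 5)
Your proof follows essentially the same route as the paper's: compute the logarithmic moment generating function at finite $N$ as a product of Poisson moment generating functions (using independence of the $\Ncal_k$), then pass to the limit termwise. The paper's own proof is in fact terser than yours — it writes down the finite-$N$ identity with $q_k^{\ssup{\alpha}}$ and simply declares the limit, without separating the two regimes or commenting on the $N$-dependence of the cycle weights under the $\rm per$ or $\rm Dir$ reference processes. Your extra care with the $N\to\infty$ interchange (dominated convergence for $t+\beta\alpha\le 0$, Fatou or a direct estimate for $t>|\alpha|\beta$) is a genuine improvement in rigour.

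That said, two of your supporting claims need fixing. First, Appendix~\ref{app-A} does not actually record the pointwise convergence $q_k^{\ssup{{\rm bc},\alpha}}\to q_k^{\ssup{\alpha}}$ or a domination $q_k^{\ssup{{\rm bc},\alpha}}\le C\,q_k^{\ssup{\alpha}}$; it only gives the definitions \eqref{qdefbc} and, importantly, the fact that the $\rm per$/$\rm Dir$ reference intensity vanishes on $\L\times\Ccal_{k,\L}^{\ssup{\rm bc}}$ for $k>N$. Second, the slogan ``periodisation on the torus'' goes the \emph{wrong} way for your purposes: the torus kernel $g_{k\beta}^{\ssup{\rm per}}(x,x)=\sum_{z\in\Z^d}g_{k\beta}(x,x+zL)\ge g_{k\beta}(x,x)$ exceeds the free kernel, and for $k\beta \gg L^2$ the ratio $q_k^{\ssup{\rm per}}/q_k^{\ssup{0}}$ grows like $k^{d/2}$, so a uniform bound $q_k^{\ssup{{\rm per},\alpha}}\le C\,q_k^{\ssup{\alpha}}$ is false without further input. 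What rescues your dominated convergence argument is precisely the mark-length truncation $k\le N$ built into the reference process in Appendix~\ref{app-A}: with $L=2N$ and $k\le N$, the correction terms $g_{k\beta}(0,zL)$ for $z\neq 0$ are exponentially small in $N$, giving $\1\{k\le N\}\,q_k^{\ssup{{\rm per}}}\le 2\,q_k^{\ssup{0}}$ for large $N$, which is the $N$-independent summable majorant you need. (For Dirichlet the domination $g_{k\beta}^{\ssup{\rm Dir}}\le g_{k\beta}$ is, as you say, immediate.) With the truncation made explicit and Appendix~\ref{app-A} cited only for the construction and the cut-off rather than for an off-the-shelf domination, your argument is correct and is a more detailed version of the paper's.
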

The following large deviation results uses the critical density for the ideal Bose gas, the thermodynamic limit of the pressure and the free energy  defined, respectively,  in \eqref{critical} in Section~\ref{idealgas}, \eqref{p} in Section~\ref{pressure} and Proposition~\ref{IdealFreeEnergy} in Section~\ref{idealgas}. Denote $ Q_{N,\alpha}={\tt Q}^{\ssup{\bc}}\circ \brho_N^{-1} $ the distribution of $ (\brho_N)_{N\ge 1} $ with chemical potential $ \alpha\le 0 $ and define the distribution $ Q_{N,\mu,\alpha}^\PMF $ via its Radon-Nikodym density
\begin{equation}
\frac{\d Q_{N,\mu,\alpha}^\PMF}{\d Q_{N,\alpha}}(x)=\frac{\exp\big(-\beta\big(-\mu x+\frac{a}{2}x^2\big)\big)}{Z_N^\PMF(\beta,\mu,\alpha)}.
\end{equation}

\begin{theorem}\label{THM-densityLDP}
Let $ d\in\N, \beta>0 $, and $ \bc\in\{\emptyset, \per,\Dir\} $.
\begin{enumerate}
\item For any $ \alpha<0 $, the sequence $ \big(Q_{N,\alpha}\big)_{N\ge 1} $  satisfies an LDP on $ \R$ with rate $ \abs{\L_N} $ and rate function
\begin{equation}
J_\alpha(x)=\begin{cases} \beta\big(p(\beta,\alpha)+f(\beta,x)-\alpha x\big) &, \mbox{ if } x\in [0,\varrho_{\rm c}] \;\mbox{ for } d\ge 3 \wedge\;  x\in [0,\infty) \;\mbox{ for } d=1,2,\\
+\infty &, \mbox{ if } x\notin[0,\varrho_{\rm c}].\end{cases}
\end{equation}

\item For any $ \alpha<0 $ and $ \mu\in\R $, the sequence $ \big(Q_{N,\mu,\alpha}^\PMF\big)_{N\ge 1} $ satisfies an LDP on $ \R $ with rate $ \abs{\L_N} $ and rate function
\begin{equation}
J_{\mu,\alpha}^\PMF(x)=\begin{cases} \beta(-\mu x+\frac{a}{2}x^2)+J_\alpha(x) -\Ncal &, \mbox{ if } x\in [0,\varrho_{\rm c}] \;\mbox{ for } d\ge 3 \wedge\; x\in [0,\infty) \;\mbox{ for } d=1,2,\\
+\infty & ,\mbox{ if } x\notin[0,\varrho_{\rm c}],\end{cases}
\end{equation}
where $ \Ncal= \inf_{y\in\R}\{\beta(-\mu y+\frac{a}{2} y^2)+J_\alpha(y)\} $.
\end{enumerate} 
\end{theorem}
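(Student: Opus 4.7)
The plan is to prove Part~(1) via the G\"artner-Ellis theorem applied to the cumulant generating function supplied by Proposition~\ref{logmomentdensity}, and to deduce Part~(2) from Part~(1) by a Varadhan-type tilting argument.

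For Part~(1), Proposition~\ref{logmomentdensity} identifies the limiting log-moment generating function as $\L(t) = \overline{q}^{\ssup{\alpha + t/\beta}} - \overline{q}^{\ssup{\alpha}}$ for $t \leq |\alpha|\beta$ and $+\infty$ otherwise; this is a proper convex lower semicontinuous function on $\R$, differentiable on the interior of its effective domain with $\L'(t) = \sum_k k\,q_k^{\ssup{\alpha+t/\beta}}$. When $d \in \{1,2\}$ this derivative diverges as $t\nearrow |\alpha|\beta$ (since $\sum_k k\,q_k^{\ssup{0}} = +\infty$ in these dimensions), so $\L$ is essentially smooth and G\"artner-Ellis delivers the full LDP on $[0, \infty)$ with rate $\L^\ast$. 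For $d \geq 3$ one has instead $\L'(|\alpha|\beta^-) = \varrho_{\rm c} < \infty$ and steepness fails; G\"artner-Ellis nonetheless yields the global upper bound plus the lower bound on the exposed points of $\L^\ast$, which by convexity covers all of $[0,\varrho_{\rm c}]$. The identification $\L^\ast(x) = \beta\bigl(p(\beta,\alpha) + f(\beta,x) - \alpha x\bigr)$ on $[0,\varrho_{\rm c}]$ is a Legendre-Fenchel computation: substituting $\mu = \alpha + t/\beta$ in $\sup_t\{tx - \L(t)\}$ and using $p(\beta,\mu) = \overline{q}^{\ssup{\mu}}/\beta$ together with the Fenchel duality $f(\beta,\cdot) = p(\beta,\cdot)^\ast$ from Section~\ref{idealgas} yields the claim.

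For Part~(2), the Radon-Nikodym density
\begin{equation*}
\frac{\d Q^{\PMF}_{N,\mu,\alpha}}{\d Q_{N,\alpha}}(x) = \frac{\exp(|\L_N|\,\phi(x))}{Z_N^{\PMF}(\beta,\mu,\alpha)}, \qquad \phi(x) = \beta\Bigl(\mu x - \tfrac{a}{2} x^2\Bigr),
\end{equation*}
expresses $Q^{\PMF}_{N,\mu,\alpha}$ as a tilt of $Q_{N,\alpha}$ by the continuous functional $\phi$. Varadhan's integral lemma applies once the integrability condition
\begin{equation*}
\limsup_{M\to\infty}\limsup_{N\to\infty}\frac{1}{|\L_N|}\log \E_{Q_{N,\alpha}}\Bigl[\e^{|\L_N|\phi(\brho_N)}\mathbf{1}_{\{\brho_N \geq M\}}\Bigr] = -\infty
\end{equation*}
is verified, which is immediate because the quadratic $-\tfrac{a\beta}{2} x^2$ drives $\phi(x)\to -\infty$ faster than any linear growth of $J_\alpha$. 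Varadhan then gives $|\L_N|^{-1}\log Z_N^{\PMF} \to -\Ncal$ and, via the standard tilted-LDP principle, the claimed LDP for $(Q^{\PMF}_{N,\mu,\alpha})_{N\ge 1}$ with rate function $\beta(-\mu x + \tfrac{a}{2} x^2) + J_\alpha(x) - \Ncal$.

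The main obstacle lies in Part~(1) when $d \geq 3$: one must justify $J_\alpha(x) = +\infty$ for $x > \varrho_{\rm c}$, whereas G\"artner-Ellis alone only delivers the finite linear extrapolation $\L^\ast(x) = |\alpha|\beta x + \overline{q}^{\ssup{\alpha}} - \overline{q}^{\ssup{0}}$ there. A sharper upper bound is needed, and the natural route bypasses G\"artner-Ellis for $x > \varrho_{\rm c}$ by combining the LDP of Proposition~\ref{THM-Ideal} on $\ell_1(\R_+)$ with the contraction principle through the lower-semicontinuous map $\blambda\mapsto\sum_k k\lambda_k$. Concretely, one approximates $\brho_N$ by the truncated densities $\brho_N^{\ssup{K}} = |\L_N|^{-1}\sum_{k=1}^K k\,\Ncal_k$, which are genuinely continuous functionals of $\blambda_N$; the contraction principle then gives LDPs with rate functions $J_\alpha^{\ssup{K}}$, and a careful $K\to\infty$ limit controlling the escape of mass into long cycles closes the gap left by G\"artner-Ellis. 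Once Part~(1) is settled with the claimed rate function, Part~(2) follows along routine lines.
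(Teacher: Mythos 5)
Your overall strategy---G\"artner--Ellis for Part~(1) using Proposition~\ref{logmomentdensity}, and a Varadhan tilt with the continuous quadratic $\phi(x)=\beta(\mu x-\tfrac{a}{2}x^2)$ for Part~(2)---matches the paper's own proof, including the Legendre-duality identification of $\L^*$ with $\beta\big(p(\beta,\alpha)+f(\beta,\cdot)-\alpha\,\cdot\big)$ on $[0,\varrho_{\rm c}]$ and the remark that the tail condition in Varadhan's lemma is trivial because $\phi$ is bounded above. You also correctly observe that on $(\varrho_{\rm c},\infty)$, for $d\geq 3$, the Fenchel conjugate is the \emph{finite} linear function $\L^*(x)=\abs{\alpha}\beta\,x+\overline{q}^{\ssup{\alpha}}-\overline{q}^{\ssup{0}}$, so that G\"artner--Ellis cannot deliver $J_\alpha(x)=+\infty$ there. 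But the repair you propose---contracting the $\ell_1$ LDP through $\blambda\mapsto\sum_{k\leq K}k\lambda_k$ and passing $K\to\infty$---cannot succeed, because $\brho_N^{\ssup{K}}$ is \emph{not} exponentially equivalent to $\brho_N$ at speed $\abs{\L_N}$. A Chernoff bound with $0<t<\abs{\alpha}\beta$ gives
\begin{equation*}
\frac{1}{\abs{\L_N}}\log Q_{N,\alpha}\big(\brho_N-\brho_N^{\ssup{K}}>\delta\big)\leq -t\delta+\sum_{k>K}q_k^{\ssup{\alpha}}\big(\ex^{tk}-1\big),
\end{equation*}
and sending $K\to\infty$, $t\uparrow\abs{\alpha}\beta$ only yields $-\abs{\alpha}\beta\,\delta$, not $-\infty$. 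The bound is sharp: a single Poisson point in $\L_N$ carrying a cycle of length $K\approx\delta\abs{\L_N}$ has probability $\sim\abs{\L_N}\,q_K^{\ssup{\alpha}}=\ex^{\alpha\beta\delta\abs{\L_N}}\cdot(\text{poly})$ and by itself supplies the excess density $\delta$, so the truncation genuinely loses exponential mass at rate $\abs{\alpha}\beta\,\delta$.

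This shows your ``obstacle'' is in fact insurmountable because the stated rate function is wrong: for $d\geq3$ and $x>\varrho_{\rm c}$ one has $J_\alpha(x)=\L^*(x)=\abs{\alpha}\beta\,x+\overline{q}^{\ssup{\alpha}}-\overline{q}^{\ssup{0}}<\infty$, not $+\infty$. A quick consistency check: if $J_\alpha$ were $+\infty$ above $\varrho_{\rm c}$, then $\sup_{x\in\R}\{tx-J_\alpha(x)\}$ would be the sup over the compact set $[0,\varrho_{\rm c}]$ and hence finite for every $t\in\R$, contradicting $\L(t)=+\infty$ for $t>\abs{\alpha}\beta$ in Proposition~\ref{logmomentdensity}. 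The paper's own proof of the $x>\varrho_{\rm c}$ case contains the error: it lower-bounds a supremum of a sum by the sum of suprema and then evaluates the second supremum over all $t\in\R$, even though the effective constraint $\alpha+t/\beta\leq 0$ forces $(\tfrac{t}{\beta}+\alpha)(x-\varrho_{\rm c})\leq 0$. Part~(2) inherits the same defect on $x>\varrho_{\rm c}$; otherwise your Part~(2) argument is fine and agrees with the paper's. So your instinct that G\"artner--Ellis alone does not give $+\infty$ was right, but the correct conclusion is that the theorem's claimed $+\infty$ is not attainable, not that a sharper upper bound must be manufactured.
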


\begin{remark}\label{RemdensityLDP}
The results in Theorem~\ref{THM-densityLDP} make the heuristic derivations in \cite{Lew86} rigorous and extend them to all $ \alpha<0 $ and $ \mu\in\R $. 
The free energy of the PMF model is $ f^\PMF(\beta,\varrho)=f(\beta,\varrho)+\frac{a}{2}\varrho^2 $, whereas the pressure
\begin{equation}\label{PMFpressure}
\begin{aligned}
p^\PMF(\beta,\mu,\alpha)&=p(\beta,\alpha)+\frac{1}{\beta}\sup_{x\in\R}\big\{-\beta h(x)-J_\alpha(x)\big\}=\sup_{x\in\R}\big\{(\mu+\alpha)x-\frac{a}{2}x^2-f(\beta,x)\big\}\\
&=\sup_{x\in\R}\big\{ (\mu+\alpha)x-f^\PMF(\beta,x)\big\},
\end{aligned}
\end{equation}
with $ h(x):=-\mu x+\frac{a}{2}x^2 $.
The CMF and the HYL model both require higher level  empirical functionals as the energy cannot be expressed as a functional of the empirical particle density. 
\hfill $ \diamond $
\end{remark}

\section{Variational analysis, pressure representations, and BEC}
Our large deviation analysis in Section~\ref{sec-cycle} is complemented by a complete analysis for the zeroes of the rate functions in Section~\ref{sec-minimiser} followed by pressure representations in Section~\ref{sec-pressure}. In Section~\ref{sec-BEC},  we finally study the onset of Bose-Einstein condensation (BEC) and discuss the relevance of our results.

\subsection{Zeroes of the Rate Functions}\label{sec-minimiser}
We have shown that the ideal Bose gas model, the cycle mean-field (CMF) model, and the particle mean-field (PMF) model and the hard-sphere (HYL) model all satisfy a LDP  for empirical cycle counts with the rate functions $ I_\mu$, $ I^{\CMF}_\mu$, $ I^{\PMF}_{\mu,\alpha}$, and $ I^\HYL_{\mu,\alpha} $ respectively. 

%

We summarise our results on the zeroes in the following statements.

\begin{prop}\label{zeroIdeal}
The rate function for the ideal Bose gas model, $ I_\mu $, has a unique zero $\xi\in\ell_1\left(\R\right)$ given by
\begin{equation}
\xi_k=q^{\ssup{\mu}}_k,\quad k\in\N.
\end{equation}
\end{prop}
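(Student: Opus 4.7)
My plan is to recognize $I_\mu$ as a relative-entropy functional with respect to the (summable, but possibly unnormalised) sequence $(q_k^{\ssup{\mu}})_{k\in\N}$, and then read off the unique minimiser from the strict convexity of the scalar function $y\mapsto y\log y - y + 1$.

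First, I rewrite each summand of $I_\mu$ in terms of the ratio $y_k := x_k/q_k^{\ssup{\mu}}$. Factoring $q_k^{\ssup{\mu}}$ out gives
\begin{equation*}
x_k\Big(\log\tfrac{x_k}{q_k^{\ssup{\mu}}} - 1\Big) = q_k^{\ssup{\mu}}\bigl(y_k\log y_k - y_k\bigr),
\end{equation*}
where $y_k\log y_k$ is interpreted as $0$ when $y_k = 0$. Summing and absorbing the constant $\bar{q}^{\ssup{\mu}} = \sum_k q_k^{\ssup{\mu}}$ into each term yields the clean representation
\begin{equation*}
I_\mu(x) = \sum_{k\in\N} q_k^{\ssup{\mu}}\, g(y_k), \qquad g(y) := y\log y - y + 1,
\end{equation*}
valid for every $x\in\ell_1(\R_+)$.

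Next I analyse $g$. A single derivative shows $g'(y) = \log y$, so $g$ is strictly convex on $(0,\infty)$, with its unique critical point at $y = 1$ and $g(1) = 0$. Combined with the boundary value $g(0) = 1 > 0$, this gives $g(y)\geq 0$ with equality if and only if $y = 1$. Since every $q_k^{\ssup{\mu}}>0$, each summand is nonnegative, so $I_\mu(x)\geq 0$ and equality forces $y_k = 1$ for every $k$, that is, $x_k = q_k^{\ssup{\mu}}$ for all $k\in\N$.

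It remains to verify that the candidate $\xi = (q_k^{\ssup{\mu}})_{k\in\N}$ genuinely lies in the effective domain $\ell_1(\R_+)$ of $I_\mu$: this is precisely the statement that $\bar{q}^{\ssup{\mu}} = \sum_k q_k^{\ssup{\mu}} < \infty$, which holds for $\mu\leq 0$ (and is guaranteed for all three boundary conditions; the periodic/Dirichlet case invokes the analogue defined in \eqref{qdefbc} in Appendix~\ref{app-A}). Substituting $\xi$ back gives $I_\mu(\xi) = 0$, confirming that $\xi$ is the unique zero. There is no real obstacle here — the argument is entirely pointwise in $k$ once the relative-entropy form has been exposed; the only mild care needed is handling $y_k = 0$ summands and ensuring convergence of the rewritten sum, both of which are immediate from the definition of $I_\mu$ on its effective domain.
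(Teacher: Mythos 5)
Your argument is correct, and it takes a noticeably more elementary route than the paper's. The paper locates the zero by computing the G{\^a}teaux derivative of $I_\mu$ on $\ell_1$, solving the resulting coordinatewise equations $\d I_\mu(\tilde x;y)=0$, and then appealing to strict convexity of $I_\mu$ on its effective domain to upgrade the critical point to the unique global minimiser. You instead expose the relative-entropy structure, writing $I_\mu(x)=\sum_k q_k^{\ssup{\mu}}\,g\!\left(x_k/q_k^{\ssup{\mu}}\right)$ with $g(y)=y\log y-y+1$, and then argue pointwise in $k$ from the scalar fact that $g\ge 0$ with equality only at $y=1$. Both proofs ultimately exploit the same feature — the rate function decouples over indices and is strictly convex in each coordinate — but yours sidesteps the functional calculus on $\ell_1$ entirely, needs only a one-variable derivative computation, and delivers the nonnegativity $I_\mu\ge 0$ as a byproduct rather than taking it as known from the LDP. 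The only small bookkeeping point worth retaining is the one you already flag: the termwise absorption of $\bar q^{\ssup{\mu}}$ into the sum uses $\bar q^{\ssup{\mu}}<\infty$ (i.e.\ $\mu\le 0$), which is precisely the standing assumption of Proposition~\ref{THM-Ideal}, and once this is observed the rewritten series is a sum of nonnegative terms and therefore unambiguously defined.
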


\begin{prop}\label{zeroCMF}
	The rate function $ I^{\CMF}_\mu$ has a unique zero at $\xi^\CMF\in\ell_1\left(\R\right)$ given by
	\begin{equation}
	\xi^\CMF_k = \frac{W_0\left(K\right)}{K}q^{\ssup{\mu}}_k, \quad k\in\N,
	\end{equation}
	where $W_0$ is the real branch of the Lambert W function for non-negative arguments, and $K = K\left(a,\beta,\mu\right)$ is a dimensionless quantity defined by
	\begin{equation}
	K := a\beta \bar{q}^{\ssup{\mu}} = \frac{a\beta}{\left(4\pi\beta\right)^{\frac{d}{2}}}g\left(1+\frac{d}{2},-\beta \mu\right).
	\end{equation}
\end{prop}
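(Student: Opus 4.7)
The zero set of $I^{\CMF}_\mu$ coincides with the argmin of the functional
\[
F(x) := \beta H^{\CMF}(x) + I_\mu(x) = \frac{a\beta}{2}\Big(\sum_{k\in\N} x_k\Big)^2 + \sum_{k\in\N} x_k\Big(\log\frac{x_k}{q^{\ssup{\mu}}_k} - 1\Big) + \bar{q}^{\ssup{\mu}}
\]
over $\ell_1(\R_+)$. My plan is to show that $F$ admits a unique minimizer by convexity, and then identify that minimizer by solving the Euler--Lagrange system using the Lambert W function.

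\textbf{Uniqueness via strict convexity.} The map $I_\mu$ is essentially a relative entropy with respect to $(q^{\ssup{\mu}}_k)_{k\in\N}$, hence strictly convex on $\ell_1(\R_+)$. The term $\beta H^{\CMF}(x) = \frac{a\beta}{2}(\sum_k x_k)^2$ is convex as the composition of the squaring function with the continuous linear functional $x\mapsto\sum_k x_k$. Therefore $F$ is strictly convex on $\ell_1(\R_+)$, so at most one minimizer exists.

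\textbf{Euler--Lagrange equations.} I would compute the partial derivatives
\[
\frac{\partial F}{\partial x_k}(x) = a\beta\sum_{j\in\N}x_j + \log\frac{x_k}{q^{\ssup{\mu}}_k}, \qquad k\in\N.
\]
Setting them to zero and writing $\sigma := \sum_j x_j$ yields the pointwise relation
\[
x_k = q^{\ssup{\mu}}_k\, \e^{-a\beta\sigma}, \qquad k\in\N,
\]
which upon summation gives the self-consistency equation $\sigma = \bar{q}^{\ssup{\mu}}\,\e^{-a\beta\sigma}$, equivalently
\[
(a\beta\sigma)\,\e^{a\beta\sigma} = a\beta\bar{q}^{\ssup{\mu}} = K.
\]
By definition of the principal branch of the Lambert W function (see Appendix C), this has the unique solution $a\beta\sigma = W_0(K)$ for $K\ge 0$, so that $\sigma = W_0(K)/(a\beta)$. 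Using the identity $\e^{-W_0(K)} = W_0(K)/K$, one obtains
\[
x_k = \frac{W_0(K)}{K}\,q^{\ssup{\mu}}_k,
\]
which is the formula claimed for $\xi^\CMF$.

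\textbf{Verification.} I would then verify that $\xi^\CMF$ indeed lies in $\ell_1(\R_+)$: since $0 \le W_0(K)/K \le 1$ for all $K\ge 0$ and $\sum_k q^{\ssup{\mu}}_k = \bar{q}^{\ssup{\mu}} < \infty$ for $\mu\le 0$, summability is automatic. Convexity of $F$ together with the fact that $\xi^\CMF$ lies in the (relative) interior of the effective domain (all coordinates strictly positive) means the vanishing of the Gateaux derivative along every direction $e_k$ is sufficient for global minimality, so $\xi^\CMF$ is the (unique) global minimizer of $F$, hence the unique zero of $I^{\CMF}_\mu$.

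The main obstacle is simply bookkeeping: justifying termwise differentiation and the interchange of summation and differentiation in the Euler--Lagrange step. This is handled by noting that the series defining $F$ and its derivatives are dominated by summable sequences on any sublevel set of $F$, so the formal computation is rigorous. Everything else — the convex reduction and the Lambert W inversion — is completely elementary.
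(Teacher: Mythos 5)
Your proof follows essentially the same route as the paper's: establish uniqueness via strict convexity of $I_\mu$ plus convexity of $H^\CMF$, then find the critical point by setting the coordinate derivatives to zero, introduce the auxiliary variable $\sigma$ (the paper's $\Gamma$) for $\sum_k x_k$, and invert the resulting self-consistency equation with the Lambert $W_0$ branch. The only additions — explicitly checking $\xi^\CMF\in\ell_1(\R_+)$ and noting interiority of the candidate — are sound and just make explicit what the paper leaves implicit.
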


\medskip

\begin{remark}
Definition and properties of the Lambert function are given in Appendix~\ref{app-Lambert}.
\hfill $ \diamond$
\end{remark}

\medskip

\begin{prop}\label{zeroPMF}
The rate function $ I^\PMF_{\mu,\alpha}$ has a unique zero at $ \xi^\PMF\in\ell_1(\R_+) $ where
\begin{equation*}
\xi^\PMF_k=q_k^{\ssup{0}} \exp\big(\beta k\big(\mu + \alpha-a\delta^*\big)_-\big),\quad k\in\N,
\end{equation*}
and $ \delta^*=\delta^*(\beta,\mu+\alpha,a) $ is given implicitly as the unique solution to the equation
\begin{equation}
\delta^*=\sum_{k=1}^\infty k q_k^{\ssup{\mu}}\exp\big(\beta k\big(\mu + \alpha-a\delta^*\big)_-\big).
\end{equation}

\end{prop}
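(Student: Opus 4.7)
The plan is to identify the zero of $I^{\PMF}_{\mu,\alpha}$ with the unique minimizer of
\[F(x) := \beta H^{\PMF}_{\mu+\alpha,l.s.c.}(x) + I(x)\]
on $\ell_1(\R_+)$, which is legitimate by Theorem~\ref{THM-PMF}. Throughout I write $\mu' := \mu+\alpha$.

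First I would observe from \eqref{lscreg} that $H^{\PMF}_{\mu',l.s.c.}(x) = h_+(D(x))$ with
\[h_+(t) = \max\bigl(-\mu' t + \tfrac{a}{2}t^2,\; -\tfrac{\mu'^2}{2a}\bigr),\]
so $h_+$ is convex on $\R$ and of class $C^1$, with derivative $h_+'(t) = (at-\mu')_+$; the two branches of $H^{\PMF}_{\mu',l.s.c.}$ glue smoothly at $t = \mu'/a$ since both $h_+$ and its derivative are continuous there. Since $D$ is linear in $x$, $h_+\circ D$ is convex on $\ell_1(\R_+)$, and the Poisson relative entropy $I$ is strictly convex. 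Hence $F$ is strictly convex, so it has at most one critical point, and any critical point is automatically its unique global minimizer.

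The Euler--Lagrange equation $\partial_{x_k} F(x) = 0$ reads
\[\beta k\,(aD(x)-\mu')_+ + \log\bigl(x_k/q_k^{\ssup{0}}\bigr) = 0,\]
which, using $-(y)_+ = (-y)_-$ with the convention $(y)_-=\min(y,0)$, is equivalent to
\[x_k = q_k^{\ssup{0}}\exp\bigl(\beta k\,(\mu'-aD(x))_-\bigr).\]
Setting $\delta := D(x)$ and summing $k x_k$ over $k$ yields the self-consistency equation
\[\delta = G(\delta) := \sum_{k\ge 1} k\, q_k^{\ssup{0}}\exp\bigl(\beta k\,(\mu'-a\delta)_-\bigr),\]
matching the equation for $\delta^*$ in the proposition (apparently modulo the typo $q_k^{\ssup{\mu}}\leadsto q_k^{\ssup{0}}$). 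I would then show $G$ has a unique fixed point $\delta^*\in[0,\infty)$: for $\delta\ge\mu'/a$, $G(\delta) = \sum_k k q_k^{\ssup{0}} e^{\beta k(\mu'-a\delta)}$ is continuous and strictly decreasing in $\delta$ with limit $0$ as $\delta\to\infty$; for $\delta<\mu'/a$ (possible only if $\mu'>0$), $G$ is constant and equal to $\varrho_{\rm c}\in (0,+\infty]$. Since $G$ is non-increasing on $[0,\infty)$ with $G(0)>0$, monotonicity and the intermediate value theorem give exactly one solution $\delta^*$. Finally, $\xi^\PMF_k := q_k^{\ssup{0}}\exp(\beta k(\mu'-a\delta^*)_-)$ satisfies $D(\xi^\PMF) = \delta^*$ by construction and is therefore a critical point of the strictly convex $F$, hence its unique minimizer and the unique zero of $I^{\PMF}_{\mu,\alpha}$.

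The main obstacle is handling the crossover $D=\mu'/a$ where the two branches of $H^{\PMF}_{\mu',l.s.c.}$ meet; this is resolved by the smoothness of $h_+$, so a single Euler--Lagrange equation (expressed naturally in terms of $(\cdot)_+$ or equivalently $(\cdot)_-$) covers both regimes without any separate subdifferential analysis at the boundary. A secondary subtlety is the low-dimensional case $d\le 2$ where $\varrho_{\rm c}=+\infty$, but this simply forces $\delta^*>\mu'/a$, where all series converge absolutely and no additional care is required.
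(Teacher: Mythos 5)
Your proof is correct and follows essentially the same route as the paper's: minimize $F = I + \beta H^{\PMF}_{\mu+\alpha,l.s.c.}$ over $\ell_1(\R_+)$, invoke strict convexity of $F$ (strict convexity of $I$ plus convexity of the composition $h_+\circ D$) for uniqueness of the critical point, derive the Euler--Lagrange equations $\log(x_k/q_k^{\ssup{0}}) = -\beta k\,(aD(x)-\mu')_+$, and close via the self-consistency equation and a monotonicity argument for $\delta^* = D(\xi)$, with your explicit $C^1$-gluing of the two branches of $H^{\PMF}_{\mu',l.s.c.}$ at $D=\mu'/a$ being a small extra precision that the paper leaves implicit. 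You are also correct that the displayed consistency equation in the proposition contains a typo: $q_k^{\ssup{\mu}}$ should read $q_k^{\ssup{0}}$, as both the formula for $\xi^\PMF_k$ and the paper's own proof (which sets $\alpha=0$ and writes $q_k=q_k^{\ssup{0}}$) confirm.
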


\begin{remark}
	\label{Ideal-PMF}
	Note that Proposition~\ref{zeroPMF} tells us that the zero of $I^\PMF_{\mu,\alpha}$ is equal to the zero of $I_{\eta}$, where $\eta = \left(\mu + \alpha-a\delta^*\right)_-$.\hfill $ \diamond $
\end{remark}

The zeroes for the HYL model are more complex and involved. We summarise this in the following two propositions.

\begin{prop}\label{zeroHYL} 
The zeroes $ \xi\subset\ell_1(\R_+) $ of rate the function $ I^\HYL_{\mu,\alpha} $ satisfy the following expression,
	\begin{equation*}
	\xi_k = -\frac{1}{b\beta k^2}W_{\chi^*_k}\left(-b\beta k^2 q_k^{\ssup{0}} \exp\left[\beta k \left(\mu +\alpha - a \delta^*\right)
	\begin{Bmatrix}
	1&:a\delta^* \geq \mu + \alpha\\
	-\frac{b}{a-b}&:a\delta^* \leq \mu + \alpha
	\end{Bmatrix}
	\right]\right), \qquad k\in\N,
	\end{equation*}
	where $\left(\delta^*,\chi^*\right) \in \R_+\times \left\{0,-1\right\}^\N$ is a solution to
	\begin{equation}
	\label{eqn:HYLconsistency}
	\delta = g^\chi\left(\delta\right) := -\frac{1}{b\beta} \sum^{\infty}_{j=1} \frac{1}{j}W_{\chi_j}\left(-b\beta j^2 q^{\ssup{0}}_j \exp\left[\beta j \left(\mu + \alpha - a \delta\right)
	\begin{Bmatrix}
	1&:a\delta \geq \mu + \alpha\\
	-\frac{b}{a-b}&:a\delta \leq \mu + \alpha
	\end{Bmatrix}
	\right]\right).
	\end{equation}
\end{prop}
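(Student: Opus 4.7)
The plan is to identify the zeroes of $I^{\HYL}_{\mu,\alpha}$ with the minimizers of the functional
\begin{equation*}
F(x) \;:=\; \beta H^{\HYL}_{\mu+\alpha,l.s.c.}(x) + I(x),\qquad x\in\ell_1(\R_+),
\end{equation*}
and then to derive the claimed representation from the Euler--Lagrange equations. Throughout, write $\tilde{\mu}:=\mu+\alpha$ and $\delta:=D(\xi)$ for a proposed minimizer $\xi$.

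First I would note that every zero $\xi$ must have $\xi_k\in(0,\infty)$ for each $k$: the entropic term $x_k\log(x_k/q_k^{\ssup{0}})$ in $I(x)$ has derivative $-\infty$ as $x_k\downarrow 0$, which prevents a minimizer from sitting on the boundary; the hypothesis $a>b$, together with the explicit form of $H^{\HYL}_{\tilde{\mu},l.s.c.}$, ensures $F$ is bounded below so that minimizers are genuine critical points. Writing $H^{\HYL}_{\tilde{\mu},l.s.c.}(x)=-\frac{b}{2}\sum_k k^2 x_k^2+h_{\tilde{\mu}}(D(x))$ and differentiating componentwise gives
\begin{equation*}
0=\frac{\partial F}{\partial x_k}(\xi)=\beta\bigl(-b k^2 \xi_k + k\, h_{\tilde{\mu}}'(\delta)\bigr)+\log\bigl(\xi_k/q_k^{\ssup{0}}\bigr).
\end{equation*}

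Next I would compute $h_{\tilde{\mu}}'$ explicitly: one checks $h_{\tilde{\mu}}'(\delta)=a\delta-\tilde{\mu}$ when $a\delta\geq\tilde{\mu}$, and $h_{\tilde{\mu}}'(\delta)=-\tfrac{b}{a-b}(\tilde{\mu}-a\delta)$ when $a\delta\leq\tilde{\mu}$, both vanishing at $a\delta=\tilde{\mu}$, so that $h_{\tilde{\mu}}$ is $C^1$ and the two regimes can be treated uniformly by the factor $\phi\in\{1,-b/(a-b)\}$. Rearranging the critical-point equation yields
\begin{equation*}
\xi_k \;=\; q_k^{\ssup{0}}\exp\bigl[b\beta k^2\xi_k+\beta k(\tilde{\mu}-a\delta)\phi\bigr].
\end{equation*}

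Then I would invert this transcendental equation via the Lambert W function. Setting $w_k:=-b\beta k^2\xi_k$, multiplying the identity by $-b\beta k^2 e^{-b\beta k^2 \xi_k}$ produces $w_k e^{w_k} = -b\beta k^2 q_k^{\ssup{0}} \exp[\beta k(\tilde{\mu}-a\delta)\phi]$, and since the argument is negative, $w_k$ may take values on either of the real branches $W_0$ or $W_{-1}$ (provided the argument lies in $[-1/e,0)$). This accounts for the branch index $\chi_k\in\{0,-1\}$ in the statement and produces the displayed formula for $\xi_k$. Finally, inserting this expression into the definition $\delta=D(\xi)=\sum_k k\xi_k$ gives exactly the self-consistency equation \eqref{eqn:HYLconsistency}.

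The main obstacle is the non-convexity introduced by the term $-\tfrac{b}{2}\sum_k k^2 x_k^2$: as a consequence, for fixed $\delta$ each decoupled equation for $\xi_k$ may admit up to two positive solutions, producing the branch ambiguity carried by $\chi^*\in\{0,-1\}^\N$. Tracking which $(\delta^*,\chi^*)$ pairs actually correspond to global minimizers (rather than merely critical points), and verifying the summability $\xi\in\ell_1(\R_+)$, are the delicate points, and it is here that the assumption $a>b$ is used to keep $g^\chi$ a well-defined map and guarantee that the fixed-point equation has admissible solutions.
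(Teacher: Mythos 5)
Your proposal follows essentially the same route as the paper: locate critical points in the interior of $\ell_1(\R_+)$ (the logarithmic term rules out minimizers on the boundary), introduce the dummy variable $\delta=D(\xi)$, solve the decoupled componentwise stationarity conditions via the Lambert $W$ function, and close the system with the self-consistency equation $\delta=\sum_k k\xi_k$; the paper also records that for fixed $\delta$ each componentwise equation may have zero or two solutions, which is precisely the origin of the branch indices $\chi_k\in\{0,-1\}$. One small slip: in the regime $a\delta\le\tilde\mu$ you write $h_{\tilde\mu}'(\delta)=-\tfrac{b}{a-b}(\tilde\mu-a\delta)$, but differentiating the formula given for $H^{\HYL}_{\mu,l.s.c.}$ gives $h_{\tilde\mu}'(\delta)=+\tfrac{b}{a-b}(\tilde\mu-a\delta)=(a\delta-\tilde\mu)\bigl(-\tfrac{b}{a-b}\bigr)$; your uniform factor $\phi\in\{1,-b/(a-b)\}$ and the resulting fixed-point equation are nonetheless written with the correct sign, so the error is a local typo that does not propagate to the stated conclusion.
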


The next proposition shows that there exists a regime for the parameters $ \beta, \alpha,\mu,a,b,d $ such that the rate function has a unique zero.
\begin{prop}
	\label{HYLboringcase}
	There exists $\tilde{\mu} = \tilde{\mu}\left(d,\beta,a,b\right)\in\R$ such that for $\mu + \alpha<\tilde{\mu}$ the rate function $I^\HYL_{\mu,\alpha}$ has a unique zero at $\xi^\HYL\in\ell_1\left(\R_+\right)$ where
	\begin{equation*}
	\xi^\HYL_k=-\frac{1}{b\beta k^2}W_0\left(-b\beta k^2 q_k^\ssup{0}\exp\left[\beta k \left(\mu + \alpha - a \delta^*\right)\right]\right),\qquad k\in\N,
	\end{equation*}
	and $ \delta^*=\delta^*(\beta,\mu+\alpha,a,b) $ is given implicitly as the unique solution to the equation $\delta^*=g^0\left(\delta^*\right)$,
	where $ g^0=g^{\chi\equiv 0} $. See Figure~\ref{fig:gHYL}.
\end{prop}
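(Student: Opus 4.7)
The plan is to specialise Proposition~\ref{zeroHYL}, which parametrises the zeroes of $I^\HYL_{\mu,\alpha}$ by pairs $(\delta^*,\chi^*)\in\R_+\times\{0,-1\}^\N$ satisfying the consistency equation $\delta=g^\chi(\delta)$. Two claims must be established for $\mu+\alpha<\tilde\mu$: (i) only $\chi\equiv 0$ yields a zero in $\ell_1(\R_+)$, and (ii) the scalar equation $\delta=g^0(\delta)$ admits a unique solution. As allowed by the remark following Theorem~\ref{Thm:HYL}, I would absorb $\alpha$ into $\mu$ and write $\mu$ for $\mu+\alpha$ throughout. Using $q_k^{\ssup{0}}=(4\pi\beta)^{-d/2}k^{-1-d/2}$, in the regime $a\delta\ge\mu$ the Lambert argument reads
\[
z_k(\delta)=-\frac{b\beta}{(4\pi\beta)^{d/2}}\,k^{1-d/2}\,\exp\!\big(\beta k(\mu-a\delta)\big),
\]
and the polynomial-versus-exponential competition (for any $\mu<0$ and any $\delta\ge 0$, the factor $\exp(\beta k(\mu-a\delta))$ beats $k^{1-d/2}$) shows $\sup_{k,\,\delta\ge 0}|z_k(\delta)|\to 0$ as $\mu\to-\infty$. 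Hence there exists $\tilde\mu_1\in\R$ such that for $\mu<\tilde\mu_1$ every $z_k(\delta)$ lies strictly inside $(-1/\ex,0)$, the condition $a\delta\ge\mu$ holds automatically, and the formula in Proposition~\ref{HYLboringcase} is just the formula of Proposition~\ref{zeroHYL} restricted to the principal branch.

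For claim (i), I would use the asymptotic $W_{-1}(z)\sim\log(-z)$ as $z\uparrow 0^-$, together with $\log|z_j(\delta)|=\beta j(\mu-a\delta)+O(\log j)$, to obtain that any coordinate with $\chi_j=-1$ satisfies
\[
\xi_j=-\frac{1}{b\beta j^2}W_{-1}(z_j(\delta))=\frac{a\delta-\mu}{bj}\big(1+o(1)\big)\qquad\text{as }\mu\to-\infty,
\]
uniformly in $\delta\ge 0$ (because $|z_j(\delta)|$ is bounded by $|z_j(0)|$). Since $g^\chi(\delta)=D(\xi)=\sum_i i\xi_i\ge j\xi_j$, the fixed-point relation $\delta=g^\chi(\delta)$ yields $b\delta\ge(1-\eps)(a\delta-\mu)$ for $\mu$ sufficiently negative and any fixed $\eps>0$. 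Choosing $\eps<1-b/a$ (which is strictly positive since $a>b$) gives $[(1-\eps)a-b]\delta\le(1-\eps)\mu<0$, a contradiction with $\delta\ge 0$. This rules out any $\chi\not\equiv 0$ once $\mu<\tilde\mu_2\le\tilde\mu_1$.

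For claim (ii), I would differentiate $g^0$ term-by-term, using $W_0'(z)=W_0(z)/(z(1+W_0(z)))$ and $z_k'(\delta)=-a\beta k\,z_k(\delta)$, to get
\[
\frac{\d g^0}{\d\delta}(\delta)=\frac{a}{b}\sum_{j=1}^\infty\frac{W_0(z_j(\delta))}{1+W_0(z_j(\delta))}<0.
\]
The elementary bound $|W_0(z)/(1+W_0(z))|\le 2|z|$ valid for $|z|\le 1/(2\ex)$ then gives $|\d_\delta g^0(\delta)|\le (2a/b)\sum_{j\ge 1}|z_j(\delta)|$; the series is dominated by $\sum_j j^{1-d/2}\ex^{\beta j\mu}$, which can be made smaller than any prescribed positive number by taking $\mu$ sufficiently negative. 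Fixing $\tilde\mu\le\tilde\mu_2$ so that $\sup_{\delta\ge 0}|\d_\delta g^0(\delta)|<1$, the map $g^0$ becomes a strict contraction of $[0,\infty)$ into itself, and Banach's fixed-point theorem produces the unique $\delta^*$.

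The hard part will be the quantitative synchronisation: three independent smallness conditions (domain of $W_0$, exclusion of $W_{-1}$ contributions, contraction constant below $1$) must be secured simultaneously by a single explicit threshold $\tilde\mu=\tilde\mu(d,\beta,a,b)$. All three, however, are driven by the same quantity $\sup_{k,\delta\ge 0}|z_k(\delta)|$, so a single choice of $\tilde\mu$ works. A secondary technical point is the uniformity of the $W_{-1}$ asymptotics used in step (i), which is classical but requires a careful error estimate over all $\delta\ge 0$.
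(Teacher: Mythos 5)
Your proposal is correct, and it reaches the same conclusion by a genuinely different route from the paper's (very terse) argument. The paper dismisses the $\chi\ne 0$ branches by observing that $g^\chi(\delta)\sim C_\chi\delta$ as $\delta\to+\infty$ with $C_\chi\ge a/b>1$, and then invokes the translational covariance of $g^\chi$ under the simultaneous shift $\mu+\alpha\mapsto\mu+\alpha-c$, $\delta\mapsto\delta-c/a$, to conclude that the fixed-point set of the $W_{-1}$ branches empties out for sufficiently negative $\mu+\alpha$; it disposes of uniqueness for $g^0$ simply by noting that $g^0$ is continuous and strictly decreasing on $\delta>\frac{\mu+\alpha}{a}$, which is all of $[0,\infty)$ once $\mu+\alpha<0$. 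You instead (a) rule out $\chi\ne 0$ by the pointwise contradiction at a candidate fixed point, i.e.\ showing $\delta=g^\chi(\delta)\ge j\xi_j\approx\frac{a\delta-\mu}{b}$, which is incompatible with $\delta\ge 0$ once $a>b$ and $\mu$ is very negative; and (b) establish uniqueness for $g^0$ via a contraction-mapping estimate on $|\d_\delta g^0|$ rather than via monotonicity. Both routes are valid; the paper's is shorter and slicker, yours is more quantitative and arguably tightens the logic at the point where the paper only gestures at ``translational symmetry'' (the paper leaves it implicit why this shift forces $\min_\delta(g^\chi(\delta)-\delta)$ above zero). A small but important observation you make explicitly, and which the paper also relies on implicitly, is that all three thresholds (domain of $W_0$ for the formula, exclusion of $W_{-1}$, and the Lipschitz/monotonicity control on $g^0$) are simultaneously governed by the single quantity $\sup_{k,\delta\ge 0}|z_k(\delta)|$, which tends to $0$ as $\mu+\alpha\to-\infty$ uniformly thanks to $|z_k(\delta)|\le|z_k(0)|$ and the exponential-beats-polynomial decay; this is what allows a single $\tilde\mu(d,\beta,a,b)$ to work.
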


\begin{figure}
	\centering
	\begin{tikzpicture}[scale = 2]
	\draw[->] (0,0) node[below left]{$0$} -- (0,3) node[left]{$g^0\left(\delta\right)$};
	\draw[->] (0,0) -- (4,0) node[below right]{$\delta$};
	\draw[dashed] (1,0) node[below]{$\frac{\mu+\alpha}{a}$} -- (1,3);
	\draw[thick] (1,2.5) to [out=270,in=175] (4,0.1);
	\draw[thick] (1,2.5) to [out=270,in=35] (0,0.5);
	\fill (1,2.5) circle (0.05);
	\end{tikzpicture}
	\caption{Sketch of $g^\chi\left(\delta\right)$ for $\chi=0$, $d=3,4$, and $\beta\leq \left(\frac{b^2\ex^2}{\left(4\pi\right)^d}\right)^\frac{1}{d-2}$. This shows $\mu+\alpha>0$, but the sketch translates with $\mu$.}\label{fig:gHYL}
\end{figure}
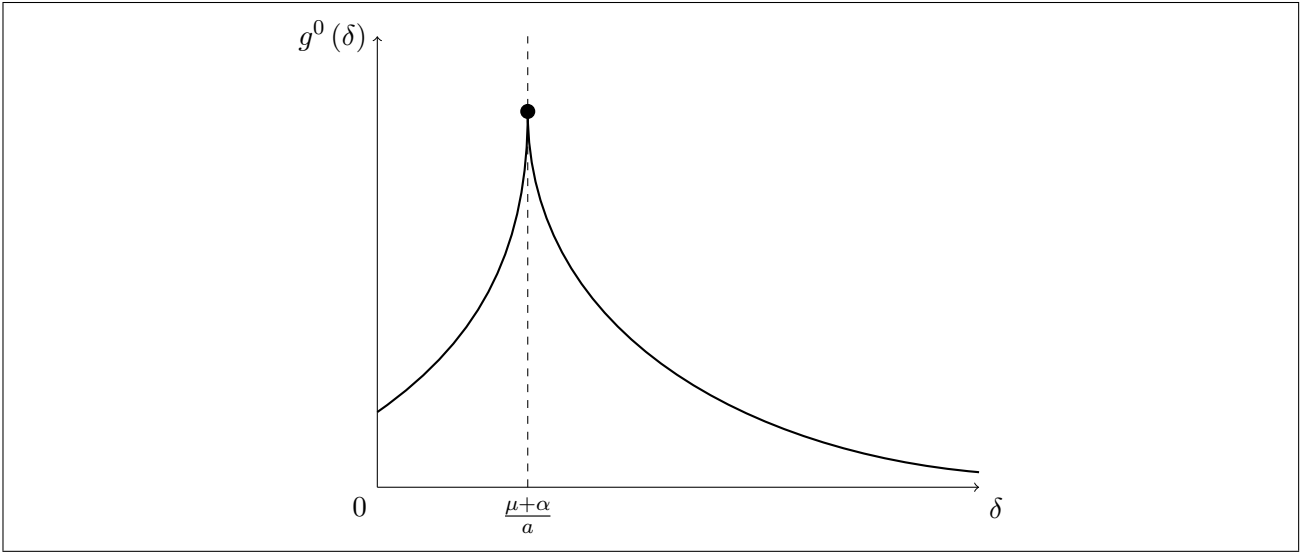

\subsection{Pressure and Bose-Einstein condensation}\label{sec-pressure}
We derive representations for the pressure  and use their formulae to prove the onset of a  phase transition called Bose-Einstein condensation in all our models.
Using our large deviation principles in Section~\ref{sec-cycle} and the zeroes of the rate functions in conjunction with our representation of the partition functions in Proposition~\ref{lem-rewrite} we obtain the thermodynamic limit of the pressure in our various models, defined as 
\begin{equation}\label{pressure}
p^{\ssup{\rm bc}}(\beta,\mu):=\lim_{N\to\infty}\frac{1}{\beta\abs{\L_N}} \log Z_{\L_N}^{\ssup{\rm bc}}(\beta,\mu).
\end{equation} 
We summarise our findings in the following theorem.

\begin{theorem}[\textbf{Pressure representations}]\label{THM-pressure}
Let $ \beta>0 $ and $ {\rm bc}\in\{\emptyset,\per, \Dir\} $, then
\begin{align}
	p(\beta,\mu) &=\lim_{N\to\infty}\frac{1}{\beta\abs{\L_N}}\log \ex^{\abs{\L_N}\overline{q}^{\ssup{\rm bc},\mu}}=    \frac{1}{\beta(4\pi\beta)^{\frac{d}{2}}}\sum_{k=1}^\infty\frac{\ex^{\beta\mu k}}{k^{1+d/2}},  \qquad\text{with } \mu \leq 0,\label{p}\\[1.5ex]
	p^\CMF(\beta,\mu) &=\lim_{N\to\infty}\frac{1}{\beta\abs{\L_N}}\log Z_N^\CMF(\beta,\mu)=\frac{1}{a\beta^2}W_0\left(a\beta\overline{q}^{\ssup{\mu}}\right)\Big(1+\frac{1}{2}W_0\big(a\beta\overline{q}^{\ssup{\mu}}\big)\Big),\nonumber\\[1.5ex]
	&\quad\text{ with }\mu\leq 0, \text{and } a\ge 0,\label{p-cmf}\\[1.5ex]
	p^\PMF(\beta,\mu,\alpha) &=\lim_{N\to\infty}\frac{1}{\beta\abs{\L_N}} \log Z_N^\PMF(\beta,\mu,\alpha) \nonumber \\[1.5ex] 
	&= \begin{cases}
	\frac{a}{2}{\delta^*}^2 + \frac{1}{\beta}\sum^\infty_{k=1}q^\ssup{0}_k\exp\left( \beta k\left(\mu + \alpha - a\delta^*\right)\right)&, \mu + \alpha\leq a\varrho_\mathrm{c},\\[1.5ex]
	\frac{\left(\mu+\alpha\right)^2}{2a}+ \frac{1}{\beta}\sum^\infty_{k=1}q^\ssup{0}_k &, \mu + \alpha\geq a\varrho_\mathrm{c},
	\end{cases}\nonumber\\[1.5ex]
	&\quad\text{ with } \mu\in\R,\alpha\leq 0,\text{and }a\ge 0,\label{p-pmf}\\[1.5ex]
	p^\HYL(\beta,\mu,\alpha)&= \lim_{N\to\infty}\frac{1}{\beta\abs{\L_N}}\log Z_N^\HYL(\beta,\mu,\alpha) = p\left(\beta,0\right)-\inf_{x\in\ell_1(\R)}\left\{I(x) + \beta H^\HYL_{\mu+\alpha,l.s.c.}(x)\right\}, \nonumber\\
	&\quad\text{ with }\mu\in\R,\alpha\leq 0,\text{and } a > b \geq 0.\label{p-hyl}	
\end{align}
\end{theorem}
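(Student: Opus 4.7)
The plan is to treat each formula in turn: express the partition function through Proposition~\ref{lem-rewrite}, invoke the corresponding LDP from Section~\ref{sec-cycle} via a Varadhan-type Laplace identity, and evaluate the resulting infimum at the explicit zero from Section~\ref{sec-minimiser}.

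For \eqref{p}, set $v\equiv 0$ in Proposition~\ref{lem-rewrite}: then $H_\L\equiv 0$, the expectation is $1$, and $Z_{\L_N}^{\ssup{\rm bc}}(\beta,\mu)=\e^{\abs{\L_N}\overline{q}^{\ssup{{\rm bc},\mu}}}$. Dividing by $\beta\abs{\L_N}$ and using $\overline{q}^{\ssup{{\rm bc},\mu}}\to\overline{q}^{\ssup{\mu}}$ as $\L_N\uparrow\R^d$ (trivial for ${\rm bc}=\emptyset$, by pointwise heat-kernel convergence otherwise, cf.~Appendix~\ref{app-A}) yields \eqref{p}.

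For \eqref{p-cmf}, factor $Z_{\L_N}^{\ssup{\rm bc,CMF}}=\e^{\abs{\L_N}\overline{q}^{\ssup{{\rm bc},\mu}}}Z_N^\CMF(\beta,\mu)$. Since $-\beta H^\CMF(x)=-\tfrac{\beta a}{2}\norm{x}_1^2$ is continuous on $\ell_1(\R_+)$ and bounded above by $0$, Varadhan's lemma applied to the LDP of Proposition~\ref{THM-Ideal} gives
\begin{equation*}
\lim_{N\to\infty}\tfrac{1}{\abs{\L_N}}\log Z_N^\CMF(\beta,\mu)=-\inf_{x\in\ell_1(\R_+)}\bigl\{\beta H^\CMF(x)+I_\mu(x)\bigr\},
\end{equation*}
with the infimum uniquely attained at $\xi^\CMF$ by Proposition~\ref{zeroCMF}. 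A direct computation using $\overline{q}^{\ssup{\mu}}=K/(a\beta)$ and the Lambert identity $W_0(K)\e^{W_0(K)}=K$ (equivalently $\log(W_0(K)/K)=-W_0(K)$) gives $\beta H^\CMF(\xi^\CMF)+I_\mu(\xi^\CMF)=\tfrac{1}{a\beta}\bigl(K-W_0(K)-\tfrac12 W_0(K)^2\bigr)$; adding $p(\beta,\mu)=\overline{q}^{\ssup{\mu}}/\beta$ recovers \eqref{p-cmf}.

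The formulas \eqref{p-pmf} and \eqref{p-hyl} follow the same strategy, but here lies the main obstacle: neither $H^\PMF_\mu$ nor $H^\HYL_\mu$ is upper semicontinuous on $\ell_1$, because both depend on the density functional $D(x)=\sum_{k}kx_k$ which is only lower semicontinuous, and hence Varadhan's lemma does not apply directly. The LDPs of Theorems~\ref{THM-PMF} and~\ref{Thm:HYL} are therefore formulated with the lower semicontinuous regularizations $H^\ast_{\mu+\alpha,l.s.c.}$, and the Laplace identity established alongside them in Section~\ref{sec-LDPproofs} reads, after the reduction $\alpha=0$ (absorbing $\alpha$ into $\mu$) justified by the Remark following Theorem~\ref{Thm:HYL},
\begin{equation*}
\lim_{N\to\infty}\tfrac{1}{\beta\abs{\L_N}}\log\bigl(\e^{\abs{\L_N}\overline{q}^{\ssup{{\rm bc},0}}}Z_N^{\ast}\bigr)=p(\beta,0)-\tfrac{1}{\beta}\inf_{x}\bigl\{\beta H^\ast_{\mu,l.s.c.}(x)+I(x)\bigr\},\qquad\ast\in\{\PMF,\HYL\}.
\end{equation*}
This yields \eqref{p-hyl} immediately. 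For \eqref{p-pmf}, substitute the minimizer $\xi^\PMF$ from Proposition~\ref{zeroPMF}: in the subcritical regime $\mu+\alpha\leq a\varrho_{\rm c}$ one has $a\delta^*\geq\mu+\alpha$, the first branch of \eqref{lscreg} applies, Remark~\ref{Ideal-PMF} identifies $\xi^\PMF$ with the ideal-gas zero at chemical potential $\mu+\alpha-a\delta^*\leq 0$, and the ensuing calculation reproduces the first line of \eqref{p-pmf}; in the supercritical regime $\mu+\alpha\geq a\varrho_{\rm c}$ the second branch of \eqref{lscreg} applies with constant value $-(\mu+\alpha)^2/(2a)$, the minimum of $I$ alone is $0$ attained at $q^{\ssup{0}}$, and adding $p(\beta,0)=\overline{q}^{\ssup{0}}/\beta=\tfrac{1}{\beta}\sum_k q_k^{\ssup{0}}$ yields the second line.
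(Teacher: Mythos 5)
Your proposal is correct and follows essentially the same route as the paper's proof: factor out the ideal-gas prefactor $\e^{\abs{\L_N}\overline{q}}$, invoke the respective LDPs to obtain the Laplace formulae (with the l.s.c.\ regularizations for PMF and HYL precisely because $D$ fails to be upper semicontinuous), and then evaluate the variational expression at the explicit zeroes from Section~\ref{sec-minimiser}. You fill in details the paper leaves implicit — making the prefactor factorization explicit (which also repairs the factor-of-$\beta$ bookkeeping in the paper's displayed Laplace identities) and carrying out the Lambert-function computation for CMF and the sub/supercritical case analysis for PMF — but the underlying argument is the same.
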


\begin{proofsect}{Proof of Theorem~\ref{THM-pressure}}
The thermodynamic limit of the average finite-volume pressure exists in all models, see \cite{BR97}. The independence of the thermodynamic limit on the choice of boundary conditions follows with \cite{ACK}, or using \cite{R71,AN73}, see Appendix~\ref{app-A}.
The pressure for the ideal Bose gas in  \eqref{p} follows easily using \eqref{rewrite} and the independence of the boundary conditions. 

For \eqref{p-cmf}, \eqref{p-pmf}, and \eqref{p-hyl} our large deviation principles give us
\begin{align*}
\lim_{N\to\infty}\frac{1}{\left|\Lambda_N\right|}\log Z^\CMF_{N}(\beta,\mu)  &= p\left(\beta,\mu\right) -\inf_{x\in\ell_1(\R)}\left\{I_\mu(x) + \beta H^\CMF(x)\right\},\\
\lim_{N\to\infty}\frac{1}{\left|\Lambda_N\right|}\log Z^\PMF_{N}(\beta,\mu,\alpha) &= p\left(\beta,0\right)-\inf_{x\in\ell_1(\R)}\left\{I(x) + \beta H^\PMF_{\mu+\alpha,l.s.c.}(x)\right\},\\
\lim_{N\to\infty}\frac{1}{\left|\Lambda_N\right|}\log Z^\HYL_{N}(\beta,\mu,\alpha)&= p\left(\beta,0\right)-\inf_{x\in\ell_1(\R)}\left\{I(x) + \beta H^\HYL_{\mu+\alpha,l.s.c.}(x)\right\}.
\end{align*}
Then for \eqref{p-cmf} and \eqref{p-pmf} we substitute in the zeroes found in Proposition~\ref{zeroCMF} and Proposition~\ref{zeroPMF}. The ideal Bose gas pressure $ p(\beta,\mu) $ respectively $ p(\beta,\alpha) $  appears in all pressure formulae due to the term in \eqref{rewrite} stemming from the reference Poisson process.  \qed
\end{proofsect}

In the following subsections we study our different models separately in terms of their thermodynamic behaviour and the onset of Bose-Einstein condensation. Crucial observation is that the partial derivative of the pressure with respect to the chemical potential $ \mu $ at inverse temperature $ \beta $ equals the expected physical particle density at equilibrium in the grand canonical ensemble at inverse temperature $ \beta $ and chemical potential $ \mu $.  In the following, we will distinguish between different regimes depending on whether the expected particle density equals the density of the zeroes of the rate functions or not. In the latter case, the excess density equals the density of the condensate in the BEC state.

\subsubsection{Thermodynamics and BEC of the ideal Bose gas}\label{idealgas}
We collect well-known properties of  the ideal gas pressure  for convenience of the reader and comparison purposes to our other models, for more details see \cite{BLP} and \cite{BCMP05}. 

\begin{prop}\label{P:ideal1}
\begin{enumerate}
\item For $ \beta> 0,\mu>0 $, we define $ p(\beta,\mu)=+\infty $. Then the pressure $ p(\beta,\cdot) $ is a closed convex function on $ \R $.

\item For $\beta>0,\mu<0$, the ideal gas pressure $p(\beta,\mu)$ is smooth with respect to $\mu$. In particular, 
\begin{equation*}
\frac{\d p}{\d \mu} = D\left(q^{\ssup{\mu}}\right).
\end{equation*}
\end{enumerate}
\end{prop}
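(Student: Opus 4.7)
The plan is to prove both parts by direct termwise analysis of the explicit series representation $p(\beta, \mu) = \frac{1}{\beta(4\pi\beta)^{d/2}} \sum_{k=1}^\infty \ex^{\beta\mu k}/k^{1+d/2}$ for $\mu \le 0$ from \eqref{p}, together with the convention $p(\beta, \mu) = +\infty$ on $(0,\infty)$. For part (1), I would first establish convexity on $(-\infty, 0]$ by noting that each map $\mu \mapsto \ex^{\beta\mu k}$ is convex, so the positive convergent series inherits pointwise convexity termwise; the extension to $+\infty$ on $(0, \infty)$ preserves convexity trivially, because any convex combination involving a point with $p=+\infty$ makes the defining inequality hold automatically. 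For closedness (i.e.\ lower semicontinuity), I would observe that on compact subsets of $(-\infty, 0)$ the series converges uniformly (use the dominating majorant $\ex^{\beta\mu_1 k}/k^{1+d/2}$ for any $\mu_1<0$), hence $p(\beta,\cdot)$ is continuous there. Monotone convergence as $\mu \uparrow 0$ then yields $p(\beta, 0) = \zeta(1+d/2)/(\beta(4\pi\beta)^{d/2})$, which is finite because $1+d/2 > 1$. Since $p = +\infty$ on $(0,\infty)$, one has $\liminf_{\mu \to 0} p(\beta, \mu) = p(\beta, 0)$, so $p(\beta, \cdot)$ is l.s.c.\ at $0$, and trivially l.s.c.\ on $(0, \infty)$.

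For part (2), the strategy is a Weierstrass M-test applied to the termwise derivatives. Fix any $\mu_0 < 0$; on the half-line $\mu \le \mu_0$ the $m$-th derivative of the $k$-th summand equals $(\beta k)^m \ex^{\beta\mu k}/k^{1+d/2}$, bounded uniformly by $(\beta k)^m \ex^{\beta\mu_0 k}/k^{1+d/2}$. This upper bound is summable in $k$ for every $m \ge 0$, since the exponential decay $\ex^{\beta\mu_0 k}$ dominates any polynomial growth $k^m$. This legitimises termwise differentiation to all orders, giving $p(\beta, \cdot) \in C^\infty((-\infty, 0))$. Taking $m = 1$,
\begin{equation*}
\frac{\d p}{\d \mu}(\beta, \mu) = \frac{1}{(4\pi\beta)^{d/2}}\sum_{k=1}^\infty \frac{k\, \ex^{\beta\mu k}}{k^{1+d/2}} = \sum_{k=1}^\infty k\, q_k^{\ssup{\mu}} = D(q^{\ssup{\mu}}),
\end{equation*}
using definition \eqref{q*def} of $q_k^{\ssup{\mu}}$ and the density functional $D(x) = \sum_k k x_k$ from Theorem~\ref{THM-PMF}.

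There is no serious obstacle. The only delicate point is verifying that the extension at $\mu = 0$ genuinely produces a closed function, which reduces to the finiteness of $\sum_{k \ge 1} k^{-(1+d/2)}$, valid for every $d \ge 1$; without this the epigraph could fail to contain its boundary point above $\mu=0$.
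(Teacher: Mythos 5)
Your proof is correct and in essence it is the same calculation the paper performs, just written out in full: the paper cites properties of the Bose function $g(1+d/2,-\beta\mu)=\sum_{k\ge 1}k^{-(1+d/2)}\ex^{\beta\mu k}$ and the identity $\frac{\d}{\d x}g(n,x)=-g(n-1,x)$, while you justify the same termwise differentiation and the finiteness of $p(\beta,0)$ directly via the Weierstrass M-test and monotone convergence. Your treatment of closedness at $\mu=0$ (left-continuity by monotone convergence, trivial l.s.c.\ from the right) is a genuine detail the paper glosses over, so nothing is missing.
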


\smallskip

We denote 
$$ p^{\ssup{\rm bc}}_{\L_N} =\frac{1}{\beta\abs{\L_N}} Z_{\L_N}^{\ssup{\rm bc}}(\beta,\mu) 
$$ the average finite-volume pressure with $ \bc\in\{\emptyset,\per,\Dir\} $.

\begin{prop}\label{densityIdeal}
\begin{enumerate}
\item For $ \beta > 0, \mu< 0 $, and any $ N\in\N $,
\begin{equation}
\frac{1}{\abs{\L_N}}{\tt E}^{\ssup{\rm bc}}\big[N_{\L_N}^{\ssup{\ell}}\big]=\sum_{k\in\N} kq_k^{\ssup{{\rm bc},\mu}}=\frac{\d}{\d\mu} p_{\L_N}^{\ssup{\rm bc}}(\beta,\mu).
\end{equation}
The function $ \mu\mapsto \frac{\d}{\d\mu} p_{\L_N}^{\ssup{\rm bc}}(\beta,\mu) $ is increasing on $ (-\infty,0) $. It follows that we can give $ \frac{1}{\abs{\L_N}}{\tt E}^{\ssup{\rm bc}}[N_{\L_N}^{\ssup{\ell}}]$ any pre-assigned value $ \varrho\in(0,\infty) $ by choosing $ \mu_N(\varrho)\in(-\infty,0) $.

\item In the thermodynamic limit $ N\to\infty $,
\begin{equation}\label{critical}
\varrho_{\rm c}:=\lim_{\mu\uparrow 0}\big(\frac{\d}{\d\mu} p^{\ssup{\rm bc}}_{\L_N}(\beta,\mu)\big)=\begin{cases}  +\infty &, d=1,2,\\   \frac{1}{\left(4\pi\beta\right)^\frac{d}{2}}\zeta\left(\frac{d}{2}\right) &, d\ge 3. \end{cases}
\end{equation}
Let $ \mu_N(\varrho) $ denote the unique root of 
\begin{equation}
\big(\frac{\d}{\d\mu} p^{\ssup{\rm bc}}_{\L_N}(\beta,\mu)\big)=\varrho
\end{equation}
then $ \mu(\varrho)=\lim_{N\to\infty} \mu_N(\varrho) $ exists and is equal to the unique root of
\begin{equation}
\big(\frac{\d}{\d\mu} p^{\ssup{\rm bc}}(\beta,\mu)\big)=\varrho \quad\mbox{ if } \varrho<\varrho_{\rm c},
\end{equation}
and it is equal to zero otherwise.
\end{enumerate}

\end{prop}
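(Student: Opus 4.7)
The plan is to exploit the explicit Poisson structure of the reference process together with Proposition~\ref{lem-rewrite} specialised to $v \equiv 0$, for which $H_\L \equiv 0$ and the partition function collapses to $Z_{\L_N}^{\ssup{\rm bc}}(\beta,\mu) = \exp\bigl(|\L_N|\,\overline{q}^{\ssup{{\rm bc},\mu}}\bigr)$. Part~(1) then reduces to two independent calculations that must be shown to agree, and part~(2) is an analytic study of the resulting density function as $N \to \infty$ and $\mu \uparrow 0$.

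For part~(1), the identity $\frac{1}{|\L_N|}{\tt E}^{\ssup{\rm bc}}[N_{\L_N}^{\ssup{\ell}}] = \sum_k k q_k^{\ssup{{\rm bc},\mu}}$ is immediate from $N_{\L_N}^{\ssup{\ell}} = \sum_k k \Ncal_k$, the independent-Poisson structure of the $\Ncal_k$ (means $|\L_N| q_k^{\ssup{{\rm bc},\mu}}$), and Tonelli. The second equality I would obtain by differentiating $\beta p_{\L_N}^{\ssup{\rm bc}}(\beta,\mu) = \sum_k q_k^{\ssup{{\rm bc},\mu}}$ termwise in $\mu$; since $q_k^{\ssup{{\rm bc},\mu}} = e^{\beta\mu k} q_k^{\ssup{{\rm bc},0}}$ the termwise derivative is $\beta k\, q_k^{\ssup{{\rm bc},\mu}}$, and the interchange is justified by local uniform convergence of the differentiated series on compact subsets of $(-\infty,0)$. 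Monotonicity then follows from $\partial_\mu^2 p_{\L_N}^{\ssup{\rm bc}} = \beta \sum_k k^2 q_k^{\ssup{{\rm bc},\mu}} > 0$, and surjectivity of $\mu \mapsto \frac{1}{|\L_N|}{\tt E}^{\ssup{\rm bc}}[N_{\L_N}^{\ssup{\ell}}]$ onto $(0,\infty)$ follows via the intermediate value theorem from $\lim_{\mu\to-\infty}\sum_k k q_k^{\ssup{{\rm bc},\mu}} = 0$ (dominated convergence) together with the divergence of the series as $\mu$ approaches the smallest eigenvalue of $-\Delta^{\ssup{\rm bc}}_{\L_N}$.

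For part~(2), the critical-density formula is a direct application of monotone convergence to $\sum_k k q_k^{\ssup{\mu}} = (4\pi\beta)^{-d/2}\sum_k k^{-d/2} e^{\beta\mu k}$ as $\mu \uparrow 0$, yielding $+\infty$ when $d \leq 2$ and $(4\pi\beta)^{-d/2}\zeta(d/2)$ when $d \geq 3$. For the convergence $\mu_N(\varrho) \to \mu(\varrho)$ when $\varrho < \varrho_{\rm c}$, I would establish that $D_N(\mu) := \sum_k k q_k^{\ssup{{\rm bc},\mu}}$ converges locally uniformly on $(-\infty,0)$ to $D(\mu) := \sum_k k q_k^{\ssup{\mu}}$, using the comparisons between $q_k^{\ssup{{\rm bc},\mu}}$ and $q_k^{\ssup{\mu}}$ developed in Appendix~\ref{app-A}; strict monotonicity and continuity of the limit then transfer to convergence of the inverse maps. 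For $\varrho \geq \varrho_{\rm c}$ in $d \geq 3$, any subsequential accumulation point $\mu_* \in (-\infty,0)$ of $\mu_N(\varrho)$ would force $\varrho = D(\mu_*) < \varrho_{\rm c}$, a contradiction, and so $\mu_N(\varrho) \uparrow 0$.

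The main obstacle will be the boundary-condition bookkeeping: for empty bc the $q_k^{\ssup{\emptyset,\mu}}$ are independent of $N$ and $D(\mu)$ stays bounded as $\mu \uparrow 0$ in $d \geq 3$, so the unrestricted surjectivity onto $(0,\infty)$ in part~(1) genuinely relies on the bc-specific spectral picture (the zero ground-state eigenvalue on the torus, the $N$-dependent threshold under Dirichlet data). Reconciling this with the \emph{locally} uniform $D_N \to D$ needed in part~(2) — so that divergence of $D_N$ near $\mu = 0$ for finite $N$ coexists with the finite infinite-volume limit $\varrho_{\rm c}$ — requires one to keep the comparison estimates of Appendix~\ref{app-A} sharp away from compact subsets of $(-\infty,0)$; all other steps are routine.
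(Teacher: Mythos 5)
Your proposal follows the same ``direct computation'' route that the paper's own one-line proof gestures at, and the calculations you supply for the identity $\frac{1}{|\L_N|}{\tt E}^{\ssup{\rm bc}}[N_{\L_N}^{\ssup\ell}]=\sum_k kq_k^{\ssup{{\rm bc},\mu}}=\partial_\mu p_{\L_N}^{\ssup{\rm bc}}$, the monotonicity via $\partial_\mu^2 p_{\L_N}^{\ssup{\rm bc}}=\beta\sum_k k^2 q_k^{\ssup{{\rm bc},\mu}}>0$, and the critical density via monotone convergence applied to the Bose series are all correct. The one place you (rightly) pause --- the surjectivity of $\mu\mapsto D_N^{\ssup{\rm bc}}(\mu):=\sum_k kq_k^{\ssup{{\rm bc},\mu}}$ onto all of $(0,\infty)$ when $\mu$ is restricted to $(-\infty,0)$ --- is a genuine imprecision in the statement that the paper's proof also glosses over. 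For the empty boundary condition $D(\mu)=\sum_k kq_k^{\ssup\mu}$ is $N$-independent and never exceeds $\varrho_{\rm c}$ on $(-\infty,0)$ when $d\geq 3$; for Dirichlet data the spectral threshold of $-\Delta^{\ssup{\rm Dir}}_{\L_N}$ is strictly positive, so $D_N^{\ssup{\rm Dir}}(\mu)$ stays finite as $\mu\uparrow 0$; only for the torus (zero ground-state eigenvalue) does the density actually diverge as $\mu\uparrow 0$ at fixed $N$. So on the literal range $\mu_N\in(-\infty,0)$ the full range $(0,\infty)$ is reached only for periodic bc, and the assertion should be read either with $\varrho<\varrho_{\rm c}$ or with $\mu_N$ allowed to run up to the bc-specific spectral edge. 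With that reading, your part (2) --- monotone convergence for $\varrho_{\rm c}$, locally uniform convergence $D_N\to D$ on compact subsets of $(-\infty,0)$, and transfer to the inverse maps for $\varrho<\varrho_{\rm c}$ --- is sound and matches the paper's intent, which it states explicitly when it says the convergence of the root is ``ensured as long as the expected particle density stays below the critical density.''
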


\begin{prop}
	\label{IdealFreeEnergy}
	For $\varrho>0$, we define the ideal Bose gas free energy as the Legendre-Fenchel transform of the pressure,
	\begin{align}
		f\left(\beta,\varrho\right):=\sup_{\mu\in\R}\left\{\mu\varrho-p\left(\beta,\mu\right)\right\}
		= \begin{cases}
		\frac{-1}{\beta\left(4\pi\beta\right)^{\frac{d}{2}}}g(1+\frac{d}{2},-\beta\alpha) + \varrho\alpha &,  \varrho\leq\varrho_{\rm c}\; ,\\
		\frac{-1}{\beta\left(4\pi\beta\right)^\frac{d}{2}}\zeta\left(1+\frac{d}{2}\right) &,\varrho\geq\varrho_{\rm c}\;, 
		\end{cases}
	\end{align}
	where $\alpha\le 0 $ is a solution to
	\begin{equation*}
	\frac{1}{\left(4\pi\beta\right)^\frac{d}{2}}g\big(\frac{d}{2},-\beta\alpha\big) = \varrho,
	\end{equation*}
	which exists and is unique for $\varrho\leq\varrho_{\rm c}$.
\end{prop}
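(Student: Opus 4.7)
The plan is to compute the Legendre--Fenchel transform of $\mu \mapsto p(\beta,\mu)$ directly, exploiting the explicit formula \eqref{p} and the regularity/monotonicity properties already recorded in Proposition~\ref{P:ideal1} and Proposition~\ref{densityIdeal}. The first observation is that $p(\beta,\mu) = +\infty$ for $\mu > 0$ by Proposition~\ref{P:ideal1}(1), so the supremum in the definition of $f(\beta,\varrho)$ is effectively taken over $\mu \in (-\infty,0]$. On the open half-line $\mu<0$, the map $\mu \mapsto p(\beta,\mu)$ is smooth and strictly convex, with derivative
$$
\tfrac{\d}{\d\mu}p(\beta,\mu) \;=\; D\bigl(q^{\ssup{\mu}}\bigr) \;=\; \tfrac{1}{(4\pi\beta)^{d/2}}\,g\bigl(\tfrac{d}{2},-\beta\mu\bigr),
$$
by Proposition~\ref{P:ideal1}(2), and this derivative is strictly increasing in $\mu$, with limit $\varrho_{\rm c}\in(0,\infty]$ as $\mu\uparrow 0$ given by Proposition~\ref{densityIdeal}(2).

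I would then split into two cases according to whether the supremum is attained in the interior or at the boundary $\mu = 0$. For $\varrho \le \varrho_{\rm c}$ (which is automatic for $d=1,2$), monotonicity and continuity of $\tfrac{\d}{\d\mu}p(\beta,\cdot)$ on $(-\infty,0)$ together with the limits $0$ at $-\infty$ and $\varrho_{\rm c}$ at $0^-$ yield a unique $\alpha \le 0$ solving $(4\pi\beta)^{-d/2}g(d/2,-\beta\alpha) = \varrho$. By strict concavity of $\mu \mapsto \mu\varrho - p(\beta,\mu)$, this $\alpha$ is the unique maximiser, so
$$
f(\beta,\varrho) \;=\; \alpha\varrho - p(\beta,\alpha) \;=\; \varrho\alpha - \tfrac{1}{\beta(4\pi\beta)^{d/2}}\,g\bigl(1+\tfrac{d}{2},-\beta\alpha\bigr),
$$
matching the first branch of the formula. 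The case $\varrho = \varrho_{\rm c}$ is included here via $\alpha = 0$.

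For $\varrho > \varrho_{\rm c}$ (only relevant when $d \ge 3$), the derivative $\tfrac{\d}{\d\mu}p(\beta,\mu) < \varrho_{\rm c} < \varrho$ throughout $(-\infty,0)$, so $\mu \mapsto \mu\varrho - p(\beta,\mu)$ is strictly increasing on $(-\infty,0)$. Since $p(\beta,\cdot)$ is closed and convex (Proposition~\ref{P:ideal1}(1)) and the series \eqref{p} converges at $\mu = 0$ for $d \ge 3$ to $\zeta(1+d/2)/[\beta(4\pi\beta)^{d/2}]$, $p$ is left-continuous at $0$, so the supremum is attained at $\mu = 0$, giving
$$
f(\beta,\varrho) \;=\; -p(\beta,0) \;=\; \tfrac{-1}{\beta(4\pi\beta)^{d/2}}\,\zeta\bigl(1+\tfrac{d}{2}\bigr),
$$
which is the second branch. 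This agrees with the first branch at $\varrho = \varrho_{\rm c}$ via $\alpha = 0$, so the two expressions match continuously.

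The main subtlety, rather than a genuine obstacle, is ensuring the boundary case $\mu \uparrow 0$ is handled correctly: one must invoke left-continuity of the closed convex function $p(\beta,\cdot)$ and the finiteness of $p(\beta,0)$ in dimensions $d \ge 3$ (which itself hinges on the convergence of $\zeta(1+d/2)$). All other steps reduce to unpacking the definition of the Bose function $g$ in Appendix~\ref{app-Bose} and invoking the already-proved monotonicity statements.
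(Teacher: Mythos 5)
Your proposal is correct and follows essentially the same route as the paper's proof: reduce the supremum to $\mu\le 0$ since $p(\beta,\mu)=+\infty$ for $\mu>0$, then do a stationary-point analysis on $(-\infty,0)$ using the strict monotonicity of $\frac{\d}{\d\mu}p(\beta,\mu)$, yielding the interior maximiser $\alpha<0$ when $\varrho<\varrho_{\rm c}$ and the boundary maximiser $\mu=0$ when $\varrho\geq\varrho_{\rm c}$. The paper's write-up is terser, but the decomposition, the key monotonicity facts invoked from Propositions~\ref{P:ideal1} and~\ref{densityIdeal}, and the matching at $\varrho=\varrho_{\rm c}$ are all the same.
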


It is easy to see that $ \varrho\mapsto f(\beta,\varrho) $ is a decreasing convex function; it is given by
$$
f(\beta,\varrho)=\mu(\varrho)\varrho-p(\beta,\mu(\varrho)) \quad\mbox{ for } \varrho<\varrho_{\rm c}.
$$
The linear segment in the graph of $ f$ where $ f $ is constant and equal to $ -p(\beta,0) $ for $ \varrho\ge \varrho_{\rm c} $, signals a first-order phase-transition at $ \mu=0 $. This phase-transition is called Bose-Einstein condensation (BEC).

In \cite{Lew86}, Lewis suggests an order parameter for the Bose-Einstein condensation phase transition. Having in mind the density of particles with zero single particle energy in the thermodynamic limit, Lewis first takes the finite volume expected density of particles with energy below some cut-off. He then takes the thermodynamic limit before taking the cut-off to zero. In contrast to Lewis' model, we do not keep track of the particles' energy. Instead, we partition our gas by loop type, and expect the condensate to occupy loops of diverging length. Therefore we want to evaluate the `condensate density' given by
	\begin{equation*}
	\Delta\left(\beta,\mu\right) := \lim_{K\to\infty}\lim_{N\to\infty} \E_{\nu_{N,\mu}}\left[D-D_K\right],\qquad D_K\left(x\right):=\sum^K_{j=1}jx_j.
	\end{equation*}
	
	\begin{theorem}\label{thm:IDEALcondensate}
		For $\beta>0$, $\mu<0$, we have $\Delta\left(\beta,\mu\right) = 0$.
		
		For $\beta>0$, $\mu=0$,
		\begin{equation*}
		\Delta\left(\beta,0\right) = \begin{cases}
		+\infty &, d=1,2, \text{ or } d\geq3, \bc = {\rm per}\\
		0 & ,d\geq 3, \bc\in\left\{\emptyset,{\rm Dir}\right\}.
		\end{cases}
		\end{equation*}
	\end{theorem}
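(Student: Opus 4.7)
The plan is to reduce the expected excess density to an explicit tail sum of cycle weights and then analyse it case by case. Since $\Ncal_k$ is Poisson with parameter $\abs{\L_N}q_k^{\ssup{\bc,\mu}}(L_N)$ under $\nu_{N,\mu}^{\ssup{\bc}}$ and the $\Ncal_k$ are independent, monotone convergence applied to the non-negative partial sums $\sum_{k\leq M}k\Ncal_k$ gives
\[
\E_{\nu_{N,\mu}^{\ssup{\bc}}}\bigl[D-D_K\bigr]=\sum_{k=K+1}^{\infty} k\,q_k^{\ssup{\bc,\mu}}(L_N),
\]
so $\Delta(\beta,\mu)=\lim_{K\to\infty}\lim_{N\to\infty}\sum_{k>K}kq_k^{\ssup{\bc,\mu}}(L_N)$. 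Three ingredients drive the analysis: $q_k^{\ssup{\emptyset,\mu}}=\ex^{\beta\mu k}/((4\pi\beta)^{d/2}k^{1+d/2})$ is $N$-independent; the Dirichlet weight satisfies $q_k^{\ssup{\Dir,\mu}}(L_N)\leq q_k^{\ssup{\emptyset,\mu}}$ with pointwise convergence $q_k^{\ssup{\Dir,\mu}}(L_N)\uparrow q_k^{\ssup{\emptyset,\mu}}$ as $L_N\to\infty$ (from the monotonicity of the Dirichlet heat kernel in its domain); and by Poisson summation on the torus
\[
q_k^{\ssup{\per,\mu}}(L_N)=\frac{\ex^{\beta\mu k}}{kL_N^{d}}\sum_{m\in\Z^{d}}\ex^{-4\pi^{2}k\beta|m|^{2}/L_N^{2}}\ge\frac{\ex^{\beta\mu k}}{kL_N^{d}}.
\]

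For $\mu<0$, the prefactor $\ex^{\beta\mu k}$ supplies exponential decay in $k$ uniformly in $N$: in the periodic case the theta series is crudely bounded by $C(1+(L_N/\sqrt{k\beta})^{d})$, which together with the $1/L_N^{d}$ factor produces an $N$-independent summable majorant. Dominated convergence then allows me to exchange $\lim_N$ with the tail sum, yielding $\sum_{k>K}kq_k^{\ssup{\emptyset,\mu}}$, which vanishes as $K\to\infty$. Hence $\Delta(\beta,\mu)=0$ for every boundary condition.

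For $\mu=0$, the controlling series is $kq_k^{\ssup{\emptyset,0}}=(4\pi\beta)^{-d/2}k^{-d/2}$, summable iff $d\geq 3$. When $\bc\in\{\emptyset,\Dir\}$ and $d\geq 3$, the upper bound $q_k^{\ssup{\Dir,0}}(L_N)\leq q_k^{\ssup{\emptyset,0}}$ combined with termwise convergence gives $\Delta(\beta,0)=0$ by dominated convergence. When $\bc\in\{\emptyset,\Dir\}$ and $d\in\{1,2\}$, for any $K'>K$ the finite truncated sum $\sum_{k=K+1}^{K'}kq_k^{\ssup{\Dir,0}}(L_N)$ converges to $\sum_{k=K+1}^{K'}kq_k^{\ssup{\emptyset,0}}$ as $N\to\infty$, so $\liminf_N\sum_{k>K}kq_k^{\ssup{\Dir,0}}(L_N)\geq\sum_{k=K+1}^{K'}kq_k^{\ssup{\emptyset,0}}\uparrow+\infty$ as $K'\to\infty$; the empty case is immediate. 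When $\bc=\per$ and $\mu=0$, the zero-momentum lower bound forces $kq_k^{\ssup{\per,0}}(L_N)\ge L_N^{-d}$ for every $k$, so $\sum_{k>K}kq_k^{\ssup{\per,0}}(L_N)=+\infty$ already at finite $N$ and the double limit is trivially $+\infty$.

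The main technical obstacle is the bookkeeping of the boundary-dependent cycle weights from Appendix~\ref{app-A} and the careful justification of each limit-sum interchange via monotone, dominated, or Fatou convergence; the mechanism producing $\Delta(\beta,0)=+\infty$ in the periodic regime is simply that the zero-momentum Fourier mode of the torus heat kernel contributes mass $L_N^{-d}$ irrespective of the cycle length, forcing macroscopic occupation of arbitrarily long cycles at every finite volume.
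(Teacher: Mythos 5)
Your proof is correct and takes a genuinely different --- and for the ideal gas, substantially simpler --- route than the paper's. The paper introduces $g^{\ssup{K}}_N(s)=\frac{1}{\beta|\Lambda_N|}\log\E_{\nu_{N,\mu}}\bigl[\exp(|\Lambda_N|s\beta(D-D_K))\bigr]$, applies Varadhan's Lemma to identify its $N$-limit as a variational problem, solves for the minimiser, and then invokes Griffiths' Lemma to pass the $s$-derivative through the thermodynamic limit. You bypass this entirely by observing that under the untilted reference measure the $\Ncal_k$ are independent Poisson with means $|\Lambda_N|q_k^{\ssup{\bc,\mu}}(L_N)$, so $\E_{\nu_{N,\mu}^{\ssup{\bc}}}[D-D_K]=\sum_{k>K}k\,q_k^{\ssup{\bc,\mu}}(L_N)$ by Tonelli, reducing the theorem to elementary tail-sum asymptotics: the free weights are $N$-independent; Dirichlet weights are dominated by and converge to the free ones; and Poisson summation exposes the zero-momentum mode $L_N^{-d}$ on the torus as the source of divergence at $\mu=0$. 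Your account also has the advantage of actually spelling out the boundary-condition dependence in the $\mu=0$, $d\geq3$ case, which the paper's proof leaves as a one-line remark (``by applying direct methods \dots\ we get the required results''). The trade-off is that the direct-moment argument only works because the reference law factorises; the paper's Varadhan/Griffiths scaffold is deliberately chosen so the same template carries over to the tilted (non-product) measures in Theorems~\ref{thm:CMFcondensate}, \ref{thm:PMFcondensate} and \ref{THM:HYL-Cond}.

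One point worth flagging: you implicitly assume that for $\bc\in\{\per,\Dir\}$ the reference process carries marks of every length $k\in\N$. Appendix~\ref{app-A} as written imposes a cutoff $k\leq N$ on mark lengths (and sets $\overline{q}^{\ssup{\bc}}=\sum_{k=1}^N q_k^{\ssup{\bc}}$). If that cutoff were taken literally then your periodic $d\geq3$ case would fail: $\sum_{K<k\leq N}k\,q_k^{\ssup{\per,0}}(L_N)$ is finite at each $N$, and since $L_N^2/(k\beta)\to\infty$ uniformly over $k\leq N$, the non-zero winding contributions vanish and the limit would be the finite free tail $\sum_{k>K}(4\pi\beta k)^{-d/2}$, giving $\Delta=0$ rather than $+\infty$. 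The cutoff appears to be a leftover from the canonical setting of \cite{ACK} and is not intrinsic to the grand canonical ensemble; your argument is correct under the natural uncut interpretation, which is also the one needed for the theorem statement itself to hold, but it would be worth making that assumption explicit.
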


\noindent \textbf{Conclusion.}     The ideal gas shows some critical behaviour, namely, the density and chemical potential relation breaks down in dimensions $ d\ge 3 $. This can be seen as a signal of a phase transition manifesting itself in the in-equivalence of the canonical and grand canonical ensemble.  The excess density above the critical value of the density is identified as the BEC condensate density. The free energy is constant for all densities above the critical one, showing that the condensate density does not contribute to the free energy. In \cite{BCMP05}, using the energy (Fourier) representation, it is shown that the excess density (for dimensions $ d\ge 3 $) equals the expected density of particles in the zero-energy mode. Mathematically, the critical behaviour is seen in the thermodynamic limit of distribution function of the empirical particle density (called Kac distribution), the limit of the Kac distribution  exists only for densities $ \varrho<\varrho_{\rm c} $ and is the degenerate distribution 
$$
\mathbb{K}(x)=\begin{cases} 0 &, x <\varrho, \\ 1  & , x\ge \varrho.\end{cases}
$$
This critical behaviour is also seen in the rate function for the density large deviation principle in Theorem~\ref{THM-densityLDP}. 	We believe that the unexpected results in Theorem~\ref{thm:IDEALcondensate} are due to this degeneracy.

\subsubsection{Thermodynamics of the CMF model}\label{sec-thermCMF}
We summarise our findings for the CMF model below.

\medskip

\begin{prop}\label{P:densityCMF}
	\begin{enumerate}
		\item For $ \beta> 0 $, we define $ p^\CMF(\beta,\mu)=+\infty $ for $ \mu > 0 $. Then $ p^\CMF(\beta,\cdot) $ is a closed convex function on $ \R $.
		
		\item For $\beta>0$, $\mu<0$, the CMF gas pressure $p^\CMF(\beta,\mu)$ is smooth with respect to $\mu$. In particular,
			\begin{equation*}
			\frac{\d p^\CMF}{\d \mu} = D\left(\xi^\CMF\right) = \frac{W_0\left(a\beta\bar{q}^\ssup{\mu}\right)}{a\beta\bar{q}^\ssup{\mu}}D\left(q^\ssup{\mu}\right).
			\end{equation*}

\item In the thermodynamic limit $ N\to\infty $,
\begin{equation}\label{criticalCMF}
\varrho^{\CMF}_{\rm c}:=\lim_{\mu\uparrow 0}\big(\frac{\d}{\d\mu}p^{\CMF}_{\L_N}(\beta,\mu)\big)=\begin{cases} +\infty &, d=1,2,\\[1.5ex] \frac{W_0(a\beta\overline{q}^{\ssup{0}})}{a\beta\overline{q}^{\ssup{0}}(4\pi\beta)^{\frac{d}{2}}}\zeta\big(\frac{d}{2}\big) &, d\ge 3.
\end{cases}
\end{equation}
	\end{enumerate}
\end{prop}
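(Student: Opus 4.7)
The plan is to derive all three parts directly from the closed-form formula \eqref{p-cmf} established in Theorem~\ref{THM-pressure}, combined with the standard convexity of finite-volume grand-canonical log-partition functions in the chemical potential.

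\textbf{Part (1).} I would write $\hat Z^\CMF_{\L_N}(\beta,\mu):=\e^{|\L_N|\bar q^{\ssup{\rm bc,\mu}}}Z^\CMF_N(\beta,\mu)$ and expand it as a Poisson-weighted sum over cycle configurations $(k_1,\dots,k_n)$; the chemical potential enters only through the factor $\e^{\beta\mu\sum_i k_i}$, so $\log\hat Z^\CMF_{\L_N}(\beta,\cdot)$ is a log-moment generating function, hence convex. Since pointwise limits of convex functions are convex, $\beta p^\CMF(\beta,\cdot)$ is convex on $(-\infty,0]$, and its extension by $+\infty$ on $(0,\infty)$ remains convex on $\R$. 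Because $\bar q^{\ssup{0}}<\infty$ in every dimension, the explicit formula is continuous from the left at $\mu=0$, while the limit from the right is $+\infty\ge p^\CMF(\beta,0)$; this yields lower semicontinuity and hence closedness.

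\textbf{Part (2).} On $(-\infty,0)$ the series $\bar q^{\ssup{\mu}}=\sum_k q^{\ssup{\mu}}_k$ converges uniformly on compact subsets together with all term-by-term derivatives, so $\mu\mapsto\bar q^{\ssup{\mu}}$ is real-analytic there; since the principal branch $W_0$ is analytic on $(0,\infty)$, the composition $y(\mu):=W_0(a\beta\bar q^{\ssup{\mu}})$ and hence $p^\CMF(\beta,\mu)=(y+y^2/2)/(a\beta^2)$ are smooth in $\mu$. Implicit differentiation of $y\e^y=a\beta\bar q^{\ssup{\mu}}$ yields $y'(1+y)\e^y=a\beta\,\frac{\d\bar q^{\ssup{\mu}}}{\d\mu}$, i.e. $(1+y)y'=(y/\bar q^{\ssup{\mu}})\,\frac{\d\bar q^{\ssup{\mu}}}{\d\mu}$; combining with $\frac{\d p^\CMF}{\d\mu}=(1+y)y'/(a\beta^2)$ and $\frac{\d\bar q^{\ssup{\mu}}}{\d\mu}=\beta D(q^{\ssup{\mu}})$ produces the first equality of the claim, and the second follows from the formula $\xi^\CMF_k=(W_0(K)/K)q^{\ssup{\mu}}_k$ given in Proposition~\ref{zeroCMF}.

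\textbf{Part (3).} For convex $p^\CMF_{\L_N}\to p^\CMF$, one-sided derivatives converge at every point of differentiability of the limit, so throughout $(-\infty,0)$ the finite-volume density $\frac{\d}{\d\mu}p^\CMF_{\L_N}$ converges to $\frac{\d p^\CMF}{\d\mu}$ from Part~(2), reducing the claim to computing $\lim_{\mu\uparrow 0}\frac{\d p^\CMF}{\d\mu}$. Monotone convergence gives $\bar q^{\ssup{\mu}}\nearrow\bar q^{\ssup{0}}<\infty$ in every dimension and $D(q^{\ssup{\mu}})\nearrow D(q^{\ssup{0}})=\zeta(d/2)/(4\pi\beta)^{d/2}$, which is finite precisely when $d\ge 3$ and $+\infty$ for $d\in\{1,2\}$; combined with continuity of $W_0$ on $[0,\infty)$ this yields both cases of \eqref{criticalCMF}. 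The main subtle point is this interchange of the $N\to\infty$ and $\mu\uparrow 0$ limits, and it is resolved precisely by convex-function convergence of derivatives together with monotone convergence of the series defining $\bar q^{\ssup{\mu}}$ and $D(q^{\ssup{\mu}})$.
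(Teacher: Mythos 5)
Your proof is correct. For Parts (2) and (3) it follows essentially the same route as the paper (direct differentiation of the closed formula and, for Part (3), reduction to the ideal-gas limit via continuity of $W_0$ and $\bar q^{\ssup{\mu}}$), though you are more explicit about why the $N\to\infty$ and $\mu\uparrow 0$ limits may be exchanged. The genuine difference is Part (1): the paper establishes convexity by explicitly computing derivatives of $W_0(K(\mu))$ and invoking the Lambert-function calculus from Appendix~C, whereas you obtain convexity abstractly by recognising $\log\hat{Z}^\CMF_{\L_N}(\beta,\cdot)$ as a cumulant generating function in the chemical potential (since $\mu$ enters the Poisson weights only through $\e^{\beta\mu N^{\ssup\ell}_{\L_N}}$) and then passing to the pointwise limit of convex functions. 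Your route is cleaner and does not require verifying sign conditions on second derivatives of $W_0$; the paper's route is more self-contained in that it works only with the explicit formula \eqref{p-cmf} and does not need to return to the finite-volume representation. Both give lower semicontinuity from the left-continuity of the explicit formula at $\mu=0$, which uses $\bar q^{\ssup 0}<\infty$ in all dimensions.
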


\medskip

\begin{prop}\label{P:LFT-CMF}
	For $\varrho>0$, the CMF Bose gas free energy is defined as the Legendre-Fenchel transform of the pressure, 
	\begin{equation}
	f^\CMF\left(\beta,\varrho\right) := \sup_{\alpha\in\R}\left\{\alpha\varrho - p^\CMF\left(\beta,\alpha\right)\right\}
	= \begin{cases}
	\varrho\mu - p^\CMF\left(\beta,\mu\right) &, \varrho\leq \varrho^\CMF_{\rm c},\\
	-p^\CMF\left(\beta,0\right) &, \varrho\geq \varrho^\CMF_{\rm c},
	\end{cases}
	\end{equation}
	where $\alpha=\mu$ is a solution to
	\begin{equation*}
	\frac{1}{a\beta\bar{q}^{\ssup{\alpha}}}W_0\left(a\beta \bar{q}^{\ssup{\alpha}}\right)\sum^\infty_{k=1}kq_k^{\ssup{\alpha}} = \varrho,
	\end{equation*}
	which exists and is unique for $\varrho\leq \varrho^\CMF_{\rm c}:=\frac{W_0(K)}{K}\varrho_{\rm c}$.
\end{prop}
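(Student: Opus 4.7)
The plan is to compute the Legendre–Fenchel transform of $p^{\CMF}(\beta,\cdot)$ directly, leveraging the closed convexity and smoothness properties established in Proposition~\ref{P:densityCMF}. Since $p^{\CMF}(\beta,\alpha)=+\infty$ for $\alpha>0$, the supremum defining $f^{\CMF}(\beta,\varrho)$ reduces to $\alpha\le 0$. On the open half-line $\alpha<0$ the pressure is smooth, so the strictly concave map $\alpha\mapsto\alpha\varrho-p^{\CMF}(\beta,\alpha)$ has a unique interior maximiser (whenever one exists) characterised by the first-order condition
\[
\varrho=\frac{\d p^{\CMF}}{\d\alpha}(\beta,\alpha)=\frac{W_0\!\left(a\beta\overline{q}^{\ssup{\alpha}}\right)}{a\beta\overline{q}^{\ssup{\alpha}}}\sum_{k\ge 1}k\,q^{\ssup{\alpha}}_k,
\]
which is exactly the equation displayed in the proposition.

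Next I would address existence and uniqueness of a solution $\mu\in(-\infty,0]$ on the subcritical regime $\varrho\in(0,\varrho^{\CMF}_{\rm c}]$. Strict convexity of $p^{\CMF}(\beta,\cdot)$ renders $\alpha\mapsto \d p^{\CMF}/\d\alpha$ strictly increasing, while a short endpoint analysis shows that this derivative tends to $0$ as $\alpha\to-\infty$ (because $\overline{q}^{\ssup{\alpha}}\to 0$ and $W_0(x)/x\to 1$ as $x\to 0^+$) and to $\varrho^{\CMF}_{\rm c}$ as $\alpha\uparrow 0$, by \eqref{criticalCMF}. The intermediate value theorem then supplies a unique $\mu=\mu(\varrho)\le 0$ satisfying the first-order condition for each $\varrho$ in this range, and substitution yields the first branch $f^{\CMF}(\beta,\varrho)=\varrho\mu-p^{\CMF}(\beta,\mu)$.

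For the supercritical regime $\varrho>\varrho^{\CMF}_{\rm c}$, which is only relevant in $d\ge 3$ since $\varrho^{\CMF}_{\rm c}=+\infty$ for $d=1,2$, the monotone derivative $\d p^{\CMF}/\d\alpha$ remains strictly below $\varrho$ throughout $(-\infty,0)$, so the concave objective $\alpha\mapsto\alpha\varrho-p^{\CMF}(\beta,\alpha)$ is strictly increasing on that interval and its supremum is attained at the boundary $\alpha=0$, giving $f^{\CMF}(\beta,\varrho)=-p^{\CMF}(\beta,0)$. Left-continuity of $p^{\CMF}(\beta,\cdot)$ at $0$, a consequence of the continuity of $W_0$ and of $\alpha\mapsto\overline{q}^{\ssup{\alpha}}$ up to $\alpha=0$, confirms that this boundary value is correctly identified as the limit of the functional.

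The main technical obstacle I anticipate is purely analytic: verifying rigorously the two endpoint limits of $\d p^{\CMF}/\d\alpha$ and checking that the $\alpha\uparrow 0$ limit reproduces $\varrho^{\CMF}_{\rm c}$ as given in \eqref{criticalCMF}. Both reduce to dominated convergence applied termwise to the series $\sum_k k\,q^{\ssup{\alpha}}_k$, combined with the small-argument asymptotics $W_0(x)\sim x$ at $x=0^+$ recalled in Appendix~\ref{app-Lambert}; the remainder of the argument is standard convex analysis and requires no new ingredients beyond Proposition~\ref{P:densityCMF}.
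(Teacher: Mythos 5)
Your proposal is correct and follows essentially the same route as the paper, which proves Proposition~\ref{P:LFT-CMF} by mirroring the argument for Proposition~\ref{IdealFreeEnergy}: restrict to $\alpha\le 0$ since $p^\CMF(\beta,\alpha)=+\infty$ otherwise, find the stationary point via $\varrho=\d p^\CMF/\d\alpha$ in the subcritical regime, and observe that for $\varrho\ge\varrho^\CMF_{\rm c}$ the objective is increasing on $(-\infty,0]$ so the supremum sits at $\alpha=0$. Your endpoint analysis of the derivative (vanishing as $\alpha\to-\infty$, attaining $\varrho^\CMF_{\rm c}$ as $\alpha\uparrow 0$ by \eqref{criticalCMF}) is exactly the content needed to make the intermediate-value argument precise.
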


\bigskip

	\begin{theorem}\label{thm:CMFcondensate}
	For all $\beta>0$, $\mu<0$,
	\begin{equation*}
	\Delta^\CMF\left(\beta,\mu\right) =0.
	\end{equation*}
\end{theorem}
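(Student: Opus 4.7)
The plan is to combine the LDP of Theorem~\ref{THM-CMF}, whose rate function has the unique zero $\xi^\CMF$ from Proposition~\ref{zeroCMF}, with the density formulas in Proposition~\ref{P:densityCMF}. The LDP gives exponential concentration of $\blambda_N$ at $\xi^\CMF$, but because $D$ and $D_K$ are linear and unbounded on $\ell_1(\R_+)$, the passage from concentration to convergence of expectations must be handled via a convex-convergence argument rather than a direct continuity argument.

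First, I would show that $\lim_{N\to\infty}\E_{\nu^\CMF_{N,\mu}}[D(\blambda_N)] = D(\xi^\CMF)$. Writing
\[
\E_{\nu^\CMF_{N,\mu}}[D(\blambda_N)] = \frac{1}{|\L_N|}\E_{\nu^\CMF_{N,\mu}}[N_{\L_N}^{\ssup{\ell}}] = \frac{1}{\beta|\L_N|}\frac{\d}{\d\mu}\log\bigl(\ex^{|\L_N|\bar{q}^{\ssup{{\rm bc},\mu}}}Z^\CMF_N(\beta,\mu)\bigr),
\]
one sees that the right-hand side is the $\mu$-derivative of a finite-volume pressure that is convex in $\mu$ (as the logarithm of an exponential-family partition function) and converges pointwise to $p^\CMF(\beta,\mu)$ by Theorem~\ref{THM-pressure}. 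Since $p^\CMF(\beta,\cdot)$ is smooth on $(-\infty,0)$ (Proposition~\ref{P:densityCMF}(2)), Griffiths' lemma on derivatives of pointwise-convergent convex functions yields
\[
\lim_{N\to\infty}\E_{\nu^\CMF_{N,\mu}}[D(\blambda_N)] = \tfrac{\d p^\CMF}{\d\mu}(\beta,\mu) = D(\xi^\CMF) = \tfrac{W_0(a\beta\bar{q}^{\ssup{\mu}})}{a\beta\bar{q}^{\ssup{\mu}}}\sum_{j\in\N} j q_j^{\ssup{\mu}},
\]
which is finite for $\mu<0$ because $q_j^{\ssup{\mu}}$ decays exponentially in $j$.

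Next, I would establish $\lim_{N\to\infty}\E_{\nu^\CMF_{N,\mu}}[D_K(\blambda_N)] = D_K(\xi^\CMF)$ for each fixed $K\in\N$. The cleanest route is to repeat the Griffiths argument with a length-specific tilt: for each $j$, replacing $q_j^{\ssup{\mu}}$ by $\ex^{\beta t_j}q_j^{\ssup{\mu}}$ in the reference intensity gives a perturbed partition function convex in $t_j$, whose thermodynamic limit is an immediate variant of Theorem~\ref{THM-pressure} and, by the same argument as above, smooth at $t_j=0$ for $\mu<0$; differentiating at $t_j=0$ produces $\lim_N\E_{\nu^\CMF_{N,\mu}}[\blambda_N^{\ssup{j}}] = \xi^\CMF_j$, and summing over $j\le K$ gives the truncated-density limit. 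Combining the two steps,
\[
\lim_{N\to\infty}\E_{\nu^\CMF_{N,\mu}}\bigl[D-D_K\bigr] = D(\xi^\CMF) - D_K(\xi^\CMF) = \sum_{j>K} j\,\xi^\CMF_j,
\]
which tends to $0$ as $K\to\infty$ by the finiteness established above. Since $D-D_K\ge 0$ the double limit is well defined, and $\Delta^\CMF(\beta,\mu)=0$. The main obstacle is precisely this convergence of unbounded expectations: the LDP alone gives concentration in $\ell_1$ but not $L^1$-convergence of $D$ or $D_K$, and it is the convexity in $\mu$ (and in the auxiliary tilts $t_j$) combined with the smoothness of $p^\CMF$ on $(-\infty,0)$ that makes Griffiths' lemma applicable and closes the argument.
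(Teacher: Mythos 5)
Your argument is correct and rests on the same essential device as the paper's — Griffiths' lemma applied to a family of convex, finite-volume cumulant-generating functions whose pointwise limit is identified via the large-deviation/pressure machinery — but you organize the computation differently. The paper forms a \emph{single} scalar tilt $g^{\ssup{K}}_N(s)=\frac{1}{\beta\abs{\L_N}}\log\E_{\nu^\CMF_{N,\mu}}[\exp(\abs{\L_N}s\beta(D-D_K))]$, absorbs the $sD$ part into a chemical-potential shift $\mu\mapsto\mu+s$, applies Varadhan to the remaining continuous piece $-s\beta D_K$, exhibits the minimiser $\xi(s)$ of the resulting variational problem explicitly, differentiates at $s=0$ to obtain precisely the tail $\sum_{j>K}jq^{\ssup{\mu}}_j\cdot W_0(K)/K$, and then lets $K\to\infty$. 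You instead split $\E[D-D_K]$ into $\E[D]$ (obtained as the $\mu$-derivative of the finite-volume pressure, invoking Proposition~\ref{P:densityCMF}(2) and Griffiths), and $\E[D_K]$ (obtained coordinate-wise via an auxiliary tilt $t_j$, again by Griffiths, then summed over $j\le K$). Both routes are valid; the paper's single-parameter tilt is marginally more economical (one application of Griffiths per $K$, and one explicit minimiser computation), while yours is more modular and reuses the already-proved pressure-derivative identity $\frac{\d p^\CMF}{\d\mu}=D(\xi^\CMF)$. One point you gloss over: in your second step you assert that the coordinate-tilted limiting variational problem $\sup_x\{t_j x_j-\frac{1}{\beta}I^\CMF_\mu(x)\}$ is smooth at $t_j=0$ "by the same argument as above"; this is true because $I^\CMF_\mu$ is strictly convex where finite and the unique minimiser varies smoothly in $t_j$ (implicit function theorem on the finite stationarity equations), but it deserves a line of justification rather than an appeal to a "variant" of Theorem~\ref{THM-pressure}. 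That said, this is a presentation matter, not a gap: the claimed smoothness can be checked exactly as the paper checks its explicit $\xi(s)$.
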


\bigskip

\noindent \textbf{Conclusion.}   The CMF model shows the same BEC phase transition as the ideal Bose gas as the free energy is constant in the density beyond its specific  critical density. The critical density of the CMF model is different from the ideal Bose gas one. Using properties of the Lambert function, see Appendix~\ref{app-Lambert}, we know that 
$$
\begin{aligned}
\lim_{c\downarrow 0}\frac{W_0(cx)}{cx}&=1,\\
\lim_{c\to\infty}\frac{W_0(cx)}{cx}&=0.
\end{aligned}
$$
Hence, as the coupling parameter $a \to 0 $ vanishes, we obtain the critical ideal Bose gas density, and as $ a\to\infty $ the critical density decreases indicating BEC for much lower particle densities. When the coupling parameter $a$ increases  the number of finite cycles is suppressed in the probability measure,  and therefore the system undergoes a transition to a regime where the particle density is realised in so-called infinite cycles. The CMF model has not been studied in the literature so far, it shows similar behaviour as the ideal Bose gas because the Hamiltonian adds only weight on large numbers of cycles present. The following models involving the physical particle density have a more complex phase transition behaviour. As in the ideal Bose gas, the condensate density in Theorem~\ref{thm:CMFcondensate} can be computed only in the sub-critical regime. This is again, due to the degenerate behaviour of the distribution.

\subsubsection{Thermodynamics of the PMF Bose Gas}\label{sec-thermPMF}
We collect our findings for the PMF model. Our results are using rigorous large deviation analysis and all possible ranges of the chemical potential. We obtain all results in \cite{BCMP05} with a completely different method for all values of the chemical potential, in addition, we compute the condensate density in Theorem~\ref{thm:PMFcondensate} below.  The density square term in the Hamiltonian stabilises the distribution such that the limiting distribution is no longer degenerate, allowing us the compute the partial derivatives for all values of the chemical potential. We identify regimes where the expected particle density equals the density of the rate function zero or not.

\medskip

\begin{prop}\label{P:densPMF}
	For $\beta>0$, the pressure $p^{\PMF}\left(\beta,\cdot\right)\in C^{1}\left(\R\right)$ and is convex. In particular,
	\begin{align*}
	\mu +\alpha < a\varrho_{\mathrm{c}} &\implies \frac{\d p^\PMF}{\d \mu} = D\left(\xi^\PMF\right) > \frac{\mu+\alpha}{a}\\
	\mu +\alpha= a\varrho_{\mathrm{c}} &\implies \frac{\d p^\PMF}{\d \mu} = D\left(\xi^\PMF\right) = \frac{\mu+\alpha}{a}\\
	\mu +\alpha > a\varrho_{\mathrm{c}} &\implies \frac{\d p^\PMF}{\d \mu} = \frac{\mu+\alpha}{a} > D\left(\xi^\PMF\right).
	\end{align*}
\end{prop}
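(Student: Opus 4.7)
The plan is to obtain convexity from the finite-volume representation of $Z_N^\PMF$, where the dependence on $\mu$ is manifestly linear in the Hamiltonian, and then to compute $\partial_\mu p^\PMF$ explicitly in each of the two regimes of Theorem~\ref{THM-pressure} using the implicit characterisation of $\delta^\ast$ from Proposition~\ref{zeroPMF}. Finally I would match the one-sided derivatives at the critical point $\mu+\alpha=a\varrho_{\mathrm c}$ to conclude global $C^1$ regularity.

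\textbf{Convexity.} Since $H^\PMF_\mu(x)=-\mu D(x)+\tfrac{a}{2}D(x)^2$ is linear in $\mu$, the finite-volume pressure $\tfrac{1}{\beta|\L_N|}\log Z_N^\PMF(\beta,\mu,\alpha)$ is the log of a moment generating function in $\mu$ and hence convex in $\mu$. Pointwise limits of convex functions are convex, so $p^\PMF(\beta,\cdot)$ is convex on $\R$.

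\textbf{Subcritical regime $\mu+\alpha<a\varrho_{\mathrm c}$.} A graphical analysis of the fixed-point equation in Proposition~\ref{zeroPMF} (the right-hand side equals $\varrho_{\mathrm c}$ for $\delta\le(\mu+\alpha)/a$ and is strictly decreasing in $\delta$ afterwards) shows that $\delta^\ast\in((\mu+\alpha)/a,\varrho_{\mathrm c})$, so $(\mu+\alpha-a\delta^\ast)_-=\mu+\alpha-a\delta^\ast$ and
\begin{equation*}
\delta^\ast=\sum_{k=1}^\infty kq_k^{\ssup{0}}\e^{\beta k(\mu+\alpha-a\delta^\ast)},\qquad \xi^\PMF_k=q_k^{\ssup{0}}\e^{\beta k(\mu+\alpha-a\delta^\ast)}.
\end{equation*}
The map $F(\delta,\mu):=\delta-\sum_kkq_k^{\ssup{0}}\e^{\beta k(\mu+\alpha-a\delta)}$ satisfies $\partial_\delta F=1+a\beta\sum_kk^2q_k^{\ssup{0}}\e^{\beta k(\mu+\alpha-a\delta^\ast)}>0$ (the series converges because the exponent is strictly negative), so the implicit function theorem gives $\delta^\ast=\delta^\ast(\mu)\in C^\infty$ on this open interval. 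Differentiating the pressure formula and using the self-consistency equation gives
\begin{equation*}
\tfrac{\partial p^\PMF}{\partial\mu}=a\delta^\ast\tfrac{\partial\delta^\ast}{\partial\mu}+\Bigl(1-a\tfrac{\partial\delta^\ast}{\partial\mu}\Bigr)\sum_{k=1}^\infty kq_k^{\ssup{0}}\e^{\beta k(\mu+\alpha-a\delta^\ast)}=\delta^\ast=D(\xi^\PMF),
\end{equation*}
and $D(\xi^\PMF)=\delta^\ast>(\mu+\alpha)/a$ by the subcritical location of $\delta^\ast$.

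\textbf{Supercritical regime and boundary matching.} For $\mu+\alpha>a\varrho_{\mathrm c}$, the fixed-point equation forces $\delta^\ast=\varrho_{\mathrm c}$ (with $a\delta^\ast\le\mu+\alpha$ so the negative part vanishes), whence $\xi^\PMF_k=q_k^{\ssup{0}}$ and $D(\xi^\PMF)=\varrho_{\mathrm c}<(\mu+\alpha)/a$. Direct differentiation of $p^\PMF=(\mu+\alpha)^2/(2a)+\tfrac{1}{\beta}\sum_kq_k^{\ssup{0}}$ yields $\partial_\mu p^\PMF=(\mu+\alpha)/a$. At $\mu+\alpha=a\varrho_{\mathrm c}$, both regimes give the common value $\varrho_{\mathrm c}$: the subcritical side yields $\delta^\ast\to\varrho_{\mathrm c}$ by continuity of the implicit solution, and the supercritical side gives $(\mu+\alpha)/a=\varrho_{\mathrm c}$. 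Hence the left and right derivatives of the convex function $p^\PMF(\beta,\cdot)$ agree at the critical point, so the derivative exists there and is continuous across $\R$, yielding $p^\PMF(\beta,\cdot)\in C^1(\R)$.

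\textbf{Main obstacle.} The most delicate step is the continuity of $\delta^\ast(\mu)$ as $\mu+\alpha\uparrow a\varrho_{\mathrm c}$ in dimension $d\ge3$: the exponential damping in the self-consistency sum degenerates precisely at the threshold, and one must argue via a monotone/dominated-convergence comparison on the fixed-point equation that $\delta^\ast\to\varrho_{\mathrm c}$, using that $\sum kq_k^{\ssup{0}}=\varrho_{\mathrm c}<\infty$ in this case. In $d=1,2$ one has $\varrho_{\mathrm c}=\infty$ and only the subcritical regime is present, so the argument simplifies considerably.
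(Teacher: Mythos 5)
Your proof is correct, and it implements the route the paper only gestures at: direct differentiation of the pressure representation \eqref{p-pmf} via the self-consistency equation for $\delta^*$, followed by matching the one-sided derivatives at $\mu+\alpha=a\varrho_{\mathrm c}$. The one genuine point of difference is the convexity argument. The paper deduces convexity \emph{a posteriori} from Proposition~\ref{zeroPMF}, noting that $D(\xi^\PMF)$ is continuous and increasing in $\mu$, which essentially uses the (already computed) derivative. You instead establish convexity at finite volume, from the observation that the $\mu$-dependence of $Z_N^\PMF$ enters linearly in the exponent (so $\log Z_N^\PMF$ is convex by H\"older), and then pass to the pointwise limit. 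Your argument is more elementary, self-contained, and avoids any circularity with the differentiability claim you are trying to prove---this would actually be a cleaner way to present it. One remark on your flagged ``main obstacle'': the continuity of $\delta^*$ at the critical threshold requires no monotone or dominated convergence at all. Since you have already located $\delta^*\in\left(\frac{\mu+\alpha}{a},\varrho_{\mathrm c}\right)$ on the subcritical side, $\delta^*\to\varrho_{\mathrm c}$ as $\mu+\alpha\uparrow a\varrho_{\mathrm c}$ follows immediately by squeezing, so the matching of one-sided derivatives is automatic once you have the two-sided bound.
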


\medskip

\begin{prop}\label{P:fPMF}
	Let $ \alpha\le 0 $. For $\varrho>0$, the PMF Bose gas free energy is defined as the Legendre-Fenchel transform of the pressure,
	\begin{equation}
	f^\PMF\left(\beta,\varrho\right):=\sup_{\mu\in\R}\left\{(\mu+\alpha)\varrho - p^\PMF\left(\beta,\mu,\alpha\right)\right\}= f\left(\beta,\varrho\right)+ \frac{a}{2}\varrho^2.
	\end{equation}
\end{prop}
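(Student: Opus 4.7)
The plan is to recognize the claim $f^\PMF(\beta,\varrho) = f(\beta,\varrho)+\frac{a}{2}\varrho^2$ as a direct consequence of the Fenchel-Moreau (biconjugate) theorem applied to the candidate function $g(\varrho):=f(\beta,\varrho)+\frac{a}{2}\varrho^2$. Indeed, the second displayed identity inside \eqref{PMFpressure} in Remark~\ref{RemdensityLDP} reads
\begin{equation*}
p^\PMF(\beta,\mu,\alpha)=\sup_{x\in\R}\big\{(\mu+\alpha)x-g(x)\big\},
\end{equation*}
so $p^\PMF(\beta,\cdot,\alpha)$ is precisely the Legendre-Fenchel transform $g^{*}$ evaluated at $\mu+\alpha$. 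Since $\alpha\le 0$ is fixed and $\mu$ ranges over $\R$, taking the Legendre-Fenchel transform once more and changing variables $\nu=\mu+\alpha$ yields exactly the definition of $f^\PMF(\beta,\varrho)$, so the task reduces to showing $g^{**}=g$.

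The first step is to verify that $g$ (extended by $+\infty$ on $(-\infty,0)$) is a proper, convex, and lower semicontinuous function on $\R$. Properness is immediate from $g(\varrho_0)<\infty$ for any $\varrho_0>0$. Convexity and lower semicontinuity of $f(\beta,\cdot)$ follow from Proposition~\ref{IdealFreeEnergy}, since it is defined as a Legendre-Fenchel transform of the pressure $p(\beta,\cdot)$ and hence is automatically a closed convex function; both properties are then preserved under adding the continuous convex term $\frac{a}{2}\varrho^2$. The Fenchel-Moreau theorem then gives $g^{**}=g$, which is the desired identity.

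The supremum in the definition of $f^\PMF$ is attained in the two expected regimes. For $\varrho<\varrho_{\rm c}$, Proposition~\ref{P:densPMF} and the differentiability of $p^\PMF$ show that the stationarity condition $\frac{\partial p^\PMF}{\partial\mu}=\varrho$ pins down $\mu+\alpha=\nu(\varrho)+a\varrho$ where $\nu(\varrho)$ is the ideal-gas chemical potential at density $\varrho$, and substituting back into \eqref{p-pmf} yields $\nu(\varrho)\varrho-p(\beta,\nu(\varrho))+\frac{a}{2}\varrho^2 = f(\beta,\varrho)+\frac{a}{2}\varrho^2$ by Proposition~\ref{IdealFreeEnergy}. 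For $\varrho\ge\varrho_{\rm c}$, the stationarity condition in the second branch of \eqref{p-pmf} forces $\mu+\alpha=a\varrho$, and using $f(\beta,\varrho)=-p(\beta,0)$ for $\varrho\ge\varrho_{\rm c}$ gives the same formula. This direct computation can serve as an independent verification.

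The main (minor) obstacle is ensuring that, when $\varrho\ge\varrho_{\rm c}$, the optimizing $\mu$ lies in the upper branch $\mu+\alpha\ge a\varrho_{\rm c}$, and when $\varrho<\varrho_{\rm c}$ it lies in the lower branch; this is handled by the monotonicity and continuity of $\mu\mapsto\frac{\partial p^\PMF}{\partial\mu}$ established in Proposition~\ref{P:densPMF}, so the candidate extremum identified by differentiation is genuinely the global maximizer. Once this is in place, the two regimes glue at $\varrho=\varrho_{\rm c}$ (where both formulas coincide), yielding the single expression $f^\PMF(\beta,\varrho)=f(\beta,\varrho)+\frac{a}{2}\varrho^2$ for all $\varrho>0$.
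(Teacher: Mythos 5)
Your proposal is correct and follows essentially the same route as the paper's (extremely terse) proof, which simply cites \eqref{PMFpressure}: you read off $p^\PMF(\beta,\mu,\alpha)=g^*(\mu+\alpha)$ with $g=f(\beta,\cdot)+\frac{a}{2}(\cdot)^2$ and then apply biconjugation (Fenchel--Moreau), filling in the properness, convexity, and lower semicontinuity of $g$ that the paper leaves implicit. The additional direct stationarity computation in the two regimes $\varrho<\varrho_{\rm c}$ and $\varrho\ge\varrho_{\rm c}$ is a sound independent check and matches the ``from our analysis above'' alternative the paper alludes to; it is not, however, needed once Fenchel--Moreau is invoked.
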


\bigskip

\begin{theorem}\label{thm:PMFcondensate}
	For all $\beta>0$, $\mu\in\R$ and $ \alpha\leq 0 $,
	\begin{equation*}
	\begin{aligned}
		\Delta^\PMF\left(\beta,\mu,\alpha\right) &= \Big(\frac{\d p^\PMF}{\d \mu}\left(\beta,\mu,\alpha\right) - \varrho_\mathrm{c}\left(\beta\right)\Big)_+ = \Big(\frac{\mu+\alpha}{a} - \varrho_\mathrm{c}\left(\beta\right)\Big)_+ \\ & = \Big(\frac{\partial}{\partial \mu}\left(H^\PMF - H^\PMF_{\mu+\alpha,l.s.c.}\Big)\right)\left(\xi^\PMF\right),
		\end{aligned}
	\end{equation*}
	where $\xi^\PMF$ is the unique minimiser (zero) of the rate function $I^\PMF_{\mu,\alpha}$.
\end{theorem}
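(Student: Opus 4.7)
The strategy is to use the LDP of Theorem~\ref{THM-PMF} to establish the intermediate identity
$\Delta^\PMF(\beta,\mu,\alpha) = \frac{\d p^\PMF}{\d \mu}(\beta,\mu,\alpha) - D(\xi^\PMF)$,
and then to recover the three expressions on the right-hand side of the theorem by a short case analysis on the structure of $\xi^\PMF$ as given in Proposition~\ref{zeroPMF}, combined with Proposition~\ref{P:densPMF}.

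Since $H^\PMF_\mu(x) = -\mu D(x) + \frac{a}{2} D(x)^2$ depends linearly on $\mu$, the finite-volume pressure $\mu\mapsto p^\PMF_{\Lambda_N}(\beta,\mu,\alpha)$ is convex in $\mu$ with $\partial_\mu p^\PMF_{\Lambda_N} = \E_{\nu^\PMF_N}[D]$. Theorem~\ref{THM-pressure} gives pointwise convergence to $p^\PMF$, and Proposition~\ref{P:densPMF} provides $p^\PMF\in C^1$; the standard convex convergence-of-derivatives lemma then yields $\E_{\nu^\PMF_N}[D]\to \frac{\d p^\PMF}{\d \mu}$. For the truncated density $D_K$, which is continuous and linear on $\ell_1$, I introduce a tilt parameter $t$ and set $f_N(t) = \frac{1}{|\Lambda_N|}\log \E_{\nu^\PMF_N}[\exp(t|\Lambda_N| D_K)]$. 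This is convex in $t$ with $f'_N(0) = \E_{\nu^\PMF_N}[D_K]$. Varadhan's lemma applied to the LDP of Theorem~\ref{THM-PMF} produces $f_N(t)\to -\inf_x\{I^\PMF_{\mu,\alpha}(x) - tD_K(x)\}$ near $t=0$; since by Proposition~\ref{zeroPMF} the minimizer at $t=0$ is the unique zero $\xi^\PMF$, the limit is differentiable at $0$ with derivative $D_K(\xi^\PMF)$, and convex convergence of derivatives gives $\E_{\nu^\PMF_N}[D_K]\to D_K(\xi^\PMF)$. Monotone convergence $K\to\infty$ then supplies $D(\xi^\PMF)$, establishing the intermediate identity.

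The remaining analysis is a short case distinction. In the sub/critical regime $\mu+\alpha\le a\varrho_{\rm c}$, Proposition~\ref{zeroPMF} forces $\delta^*\ge (\mu+\alpha)/a$, so $\xi^\PMF_k=q_k^{\ssup{\mu+\alpha - a\delta^*}}$ and $D(\xi^\PMF)=\delta^*\le \varrho_{\rm c}$; Proposition~\ref{P:densPMF} then gives $\frac{\d p^\PMF}{\d\mu}=D(\xi^\PMF)$, so $\Delta^\PMF=0$, consistent with $\bigl(\frac{\mu+\alpha}{a}-\varrho_{\rm c}\bigr)_+ = \bigl(\frac{\d p^\PMF}{\d\mu}-\varrho_{\rm c}\bigr)_+ =0$. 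In the supercritical regime $\mu+\alpha> a\varrho_{\rm c}$ (only possible for $d\ge 3$), the consistency equation in Proposition~\ref{zeroPMF} forces $\delta^*=\varrho_{\rm c}$ and $\xi^\PMF_k=q_k^{\ssup{0}}$, so $D(\xi^\PMF)=\varrho_{\rm c}$, while Proposition~\ref{P:densPMF} gives $\frac{\d p^\PMF}{\d\mu}=\frac{\mu+\alpha}{a}$; hence $\Delta^\PMF=\frac{\mu+\alpha}{a}-\varrho_{\rm c}$, identical to both right-hand sides. The final identity is the direct computation $\partial_\mu\bigl(H^\PMF_{\mu+\alpha}(x) - H^\PMF_{\mu+\alpha,l.s.c.}(x)\bigr) = \frac{1}{a}(\mu+\alpha - aD(x))_+$ obtained from \eqref{lscreg}, evaluated at $\xi^\PMF$: this returns $0$ in the sub/critical case and $\frac{\mu+\alpha}{a}-\varrho_{\rm c}$ in the supercritical case, matching the preceding computations.

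The principal technical point is the convergence $\E_{\nu^\PMF_N}[D_K]\to D_K(\xi^\PMF)$: since $D_K$ is $\ell_1$-unbounded, the LDP on its own delivers only convergence in probability of $\blambda_N$ to $\xi^\PMF$, and the convex-tilt argument above is needed to promote this to convergence of expectations. Its linchpin is the differentiability at $t=0$ of the limiting tilted pressure, which reduces to uniqueness of the minimizer of $I^\PMF_{\mu,\alpha}$ supplied by Proposition~\ref{zeroPMF}; once this is in place the remaining manipulations are routine.
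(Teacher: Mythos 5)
Your decomposition is a legitimate alternative to the paper's. The paper tilts by the \emph{combined} functional $s\beta(D-D_K)$ and absorbs the $D$-part into a chemical-potential shift $\mu\mapsto\mu+s$, so the whole derivative at $s=0$ falls out of one Varadhan computation followed by Lemma~\ref{diffPressureTech}; you instead treat $D$ and $D_K$ separately. Your treatment of $D$ -- differentiating the finite-volume pressure, which is a cumulant-generating function in $\mu$, and invoking Griffiths' lemma together with Theorem~\ref{THM-pressure} and Proposition~\ref{P:densPMF} -- is clean and correct. Your case analysis of $\delta^*$ and the identification of all four expressions for $\Delta^\PMF$ are also correct. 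What your version buys is a conceptually transparent intermediate identity $\Delta^\PMF=\frac{\d p^\PMF}{\d\mu}-D(\xi^\PMF)$; what the paper's version buys is that the $(\cdot)_+$ structure of the answer appears automatically from the consistency equation for $\delta^*(s)$ rather than from a separate case distinction.

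There is, however, a genuine gap. For the $D_K$ step you write that Varadhan's Lemma applied to the LDP of Theorem~\ref{THM-PMF} gives $f_N(t)\to -\inf_x\{I^\PMF_{\mu,\alpha}(x)-tD_K(x)\}$ ``near $t=0$'', and then you differentiate. But $D_K$ is unbounded on $\ell_1(\R_+)$, so for $t>0$ the upper bound in Varadhan's Lemma requires the moment/tail condition
\begin{equation*}
\lim_{M\to\infty}\limsup_{N\to\infty}\frac{1}{\abs{\L_N}}\log\E_{\nu^\PMF_{N,\mu,\alpha}}\bigl[\e^{t\abs{\L_N}D_K}\,\1\{tD_K\geq M\}\bigr]=-\infty,
\end{equation*}
which you assert implicitly but do not check. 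This is exactly the obstacle the paper isolates as \eqref{VaradhanConstraint} and resolves using the quadratic term in $H^\PMF$. Without this your convex-tilt argument only yields the one-sided bound $\liminf_N\E_{\nu^\PMF_N}[D_K]\geq D_K(\xi^\PMF)$ (from $t<0$, where the tilt is bounded above), and the matching upper bound needs the $t>0$ side. The fix is short in your framework: since $0\leq D_K\leq D$ on $\ell_1(\R_+)$ and the PMF weight carries the factor $\e^{-\abs{\L_N}\beta\frac{a}{2}D^2}$, one has
\begin{equation*}
\E_{\nu^\PMF_{N,\mu,\alpha}}\bigl[\e^{t\abs{\L_N}D_K}\1\{tD_K\geq M\}\bigr]
\leq \e^{-\abs{\L_N}M}\,\E_{\nu^\PMF_{N,\mu,\alpha}}\bigl[\e^{2t\abs{\L_N}D}\bigr]
= \e^{-\abs{\L_N}M}\,\frac{Z^\PMF_N(\beta,\mu+2t/\beta,\alpha)}{Z^\PMF_N(\beta,\mu,\alpha)},
\end{equation*}
and the last ratio has a finite exponential rate by Theorem~\ref{THM-pressure}, so letting $M\to\infty$ gives the tail condition. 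Include this, or cite the analogous verification in the paper's proof of Theorem~\ref{thm:PMFcondensate}; as written, your ``linchpin'' paragraph names the uniqueness of the minimiser as the only technical point, but the applicability of Varadhan's upper bound for positive tilts is the second, and presently missing, ingredient.
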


\bigskip

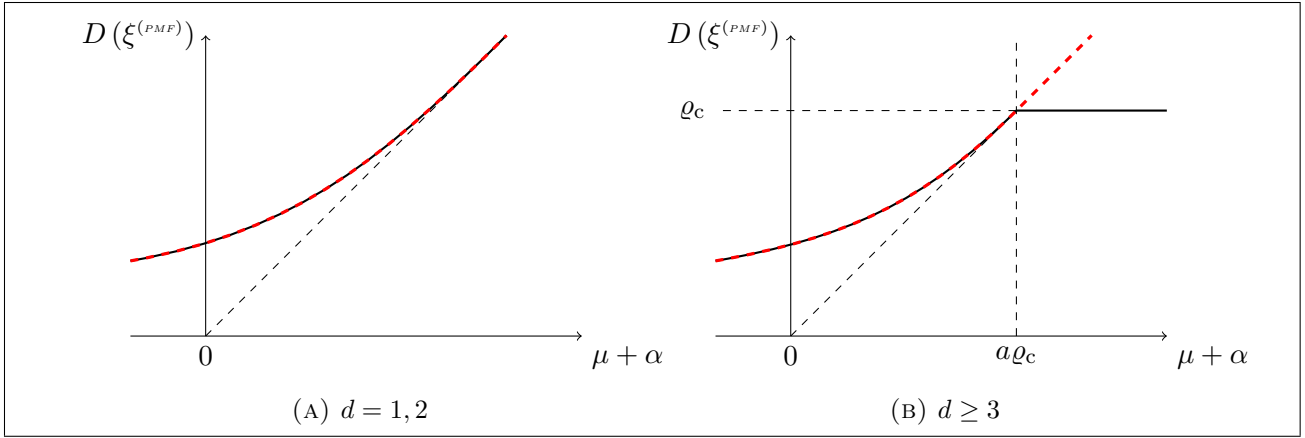
\begin{figure}
	\centering
	\begin{subfigure}[b]{0.45\textwidth}
		\begin{tikzpicture}[scale = 1]
		\draw[->] (-1,0) -- (5,0) node[below right]{$\mu+\alpha$};
		\draw[->] (0,0) node[below]{$0$} -- (0,4) node[left]{$D\left(\xi^\PMF\right)$};
		\draw[dashed] (0,0) -- (4,4);
		\draw[thick] (-1,1) to [out=10,in=225] (4,4);
		\draw[very thick, red, dashed] (-1,1) to [out=10,in=225] (4,4);
		\end{tikzpicture}
		\caption{$d=1,2$}
		\label{fig:PMFdensity12}
	\end{subfigure}
	\begin{subfigure}[b]{0.45\textwidth}
		\begin{tikzpicture}[scale = 1]
		\draw[->] (-1,0) -- (5,0) node[below right]{$\mu+\alpha$};
		\draw[->] (0,0) node[below]{$0$} -- (0,4) node[left]{$D\left(\xi^\PMF\right)$};
		\draw[dashed] (3,0) node[below]{$a\varrho_\mathrm{c}$}--(3,4);
		\draw[dashed] (0,0) -- (3,3);
		\draw[dashed] (3,3) -- (-1,3) node[left]{$\varrho_\mathrm{c}$};
		\draw[thick] (-1,1) to [out=10,in=225] (3,3) -- (5,3);
		\draw[very thick, red, dashed] (-1,1) to [out=10,in=225] (3,3) -- (4,4);
		\end{tikzpicture}
		\caption{$d\geq3$}
		\label{fig:PMFdensity>=3}
	\end{subfigure}
	\caption{Total particle density of the zero of $I^\PMF_{\mu,\alpha}$. The limiting expected particle density (including the condensate) only differs for $\mu+\alpha>a\varrho_{\mathrm{c}}$, where it follows the dashed plot.}
	\label{fig:PMFdensity}
\end{figure}

\bigskip

\noindent \textbf{Conclusion.}  The BEC phase transition is established in various equivalent ways, in Theorem~\ref{thm:PMFcondensate} it is shown that the excess particle density is carried by so-called loops of unbounded length. Alternatively, Proposition~\ref{P:densPMF} and Proposition~\ref{P:fPMF} establish the phase transition via the change of the pressure density relation. The advantage of our LDP approach is that the rate function has unique zero and not an approximating sequence of minimiser. This is due to the fact that we are using the lower semicontinuous regularisation of the energy proving the large deviation principle. A close inspection of Figure~\ref{fig:PMFdensity}
reveals this. For $ d\ge 3 $, we know that $ a\varrho_{\rm c} <\infty $, and thus the density of the zero  of the rate function is constant for all $ \mu+\alpha \ge a\varrho_{\rm c} $. In this region, the total particle density is the dashed line intersecting the point $ (a\varrho_{\rm c},\varrho_{\rm c}) $. The so-called condensate density is then $ (\frac{\mu+\alpha}{a}-\varrho_{\rm c})_+ =\Delta^\PMF(\beta,\mu,\alpha) $.

\subsubsection{Thermodynamics of the HYL model}\label{sec-thermHYL}

We collect our findings for the HYL model. Note that our results hold for all parameter $ a>b $ wheres the ones in \cite{BLP} apply only to $ a=2b$, and we can dispense a technical assumption necessary in \cite{BLP}.
We first identify  the sub-critical regime where the expected physical particle density equals the density of the possible zeroes of the HYL rate function.

\begin{prop}\label{P:HYLsub}
	There exists $\tilde{\mu} = \tilde{\mu}\left(d,\beta,a,b\right)\in\R$ such that for $\mu+\alpha<\tilde{\mu}$, $p^\HYL\left(\beta,\mu,\alpha\right)$ is smooth and convex in $\mu$. In particular,
	\begin{align*}
		\frac{\d p^\HYL}{\d \mu} = D\left(\xi^\HYL\right)
	\end{align*}
	for this range of $\mu$. $\xi^\HYL$ is given in Proposition~\ref{HYLboringcase}.
\end{prop}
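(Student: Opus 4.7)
The plan is to combine the variational representation of $p^\HYL$ from Theorem~\ref{THM-pressure} with the uniqueness and explicit form of the minimizer provided by Proposition~\ref{HYLboringcase}. Writing $F(\mu,x) := I(x) + \beta H^\HYL_{\mu+\alpha,l.s.c.}(x)$, we have
\[
p^\HYL(\beta,\mu,\alpha) = p(\beta,0) - \inf_{x\in\ell_1(\R)} F(\mu,x),
\]
and for $\mu+\alpha<\tilde\mu$ the infimum is attained uniquely at $\xi^\HYL$, with $\delta^*=D(\xi^\HYL)$ the unique fixed point of $g^0$.

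Convexity follows directly from the representation. For each fixed $x$, the $\mu$-dependence of $H^\HYL_{\mu+\alpha,l.s.c.}(x)$ enters only through the term $-\frac{1}{2(a-b)}(\mu+\alpha-aD(x))_+^2$, i.e.\ the negative of a nonnegative convex function of $\mu$. Hence $\mu\mapsto F(\mu,x)$ is concave for each $x$, so $\mu\mapsto -F(\mu,x)$ is convex, and $p^\HYL(\beta,\cdot,\alpha)$ is a supremum of convex functions, hence convex.

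For smoothness and the derivative identity I would proceed in two substeps. First, I would argue that after possibly decreasing $\tilde\mu$, the strict inequality $a\delta^* > \mu+\alpha$ holds throughout the regime, which places $\xi^\HYL$ strictly in the first branch of $H^\HYL_{\mu+\alpha,l.s.c.}$, where the regularization coincides with $H^\HYL_{\mu+\alpha}$ and $\partial_\mu H^\HYL_{\mu+\alpha}(x) = -D(x)$. This is consistent with the $W_0$ branch selected in Proposition~\ref{HYLboringcase} and the geometry sketched in Figure~\ref{fig:gHYL}. Second, I would apply the implicit function theorem to the fixed-point relation $\delta = g^0(\delta)$. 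Differentiating through the Lambert branch using $W_0'(z)=W_0(z)/[z(1+W_0(z))]$ shows $\partial_\delta g^0(\delta^*)<0<1$, providing the non-degeneracy needed to conclude that $\mu\mapsto\delta^*$, and hence each component $\xi^\HYL_k$ and the sum $D(\xi^\HYL)$, depends smoothly on $\mu$. The envelope theorem applied at this smooth minimizer then yields the claimed identity $\partial_\mu p^\HYL = D(\xi^\HYL)$.

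The principal obstacle is the non-degeneracy check $\partial_\delta g^0(\delta^*)<1$. Term-by-term differentiation of the infinite series defining $g^0$ is permissible only after justifying the interchange of differentiation and summation, and the resulting series must then be controlled uniformly in $\mu+\alpha<\tilde\mu$. The monotonicity of $g^0$ visible in Figure~\ref{fig:gHYL} strongly suggests $\partial_\delta g^0(\delta^*)<0$, so the bound is qualitative in nature, but producing a clean uniform estimate and controlling the tail of the sum is the substantive work. Once this is in hand, convexity together with smoothness of the minimizer and the envelope identity complete the proof.
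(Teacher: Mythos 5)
Your proof is correct and, for the derivative identity, follows essentially the same route as the paper: both invoke the envelope result of Lemma~\ref{diffPressureTech} applied to the minimiser $\xi^\HYL$ of Proposition~\ref{HYLboringcase}, with the observation that $a\delta^*>\mu+\alpha$ in this regime, so that the regularisation term in $H^\HYL_{\mu+\alpha,l.s.c.}$ vanishes near $\xi^\HYL$ and $\partial_\mu H^\HYL_{\mu+\alpha,l.s.c.}(\xi^\HYL)=-D(\xi^\HYL)$. Where you go beyond the paper is in making explicit the hypothesis of Lemma~\ref{diffPressureTech} that $\xi^\HYL$ be $C^1$ in $\mu$: you obtain this by applying the implicit function theorem to the scalar consistency equation $\delta=g^0(\delta)$, using that $(g^0)'(\delta^*)<0$ on the decreasing branch so the relevant Jacobian $1-(g^0)'(\delta^*)$ is bounded away from zero. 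The paper's proof leaves this (and the term-by-term differentiation and tail control you flag) implicit, so your write-up genuinely fills a gap. Your convexity argument, by contrast, is a genuinely \emph{different} route: you observe that for each fixed $x$ the map $\mu\mapsto H^\HYL_{\mu+\alpha,l.s.c.}(x)$ is affine minus $\frac{1}{2(a-b)}(\mu+\alpha-aD(x))_+^2$, hence concave, so $p^\HYL$ is a supremum over $x$ of convex functions of $\mu$. This is more elementary than the paper's argument (which deduces convexity a posteriori from the derivative formula plus a monotonicity claim about $D(\xi^\HYL)$ that is not itself verified), and your argument in fact yields global convexity of $p^\HYL(\beta,\cdot,\alpha)$, not merely convexity on $\mu+\alpha<\tilde\mu$. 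Both proofs reach the statement, but yours is tighter on the smoothness step and cleaner on convexity.
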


\bigskip

The following proposition shows that, for dimension $ d=3, 4$ and large $ \beta $ depending on the value for the counter energy term with pre-factor $ b$, the pressure is no longer smooth and thus the density pressure relation is void, signalling some critical behaviour. The analysis is complex, and in future work \cite{AD2018b} we hope to identify this regime more explicitly.

\bigskip

\begin{prop}\label{HYLpressure}
	For $d=3,4$, $a>b>0$ and $\beta \geq \beta^* = \left(\frac{b^2\ex^2}{\left(4\pi\right)^d}\right)^\frac{1}{d-2}$, the pressure $p^\HYL\left(\beta,\cdot,\alpha\right)\notin C^1\left(\R\right)$.
\end{prop}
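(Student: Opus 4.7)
\begin{proofsect}{Proof plan for Proposition~\ref{HYLpressure}}

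The pressure $p^{\HYL}(\beta,\cdot,\alpha)$ is convex in $\mu$ (as a pointwise limit of the logarithms of Laplace transforms in $\mu$), so failure of $C^1$-regularity is equivalent to a jump in the one-sided derivatives at some $\mu_0$. By Theorem~\ref{THM-pressure},
\begin{equation*}
\beta p^{\HYL}(\beta,\mu,\alpha) = \beta p(\beta,0)-\inf_{x\in\ell_1(\R)}\bigl\{I(x)+\beta H^{\HYL}_{\mu+\alpha,l.s.c.}(x)\bigr\},
\end{equation*}
and for each $\mu$ the dependence of the bracketed functional on $\mu$ is $C^1$ (indeed linear on each of the two regions $D(x)\lessgtr (\mu+\alpha)/a$), so by the standard envelope argument non-differentiability of $p^{\HYL}$ at $\mu_0$ must come from a discontinuous change of the \emph{set of minimisers} at $\mu_0$. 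The plan is therefore to exhibit a value $\mu_0$ where two distinct minimisers coexist, one on each side of the kink at $D(x)=(\mu+\alpha)/a$, and to show the total density $D(\cdot)$ of the global minimiser jumps as $\mu$ crosses $\mu_0$.

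First I would reduce the infinite-dimensional variational problem to a one-dimensional problem in the cumulative density $\delta=D(x)$. Fixing $\delta\ge 0$ and minimising $I(x)+\beta H^{\HYL}_{\mu+\alpha,l.s.c.}(x)$ over $\{x\in\ell_1(\R_+):D(x)=\delta\}$ reproduces, via the Euler--Lagrange analysis already used for Propositions~\ref{zeroHYL}--\ref{HYLboringcase}, the fixed-point equation $\delta=g^{\chi}(\delta)$ of \eqref{eqn:HYLconsistency}. Call $\Phi(\delta;\mu)$ the reduced objective; then
\begin{equation*}
\inf_x\bigl\{I(x)+\beta H^{\HYL}_{\mu+\alpha,l.s.c.}(x)\bigr\}=\inf_{\delta\ge 0}\Phi(\delta;\mu),
\end{equation*}
and the stationary points of $\Phi(\cdot;\mu)$ are exactly the fixed points of $g^{\chi}$. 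Because $H^{\HYL}_{\mu+\alpha,l.s.c.}$ has a cusp at $D(x)=(\mu+\alpha)/a$, so does $\Phi(\cdot;\mu)$ at $\delta=(\mu+\alpha)/a$; the sketch in Figure~\ref{fig:gHYL} shows that $g^0$ has a jump upwards at this point, which is the mechanism driving the non-smoothness.

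Next I would analyse the branch structure at $\beta=\beta^*$. The argument of $W_{\chi_j}$ at the cusp $\delta=(\mu+\alpha)/a$, evaluated at $j=1$, equals $-b\beta q_1^{\ssup 0}=-b\beta/(4\pi\beta)^{d/2}$; a direct computation shows that for $d=3,4$ this hits the critical value $-\ex^{-1}$ (the branching point of the real Lambert function) exactly when $\beta=\beta^*=(b^2\ex^2/(4\pi)^d)^{1/(d-2)}$, and exceeds it in modulus for $\beta>\beta^*$. In this regime the restriction $x_1\le 1/(b\beta)$ needed for a finite minimum becomes binding on the $\delta<(\mu+\alpha)/a$ branch, so that as $\mu$ is increased from $-\infty$ the ``low-density'' fixed point of $g^0$ (on the left of the cusp) coexists, over an interval of $\mu$, with a second fixed point above the cusp produced by the upward jump of $g^0$. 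Monotonicity in $\mu$ of each of the two local minima of $\Phi(\cdot;\mu)$, combined with the fact that the low-density branch remains bounded in $\delta$ while the high-density branch satisfies $\delta\ge (\mu+\alpha)/a\to\infty$, forces the two values $\Phi(\delta_{\rm low}(\mu);\mu)$ and $\Phi(\delta_{\rm high}(\mu);\mu)$ to cross at some $\mu_0$.

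Finally, at such a crossing point $\mu_0$ the global minimiser jumps from $\delta_{\rm low}(\mu_0^-)$ to $\delta_{\rm high}(\mu_0^+)>\delta_{\rm low}(\mu_0^-)+(\mu_0+\alpha)/a-\delta_{\rm low}(\mu_0^-)>0$, and the envelope formula gives
\begin{equation*}
\frac{\partial p^{\HYL}}{\partial\mu}(\beta,\mu_0^\pm,\alpha)=-\frac{1}{\beta}\partial_\mu H^{\HYL}_{\mu_0+\alpha,l.s.c.}\bigl(\xi^{\HYL}(\mu_0^\pm)\bigr)
\end{equation*}
with the two one-sided values corresponding to the two different branches. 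Since these are distinct whenever the density jumps, $p^{\HYL}(\beta,\cdot,\alpha)\notin C^1(\R)$. The main obstacle is not the envelope argument (which is routine once uniqueness fails) but the branch analysis in the previous paragraph: one must verify rigorously that the two local minima persist over a non-trivial $\mu$-interval, that their values cross rather than merge, and that no third branch from $\chi_j=-1$ for some $j\ge 2$ pre-empts the transition. For $d=3,4$ the decay $q_j^{\ssup 0}=O(j^{-1-d/2})$ makes $j=1$ the only relevant branching index, and the explicit monotonicity of $W_0$ and $W_{-1}$ on $[-\ex^{-1},0)$ is what ultimately makes the comparison tractable.

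\end{proofsect}
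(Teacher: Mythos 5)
Your top-level strategy — convexity of $p^{\HYL}$ in $\mu$, envelope form of the derivative, and non-smoothness from a discontinuous change of the global minimiser — matches the paper's, and you correctly identify that $\beta^*$ is the value at which $-b\beta j^2 q_j^{\ssup{0}}$ reaches the Lambert branch point $-e^{-1}$ for $j=1$ when $d=3,4$. But the way you try to exhibit coexisting minimisers has several concrete errors. First, the inequality at $\beta>\beta^*$ is backwards: since $b\beta j^2 q_j^{\ssup{0}}=b\beta^{1-d/2}j^{1-d/2}(4\pi)^{-d/2}$ and $1-d/2<0$ for $d\geq 3$, the modulus of the Lambert argument is \emph{decreasing} in both $\beta$ and $j$, so for $\beta\geq\beta^*$ it is bounded above by $e^{-1}$, not below. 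This is exactly what the paper uses — for $\beta\geq\beta^*$ the arguments never fall below $-e^{-1}$, so $g^\chi$ is defined for every $\delta$. Your conclusion that "the restriction $x_1\leq 1/(b\beta)$ becomes binding" therefore doesn't follow and isn't meaningful anyway: $x_1\lessgtr 1/(b\beta)$ is just the choice between the $W_0$ and $W_{-1}$ branches. Second, the claim that a crossing is forced because "the high-density branch satisfies $\delta\geq(\mu+\alpha)/a\to\infty$" is wrong; the paper shows the multi-valued regime of $g^0$ is confined to a \emph{bounded} interval $[\underline{\mu},\bar{\mu}]$, and establishes $P^2(\bar{\mu})>P^0(\bar{\mu})$ by computing $dP^j/d\mu$ on that interval and matching $P^1$ with $P^0$ at $\bar{\mu}$ and $P^1$ with $P^2$ at $\underline{\mu}$. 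Nothing diverges.

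Third, and most importantly, you do not actually show that the two branches \emph{cross} rather than \emph{merge}, which is the entire content of the proof. The paper sets $\hat{\mu}=\inf\{\mu\leq\bar{\mu}:\exists j,\;P^j(\mu)>P^0(\mu)\}$ and handles both cases: if $\hat{\mu}$ is attained, $\max_j P^j$ is discontinuous; if not, it proves a strict kink by showing $\lim_{\mu\downarrow\hat{\mu}}dP^J/d\mu > \lim_{\mu\uparrow\hat{\mu}}dP^0/d\mu$, using the trichotomy $D^J\ne\mu/a$ (from $g^\chi\geq g^0$ with equality only at $\chi=0$ or $\beta=\beta^*$, $\delta=\mu/a$) and the geometric inequality $D^0-\mu/a<\frac{b}{a-b}(\mu/a-D^1)$ coming from the symmetry of the argument of $W$ about $\delta=\mu/a$ (the paper's \eqref{RelDensity} and Figure~\ref{fig:HYL-ineq}). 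Your assertion that $j=1$ is the "only relevant branching index" also misses that $W_{-1}$ is available at every $j$ (all arguments lie in $(-e^{-1},0)$); finitely many $\chi_j=-1$ are allowed, bounded not by branch-point location but by divergence of $\sum j^{-1}W_{-1}(\cdot)$, and all such $\chi$ must be ruled out of a role at $\hat{\mu}$. These are the steps your plan skips, and without them the proposal does not deliver the proposition.
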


\bigskip

	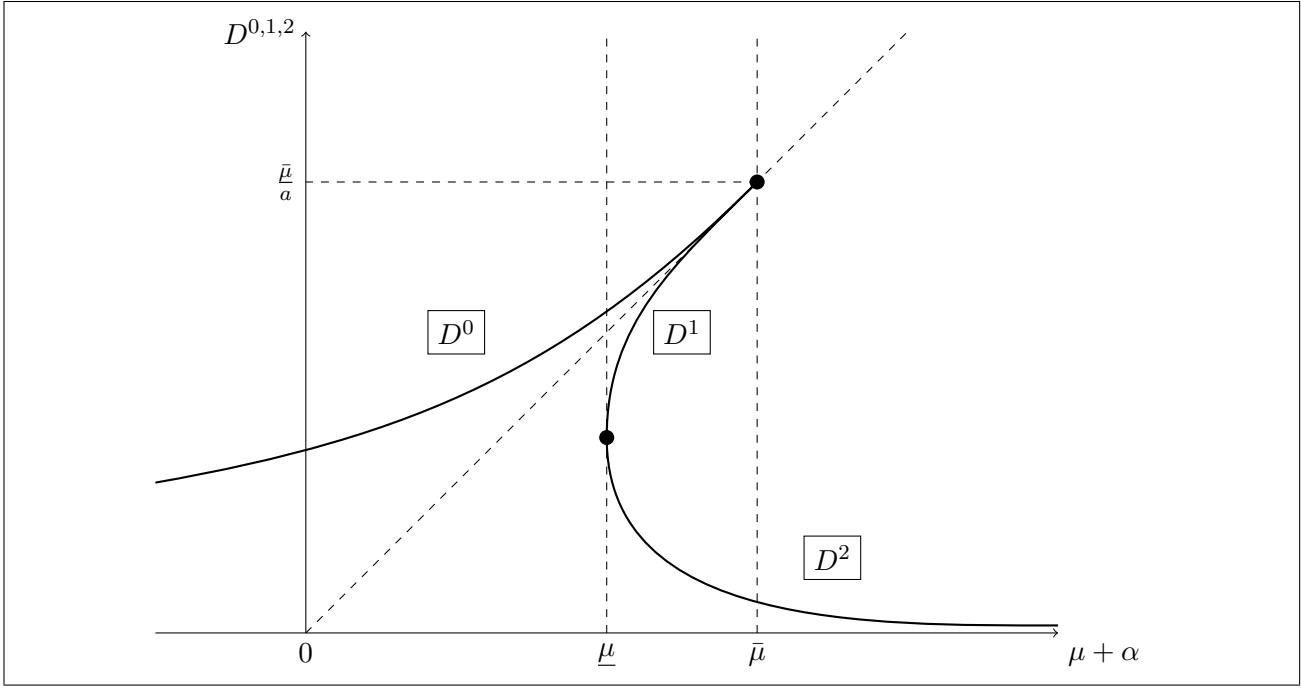
\begin{figure}
		\centering
		\begin{tikzpicture}[scale = 2]
		\draw[->] (-1,0) -- (5,0) node[below right]{$\mu+\alpha$};
		\draw[->] (0,0) node[below]{$0$} -- (0,4) node[left]{$D^{0,1,2}$};
		\draw[dashed] (3,0) node[below]{$\bar{\mu}$}--(3,4);
		\draw[dashed] (2,0) node[below]{$\underline{\mu}$}--(2,4);
		\draw[dashed] (0,0) -- (4,4);
		\draw[thick] (-1,1) to [out=10,in=225] (3,3) to [out=225,in=90] (2,1.3) to [out=270,in=180] (5,0.05);
		\fill (2,1.3) circle (0.05);
		\fill (3,3) circle (0.05);
		\draw[dashed] (3,3) -- (0,3) node[left]{$\frac{\bar{\mu}}{a}$};
		\node[draw] at (1,2) {$D^0$};
		\node[draw] at (2.5,2) {$D^1$};
		\node[draw] at (3.5,0.5) {$D^2$};
		\end{tikzpicture}
		\caption{Sketch of the total particle density of the three $\chi=0$ solutions for $d=3,4$, $\beta\geq \beta^*$.}
		\label{fig:HYL3sols}
	\end{figure}

	\bigskip
	
	The following result shows that the condensate density has a limit for certain regimes of thermodynamic parameter $ \beta $ and $ \mu $ and energy parameter $ a $ and $ b$. Explicit expressions for the minimiser and condensate density in the critical regime are not available. We think this can be achieved with a combined density and cycle count large deviation principle in \cite{AD2018b}.
	
	\medskip

	\begin{theorem}\label{THM:HYL-Cond}
	For $\beta>0$, $\mu\in\R$, $\alpha\leq0$, where the derivative is defined,
	\begin{equation}\label{HYLcondenstate1}
	\begin{aligned}
	\Delta^\HYL\left(\beta,\mu,\alpha\right)& := \lim_{K\to\infty}\lim_{N\to\infty} \E_{\nu_{N,\mu,\alpha}^\HYL}\left[D-D_K\right] \\ &= -\lim_{K\to\infty}\left.\frac{\d}{\d s}\left(\inf_{\ell_1}\left\{\frac{1}{\beta}I^\HYL_{\mu+s,\alpha} + s D_K\right\} - p^\HYL\left(\beta,\mu+s,\alpha\right)\right)\right|_{s=0}.
	\end{aligned}
	\end{equation}
	In particular, if the infimum is achieved by $\xi\left(s\right)\in C^1\left(\left(-\epsilon,\epsilon\right):\ell_1\right)$ for some $\epsilon>0$, then
	\begin{equation}\label{HYLcondenstate2}
	\Delta^\HYL\left(\beta,\mu,\alpha\right) = \frac{a}{a-b}\left(\frac{\mu+\alpha}{a} - D\left(\xi\left(0\right)\right)\right)_+ = \left(\frac{\partial}{\partial \mu}\left(H^\HYL_{\mu+\alpha} - H^\HYL_{\mu+\alpha,l.s.c.}\right)\right)\left(\xi\left(0\right)\right).
	\end{equation}
\end{theorem}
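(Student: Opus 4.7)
\textbf{Proof proposal for Theorem~\ref{THM:HYL-Cond}.}
The plan is to realise $\Delta^\HYL$ as the $s$-derivative of a tilted log-partition function, pass to the thermodynamic limit via the HYL LDP, and then use an envelope argument together with the explicit piecewise form of $H^\HYL_{\mu+\alpha,l.s.c.}$ from Theorem~\ref{Thm:HYL} to identify the condensate with the $\left(\,\cdot\,\right)_+$-expression on the right-hand side of \eqref{HYLcondenstate2}.

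\textbf{Tilt and thermodynamic limit.} For a parameter $s \in \R$ and fixed cut-off $K \in \N$, I would introduce
\begin{equation*}
\tilde Z_N^\HYL(\beta,\mu,s,\alpha) := \E_{\nu^{\ssup{\bc}}_{N,\alpha}}\Big[\exp\Big(-\beta|\L_N|\bigl(H^\HYL_{\mu+s}(x) + s D_K(x)\bigr)\Big)\Big].
\end{equation*}
The algebraic identity $-\beta\bigl(H^\HYL_{\mu+s}(x) + sD_K(x)\bigr) = -\beta H^\HYL_\mu(x) + \beta s\bigl(D(x) - D_K(x)\bigr)$ is linear in $s$, so
\begin{equation*}
\E_{\nu_{N,\mu,\alpha}^\HYL}[D - D_K] \;=\; \frac{1}{\beta|\L_N|}\frac{d}{ds}\bigg|_{s=0}\log \tilde Z_N^\HYL(\beta,\mu,s,\alpha),
\end{equation*}
and the map $s \mapsto |\L_N|^{-1}\log \tilde Z_N^\HYL$ is convex in $s$. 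Since $D_K$ is a continuous linear functional on $\ell_1(\R)$, Varadhan's lemma applied to the HYL LDP (Theorem~\ref{Thm:HYL}) together with the pressure formula \eqref{p-hyl} yields
\begin{equation*}
\lim_{N\to\infty}\frac{1}{\beta|\L_N|}\log\tilde Z_N^\HYL(\beta,\mu,s,\alpha) \;=\; p^\HYL(\beta,\mu+s,\alpha) \;-\; \inf_{x\in\ell_1(\R)}\Bigl\{\tfrac{1}{\beta}I^\HYL_{\mu+s,\alpha}(x) + s D_K(x)\Bigr\}.
\end{equation*}
Griffiths' lemma then permits the exchange of $\lim_N$ and $d/ds$ at any $s=0$ where the right-hand side is differentiable, and sending $K \to \infty$ produces the representation \eqref{HYLcondenstate1}.

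\textbf{Envelope computation and case split.} Under the hypothesis $\xi(s) \in C^1\bigl((-\epsilon,\epsilon);\ell_1\bigr)$, the envelope theorem collapses the $s$-derivative of the infimum at $s=0$ to the partial $s$-derivative of the integrand evaluated at $x = \xi(0)$. Using the explicit piecewise form of $H^\HYL_{\mu+\alpha,l.s.c.}$ recorded in Theorem~\ref{Thm:HYL}, a direct differentiation gives
\begin{equation*}
\frac{\partial H^\HYL_{\mu+s+\alpha,l.s.c.}}{\partial s}(x)\bigg|_{s=0} = \begin{cases} -D(x), & D(x) \ge \tfrac{\mu+\alpha}{a},\\[0.8ex] \dfrac{bD(x) - (\mu+\alpha)}{a-b}, & D(x) < \tfrac{\mu+\alpha}{a}.\end{cases}
\end{equation*}
Substituting $x = \xi(0)$, adding the contribution $D_K(\xi(0))$ coming from differentiating the $sD_K$ term, negating, and finally sending $K \to \infty$ so that $D_K(\xi(0)) \to D(\xi(0))$, the upper branch collapses to $0$ and the lower branch collapses to $\tfrac{a}{a-b}\bigl(\tfrac{\mu+\alpha}{a}-D(\xi(0))\bigr)$, merging into the $\left(\,\cdot\,\right)_+$-form. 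The final equality in \eqref{HYLcondenstate2} is verified by direct computation: $\partial_\mu H^\HYL_{\mu+\alpha}(x) = -D(x)$ while $\partial_\mu H^\HYL_{\mu+\alpha,l.s.c.}(x)$ reproduces the piecewise display above, so their difference equals $\tfrac{a}{a-b}\bigl(\tfrac{\mu+\alpha}{a}-D(x)\bigr)_+$ at $x = \xi(0)$.

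\textbf{Main obstacle.} The principal technical hurdle is the exchange of $d/ds$ with $\lim_N$ in the second display above: convexity of $s \mapsto |\L_N|^{-1}\log\tilde Z_N^\HYL$ reduces this to differentiability of the limit at $s=0$, which by Proposition~\ref{P:HYLsub} holds throughout the sub-critical regime $\mu+\alpha < \tilde\mu$, but which can fail in general by Proposition~\ref{HYLpressure}. This is exactly why the statement is qualified ``where the derivative is defined.'' A secondary obstacle is controlling the interchange of $K\to\infty$ with the infimum, which requires stability of the minimiser family under the $sD_K$-perturbation so that $D_K(\xi(0))\to D(\xi(0))$ may be moved inside the envelope identity; this is precisely the role played by the $C^1$ hypothesis on $s\mapsto \xi(s)$ in \eqref{HYLcondenstate2}, and a full unconditional treatment beyond the sub-critical regime appears to require the combined density and cycle-count LDP announced for \cite{AD2018b}.
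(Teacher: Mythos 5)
Your proposal follows the same route as the paper: tilt the measure by $s\beta(D-D_K)$, invoke the HYL LDP via Varadhan's Lemma to identify the thermodynamic limit with an infimum plus a pressure term, pass the $s$-derivative through the limit by Griffith's Lemma using convexity, and then compute the derivative of the infimum via the stationarity of the minimiser (the paper's Lemma~\ref{diffPressureTech}, which you call the envelope theorem). Your explicit differentiation of the piecewise form of $H^\HYL_{\mu+\alpha,l.s.c.}$, your handling of the $+D_K(\xi(0))$ term, and your verification of the final identity $\frac{a}{a-b}\bigl(\frac{\mu+\alpha}{a}-D(\xi(0))\bigr)_+ = \partial_\mu\bigl(H^\HYL_{\mu+\alpha}-H^\HYL_{\mu+\alpha,l.s.c.}\bigr)(\xi(0))$ are all correct, and your identification of the differentiability of the limiting pressure as the qualification behind ``where the derivative is defined'' matches the paper's intent.

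The one ingredient you omit that the paper explicitly singles out is the verification of Varadhan's moment condition for the unbounded tilt $\phi=-s\beta D_K$ when $s<0$. In the PMF proof (Theorem~\ref{thm:PMFcondensate}) this is the display labelled \eqref{VaradhanConstraint}: it requires bounding $\nu^\PMF_{N,\mu+s}(\phi=m)$ using the quadratic growth of $H^\PMF_{\mu+s,l.s.c.}$ in $D$, and the paper's proof of Theorem~\ref{THM:HYL-Cond} reduces the HYL version of this check to a single inequality, $H^\HYL_{\mu,l.s.c.}\geq H^\PMF_{\mu,a-b,l.s.c.}$ pointwise on $\ell_1(\R_+)$, so that the PMF tail estimate (with interaction strength $a-b$) carries over verbatim. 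Applying Varadhan without this step is not justified, because $D_K$ is unbounded above and one needs superlinear decay of the upper-tail probability of $D_K$ under the tilted measure. You correctly flag the interchange of $\lim_N$ and $d/ds$ as needing care (this is the Griffith's step), but that is a separate concern from whether the Varadhan limit for each fixed $s$ exists and is given by the variational formula you write; your proposal conflates the two. Inserting the explicit tail bound with the inequality $H^\HYL_{\mu,l.s.c.}\geq H^\PMF_{\mu,a-b,l.s.c.}$ would close the gap and bring your argument into full agreement with the paper.
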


\bigskip

\noindent \textbf{Conclusion.}  The BEC phase transition for the HYL model  is established as follows. Proposition~\ref{P:HYLsub} establishes a subcritical regime showing that for all $ \mu+\alpha\le \tilde{\mu} $ the pressure is smooth and its derivative gives the particle density with no condensation.    
In Proposition~\ref{HYLpressure} we identify a regime for the inverse temperature where the pressure-density relation is broken. Depending on the density $ D(\xi(0)) $, for large enough $ \mu $ the particle density in loops of unbounded length is not vanishing. In Figure~\ref{fig:HYL3sols} we can identify the regime $ \mu+\alpha\ge \bar{\mu} $ when the density of the zero of the rate function is decreasing with $ \mu $ such that the excess density is carried by loops of unbounded length. If we choose $ a=2b $ in \eqref{HYLcondenstate2}, we can recover the results in \cite{Lew86} and \cite{BLP}. 
It shows in fact, that for increasing values of the coupling parameter $ a $, the condensate density decreases. On the other hand, if the parameter for the counter energy term, $ b $, is approaching $ a $, the condensate density increases. This is due to the fact that with large counter terms the system distributes the physical particles in as few as possible different cycles lengths. To accommodate the particle density, the only way is to put them in infinitely long cycles.  Our analysis actually shows that the BEC phase transition for HYL is more complex and requires further detailed study, see \cite{AD2018b}.

\subsection{Relevance and discussion}\label{sec-BEC}

\noindent One of the most prominent open problems in mathematical physics is the understanding of  {\em Bose-Einstein condensation (BEC)}, a phase transition in an interacting many-particle system when a  sufficiently low temperature is reached. That is, a macroscopic part of the system condenses to a state which is highly correlated and coherent.  The first experimental realisation of BEC was only in 1995, and it has been awarded with a Nobel prize. In spite of an enormous research activity, this phase transition has withstood a mathematical proof yet. Only partial successes have been achieved, like the description of the free energy of the ideal, i.e., non-interacting, system (already contained  in Bose's and Einstein's seminal paper in 1925) or the analysis of mean-field models (e.g. \cite{Toth90, DMP05,AD06}) or  the analysis of dilute systems at vanishing temperature \cite{LSSY05} or the proof of BEC in lattice systems with half-filling \cite{LSSY05}. However, the original problem for non-vanishing expected particle density and temperature is still waiting for a promising attack.  The main purpose of the present paper is to provide representation formulae for the thermodynamic limit of the pressure in the so-called grand canonical ensemble and to show onset of BEC using these representations in various interaction models.  We study first the well-known  ideal Bose gas and its novel cousin  the CMF model. The CMF model shows onset of BEC as the ideal Bose gas only for different critical values. We then analyse the well-known PMF model but we allow for strictly positive chemical potential. To prove the large deviation principle for the PMF model it is necessary to consider the lower semicontinuous regularisation of the mean-field energy term. This regularisation has not been done in the literature yet, see \cite{BCMP05}, and it allows to find a minimum for the pressure representation for any set of parameters (inverse temperature, chemical potential and coupling constant), and thus establishes a rigorous and streamlined approach improving all previous studies of particle mean-field models which have to consider sequences of approximating minimisers. 
Our interaction potentials take also negative values for the HYL model. Our analysis for the HYL model shows that the BEC is more complex with the negative counter term and that it depends on the interaction parameter $ a $ and $ b $ as well as the thermodynamic parameter.

The mathematical description of bosons is in terms of the symmetrised trace of the negative exponential of the corresponding Hamiltonian times the inverse temperature. The symmetrisation creates long range correlations of the interacting particles making the analysis an extremely challenging endeavour. The Feynman-Kac formula gives, in a natural way, a representation in terms of an expansion with respect to the cycles of random paths. It is conjectured by Feynman \cite{F53} that BEC is signalled by the decisive appearance of  a macroscopic amount of \lq infinite\rq\ cycles, i.e. cycles whose lengths diverge with the number of particles. This phenomenon is also signalled by a loss of probability mass in the distribution of the \lq finite\rq\ cycles. See  \cite{S02}  for proofs of this coincidence in the ideal Bose gas and some mean-field models. A different line of research is studying the effect of the symmetrisation in random permutation and random partition models, see \cite{Ver96}, \cite{BCMP05}, \cite{AD06,AK08,A08}, or in spatial random permutation models going back to \cite{F91}.

In the present paper, we prove large deviation principles for all our models, and as such generalise recent work in \cite{ACK} where upper and lower large deviation bounds do not match.  In current work \cite{AD2018b}, we analyse the details of the HYL model and its BEC transition in more depth extending \cite{BLP} in a significant way. In future work, our aim is to establish level-3 large deviation principles for Lennard-Jones type potentials in the grand canonical ensemble for pressure representation formulae.

The methods used in the present paper are mainly probabilistic and are extensions and adaptations of recent work \cite{ACK}. Our starting point is the well-known Feynman-Kac formula, which translates the partition function in terms of an expectation over a large symmetrised system of interacting Brownian bridge paths. In a second step, which is also well-known, we reduce the combinatorial complexity by concatenating the bridges, using the symmetrisation. The novelty of the approach in \cite{ACK} is a reformulation of this system in terms of an expectation with respect to a {\em marked Poisson point process}, which serves as a reference process. This is a Poisson process in the space $\R^d$ to whose particles we attach cycles called marks, starting and ending at that particle. The symmetrisation is reflected by an {\it a priori} distribution of cycle lengths. 

Approaches to Bose gases using point processes have occasionally been used in the past (see \cite{F91} and the references therein) and also in \cite{R09}, but systems with interactions have not yet been considered using this technique, to the best of our knowledge. 

The greatest advantage of this approach is that it is amenable to a large deviations analysis. The central objects here are  the {\it empirical cycle count} and the {\it empirical particle density} of the marked point process. For some class of interacting systems, this direction of research was explored in \cite{GZ93,G94}. In the present paper, we apply these ideas to the more difficult case of the interacting Bose gas.

\section{Proof of the Large deviations principles}\label{sec-LDPproofs}
This section contains the proofs for all large deviation principles. In Section~\ref{sec-Ideal} we establish the  LDP for the ideal Bose gas model. We use this large deviation principle   as a stepping stone towards arriving at LDPs for our interaction models. Section~\ref{sec-CMF}  gives the proof for the cycle mean-field (CMF) LDP, Section~\ref{sec-PMF} for particle mean-field (PMF) LDP, and Section~\ref{sec-HYL} the hard-core (HYL) LDP.

\subsection{Proof of Proposition~\ref{THM-Ideal} - Ideal Bose Gas LDP}\label{sec-Ideal}

\begin{remark}
	The condition that $\mu\leq 0$ arises from the $\overline{q}^{\ssup{\mu}}$ term. Our reference marked Poisson point process is superposition of independent marked Poisson point processes on $ \R^d\times \Ccal_k $ with intensity measure given in \eqref{nudef}. The superposition is itself a marked Poisson point process if and only if $ \overline{q}^{\ssup{\mu}} <\infty $.  Clearly, $\overline{q}^{\ssup{\mu}}$ is finite if and only if $\mu\leq0$. \hfill $ \diamond $
\end{remark}

Our derivation of this LDP will be based on applying Baldi's Theorem. We recall that theorem for the convenience of the reader in the following lemma.
\begin{lemma}[\textbf{Baldi's Theorem}]\label{Baldi}
	Suppose $(\nu_N)_{N\in\N} $ is an exponentially tight sequence of measures on $\ell_1(\R)$. Let $\L\colon\ell_\infty(\R)\to [0,\infty] $ be the limiting cumulant generating function, and suppose that it exists and is finite for $t\in\ell_\infty(\R)$. If $\L$ is G{\^a}teaux differentiable, and lower semicontinuous on $\ell_\infty(\R)$, then $(\nu_N)_{N\in\N} $ satisfies an LDP with rate function
	\begin{equation}
		\L^*(x) = \sup_{t\in\ell_\infty(\R)}\big\{\langle t,x\rangle - \L(t)\big\},\qquad x\in\ell_1(\R).
	\end{equation}
\end{lemma}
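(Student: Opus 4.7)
The plan is to establish the large deviation upper and lower bounds separately via the classical Gärtner--Ellis/Cramér machinery, adapted to the topological pairing $(\ell_1(\R), \ell_\infty(\R))$. Throughout, set $\L_N(t) := \frac{1}{|\Lambda_N|}\log \E_{\nu_N}[\exp(|\Lambda_N|\langle t, \cdot\rangle)]$, so by hypothesis $\L_N(t) \to \L(t)$ pointwise on $\ell_\infty(\R)$.

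For the upper bound, the assumed exponential tightness reduces matters to compact subsets $K \subset \ell_1(\R)$. Given such a $K$ and $\delta > 0$, for each $x \in K$ pick $t_x \in \ell_\infty(\R)$ realising $\langle t_x, x\rangle - \L(t_x) \geq \min\{\L^*(x), \delta^{-1}\} - \delta$. Markov's inequality applied to $\exp(|\Lambda_N|\langle t_x, \cdot\rangle)$ gives, for any open ball $B(x,r) \subset \ell_1(\R)$,
\begin{equation*}
\nu_N\bigl(B(x,r)\bigr) \leq \exp\bigl(-|\Lambda_N|\bigl[\langle t_x, x\rangle - \L_N(t_x) - r\|t_x\|_\infty\bigr]\bigr).
\end{equation*}
Covering $K$ by finitely many such balls and summing, then sending $r \downarrow 0$ and $\delta \downarrow 0$, yields the desired $\L^*$-upper bound on $K$, and hence on every closed set.

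For the lower bound, fix open $G \subset \ell_1(\R)$ and $x \in G$ with $\L^*(x) < \infty$, and look for a tilt $t \in \ell_\infty(\R)$ whose Cramér transform $\d\tilde{\nu}^{(t)}_N / \d\nu_N \propto \exp(|\Lambda_N|\langle t, \cdot\rangle)$ concentrates near $x$. Writing
\begin{equation*}
\nu_N(G) \geq \exp\bigl(-|\Lambda_N|\bigl[\langle t, x\rangle - \L_N(t) + r\|t\|_\infty\bigr]\bigr)\,\tilde{\nu}^{(t)}_N\bigl(B(x,r) \cap G\bigr),
\end{equation*}
the target $\liminf$ follows once $\tilde{\nu}^{(t)}_N(B(x,r)) \to 1$ and $\langle t, x\rangle - \L(t)$ is close to $\L^*(x)$. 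Gâteaux differentiability of $\L$ plays a double role: by a direct first-moment computation it identifies the Gâteaux derivative $D\L(t) \in \ell_1(\R)$ as the limiting $\tilde{\nu}^{(t)}_N$-mean, and via convex duality with the assumed lower semicontinuity of $\L$ it shows that $t$ exposes $\L^*$ at $D\L(t)$. A density argument for exposed points then lets $D\L(t)$ approximate a given $x$ arbitrarily well.

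The principal obstacle is the lower bound, specifically the passage from Gâteaux differentiability of $\L$ on $\ell_\infty(\R)$ to $\ell_1$-norm concentration of $\tilde{\nu}^{(t)}_N$ around $D\L(t)$. In finite dimensions this is a routine application of Cramér's theorem. In our infinite-dimensional setting the exposing functional produced by Gâteaux differentiability is \emph{a priori} only weak-$*$-continuous, and additional structure must be used to bootstrap to norm control: here, finiteness of $\L$ on all of $\ell_\infty(\R)$ forces local boundedness of the convex function $\L$ and hence norm-continuity, which combined with lower semicontinuity places us in the framework of a Bishop--Phelps-type density statement for exposed points of $\L^*$, allowing the concentration argument to be completed in the $\ell_1$-topology required to localise inside $G$.
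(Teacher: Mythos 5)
The paper does not actually prove this lemma: it is quoted as Baldi's Theorem, i.e.\ the standard result of Dembo--Zeitouni (Theorem~4.5.20 together with its Banach-space corollary, Cor.~4.5.27, for a finite, G\^ateaux differentiable, lower semicontinuous $\L$), so there is no in-paper argument to compare against. Your sketch reconstructs exactly that textbook proof: upper bound by exponential tightness plus a Chernoff/Markov bound over a finite cover of each compact set, lower bound by exponential tilting, exposed points of $\L^*$, and a convex-analytic density statement making exposed points sufficient to compute $\inf_G \L^*$. In outline this is the right route and the same one a citation would supply.

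Two steps, as written, are not correct and are precisely where the real content of Baldi's theorem sits. First, finiteness of a convex function on all of $\ell_\infty(\R)$ does \emph{not} force local boundedness or norm-continuity in infinite dimensions (a discontinuous linear functional is a counterexample); continuity follows from finiteness \emph{together with} the assumed lower semicontinuity via Baire category on the complete space $\ell_\infty(\R)$, so the hypothesis you relegate to the Bishop--Phelps step is in fact what rescues this claim. Second, the concentration $\tilde{\nu}^{(t)}_N(B(x,r))\to 1$ cannot be obtained from a first-moment computation identifying $D\L(t)$ as the limiting mean: one needs exponential decay of $\tilde{\nu}^{(t)}_N(B(x,r)^{\rm c})$, which in the standard proof comes from (a) exponential tightness and the LDP \emph{upper} bound for the tilted family (available because $\L(\gamma t)<\infty$ for some $\gamma>1$, here for all $\gamma$), and (b) the strict exposed-point inequality $\L^*(y)-\langle t,y\rangle>\L^*(x)-\langle t,x\rangle$ for $y\neq x$. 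Relatedly, $D\L(t)$ a priori lies in $(\ell_\infty)^*\supsetneq\ell_1(\R)$, so the argument must be organised around exposed points of $\L^*$ in $\ell_1(\R)$ (with the Br\o ndsted--Rockafellar/Bishop--Phelps-type density you allude to) rather than around the derivative itself. With these repairs your plan becomes the standard proof; as it stands, those two assertions are genuine gaps.
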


We shall now set about establishing that the hypotheses of Baldi's Theorem are satisfied. We adapt a beautiful proof in a recent study of Bosonic loop measures on graphs given in \cite{Dan15}.

\begin{lemma}\label{exptight}
For every  $ \mu\le 0 $, 	$\big(\nu_{N,\mu}\big)_{N\in\N} $ is an exponentially tight sequence of measures.
\end{lemma}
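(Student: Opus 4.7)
The goal is to exhibit, for every threshold $M>0$, a relatively compact subset $K_M\subset \ell_1(\R_+)$ satisfying
$$
\limsup_{N\to\infty}\frac{1}{|\Lambda_N|}\log\nu_{N,\mu}(K_M^c)\le -M.
$$
I will exploit the standard characterization of relative compactness in $\ell_1$: a set is relatively compact iff it is norm-bounded and has uniformly small tails. This suggests taking
$$
K_M=\Big\{x\in\ell_1(\R_+):\sum_{k\in\N}a_k x_k\le C_M\Big\}
$$
for suitably chosen weights $a_k\uparrow\infty$ (guaranteeing tail control) and a level $C_M$ tuned to $M$. Since $a_k\to\infty$, the constraint $\sum_k a_k x_k\le C_M$ both bounds $\|x\|_1$ and forces the tails $\sum_{k>K}x_k\le C_M/\inf_{k>K}a_k$ to vanish uniformly over $K_M$.

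Since the empirical cycle count $\blambda_N$ under $\nu_{N,\mu}$ is a vector of independent Poisson variables $\Ncal_k/|\Lambda_N|$ with $\Ncal_k\sim\Poi(|\Lambda_N|q_k^{\ssup{\mu}})$, I would apply the exponential Chebyshev inequality to the linear functional $x\mapsto\sum_k a_k x_k$:
$$
\nu_{N,\mu}(K_M^c)=\Pr\Big(\sum_{k}a_k\Ncal_k>|\Lambda_N|C_M\Big)\le \exp\Big(-|\Lambda_N|C_M+|\Lambda_N|\sum_{k\in\N}q_k^{\ssup{\mu}}(\ex^{a_k}-1)\Big),
$$
using independence and the Poisson cumulant generating function $m\mapsto m(\ex^{a_k}-1)$. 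Dividing by $|\Lambda_N|$ and taking $\limsup$ yields
$$
\limsup_{N\to\infty}\frac{1}{|\Lambda_N|}\log\nu_{N,\mu}(K_M^c)\le -C_M+\sum_{k\in\N}q_k^{\ssup{\mu}}(\ex^{a_k}-1).
$$
It then suffices to choose weights $a_k\uparrow\infty$ so that the sum on the right is finite, and then pick $C_M$ large to ensure the bound is $\le -M$.

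The remaining task is therefore the choice of $(a_k)$. Recalling $q_k^{\ssup{\mu}}=\ex^{\beta\mu k}/((4\pi\beta)^{d/2}k^{1+d/2})$, for $\mu<0$ one may take $a_k=ck$ with $0<c<\beta|\mu|$, so the tilted series $\sum_k \ex^{(\beta\mu+c)k}/k^{1+d/2}$ converges; for the boundary case $\mu=0$ (which is only compatible with the setup when $d\ge 3$, so that $\bar q^{\ssup 0}<\infty$) one picks $a_k=\e\log k$ with $\e\in(0,d/2)$, giving $\ex^{a_k}=k^\e$ and convergence of $\sum_k k^{\e-1-d/2}$. In either regime $a_k\to\infty$, so $K_M$ is relatively compact as required.

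The only delicate point is matching $(a_k)$ to the decay of $(q_k^{\ssup{\mu}})$ uniformly across the admissible parameter range $\mu\le 0$; otherwise the argument is a clean Chernoff-style computation. An identical argument applies to the periodic and Dirichlet cases, since $q_k^{\ssup{\bc,\mu}}$ admits the same type of exponential/polynomial decay in $k$.
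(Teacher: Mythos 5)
Your proof is correct in substance, but it takes a genuinely different route from the paper. The paper applies a Chernoff bound coordinate by coordinate, obtaining for each $k$ a threshold $c_k^*$ and then proving (via a contradiction argument using $1-\log(c_k^*/q_k^{\ssup\mu})\to 0$) that $(c_k^*)_k\in\ell_1$, which makes the ``hyper-rectangle'' $K=\{y:|y_k|\le|x_k|\;\forall k\}$ compact. You instead apply a single Chernoff bound to the weighted functional $\sum_k a_k\Ncal_k$ with weights $a_k\uparrow\infty$, producing sublevel sets $K_M$ whose compactness follows at once from the standard criterion (bounded plus uniformly small tails). Your approach is arguably tidier: the finiteness condition $\sum_k q_k^{\ssup\mu}(\ex^{a_k}-1)<\infty$ appears explicitly and can be met by a transparent choice of weights matched to the decay of $q_k^{\ssup\mu}$, whereas the paper must verify the summability of a sequence defined only implicitly. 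Two small points should be repaired: (i) your parenthetical ``$\mu=0$ is only compatible when $d\ge 3$'' is incorrect --- $\overline q^{\ssup 0}=(4\pi\beta)^{-d/2}\sum_k k^{-1-d/2}$ converges for every $d\ge1$ (you are confusing it with the critical density $\sum_k kq_k^{\ssup 0}$, which indeed requires $d\ge3$); as it happens this does not affect the construction since $a_k=\e\log k$ with $\e<d/2$ works for $\mu=0$ in every dimension. (ii) With $a_k=\e\log k$ the first weight vanishes, $a_1=0$, so $K_M$ fails to be norm-bounded (and hence is not compact); simply shift to $a_k=1+\e\log k$ or $a_k=\e\log(k+1)$, which costs nothing in the summability check.
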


\begin{proofsect}{Proof}
	Suppose there exists an $x=x\left(\alpha\right)\in\ell_1\left(\R\right)$ such that for all $k\geq1$,
	\begin{equation*}
		\limsup_{N\rightarrow\infty}\frac{1}{\left|\Lambda_N\right|}\log\nu_{N,\mu}\left(\blambda^{\ssup{k}}_{N}\geq x_k\right) <-2^{-k}\alpha,
	\end{equation*}
	where $\blambda_N=\big(\blambda^{\ssup{k}}_{N}\big)_{k\in\N} $ is an $\ell_1\left(\R\right)$-valued random variable with law $\nu_{N,\mu}$. Also, define the set
	\begin{equation*}
		K = \left\{y\in\ell_1\left(\R\right)\colon \left|y_k\right| \leq \left|x_k\right| \forall k \geq 1\right\},\qquad x\in\ell_1(\R).
	\end{equation*}
To show compactness of $K$ it is first  easy to see that $K$ is bounded and closed. Boundedness follows from $ \norm{y}_{\ell_1(\R)}\le \norm{x}_{\ell_1(\R)} $ for all $ y\in K $. Suppose that $K$ is not closed, that is, there exists a sequence $ y^{\ssup{n}}\in K $ with limit $ y^{\ssup{n}}\to y\notin K $ as $ n\to\infty $. Suppose that $ \abs{y_k}>\abs{x_k} $ for $ y\notin K $ and some $k\in\N $. Choose $ \eps=\frac{1}{2}(\abs{y_k}-\abs{x_k}) $, then, for $ n $ sufficiently large, 
$$
\abs{y_k^{\ssup{n}}-y_k}\le \sum_{j\in\N}\abs{y^{\ssup{n}}_j-y_j}< \frac{1}{2}(\abs{y_k}-\abs{x_k}),
$$
which implies that 
$$
\abs{y^{\ssup{n}}_k}>\abs{y_k}-\eps=\frac{1}{2}(\abs{y_k}+\abs{x_k})>\abs{x_k},
$$ contradicting $ y^{\ssup{n}}\in K $. Hence, $ K$ is closed. It remains to show that $K$ is totally bounded.  From that, we shall find a finite cover of $\eps$-open balls for $K$. Pick $ \eps> 0 $, and choose $ N\in\N$ such that $ \sum_{k>N} \abs{x_k}<\eps/2 $, and define the so-called cut-off sequences $ \widetilde{K}=\{y\in K\colon y_k=0, k> N\} $. Clearly, $ \widetilde{K} $ is isomorphic to the totally bounded set 
$$
[-\abs{x_1},\abs{x_1}]\times\cdots[-\abs{x_N},\abs{x_N}] \subset \R^N,
$$
and thus it is itself totally bounded. There exist $ w^{\ssup{1}},\ldots,w^{\ssup{M}}\in \widetilde{K} $ such that 
$$ 
\widetilde{K}\subset \bigcup_{i=1}^M B(w^{\ssup{i}},\frac{\eps}{2}).
$$
For any $y\in $ denote $ \widetilde{y}\in\widetilde{K} $ the sequences which agrees with $y$ on the first $ N$ terms, and choose $ w^{\ssup{i}} $ such that $ \widetilde{y}\in B(w^{\ssup{i}},\frac{\eps}{2}) $. Then,
$$
\norm{y -w^{\ssup{i}}}_{\ell_1(\R)}=\sum_{k=1}^N\abs{\widetilde{y}_k-w^{\ssup{i}}_k}+\sum_{k>N}\abs{y_k}<\frac{\eps}{2}+\frac{\eps}{2}.
$$
Thus, $ K\subset\bigcup_{i=1}^M B(w^{\ssup{i}},\frac{\eps}{2}) $, and we conclude with the compactness of $K$.

	Now, since the $\blambda^{\ssup{k}}_N$ are independent, we have
	\begin{equation*}
		\limsup_{N\rightarrow\infty}\frac{1}{\left|\Lambda_N\right|}\log\nu_{N,\mu}\left(K^{\rm c}\right) = \limsup_{N\rightarrow\infty}\frac{1}{\left|\Lambda_N\right|}\sum_{k\in\N}\log\nu_{N,\mu}\left(\blambda^{\ssup{k}}_N>x_k\right)<-\alpha,
	\end{equation*}
	and conclude with the statement in the lemma.
	All that remains now is to find such a sequence $x$. We consider each $x_k$ in turn. For all constants $c\geq0$, and $\tau>0$, we have the Chernoff bound
	\begin{align*}
		\nu_{N,\mu}\left(\blambda^{\ssup{k}}_N> c\right) &= \nu_{N,\mu}\left(\ex^{\frac{\tau}{\abs{\L_N}} \Ncal_k} > \ex^{\tau c}\right)\\
		&\leq \ex^{-\tau c}{\tt E}^{\ssup{k}}\left[\ex^{\frac{\tau}{\abs{\L_N}} \Ncal_k}\right]\\
		&= \ex^{-\tau c}\exp\left(\left|\Lambda_N\right|q^{\ssup{\mu}}_k\left(\ex^{\frac{\tau}{\left|\Lambda_N\right|}}-1\right)\right).
	\end{align*}
	The inequality is an application of Markov's inequality, and the expectation is nothing other than the moment generating function of $\Ncal_k$ and $ {\tt E}^{\ssup{k}} $ is the expectation with respect to the Poisson point process on $ \R^d $ with intensity $ q^{\ssup{\mu}}_k $. Since $\Ncal_k$ is a Poisson random variable with mean $\abs{\Lambda_N}q^{\ssup{\mu}}_k$, this can be calculated.
	
	Differentiating this bound with respect to $\tau$ gives us that the minimum occurs at $\tau^* = \abs{\Lambda_N}\log\frac{c}{q^{\ssup{\mu}}_k}$. If $c>0$, then $\tau^*>0$ for sufficiently large $N$. This means that we can optimise this form of bound as
	\begin{equation*}
		\nu_{N,\mu}(\blambda^{\ssup{k}}_N>c) \leq \Big(\frac{c}{q^{\ssup{\mu}}_k}\Big)^{-\left|\Lambda_N\right|c}\exp\left(\left|\Lambda_N\right|\left(c-q^{\ssup{\mu}}_k\right)\right).
	\end{equation*}
	Taking $N\rightarrow\infty$ then gives us
	\begin{equation*}
			\limsup_{N\rightarrow\infty}\frac{1}{\abs{\Lambda_N}}\log\nu_{N,\mu}(\blambda^{\ssup{k}}_N>c) \leq c - q^{\ssup{\mu}}_k -c\log\frac{c}{q^{\ssup{\mu}}_k}.
	\end{equation*}
	
	Now note that on $c>0$, the maps
	\begin{equation*}
		c\mapsto c - q^{\ssup{\mu}}_k- c\log\frac{c}{q^{\ssup{\mu}}_k} + 2^{-k}\alpha, \qquad k\in\N,
	\end{equation*}
	are differentiable, strictly decreasing, and have at most a unique zero $c^*_k$. If there does not exist such a zero, then the map is strictly negative, and it will suffice in what follows to set $c^*_k=0$. Since our maps are strictly negative for $c>c^*_k$, we only need to find a sequence $x$ such that $x_k>c^*_k$ for all $k$. Now we only need to find such an $x\in\ell_1\left(\R\right)$.
	
	Consider $x_k = c^*_k + 2^{-k}$. Therefore $x\in\ell_1\left(\R\right)$ if and only if $c^*\in\ell_1\left(\R\right)$. If we defined $c^*_k$ as a zero, then $c^*_k$ solves
	\begin{equation*}
		c^*_k\Big(1-\log\Big(\frac{c^*_k}{q^{\ssup{\mu}}_k}\Big)\Big) = q^{\ssup{\mu}}_k + 2^{-k}\alpha.
	\end{equation*}
	Otherwise, $c^*_k=0$ and
	\begin{equation*}
		c^*_k\Big(1-\log\Big(\frac{c^*_k}{q^{\ssup{\mu}}_k}\Big)\Big) = 0.
	\end{equation*}	
	So noting that $q^{\ssup{\mu}}_k,\alpha>0$ and that the sum of $q^{\ssup{\mu}}_k$ converges give us
	\begin{equation*}
		\sum_{k\in\N}c^*_k\Big(1-\log\Big(\frac{c^*_k}{q^{\ssup{\mu}}_k}\Big)\Big) \leq \alpha + \sum_{k\in\N}q^{\ssup{\mu}}_k < \infty.
	\end{equation*}
	
	Suppose, for contradiction, that $\sum_{k\in\N}c^*_k=\infty$. Then, in order for the left hand side of the above inequality to converge, we require $1-\log\Big(\frac{c^*_k}{q^{\ssup{\mu}}_k}\Big)\rightarrow0$ as $k\rightarrow\infty$. Consequently, there exists a $K\geq1$ such that for $k\geq K$, $\frac{c^*_k}{q^\star_k}<3$, and hence
	\begin{equation*}
		\sum_{k\geq K}c^*_k \leq 3\sum_{k\geq K}q^{\ssup{\mu}}_k\leq 3\sum_{k\in\N}q^{\ssup{\mu}}_k<\infty.
	\end{equation*}
	We have a contradiction, and $\sum_{k\in\N}c^*_k<\infty$ as required.\qed
\end{proofsect}

Recall that $\blambda_N$ is a $\ell_1\left(\R\right)$-valued random variable with law $\nu_{N,\mu}$, and let $t\in\ell_\infty\left(\R\right)$, the dual space. Then,  the \emph{limit cumulant generating function} is given by
\begin{equation}
\L(t) = \lim_{N\rightarrow\infty}\frac{1}{\abs{\Lambda_N}}\log\mathbb{E}_{\nu_{N,\mu}}\Big[\exp\Big(\abs{\L_N}\langle t,\blambda_N\rangle\Big)\Big], \qquad t\in\ell_\infty(\R).
\end{equation}

\begin{lemma}\label{cumulant}
	The limit cumulant generating function exists and is given by
	\begin{equation*}
		\L\left(t\right) = \sum_{k\in\N}q^{\ssup{\mu}}_k\left(\e^{t_k}-1\right)<\infty,\qquad t\in\ell_\infty\left(\R\right).
	\end{equation*}
	Moreover, $\L$ is G{\^a}teaux differentiable, lower semicontinuous, and strictly convex.
\end{lemma}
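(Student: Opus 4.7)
The plan is to exploit the explicit Poisson independence structure behind $\nu_{N,\mu}$: the coordinates $\Ncal_k$ of $|\L_N|\blambda_N$ are independent Poisson random variables with means $|\L_N|q_k^{\ssup{\mu}}$. Writing $|\L_N|\langle t,\blambda_N\rangle = \sum_{k\in\N} t_k\Ncal_k$ and using the elementary Poisson moment generating identity $\E[\e^{s\Ncal_k}] = \exp(|\L_N|q_k^{\ssup{\mu}}(\e^s-1))$, I would first establish for each truncation $K\in\N$ the finite-product identity
\[
\frac{1}{|\L_N|}\log \E_{\nu_{N,\mu}}\Big[\exp\Big(\sum_{k=1}^K t_k\Ncal_k\Big)\Big] = \sum_{k=1}^K q_k^{\ssup{\mu}}(\e^{t_k}-1),
\]
and then send $K\to\infty$. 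Monotone/dominated convergence on each side (after splitting into the positive and negative parts of $t$ if necessary) gives the claimed infinite-series identity. In particular the right-hand side is \emph{independent of $N$}, so no genuine thermodynamic limit is actually taken.

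Second, for finiteness on $\ell_\infty(\R)$, with $M:=\|t\|_\infty$ the crude bound $|\e^{t_k}-1|\le \e^M+1$ uniformly in $k$ combined with $\bar q^{\ssup{\mu}}<\infty$ (where the standing restriction $\mu\le 0$ enters) makes the series absolutely convergent. G\^ateaux differentiability at $t$ in a direction $h\in\ell_\infty$ follows from termwise differentiation of $s\mapsto\L(t+sh)$ at $s=0$, yielding $\sum_k q_k^{\ssup{\mu}} h_k \e^{t_k}$; the exchange of $\partial_s$ and $\sum_k$ is legitimate by dominated convergence against the summable bound $\|h\|_\infty \e^{\|t\|_\infty+\|h\|_\infty} q_k^{\ssup{\mu}}$ valid uniformly on $|s|\le 1$.

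For lower semicontinuity I would in fact prove the stronger statement that $\L$ is continuous in the norm topology of $\ell_\infty(\R)$: if $t^{(n)}\to t$ in $\ell_\infty$, then $t_k^{(n)}\to t_k$ for every $k$ and $\sup_n\|t^{(n)}\|_\infty<\infty$, and a third dominated convergence argument gives $\L(t^{(n)})\to\L(t)$. Strict convexity is inherited termwise from the scalar exponential: if $t\ne s$ in $\ell_\infty$ then $t_{k_0}\ne s_{k_0}$ for some $k_0$, so the $k_0$-summand of $\L(\tfrac12(t+s))$ is strictly smaller than the midpoint of the corresponding $k_0$-summands of $\L(t)$ and $\L(s)$, while every other coordinate satisfies the non-strict convexity inequality; summing in $k$ preserves the strict inequality. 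I do not anticipate a serious obstacle: the only mildly delicate step is the $K\to\infty$ interchange of the infinite product of moment generating functions, and even this reduces to standard monotone/dominated convergence once finiteness of the candidate limit has been established.
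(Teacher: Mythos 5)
Your proposal is correct and follows essentially the same route as the paper: exploit the Poisson independence of the coordinates $\Ncal_k$, apply the Poisson moment generating identity coordinatewise, observe that the $N$-dependence cancels exactly, and then verify the regularity properties (finiteness, G\^ateaux differentiability, semicontinuity, strict convexity) term by term. There are two small places where you are more careful or slightly stronger than the paper, and it is worth registering them. First, the paper simply asserts the interchange of the $k$-sum with the $N$-limit, whereas you introduce the truncation at level $K$ and pass $K\to\infty$ by dominated convergence against the summable envelope $\exp\bigl(\|t\|_\infty\sum_k\Ncal_k\bigr)$, which has finite expectation because $\bar q^{\ssup{\mu}}<\infty$; this makes explicit the observation (also implicit in the paper) that the finite-volume normalised log-MGF already equals $\L(t)$ exactly for every $N$, so no genuine limit is being taken. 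Second, for semicontinuity the paper invokes Fatou's lemma to get the one-sided inequality $\liminf_n\L(t^{(n)})\ge\L(t)$; you instead use dominated convergence to prove full norm-continuity of $\L$ on $\ell_\infty(\R)$, which is a strictly stronger conclusion and arguably cleaner since $|q_k^{\ssup{\mu}}(\e^{t_k^{(n)}}-1)|\le q_k^{\ssup{\mu}}(\e^{\sup_n\|t^{(n)}\|_\infty}+1)$ is clearly a summable dominator. Both refinements are sound and cost nothing; the paper's Fatou route is shorter but delivers only what Baldi's theorem minimally requires. Your finiteness bound $|\e^{t_k}-1|\le \e^{\|t\|_\infty}+1$ is likewise a harmless sharpening of the paper's one-sided estimate $\L(t)\le(\e^T-1)\bar q^{\ssup{\mu}}$ with $T=\sup_j t_j$ (which, combined with the trivial lower bound $\L(t)\ge-\bar q^{\ssup{\mu}}$, gives the same conclusion). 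The strict-convexity and G\^ateaux-differentiability arguments are identical to the paper's.
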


\begin{proof}
	First, let us evaluate the logarithmic moment generating function. Recall, that our reference process is  a independent superposition of countably many independent marked Poisson point processes.  Denote the marginal law of $\blambda^{\ssup{k}}_N$ by $\nu^{(k)}_N$, then we have,
$$
\begin{aligned}
	\L\left(t\right) &=   \lim_{N\rightarrow\infty}\frac{1}{\abs{\Lambda_N}}\log\mathbb{E}_{\nu_{N,\mu}}\Big[\exp\Big(\abs{\L_N}\langle t,\blambda_N\rangle\Big)\Big]= \lim_{N\rightarrow\infty}\frac{1}{\left|\Lambda_N\right|}\sum_{k\in\N}\log \mathbb{E}_{\nu^{(k)}_N}\left[\exp\left(\abs{\Lambda_N}t_k\blambda^{\ssup{k}}_N\right)\right]\\
	&= \lim_{N\rightarrow\infty}\frac{1}{\left|\Lambda_N\right|}\sum_{k\in\N}\left|\Lambda_N\right|q^{\ssup{\mu}}_k\left(\ex^{t_k}-1\right)= \sum_{k\in\N}q^{\ssup{\mu}}_k\left(\ex^{t_k}-1\right).
	\end{aligned}
$$
	Here, we were able to evaluate the expectation with respect to $\nu^{(k)}_N$ using the logarithmic moment generating function for a Poisson distribution and recalling that $\blambda^{\ssup{k}}_N$ has mean $q^{\ssup{\mu}}_k$.
	
	To see that $\L\left(t\right)$ is finite, note that $t\in\ell_\infty\left(\R\right)$ implies that $T:=\sup_{j\in\N}t_j$ is finite. Hence 
	\begin{equation*}
		\L\left(t\right)\leq\left(\ex^T-1\right)\overline{q}^{\ssup{\mu}}<\infty.
	\end{equation*}
	
	To confirm G{\^a}teaux differentiability, let $t,s\in\ell_\infty\left(\R\right)$ and consider
	\begin{equation*}
		\frac{\d}{\d \epsilon}\L\left(t+\epsilon s\right) = \sum_{k\in\N}q^{\ssup{\mu}}_k s_k \ex^{t_k+\epsilon s_k}.
	\end{equation*}
	This sum is finite because $t$ and $s$ are bounded above and $q^{\ssup{\mu}}\in\ell_1\left(\R\right)$ for $\mu <0$. In particular, the derivative is defined at $\epsilon=0$, and hence $\L$ is G{\^a}teaux differentiable.
	
	Lower semicontinuity is an immediate consequence of Fatou's Lemma. For any sequence $t^{(n)}\rightarrow t$ in $\ell_\infty\left(\R\right)$,
	\begin{equation*}
		\liminf_{n\rightarrow\infty}\L\left(t^{(n)}\right) = \liminf_{n\rightarrow\infty}\sum_{k\in\N}q^{\ssup{\mu}}_k\left(\ex^{t^{(n)}_k}-1\right) \geq \sum_{k\in\N}q^{\ssup{\mu}}_k\left(\ex^{t_k}-1\right) = \L\left(t\right).
	\end{equation*}
	
	To show strict convexity, consider distinct $t,s\in\ell_\infty\left(\R\right)$ and $\lambda\in\left[0,1\right]$. Then
	\begin{align*}
		\L\left(\lambda s + \left(1-\lambda\right)t\right) &= \sum_{k\in\N}q^{\ssup{\mu}}_k\left(\ex^{\lambda s_k + \left(1-\lambda\right)t_k}-1\right)\\
		&< \lambda\sum_{k\in\N}q^{\ssup{\mu}}_k \ex^{s_k} + \left(1-\lambda\right)\sum_{k\in\N}q^{\ssup{\mu}}_k\ex^{t_k} - \overline{q}^{\ssup{\mu}}\\
		&= \lambda \L\left(s\right) + \left(1-\lambda\right)\L\left(t\right),
	\end{align*}
	where the inequality follows from the strict convexity of the exponential function.
\end{proof}

\begin{remark}
	If we do not have $\mu\leq0$, then we do not have $\L\left(t\right)<\infty$ for all $t\in\ell_\infty\left(\R\right)$. To see this, let $t$ be a constant sequence $t_k=C>0$. Then $\L\left(t\right) = C\overline{q}^{\ssup{\mu}} = \infty$ unless $\mu\leq0$.\hfill $ \diamond $
\end{remark}

\begin{lemma}\label{legendre}
	For all $x\in\ell_1\left(\R\right)$, we have
	\begin{equation*}
		\L^*\left(x\right) := \sup_{t\in\ell_\infty\left(\R\right)}\left\{\left\langle t,x\right\rangle - \L\left(t\right)\right\} =  I(x).
	\end{equation*}
\end{lemma}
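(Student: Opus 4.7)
The plan is to reduce this infinite-dimensional variational problem to a collection of independent one-dimensional Legendre transforms, and then to pass back through an $\ell_\infty(\R)$-valued clipping sequence. For each $k\in\N$ set $\phi_k(s) := s\,x_k - q_k^{\ssup{\mu}}(\ex^s-1)$; since $\phi_k$ is strictly concave, an elementary calculation gives
\begin{equation*}
\phi_k^*(x_k) := \sup_{s\in\R}\phi_k(s) = \begin{cases} x_k\log(x_k/q_k^{\ssup{\mu}}) - x_k + q_k^{\ssup{\mu}} & \text{if } x_k>0,\\ q_k^{\ssup{\mu}} & \text{if } x_k=0,\\ +\infty & \text{if } x_k<0,\end{cases}
\end{equation*}
with maximiser $s_k^* = \log(x_k/q_k^{\ssup{\mu}})$ in the first case and the supremum attained in the limit $s\to-\infty$ in the second. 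With the convention $0\log 0 := 0$, summing termwise yields $\sum_{k\in\N}\phi_k^*(x_k) = I_\mu(x)$ in the sense of Proposition~\ref{THM-Ideal} on all of $\ell_1(\R)$.

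First I would establish the upper bound $\L^*(x)\leq I_\mu(x)$, which is immediate from the pointwise estimate: for any $t\in\ell_\infty(\R)$,
\begin{equation*}
\langle t,x\rangle - \L(t) = \sum_{k\in\N}\phi_k(t_k) \leq \sum_{k\in\N}\phi_k^*(x_k) = I_\mu(x),
\end{equation*}
with equality of the two outermost expressions as $+\infty$ when the right-hand side diverges. For the matching lower bound I would introduce the clipped test sequence $t_k^{(M)} := (\log(x_k/q_k^{\ssup{\mu}})\vee(-M))\wedge M$ if $x_k>0$ and $t_k^{(M)} := -M$ if $x_k\leq 0$, which lies in $\ell_\infty(\R)$ with $\|t^{(M)}\|_\infty\leq M$. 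The key observation is that, by concavity of $\phi_k$ and the location of its maximiser, $M\mapsto \phi_k(t_k^{(M)})$ is non-decreasing and converges pointwise to $\phi_k^*(x_k)$ as $M\to\infty$; the monotone convergence theorem then gives
\begin{equation*}
\L^*(x) \geq \lim_{M\to\infty}\sum_{k\in\N}\phi_k(t_k^{(M)}) = \sum_{k\in\N}\phi_k^*(x_k) = I_\mu(x).
\end{equation*}
If $x$ has a coordinate $x_{k_0}<0$, the same construction forces $\phi_{k_0}(-M)\to+\infty$, so both sides are $+\infty$ and the identity again holds.

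The main technical obstacle is that the sup is constrained to lie in $\ell_\infty(\R)$ whereas the pointwise maximisers $s_k^*=\log(x_k/q_k^{\ssup{\mu}})$ generally fail to form a bounded sequence: since $q_k^{\ssup{\mu}}\sim \ex^{\beta\mu k}/k^{1+d/2}$ decays exponentially in $k$ for $\mu<0$, even for well-behaved $x\in\ell_1(\R_+)$ one typically has $s_k^*\to+\infty$, so $s^*\notin\ell_\infty(\R)$ and the naive optimiser is inadmissible. The clipping-plus-MCT device circumvents this cleanly, because the monotonicity of $M\mapsto\phi_k(t_k^{(M)})$ is automatic from the concavity of $\phi_k$ and therefore dispenses with any delicate tail control on $x$ or $q^{\ssup{\mu}}$.
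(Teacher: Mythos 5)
Your proof is correct. The structure — reducing the infinite-dimensional supremum to a coordinatewise family of scalar Legendre transforms $\phi_k^*$, then showing $\L^*(x)\leq \sum_k\phi_k^*(x_k) = I(x)$ by termwise domination, and recovering $\geq$ via an admissible approximating family — is the same as the paper's. Where you diverge is the approximating device: the paper truncates \emph{in the index}, using $t^{(K)}_k = \log(x_k/q_k^{\ssup\mu})$ for $k\le K$ and $-K\mathds 1\{k\le K\}$ when $x_k=0$, and then tracks the vanishing tail $\ex^{-K}\sum_{k\le K:x_k=0}q_k^{\ssup\mu}$ by hand; you instead clip \emph{in the value}, $t^{(M)}_k=(\log(x_k/q_k^{\ssup\mu})\vee(-M))\wedge M$, and invoke the monotone convergence theorem (valid here because $\phi_k(t_k^{(0)})=\phi_k(0)=0$ and concavity of $\phi_k$ makes $M\mapsto\phi_k(t_k^{(M)})$ non-decreasing, hence non-negative). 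This is a mildly cleaner route: the case $x\notin\ell_1(\R_+)$ needs no separate preliminary argument (the paper handles it first with the ansatz $t^{(T)}=-T\delta_{k'}$, while your clipping forces a $+\infty$ term automatically), the case $I(x)=+\infty$ for $x\in\ell_1(\R_+)$ is delivered by MCT without comment, and you avoid any tail estimate. The trade-off is that the paper's index truncation produces an entirely explicit formula for $g_x(t^{(K)})$ whose limit can be read off directly, whereas yours relies on the (easy but unstated) observation that the terms are non-negative so that MCT applies. Both are complete arguments.
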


\begin{proof}
	Let $g_x\left(t\right)$ denote the functional we wish to maximise in the definition of $\L^*$, so
	\begin{equation*}
	g_x\left(t\right) = \sum^\infty_{k=1}\left[x_kt_k + q^{\ssup{\mu}}_k\left(1-\ex^{t_k}\right)\right].
	\end{equation*}
	
	First let us consider $x\in\ell_1\left(\R\right)\setminus \ell_1\left(\R_+\right)$. Hence there exists an index $k^\prime$ such that $x_{k^\prime}<0$. Now let $t^{(T)}=-T\delta_{k^\prime}\in\ell_\infty\left(\R\right)$. Therefore
	\begin{equation*}
	\L^*\left(x\right)\geq g_x\left(t^{(T)}\right) = -Tx_{k^\prime} + q^{\ssup{\mu}}_{k^\prime}\left(1-\ex^{-T}\right)
	\xrightarrow{T\rightarrow\infty}+\infty.
	\end{equation*}
	This means $\L^*\left(x\right) = +\infty =I\left(x\right)$ for all $x\in\ell_1\left(\R\right)\setminus \ell_1\left(\R_+\right)$.
	
	To show the required inequality on $\ell_1\left(\R\right)$, let us now search for critical points of $g_x$. We know $g_x$ is G{\^a}teaux differentiable because it is the sum of a linear term (with coefficients $x\in\ell_1\left(\R\right)$) and a G{\^a}teaux differentiable term. Taking the G{\^a}teaux derivative of $g_x$ gives us
	\begin{equation*}
	\d g_x\left(t;s\right) = \sum^\infty_{k=1}s_k\left(x_k - q^{\ssup{\mu}}_k\ex^{t_k}\right), \qquad\forall t,s\in\ell_\infty\left(\R\right).
	\end{equation*}
	Now $t$ is a critical point if and only if $\d g_x\left(t;s\right)=0$ $\forall s \in\ell_\infty\left(\R\right)$. This means that we want to investigate the sequence $\tilde{t}_k = \log \frac{x_k}{q^\mu_k}$. If $\tilde{t}\in\ell_\infty\left(\R\right)$, then this gives us the supremum, and a simple substitution tells us that $\L^*\left(x\right) = I\left(x\right)$ for such $x$. Unfortunately, this is not necessarily the case.
	
	Nevertheless, these critical points will give us the supremum over all sequences in $\left(\R\cup\left\{-\infty\right\}\right)^\N$. Since $\ell_\infty\left(\R\right)\subset\left(\R\cup\left\{-\infty\right\}\right)^\N$, we have
	\begin{equation}
		\L^*\left(x\right) = \sup_{t\in\ell_\infty\left(\R\right)}g_x\left(t\right) \leq \sup_{t\in\left(\R\cup\left\{-\infty\right\}\right)^\N}g_x\left(t\right) = I\left(x\right).
	\end{equation}
	
	To find the reverse inequality, let us consider
	\begin{equation*}
	t^{(K)}_k =
	\begin{cases}
	\mathds{1}\left\{k\leq K\right\} \log\frac{x_k}{q^{\ssup{\mu}}_k} &,  x_k\ne0,\\
	-K\mathds{1}\left\{k\leq K\right\} &, x_k=0.
	\end{cases}
	\end{equation*}
	Since $t^{(K)}$ truncates, it is clearly in $\ell_\infty\left(\R\right)$ for all $K$. Now let us substitute it into $g_x$.
	\begin{align*}
	g_x\left(t^{(K)}\right) &= \sum_{k\leq K:x_k\ne0}\left(x_k\log\frac{x_k}{q^{\ssup{\mu}}_k}-x_k+q^{\ssup{\mu}}_k\right) +  \sum_{k\leq K:x_k=0} q^{\ssup{\mu}}_k\left(1-\ex^{-K}\right)\\
	&= \sum^K_{k=1}\left[x_k\left(\log\frac{x_k}{q^{\ssup{\mu}}_k}-1\right)+q^{\ssup{\mu}}_k\right] - \ex^{-K}\sum_{k\leq K\colon x_k=0} q^{\ssup{\mu}}_k\\
	&\xrightarrow{K\rightarrow\infty} I \left(x\right).
	\end{align*}
	In the second equality, we have used the convention that $0\log0=0$. The limit holds because the sum of the $q^{\ssup{\mu}}_k$ converges, and the sum defining $ I \left(x\right)$ converges.
	
	This sequence $\left(t^{(K)}\right)_{K\in\N}$ shows that for $x\in\ell_1\left(\R_+\right)$,
	\begin{equation*}
	\L^*\left(x\right) = \sup_{t\in\ell_\infty\left(\R\right)}g_x\left(t\right)\geq I\left(x\right),
	\end{equation*}
	as required.
\end{proof}
Using  Baldi's Theorem in Lemma~\ref{Baldi} in conjunction with Lemma~\ref{exptight}, ~\ref{cumulant}, ~\ref{legendre}, we conclude with the statement in Proposition~\ref{THM-Ideal}.\qed
\subsection{Proof of Theorem~\ref{THM-CMF} - Cycle Mean Field LDP}\label{sec-CMF}
Recall that $\mathcal{N}_{k}$ is a Poisson random variable describing the number of cycles in our loop soup with length $\beta k$, and that the total cycle number is the random variable $N_{\L_N}=\sum_{k\in\N} \Ncal_{k} $. 
We are going to apply Varadahn's Lemma in \cite[Theorem~4.3.1]{DZ09}.  To show continuity of $H^{\CMF}$, let us show sequential continuity. This implies continuity since $\ell_1(\R)$ is a metric space. Let $x^{(n)}\rightarrow x$ be a convergent sequence in $\ell_1(\R)$, so $\lim_{n\rightarrow\infty}\sum_{j\in\N}\abs{x^{(n)}_k-x_k}=0$.
 Let $S(x):=\sum_{k\in\N}x_k$. Then
	\begin{equation*}
	\lim_{n\rightarrow\infty}\abs{S(x^{(n)})-S(x)}= \lim_{n\rightarrow\infty}\big|\sum_{k\in\N}x^{(n)}_k - \sum_{k\in\N}x_k\big| \leq \lim_{n\rightarrow\infty}\sum_{k\in\N}\abs{x^{(n)}_k-x_k} =0.
	\end{equation*}
	Hence $S$ is continuous. We can then write the Hamiltonian as the composition of continuous functions $H^{\CMF} = T\circ S$, where $T\colon \R\rightarrow\R$, $x\rightarrow \frac{a}{2}x^2$.
We can now simply apply Varadhan's Lemma. The lower bound
\begin{equation}\label{CMFlower}
\liminf_{N\to\infty}\frac{1}{\abs{\L_N}}\log \E_{\nu_{N,\alpha}}\Big[\ex^{-\beta\abs{\L_N} H^\CMF}\Big]\ge \sup_{x\in\ell_1(\R)}\big\{-\beta H^\CMF(x)-I_\alpha(x)\big\}
\end{equation}
follows easily with \cite[Lemma~4.3.4]{DZ09}.  For the corresponding upper bound we simply note that the tail-condition in \cite[Theorem~4.3.1]{DZ09} holds due to $ H^\CMF(x)\ge 0 $ for all $ x\in\ell_1(\R_+) $. Therefore, with \cite[Lemma~4.3.6]{DZ09} we obtain the corresponding upper bound
\begin{equation}\label{CMFupper}
\limsup_{N\to\infty}\frac{1}{\abs{\L_N}}\log \E_{\nu_{N,\alpha}}\Big[\ex^{-\beta\abs{\L_N} H^\CMF}\Big]\le \sup_{x\in\ell_1(\R)}\big\{-\beta H^\CMF(x)-I_\alpha(x)\big\}.
\end{equation}
We conclude with the statement in Theorem~\ref{THM-CMF} by combining the lower bound \eqref{CMFlower} and the upper bound \eqref{CMFupper}.\qed

\subsection{Proof of Theorem~\ref{THM-PMF} - Particle Mean Field LDP}\label{sec-PMF}
This subsection gives the proof for Theorem~\ref{THM-PMF}.  To prove the large deviation principle for $ \nu_{N,\mu,\alpha}^\PMF $ one would simply use Varadhan's Lemma. However, the first term of the Hamiltonian $ H^\PMF_\mu $ for $ \mu>0 $ is not lower semicontinuous whereas the second term is only lower semicontinuous.  Using \cite{GZ93}, one would arrive at lower and upper bounds for $ \E_{\nu_{N,\alpha}}[\ex^{-\abs{\L_N}\beta H^\PMF_\mu}] $ using the upper and the lower semicontinuous regularisation of $ H^\PMF_\mu $, respectively.    Unfortunately, the upper semicontinuous regularisation of the Hamiltonian equals infinity, and thus it does not provide a lower bound for the large deviation principle. Our strategy is therefore twofold. For the large deviation upper bound we use the lower semicontinuous regularisation in conjunction with the corresponding bound in Varadhan's Lemma.  We obtain the corresponding large deviation lower bound by conditioning that the empirical cycle count is supported on a finite-dimensional subspace. On this event we can replace our measure by the corresponding measure with finite dimensional mark space. On this subspace the Hamiltonian is in fact continuous and thus application of Varadhan's Lemma provides a lower bound. To remove the cutoff parameter we will construct finite-dimensional sequences approximating the infimum of the corresponding lower bound.
We start with a couple of observations.

\begin{lemma}
For all $ \mu>0  $, the lower semicontinuous regularisation of $ H^\PMF_\mu $ is given as
\begin{equation}\label{lsc}
H^{\PMF}_{\mu,l.s.c.}(x)=H^\PMF_\mu(x)-\frac{1}{2a}\big(\mu-aD(x)\big)_+^2=\begin{cases} -\mu D(x)+\frac{a}{2}D(x)^2 &,D(x)\ge \frac{\mu}{a},\\-\frac{\mu^2}{2a}&, D(x)<\frac{\mu}{a}, \end{cases}\quad x\in\ell_1(\R_+), 
\end{equation}
whereas for all $ \mu\le 0 $, $ H^\PMF_{\mu,l.s.c.}\equiv H^\PMF_\mu $.
\end{lemma}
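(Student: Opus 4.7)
The argument will reduce everything to the behaviour of the density functional $D\colon \ell_1(\R_+)\to[0,\infty]$. The decisive ingredient, which I plan to establish first, is that $D$ is lower semicontinuous (Fatou for the counting measure on $\N$) but fails to be upper semicontinuous in a controlled way: for any $x\in\ell_1(\R_+)$ and any prescribed target $t\in[D(x),\infty]$, the perturbation $y^{\ssup n}:=x+\tfrac{t-D(x)}{k_n}\delta_{\{k_n\}}$ with $k_n\to\infty$ (and an obvious diagonal variant if $t=\infty$) converges to $x$ in $\ell_1$ while $D(y^{\ssup n})\to t$. Consequently, the set of accumulation values of $D$ along $\ell_1$-convergent sequences at $x$ is exactly $[D(x),\infty]$; this is the failure of upper semicontinuity of the particle number flagged in Section~\ref{sec-cycle}.

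Writing $H^\PMF_\mu(y)=f(D(y))$ with the continuous parabola $f(t):=-\mu t+\tfrac{a}{2}t^2$ on $[0,\infty]$, I plan to pass the range statement through $f$ to obtain
\[
H^\PMF_{\mu,l.s.c.}(x)\;=\;\liminf_{y\to x}H^\PMF_\mu(y)\;=\;\inf_{t\in[D(x),\infty]}f(t),
\]
which is the only genuinely analytic input. For $\mu\le 0$, one has $f'(t)=-\mu+at\ge 0$ on $[0,\infty)$, so the infimum is attained at $t=D(x)$ and returns $H^\PMF_\mu(x)$; this settles the last sentence of the lemma. For $\mu>0$, $f$ has its unique minimiser at $t^*=\mu/a$ with value $-\mu^2/(2a)$, is strictly decreasing on $[0,\mu/a]$, and strictly increasing beyond. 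A short case split then delivers the two branches: if $D(x)\ge \mu/a$ the infimum sits at the left endpoint with value $H^\PMF_\mu(x)$, while for $D(x)<\mu/a$ it equals the unconstrained minimum $-\mu^2/(2a)$.

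The alternative formula $H^\PMF_{\mu,l.s.c.}(x)=H^\PMF_\mu(x)-\tfrac{1}{2a}(\mu-aD(x))_+^2$ is then purely algebraic: the subtracted positive-part term vanishes exactly on $\{D(x)\ge \mu/a\}$ (which is all of $\ell_1(\R_+)$ when $\mu\le 0$), and expanding the square on the complementary set recovers $-\mu^2/(2a)$. The only non-routine step is thus the range-of-accumulation statement for $D$; once this is in place, the rest is one-variable calculus on $f$.
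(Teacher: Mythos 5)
Your proposal is correct and rests on exactly the same two pillars as the paper's own argument: lower semicontinuity of $D$ (Fatou) and the ``density injection'' sequence $x+\frac{c}{k_n}\delta_{\{k_n\}}$ with $k_n\to\infty$, which has vanishing $\ell_1$-norm but carries a fixed amount of extra density. The organisation, however, is genuinely different, and I think cleaner. The paper proceeds in two stages: it first \emph{verifies} that the candidate formula $h=H^\PMF_\mu-\frac{1}{2a}(\mu-aD)_+^2$ is lower semicontinuous by a case analysis in $D(x)\lessgtr\mu/a$ and $D(x)=\infty$ (a step where the argument is a bit loose — the assertion that $h(x_n)=-\mu^2/(2a)$ ``for sufficiently large $n$'' on the branch $D(x)<\mu/a$ need not hold, since $D(x_n)$ can jump above $\mu/a$; the intended point is only that $h\geq -\mu^2/(2a)$ everywhere), and then separately shows maximality via the perturbation sequence. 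You instead compute the envelope directly from the standard characterisation $H^\PMF_{\mu,l.s.c.}(x)=\liminf_{y\to x}H^\PMF_\mu(y)$: your range lemma ``accumulation values of $D$ along $\ell_1$-sequences at $x$ equal $[D(x),\infty]$'' fuses Fatou and the perturbation trick into a single statement, and then the continuity of $f(t)=-\mu t+\frac{a}{2}t^2$ on $[0,\infty]$ converts it into $\inf_{t\in[D(x),\infty]} f(t)$, leaving nothing but one-variable calculus. What this buys you: a modular proof that isolates the topological input (behaviour of $D$) from the algebraic input (shape of $f$), sidesteps the separate l.s.c.-and-maximality verification, and is manifestly reusable for any Hamiltonian of the form $f\circ D$ with $f$ continuous — for instance it reproves the $\mu\le 0$ case (where $f$ is increasing) in one line. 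The trade-off is that the approach does not extend directly to the HYL Hamiltonian, which is not a function of $D$ alone; there the paper's hands-on route is still needed.
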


\begin{figure}
	\centering
	\begin{tikzpicture}[scale=2]
	\draw[->] (0,0) -- (2.5,0) node[below]{$D$};
	\draw[->] (0,-1.5) -- (0,1) node[left]{$H^{\PMF}_{\mu,l.s.c.}$};
	\draw[very thick] (1,-1) parabola (2.4,1);
	\draw[very thick] (0,-1) node[left]{$-\frac{\mu^2}{2a}$} -- (1,-1);
	\draw[dashed] (1,-1) parabola (0,0);
	\draw[dashed] (1,-1) -- (1,0) node[above]{$\frac{\mu}{a}$};
	\end{tikzpicture}
	\caption{Sketch of $H^\PMF_{\mu,l.s.c.}$ as a function of the total particle density $D$.}
\end{figure}
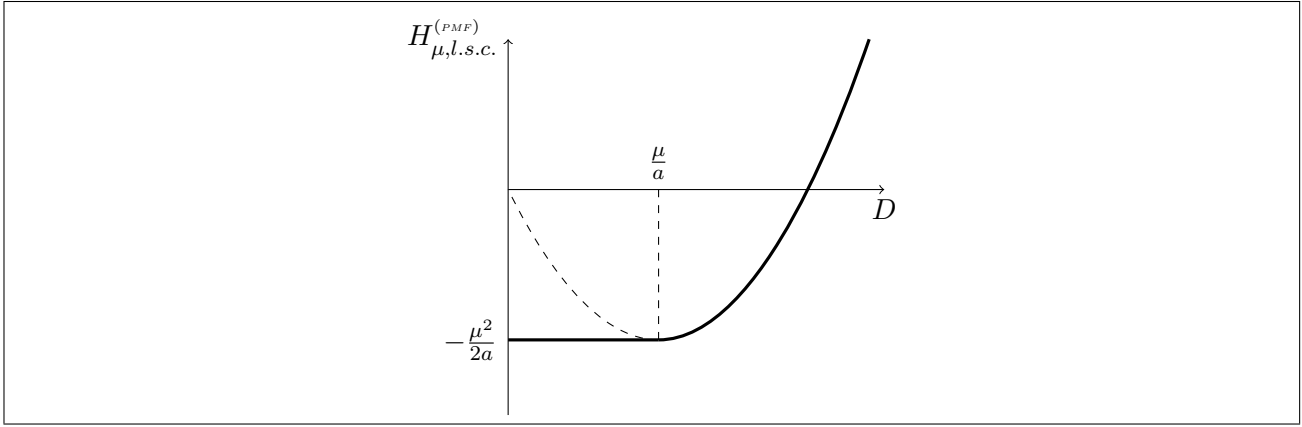

\begin{proofsect}{Proof}
Clearly, $ H^\PMF_{\mu,l.s.c.}(x)\le H^\PMF_\mu(x) $ for all $ x\in\ell_1(\R_+) $.  Suppose that $ \mu\le 0 $. Then the Hamiltonian $ H^\PMF_\mu $ is lower semicontinuous and $ H^\PMF_{\mu,l.s.c.}\equiv H^\PMF_\mu $.
Suppose now that $ \mu>0 $.  Denote $ h(x) $ the right hand side of \eqref{lsc}. For all $ x\in\ell_1(\R_+) $ with $ D(x) <\frac{\mu}{a} $ and any sequence $ (x_n)_{n\in\N} $ with $ x_n\to\ x $ in $ \ell_1(\R) $ we see that   $ h(x_n)=-\frac{\mu^2}{a} =h(x) $ for sufficiently large $ n $, and thus $ \liminf_{n\to\infty} h(x_n)=h(x) $. Fix $ x\in\ell_1(\R_+) $ with $ D(x)>\frac{\mu}{a} $. The function $ f\colon [\frac{\mu}{a},\infty)\to\R\cup\{\pm\infty\}, z\mapsto f(z) =-\mu z+\frac{a}{2} z^2 $ is continuous and increasing, and $ h=f\circ D $. The particle density, $ D\colon\ell_1(\R_+)\to[0,\infty], x\mapsto \sum_{k=1}^\infty k x_k  $, is lower semicontinuous. For all $ x\in\ell_1(\R_+) $ with $ D(x)=\infty $, we get $  h(x)=\infty $ and for any $ (x_n)_{n\in\N} $ with $ x_n\to x $ as $ n\to\infty $ we see  that $ h(x_n)\to\infty $ as  $ n\to \infty $. Suppose now that $ x_0\in\ell_1(\R_+) $ with $ D(x_0)\in (\frac{\mu}{a},\infty) $. For any $ \eps >0 $ there exists $ \widetilde{\eps}> 0  $ such that  $\abs{f(D(x_0)-\widetilde{\eps})-f(D(x_0))}<\eps $ using the continuity of $f$. As $ D$ is lower semicontinuous, there exists a neighbourhood $ \Ucal(x_x) $ such that
$$
D(x)\ge D(x_0)-\widetilde{\eps} \quad\mbox{ for all } x\in \Ucal(x_0).
$$
Using the fact that $ f $ is increasing, we obtain
$$
\begin{aligned}
f(D(x))\ge f(D(x_0)-\widetilde{\eps})\ge f(D(x_0))-\widetilde{\eps}\quad \mbox{ for all } \in\Ucal(x_0),
\end{aligned}
$$
and thus proving that $h$ is lower semicontinuous. We are left to show that $ h$ is the largest lower semicontinuous function smaller or equal to $ H^\PMF_\mu $. It suffices to consider only $ x^0\in\ell_1(\R_+) $ with $ D(x^0)<\frac{\mu}{a} $ as in the other cases $ h$ coincides with $ H^\PMF_\mu $. Define $ x^{\ssup{n}}\in\ell_1(\R_+) $ by
$$
x^{\ssup{n}}_k=x^0_k+\frac{1}{n}\big(\frac{\mu}{a}-D(x^0)\big)\1\{n=k\}, \quad k\in\N.
$$
For all $ n\in\N $, $ D(x^{\ssup{n}})=\frac{\mu}{a} $, and thus
$$
-\frac{\mu^2}{2a}=H^\PMF_{\mu,l.s.c.}(x^0)\le \liminf_{n\to\infty} H^\PMF_{\mu,l.s.c.}(x^{\ssup{n}})\le \liminf_{n\to\infty} H^\PMF_\mu(x^{\ssup{n}})=-\frac{\mu^2}{2a}.
$$
The greatest lower semicontinuous function that satisfies this is the right hand side in \eqref{lsc}.
 \qed
\end{proofsect}

\begin{prop}[\textbf{Upper  bound PMF-model}]
For all $ \mu\in\R, \alpha\le 0 $, and $ a>0 $,
\begin{equation}\label{ubound}
\limsup_{N\to\infty}\frac{1}{\abs{\L_N}}\log \E_{\nu_{N,\alpha}}\Big[\ex^{-\beta\abs{\L_N} H^\PMF_\mu}\Big]\le -\inf_{x\in\ell_1(\R_+)}\Big\{ I(x)+\beta H^\PMF_{\mu+\alpha,l.s.c.}(x)\Big\}.
\end{equation}
\end{prop}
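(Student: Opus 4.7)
The plan is to sidestep the failure of a naive Varadhan's lemma, which occurs because $H^\PMF_\mu$ is neither upper nor lower semicontinuous when $\mu>0$ (the u.s.c.\ regularisation of $-H^\PMF_\mu$ being $+\infty$, as the authors note), by passing to the lower semicontinuous regularisation $H^\PMF_{\mu+\alpha,l.s.c.}$ produced by the preceding lemma and applying the upper bound half of Varadhan's theorem against the ideal Bose gas LDP of Proposition~\ref{THM-Ideal}.

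First I would reduce to the case $\alpha=0$ via the Remark following Theorem~\ref{Thm:HYL}. An explicit exponential tilting computation based on the Radon--Nikodym derivative $d\nu_{N,\alpha}/d\nu_{N,0}=\exp\bigl(\beta\alpha N_{\L_N}^\ell+\abs{\L_N}(\bar q^{\ssup{0}}-\bar q^{\ssup{\alpha}})\bigr)$ shows that the tilt factor $e^{\beta\alpha N_{\L_N}^\ell}$ exactly shifts the linear term of $H^\PMF_\mu$ by $\alpha D$. After this bookkeeping the problem reduces to estimating $\E_{\nu_{N,0}}[\exp(-\beta\abs{\L_N}H^\PMF_{\mu+\alpha})]$.

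Next, since the preceding lemma gives $H^\PMF_{\mu+\alpha,l.s.c.}\le H^\PMF_{\mu+\alpha}$ pointwise on $\ell_1(\R_+)$, monotonicity of the exponential yields
\begin{equation*}
\E_{\nu_{N,0}}\!\big[e^{-\beta\abs{\L_N}H^\PMF_{\mu+\alpha}}\big]\le\E_{\nu_{N,0}}\!\big[e^{-\beta\abs{\L_N}H^\PMF_{\mu+\alpha,l.s.c.}}\big].
\end{equation*}
Crucially, $-\beta H^\PMF_{\mu+\alpha,l.s.c.}$ is u.s.c.\ on $\ell_1(\R)$ by construction and bounded above by $\beta(\mu+\alpha)_+^2/(2a)$, so Varadhan's tail/integrability condition is trivially satisfied. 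Applying the upper bound half of Varadhan's lemma (e.g.\ \cite[Lemma~4.3.6]{DZ09}) together with the rate function $I$ from Proposition~\ref{THM-Ideal} produces
\begin{equation*}
\limsup_{N\to\infty}\frac{1}{\abs{\L_N}}\log\E_{\nu_{N,0}}\!\big[e^{-\beta\abs{\L_N}H^\PMF_{\mu+\alpha,l.s.c.}}\big]\le -\inf_{x\in\ell_1(\R_+)}\!\big\{I(x)+\beta H^\PMF_{\mu+\alpha,l.s.c.}(x)\big\},
\end{equation*}
which, chained with the preceding two displays, gives \eqref{ubound}.

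The main obstacle is the second step: one must recognise that for an \emph{upper} bound it is the \emph{lower} envelope of $H^\PMF_{\mu+\alpha}$ that is needed (so that the exponent $-\beta H^\PMF_{\mu+\alpha}$ is dominated from above), and verify that the explicit piecewise formula for $H^\PMF_{\mu+\alpha,l.s.c.}$ from the preceding lemma genuinely yields a function that is l.s.c.\ on the whole of $\ell_1(\R)$---not merely on sublevels of the density $D$. Once these points are granted, the Radon--Nikodym bookkeeping and the direct invocation of Varadhan's upper bound for bounded u.s.c.\ exponents are entirely routine.
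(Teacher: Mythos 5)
Your argument runs the paper's own proof: replace $H^\PMF$ by its lower semicontinuous regularisation via the pointwise inequality $H^\PMF_{\mu+\alpha}\ge H^\PMF_{\mu+\alpha,l.s.c.}$ (which only increases the expectation), observe that $-\beta H^\PMF_{\mu+\alpha,l.s.c.}$ is upper semicontinuous and bounded above (that is exactly the tail condition in \cite[Lemma~4.3.6]{DZ09}), and conclude by the upper--bound half of Varadhan applied against the ideal--gas LDP of Proposition~\ref{THM-Ideal}. The Radon--Nikodym step just makes explicit the $\alpha\mapsto 0$ change that the paper delegates to the Remark following Theorem~\ref{Thm:HYL}.

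The one place that needs more care is the word \emph{reduces}. Your own Radon--Nikodym formula gives
\begin{equation*}
\E_{\nu_{N,\alpha}}\!\Bigl[\e^{-\beta\abs{\L_N}H^\PMF_\mu}\Bigr]
=\e^{\abs{\L_N}\,(\bar q^{\ssup{0}}-\bar q^{\ssup{\alpha}})}\,
\E_{\nu_{N,0}}\!\Bigl[\e^{-\beta\abs{\L_N}H^\PMF_{\mu+\alpha}}\Bigr],
\end{equation*}
and the normalising constant $\bar q^{\ssup{0}}-\bar q^{\ssup{\alpha}}$ is strictly positive for $\alpha<0$ and survives the $\tfrac1{\abs{\L_N}}\log$ limit; the tilt factor $\e^{\beta\alpha N^{\ssup{\ell}}_{\L_N}}$ shifts the Hamiltonian, but the constant does not disappear. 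Tracking it, your chain of estimates actually gives
$\limsup_N\tfrac1{\abs{\L_N}}\log\E_{\nu_{N,\alpha}}[\e^{-\beta\abs{\L_N}H^\PMF_\mu}]\le(\bar q^{\ssup{0}}-\bar q^{\ssup{\alpha}})-\inf_{\ell_1(\R_+)}\{I+\beta H^\PMF_{\mu+\alpha,l.s.c.}\}$,
which is strictly weaker than \eqref{ubound} whenever $\alpha<0$. To be fair, this is the same imprecision hidden in the paper's terse proof and in the Remark it rests on --- the Remark identifies the Gibbs \emph{measures} $\nu^\PMF_{N,\mu,\alpha}$ at pairs $(\mu,\alpha)$ with the same $\mu+\alpha$, but the unnormalised partition functions differ by precisely this volume--order constant --- so the surplus does not distinguish your argument from theirs; still, it should not be silently discarded if you intend the bound to hold literally for $\alpha<0$.
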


\begin{proofsect}{Proof}
The statement follows easily with the upper bound estimate in Varadhan's Lemma given in \cite[Lemma~4.3.6]{DZ09} using  the inequality $ H^\PMF_\mu(x)\ge H^\PMF_{\mu,l.s.c.}(x)  $ for all $ x\in\ell_1(\R_+) $, the lower semicontinuity of $ H^\PMF_{\mu,l.s.c.} $,  and the fact that $ H^\PMF_{\mu,l.s.c.}(x)\ge -\frac{\mu^2}{2a} $. The later estimate provides the tail-condition necessary to apply \cite[Lemma~4.3.6]{DZ09}.
\qed
\end{proofsect}

\begin{prop}[\textbf{Lower  bound PMF-model}]\label{PMFlower}
For all $ \mu\in\R, \alpha\le 0 $, and $ a>0 $,
\begin{equation}\label{lbound}
\liminf_{N\to\infty}\frac{1}{\abs{\L_N}}\log \E_{\nu_{N,\alpha}}\Big[\ex^{-\beta\abs{\L_N} H^\PMF_\mu}\Big]\ge -\inf_{x\in\ell_1(\R_+)}\Big\{ I(x)+\beta H^\PMF_{\mu+\alpha,l.s.c.}(x)\Big\}.
\end{equation}
\end{prop}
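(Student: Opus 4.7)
The plan is to exploit that the truncated functional
\begin{equation*}
H^{\PMF,K}_\mu(x):=-\mu D_K(x)+\tfrac{a}{2}D_K(x)^2, \qquad D_K(x):=\sum_{k=1}^K k x_k,
\end{equation*}
depends on only finitely many coordinates, hence is continuous and bounded below on $\ell_1(\R_+)$, so the lower-bound half of Varadhan's lemma is available after replacing $H^\PMF_\mu$ by $H^{\PMF,K}_\mu$; the cutoff $K$ is then removed by approximating general elements of $\ell_1(\R_+)$ by finitely-supported sequences. By the Remark after Theorem~\ref{THM-PMF} it suffices to treat $\alpha=0$.

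First I would restrict to the event $E_K:=\{\blambda_N^{\ssup{k}}=0\text{ for all }k>K\}$, on which $D(\blambda_N)=D_K(\blambda_N)$ and therefore $H^{\PMF}_\mu(\blambda_N)=H^{\PMF,K}_\mu(\blambda_N)$. Independence of the cycle counts under the reference Poisson process gives
\begin{equation*}
\E_{\nu_{N,0}}\bigl[\ex^{-\beta\abs{\L_N}H^{\PMF}_{\mu}}\bigr]\ge \E_{\nu_{N,0}^{\ssup{\le K}}}\bigl[\ex^{-\beta\abs{\L_N}H^{\PMF,K}_{\mu}}\bigr]\cdot \ex^{-\abs{\L_N}\sum_{k>K}q^{\ssup{0}}_k},
\end{equation*}
where $\nu_{N,0}^{\ssup{\le K}}$ is the marginal law of $(\blambda_N^{\ssup{1}},\ldots,\blambda_N^{\ssup{K}})$ and itself satisfies an LDP on $\R_+^K$ inherited from Proposition~\ref{THM-Ideal}, with rate function $I^{\ssup{\le K}}(y)=\sum_{k=1}^K[y_k(\log(y_k/q^{\ssup{0}}_k)-1)+q^{\ssup{0}}_k]$. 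Applying Varadhan's lower bound to the continuous functional $H^{\PMF,K}_\mu$, and noting that $I^{\ssup{\le K}}(y)+\sum_{k>K}q^{\ssup{0}}_k=I(y)$ and $H^{\PMF,K}_\mu(y)=H^{\PMF}_\mu(y)$ for finitely-supported $y$, one obtains, for every $K$,
\begin{equation*}
\liminf_{N\to\infty}\frac{1}{\abs{\L_N}}\log\E_{\nu_{N,0}}\bigl[\ex^{-\beta\abs{\L_N}H^{\PMF}_{\mu}}\bigr]\ge -\inf\bigl\{I(y)+\beta H^{\PMF}_{\mu}(y)\colon y\in\ell_1(\R_+)\text{ finitely supported}\bigr\}.
\end{equation*}

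It remains to dominate this finitely-supported infimum above by $\inf_{x\in\ell_1(\R_+)}\{I(x)+\beta H^{\PMF}_{\mu,l.s.c.}(x)\}$, which I would do by constructing, for each candidate $x^*$ (necessarily with $D(x^*)<\infty$, otherwise $H^{\PMF}_{\mu,l.s.c.}(x^*)=+\infty$), a finitely-supported approximating sequence $y^{\ssup{n}}$. If $D(x^*)\ge\mu/a$, the plain truncation $y^{\ssup{n}}_k=x^*_k\1\{k\le n\}$ suffices: monotone convergence yields $I(y^{\ssup{n}})\to I(x^*)$ and $D(y^{\ssup{n}})\to D(x^*)$, so $H^{\PMF}_\mu(y^{\ssup{n}})\to H^{\PMF}_\mu(x^*)=H^{\PMF}_{\mu,l.s.c.}(x^*)$. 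The delicate case is $D(x^*)<\mu/a$, where the l.s.c.\ regularization is strict; mimicking the construction already used to derive formula~\eqref{lsc}, I would append a single compensating atom $y^{\ssup{n}}_{m_n}:=c_n/m_n$ at an auxiliary index $m_n\to\infty$, with $c_n:=\mu/a-D(y^{\ssup{n}}_{\text{trunc}})\to\mu/a-D(x^*)>0$, so that $D(y^{\ssup{n}})=\mu/a$ and consequently $H^{\PMF}_\mu(y^{\ssup{n}})=-\mu^2/(2a)=H^{\PMF}_{\mu,l.s.c.}(x^*)$.

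The main obstacle is to show that the entropy cost of this extra atom vanishes in the second case. That cost equals $\tfrac{c_n}{m_n}\bigl(\log(c_n/(m_n q^{\ssup{0}}_{m_n}))-1\bigr)+q^{\ssup{0}}_{m_n}$; since $q^{\ssup{0}}_{m_n}\sim m_n^{-1-d/2}$, the logarithm grows only like $\tfrac{d}{2}\log m_n$, which is killed by the prefactor $c_n/m_n\to 0$, and the whole term vanishes. Hence $I(y^{\ssup{n}})\to I(x^*)$ and the desired limit holds. This cancellation, borrowed from the minimising sequence used in the l.s.c.\ regularization proof, is precisely what allows $H^{\PMF}_{\mu,l.s.c.}$ to appear in the lower bound even though it is never attained on any finitely-supported test sequence.
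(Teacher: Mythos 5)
Your proof is correct and follows essentially the same approach as the paper's: restrict to the finite-dimensional event where all cycle lengths are at most $K$, apply the lower-bound half of Varadhan's Lemma to the then-continuous truncated Hamiltonian, and remove the cutoff by constructing a finitely-supported minimising sequence, with a compensating atom at a divergent index to account for the gap between $H^\PMF_\mu$ and its lower semicontinuous regularisation. The only cosmetic differences are that you place the compensating atom at a separate index $m_n$ (rather than at the cutoff coordinate $K$ itself, as the paper does) and tune it so that $D(y^{\ssup{n}})=\mu/a$ exactly rather than approximately, which if anything slightly streamlines the error estimate.
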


\begin{proofsect}{Proof}
The strategy for proving the lower bound is to first introduce a cut-off parameter as done in \cite{ACK}, that is, we change the measure to obtain a finite-dimensional problem which gives continuity of the Hamiltonian and thus a large deviation lower bound for the finite-dimensional space. The final step is then to remove the cut-off parameter. As our Hamiltonian is not positive,  removing of the cut-off is not as straightforward as in \cite{ACK}.  We thus need to construct a sequence for the finite dimensional spaces which allows for energy estimates and at the same time gives convergence towards the lower semicontinuous regularisation. The last step is crucial as  a  lower bound can be  obtained via the upper semicontinuous regularisation which in our case is identical to infinity.

\medskip

\textbf{Step 1: Restriction of the mark space.}  We will approximate the mark space $E$ by the cut-off version
$$
E^{\ssup{K}}=\bigcup_{k=1}^K \Ccal_{k},  \quad K\in\N.
$$
The Poisson reference process on $ \R^d\times E^{\ssup{K}} $ is denoted by $ {\tt Q}^{\ssup{K}} $, and the corresponding measure on $ \R^K $ which is isomorphic to $ \pi_K(\ell_1(\R)) $ with $ \pi_K\colon\ell_1(\R)\to\R^k,x\mapsto (x_1,\ldots,x_K) $, is denoted $ \nu_{N,\alpha}^{\ssup{K}}={\tt Q}^{\ssup{K}}\circ \blambda_{\L_N}^{-1}=\nu_{N,\alpha}\circ\pi_K^{-1} $.   We obtain a large deviation principle for the cut-off version in the following.

\begin{lemma}
	\label{Thm:LDPidealRJ}
	For given $K\in\N $ and $\alpha\leq0$, the sequence $(\nu_{N,\alpha}^{\ssup{K}})_{N\in\N}$ satisfies an LDP on $\R^K$ with rate $\abs{\Lambda_N}$ and rate function
	\begin{equation}\label{RFK}
	I^{\ssup{K}}_\alpha(x) = 
	\begin{cases}
	\sum^K_{k=1}\big(x_k\log\frac{x_k}{q^{\ssup{\alpha}}_k} - x_k + q^{\ssup{\alpha}}_k\big), &x\in\R^K_+, \\
	+\infty, &\text{otherwise.}
	\end{cases}
	\end{equation}
\end{lemma}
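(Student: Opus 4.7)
\begin{proofsect}{Proof proposal}
The plan is to derive this finite-dimensional LDP as a consequence of the infinite-dimensional ideal gas LDP in Proposition~\ref{THM-Ideal} via the contraction principle. The key observation is that the random variable $\blambda_N^{\ssup{k}}$ depends only on marks of length exactly $k$, so its joint law for $k=1,\dots,K$ under the reference process ${\tt Q}^{\ssup{K}}$ on $\R^d\times E^{\ssup{K}}$ agrees with the pushforward $\nu_{N,\alpha}\circ \pi_K^{-1}$, where $\pi_K\colon\ell_1(\R)\to\R^K$ is the canonical projection onto the first $K$ coordinates.

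First I would verify this identification, noting that $\Ncal_k$ for $k\leq K$ depends on the process only through cycles of length at most $K$, so truncating the superposition of independent Poisson components in \eqref{nudef} beyond $K$ does not alter the marginal. The projection $\pi_K$ is continuous (in fact $1$-Lipschitz) from $\ell_1(\R)$ to $\R^K$, so the contraction principle (e.g.\ \cite[Theorem~4.2.1]{DZ09}) applied to the LDP of Proposition~\ref{THM-Ideal} yields an LDP for $\nu_{N,\alpha}^{\ssup{K}}$ on $\R^K$ with rate $|\L_N|$ and rate function
\begin{equation*}
I^{\ssup{K}}_\alpha(y)=\inf\bigl\{I_\alpha(x)\colon x\in\ell_1(\R),\; \pi_K(x)=y\bigr\},\qquad y\in\R^K.
\end{equation*}

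Next I would compute this infimum explicitly. For $y\notin\R_+^K$ the infimum is over an empty set within $\ell_1(\R_+)$ and hence equals $+\infty$, matching \eqref{RFK}. For $y\in\R_+^K$, any feasible $x\in\ell_1(\R_+)$ satisfies $x_k=y_k$ for $k\leq K$ while $(x_k)_{k>K}$ is free in $\ell_1(\R_+)$. Splitting the sum in the rate function gives
\begin{equation*}
I_\alpha(x)=\sum_{k=1}^K\Bigl(y_k\log\tfrac{y_k}{q_k^{\ssup{\alpha}}}-y_k+q_k^{\ssup{\alpha}}\Bigr)+\sum_{k>K}\Bigl(x_k\log\tfrac{x_k}{q_k^{\ssup{\alpha}}}-x_k+q_k^{\ssup{\alpha}}\Bigr).
\end{equation*}
Each summand $h(u,q):=u\log(u/q)-u+q$ is nonnegative on $[0,\infty)$ with minimum zero at $u=q$, so the infimum over the tail is attained at $x_k=q_k^{\ssup{\alpha}}$ for $k>K$ (which lies in $\ell_1(\R_+)$ since $\overline{q}^{\ssup{\alpha}}<\infty$ for $\alpha\leq 0$), contributing zero. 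This yields the claimed formula.

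This plan is essentially free of obstacles since all hard work is done by Proposition~\ref{THM-Ideal}; the only point requiring a little care is checking that the inf-compactification of the tail is truly minimised inside $\ell_1(\R_+)$, which is immediate from the summability of $q^{\ssup{\alpha}}$ for $\alpha\leq 0$. As an alternative route, one can apply the Gärtner--Ellis theorem directly in $\R^K$, since by independence of the Poisson counts the cumulant generating function equals $\sum_{k=1}^K q_k^{\ssup{\alpha}}(\e^{t_k}-1)$, which is finite, differentiable, and convex on $\R^K$; its Legendre transform is straightforward to compute and reproduces \eqref{RFK}. Either route delivers the lemma.
\end{proofsect}
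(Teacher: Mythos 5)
Your primary argument is essentially identical to the paper's: both apply the contraction principle to the LDP of Proposition~\ref{THM-Ideal} via the continuous projection $\pi_K$, and both evaluate the resulting variational formula by noting that for $y\in\R_+^K$ the tail sum $\sum_{k>K}\big(x_k\log(x_k/q_k^{\ssup{\alpha}})-x_k+q_k^{\ssup{\alpha}}\big)$ is termwise nonnegative and minimised at $x_k=q_k^{\ssup{\alpha}}$, while for $y\notin\R_+^K$ every pre-image lies outside $\ell_1(\R_+)$ so the rate is $+\infty$. Your secondary suggestion of a direct G\"artner--Ellis argument in $\R^K$ is a valid alternative (independence of the finitely many Poisson counts makes the moment generating function a finite sum, so none of the infinite-dimensional subtleties arise), but it is not the route the paper takes.
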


\begin{proofsect}{Proof}
	Since the projection $\pi_K$ is continuous, we can apply the contraction principle to obtain a variational form of the rate function
	\begin{equation*}
	I^{\ssup{K}}_\alpha(x) = \inf_{\widetilde{x}\in\ell_1(\R)\colon\pi_K(\widetilde{x})=x}  I_\alpha(\widetilde{x}),
	\end{equation*}
	where $ I_\alpha $ is the rate function for $\nu_{N,\alpha}$ in Proposition~\ref{THM-Ideal}.
	
	If $x\in\R^K\setminus \R^K_+$, then any element $\widetilde{x}\in\ell_1(\R)$ with $\pi_K(\widetilde{x})=x$ is such that $\widetilde{x}\in\ell_1(\R)\setminus\ell_1(\R_+)$, and hence $ I(\widetilde{x})= +\infty$. 
	
	Now suppose $x\in\R^K_+$. Then if a corresponding $\widetilde{x}\in\ell_1(\R)$ was in fact in $\widetilde{x}\in\ell_1(\R)\setminus\ell_1(\R_+)$, then $I(\widetilde{x})= +\infty$. However, for all $\widetilde{x}\in\ell_1(\R_+)$ we have $ I(\widetilde{x}) < +\infty$. Hence we only need consider $\widetilde{x}\in\ell_1(\R)$ with positive entries. For $\widetilde{x}\in\ell_1(\R_+)$ with $\pi_K(\widetilde{x})=x\in\R^K_+$,
	\begin{equation*}
	I_\alpha(\widetilde{x}) = \sum^K_{k=1} x_k\Big(\log\frac{x_k}{q^{\ssup{\alpha}}_k}-1\Big) + \sum^\infty_{k=K+1}\widetilde{x}_k\Big(\log\frac{\widetilde{x}_k}{q^{\ssup{\alpha}}_k}-1\Big) + \overline{q}^{\ssup{\alpha}}.
	\end{equation*}
	Thus is suffices to minimise the second term, which can be done term-wise. The infimum is given by $\widetilde{x}=y$, where
	\begin{equation*}
	y_k =
	\begin{cases}
	x_j, &k=1,\ldots,K\\
	q^{\ssup{\alpha}}_k, &k>K.
	\end{cases}
	\end{equation*}
	
	This gives us 
	\begin{equation*}
	I^{\ssup{K}}_\alpha(x) = I_\alpha(\widetilde{x}) = \sum^K_{k=1}x_k\Big(\log\frac{x_k}{q^{\ssup{\alpha}}_k}-1\Big) - \sum^\infty_{j=K+1}q^{\ssup{\alpha}}_k + \overline{q}^{\ssup{\alpha}},
	\end{equation*}
	exactly as required.
\qed
\end{proofsect}

\medskip

\textbf{Step 2: Lower bound.}   
 We obtain inserting an indicator the lower bound,
 $$
 \E_{\nu_{N,\alpha}}\Big[\ex^{-\beta\abs{\L_N} H^\PMF_\mu}\Big]\ge \E_{\nu_{N,\alpha}}\Big[\ex^{-\beta\abs{\L_N} H^\PMF_\mu}\1\{\nu_{N,\alpha}(\R^K)=1\}\Big], 
 $$
where we identified $ \R^K $ with the corresponding subspace in $ \ell_1(\R) $. On that event we can replace $ H^\PMF_\mu $ by $ H_{\mu,K}^\PMF $, where
$$
 H_{\mu,K}^\PMF(x)=-\mu\sum_{k=1}^K k x_k +\frac{a}{2}\Big(\sum_{k=1}^K k x_k\Big)^2,
 $$ 
and $ \E_{\nu_{N,\alpha}} $ by $ \E_{\nu_{N,\alpha}^{\ssup{K}}} $. The finite-dimensional approximation $ H^\PMF_{\mu,K} $ is in fact continuous, and thus we obtain a large deviation lower bound  using Lemma~\ref{Thm:LDPidealRJ} and Varadhan's Lemma, see \cite[Lemma~4.3.4]{DZ09},
\begin{equation}
\begin{aligned}
\liminf_{N\to\infty}\frac{1}{\abs{\L_N}}\log \E_{\nu_{N,\alpha}}\Big[\ex^{-\beta\abs{\L_N} H^\PMF_\mu}\Big] \ge -\inf_{x\in\R^K}  \big\{ I^{\ssup{K}}(x)+\beta H^\PMF_{\mu+\alpha,K}(x) \big\}\end{aligned}
\end{equation}

\medskip

\textbf{Step 3: Removing the cut-off parameter.} We are left to remove the cut-off by taking $ K\to\infty $ and to prove that the $K\to\infty $ limit of $ H^\PMF_{\mu,K} $ is replaced by the lower semicontinuous regularisation of $ H^\PMF_\mu $.
\begin{lemma}
\begin{equation}\label{removing}
\limsup_{K\to\infty} \inf_{x\in\R^K}\big\{ I^{\ssup{K}}(x)+\beta H^\PMF_{\mu+\alpha,K}(x)\big\} \le \inf_{x\in\ell_1(\R)}\big\{ I(x)+\beta H^\PMF_{\mu+\alpha,l.s.c.}(x)\big\}.
\end{equation}
\end{lemma}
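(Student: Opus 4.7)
The plan is to construct, for every $y\in\ell_1(\R_+)$, a recovery sequence $x^{(K)}\in\R^K_+$ satisfying
\begin{equation*}
\lim_{K\to\infty}\bigl(I^{\ssup{K}}(x^{(K)})+\beta H^\PMF_{\mu+\alpha,K}(x^{(K)})\bigr)=I(y)+\beta H^\PMF_{\mu+\alpha,l.s.c.}(y);
\end{equation*}
taking the infimum over $y$ on the right will then yield \eqref{removing}. Write $\widetilde{\mu}:=\mu+\alpha$. We may assume $D(y)<\infty$, since otherwise $H^\PMF_{\widetilde{\mu},l.s.c.}(y)=+\infty$ (the quadratic branch dominates when $\widetilde{\mu}>0$, and $H^\PMF_{\widetilde{\mu},l.s.c.}\equiv H^\PMF_{\widetilde{\mu}}$ when $\widetilde{\mu}\le 0$), so the right-hand side is $+\infty$ and the inequality is trivial.

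First consider the easy case $D(y)\ge\widetilde{\mu}/a$, which in particular covers all $y$ when $\widetilde{\mu}\le 0$. Then by \eqref{lsc} we have $H^\PMF_{\widetilde{\mu},l.s.c.}(y)=H^\PMF_{\widetilde{\mu}}(y)$. I would take the plain truncation $x^{(K)}_k:=y_k$ for $k=1,\dots,K$. Each summand in $I$ is nonnegative, since $x\mapsto x\log(x/q)-x+q$ attains its minimum $0$ at $x=q$, so monotone convergence gives $I^{\ssup{K}}(x^{(K)})\to I(y)$. Since $D_K(y)\to D(y)$ in $\R_+$, continuity of the polynomial $r\mapsto-\widetilde{\mu}r+\tfrac{a}{2}r^2$ yields $H^\PMF_{\widetilde{\mu},K}(x^{(K)})\to H^\PMF_{\widetilde{\mu}}(y)$.

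The non-trivial case is $D(y)<\widetilde{\mu}/a$, which forces $\widetilde{\mu}>0$; here $H^\PMF_{\widetilde{\mu},l.s.c.}(y)=-\widetilde{\mu}^2/(2a)$ is strictly below $H^\PMF_{\widetilde{\mu}}(y)$ and plain truncation fails. Instead I would inject the missing density into the highest available slot: set $x^{(K)}_k:=y_k$ for $k<K$ and
\begin{equation*}
x^{(K)}_K:=y_K+\frac{1}{K}\Bigl(\frac{\widetilde{\mu}}{a}-D_K(y)\Bigr).
\end{equation*}
Since $D_K(y)\le D(y)<\widetilde{\mu}/a$, one has $x^{(K)}\in\R^K_+$ and, by construction, $D_K(x^{(K)})=\widetilde{\mu}/a$ for every $K$; hence $H^\PMF_{\widetilde{\mu},K}(x^{(K)})=-\widetilde{\mu}^2/(2a)=H^\PMF_{\widetilde{\mu},l.s.c.}(y)$ identically in $K$. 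For the entropy, the first $K-1$ summands converge to $I(y)$ by monotone convergence, and the extra $k=K$ contribution $x^{(K)}_K\log(x^{(K)}_K/q^{\ssup{0}}_K)-x^{(K)}_K+q^{\ssup{0}}_K$ vanishes in the limit: from $q^{\ssup{0}}_K\asymp K^{-1-d/2}$ and $x^{(K)}_K\sim cK^{-1}$ with $c=\widetilde{\mu}/a-D(y)>0$ one reads off $x^{(K)}_K/q^{\ssup{0}}_K\asymp K^{d/2}$, so the term is of order $(\log K)/K\to 0$.

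The delicate step is thus the second case: one must pump positive density into a mark-length slot whose Poisson intensity $q^{\ssup{0}}_K$ is itself decaying polynomially, without generating a spurious entropic cost. The scaling $x^{(K)}_K\sim cK^{-1}$ is precisely the right balance, keeping $x^{(K)}_K/q^{\ssup{0}}_K$ polynomially growing (so its logarithm is only $O(\log K)$) while the prefactor $x^{(K)}_K$ still realises the required finite density shift $c>0$. This is the loop-counting incarnation of the standard principle that the lower semicontinuous envelope of $H^\PMF_{\widetilde{\mu}}$ is attained along sequences in which a bounded amount of extra mass is spread across longer and longer cycles.
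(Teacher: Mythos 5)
Your proof is correct and follows essentially the same recovery-sequence approach as the paper: truncate the candidate $y\in\ell_1(\R_+)$ and, in the subcritical-density case, inject the missing mass into the $K$-th slot so that the entropy cost is $O(K^{-1}\log K)$. The only material difference is a tidy simplification: you inject $\tfrac{1}{K}\bigl(\tfrac{\widetilde{\mu}}{a}-D_K(y)\bigr)$ rather than the paper's $\tfrac{1}{K}\bigl(\tfrac{\widetilde{\mu}}{a}-D(y)\bigr)_+$, which pins $D_K(x^{(K)})=\widetilde{\mu}/a$ exactly for every $K$ and makes the energy term identically equal to $-\widetilde{\mu}^2/(2a)$, whereas the paper has to carry an $\varepsilon$-error from $\abs{D_K(y)-D(y)}<\varepsilon$. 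Your reduction to $D(y)<\infty$, the splitting into the two cases at $D(y)=\widetilde{\mu}/a$, the verification that plain truncation works in the easy case, and the asymptotics $x^{(K)}_K\sim cK^{-1}$ versus $q^{\ssup{0}}_K\asymp K^{-1-d/2}$ for the extra entropy summand are all sound.
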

\begin{proofsect}{Proof}
Fix $ \widetilde{x}\in \ell_1(\R_+) $ satisfying $ I(\widetilde{x})+\beta H^\PMF_{\mu+\alpha}(\widetilde{x}) <\infty $. For $ K\in\N$, consider $ x^K=\pi_K(\widetilde{x}) $. By \eqref{RFK}, we have $ I^{\ssup{K}}(x^K)\le I(\widetilde{x}) $. We shall replace  $ x^K $ by $ \widehat{x}^K $, defined as  
$$
\widehat{x}^K_k=\begin{cases}  x^K_k, &, k=1,\ldots, K-1,\\
x^K_K +\frac{1}{K}\big(\frac{\mu+\alpha}{a}-D(\widetilde{x})\big)_+, &, k=K.\end{cases}
$$
Clearly, $ \norm{x^K-\widehat{x}^K}_{\ell_1(\R)} \to 0 $ as $ K\to\infty $, and $ x^K\to\widetilde{x} $ as $K\to\infty $. Furthermore,
$$
I^{\ssup{K}}(\widehat{x}^K)\le I^{\ssup{K}}(x^K)+\abs{I^{\ssup{K}}(\widehat{x}^K)-I^{\ssup{K}}(x^K)}\le I(\widetilde{x}) + \bigO(\frac{1}{K}\log K).
$$
We turn to the energy term which needs extra care as the Hamiltonian is not positive. Observe that
$$
D(\widehat{x}^K)=\sum_{k=1}^K k\widehat{x}^K_k =\begin{cases}  \frac{\mu+\alpha}{a} +D(x^K)-D(\widetilde{x}) &, D(\widetilde{x}) <\frac{\mu+\alpha}{a},\\ D(x^K) &, D(\widetilde{x})\ge \frac{\mu+\alpha}{a}. \end{cases} 
$$
Assume that $ D(\widetilde{x}) \ge \frac{\mu+\alpha}{a} $.   Then,
$$
H^\PMF_{\mu+\alpha,K}(\widehat{x}^K)=-\left(\mu+\alpha\right)\sum_{k=1}^K k x^K_k +\frac{a}{2}\Big(\sum_{k=1}^Kk x^K_k\Big)^2\le  H^\PMF_{\mu+\alpha,l.s.c.}(\widetilde{x}).
$$
In the other case, $ D(\widetilde{x})<\frac{\mu+\alpha}{a} $, for every  $ \eps>0 $ choose $ K$ sufficiently large such that $ \abs{D(x^K)-D(\widetilde{x})}<\eps $, and estimate
$$
\begin{aligned}
H^\PMF_{\mu+\alpha,K}(\widehat{x}^K)&=-\mu\big(\frac{\mu+\alpha}{a}+D(x^K)-D(\widetilde{x})\big)+\frac{a}{2}\big(\frac{\mu+\alpha}{a}+\big(D(x^K)-D(\widetilde{x})\big)\big)^2\\
& \le H^\PMF_{\mu+\alpha,l.s.c.}(\widetilde{x}) +2(\mu+\alpha)\eps+\eps^2\frac{a}{2}
\end{aligned}
$$
to finish the proof for \eqref{removing}.
\qed
\end{proofsect}

We finally combine Proposition~\ref{ubound} and Proposition~\ref{PMFlower}  to finish the proof for Theorem~\ref{THM-PMF}.

\qed
\end{proofsect}

\subsection{Proof of Theorem~\ref{Thm:HYL}} \label{sec-HYL}
This section proves Theorem~\ref{Thm:HYL} using  techniques which are similar to the ones in the  proof in Section~\ref{sec-PMF}. However, there are significant differences  to address due to the fact that the Hamiltonian $ H^\HYL_\mu $ has positive and negative contributions. We rewrite the Hamiltonian in two equivalent ways for any $ a\ge b>0 $ and $ \mu\in\R $,
\begin{align}
H^\HYL_\mu(x)&=-\mu\sum_{k=1}^\infty k x_k +\frac{(a-b)}{2}\Big(\sum_{k=1}^\infty k x_k\Big)^2+\frac{b}{2}\sum_{\heap{j,k=1}{j\not= k}}^\infty jkx_jx_k\label{version1}\\
&=-\mu\sum_{k=1}^\infty k x_k +\frac{a}{2}\Big(\sum_{k=1}^\infty k x_k\Big)^2-\frac{b}{2}\sum_{k=1}^\infty k^2x_k^2.\label{version2}
\end{align}
Note that that the right hand side in \eqref{version1} is the sum of a PMF Hamiltonian with interaction strength $ (a-b) $ and a lower semicontinuous and non-negative term. On the other hand, \eqref{version2}  expresses $ H^\HYL $ as the sum of a PMF Hamiltonian, and an upper semicontinuous and non-positive term. Let us introduce the following notations
\begin{align*}
H_{\mu,a}^\PMF(x)=-\mu\sum_{k=1}^\infty k x_k +\frac{a}{2}\Big(\sum_{k=1}^\infty k x_k\Big)^2, \quad a>0,\\
H_+(x)=\frac{b}{2}\sum_{\heap{j,k=1}{j\not= k}}^\infty jkx_jx_k,\qquad
H_-(x)= -\frac{b}{2}\sum_{k=1}^\infty k^2x_k^2.
\end{align*}

Thus $ H^\HYL_\mu = H_{\mu,a}^\PMF + H_-=H^\PMF_{\mu,a-b}+H_+$.
\begin{lemma}
For $ b>0 $, $H_- $ is upper semicontinuous and $H_+ $ is lower semicontinuous on $ \ell_1(\R_+) $. 
\end{lemma}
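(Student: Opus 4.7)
The plan is to express both functionals as monotone limits (suprema) of continuous truncated versions on $\ell_1(\R_+)$, and then invoke the standard fact that a pointwise supremum of continuous functions is lower semicontinuous.

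The key observation is that the coordinate projections $\pi_j\colon \ell_1(\R)\to\R$, $x\mapsto x_j$, are $1$-Lipschitz, since $|x_j-y_j|\le \sum_{k\in\N}|x_k-y_k|=\|x-y\|_{\ell_1}$. Consequently, for any $N\in\N$, the truncated functionals
\begin{equation*}
H_+^{\ssup{N}}(x):=\frac{b}{2}\sum_{\heap{1\le j,k\le N}{j\ne k}} jk\,x_j x_k,\qquad H_-^{\ssup{N}}(x):=-\frac{b}{2}\sum_{k=1}^N k^2 x_k^2,
\end{equation*}
are finite linear combinations of products of continuous real-valued maps on $\ell_1(\R)$, hence continuous (in particular on the closed subset $\ell_1(\R_+)$).

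First I would handle $H_+$: on $\ell_1(\R_+)$ all summands $jk x_j x_k$ are non-negative, so $(H_+^{\ssup{N}})_{N\in\N}$ is a monotone non-decreasing sequence of continuous functions with pointwise supremum equal to $H_+$ (allowing the value $+\infty$). The supremum of a family of continuous $[-\infty,\infty]$-valued functions is lower semicontinuous, giving lower semicontinuity of $H_+$. Next for $H_-$, I would apply the same argument to $-H_-(x)=\frac{b}{2}\sum_{k\in\N}k^2 x_k^2$, which on $\ell_1(\R_+)$ is the pointwise supremum of the continuous non-negative functions $-H_-^{\ssup{N}}$; hence $-H_-$ is lower semicontinuous and therefore $H_-$ is upper semicontinuous.

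No step here is expected to be subtle; the only point to be mindful of is that $H_+$ (and symmetrically $-H_-$) may take the value $+\infty$ on $\ell_1(\R_+)$, but this is harmless for the semicontinuity statement, which is formulated for extended real-valued functions. This lemma will subsequently be used exactly as in the PMF analysis of Section~\ref{sec-PMF}: the representation $H^\HYL_\mu=H^\PMF_{\mu,a-b}+H_+$ furnishes the large-deviation upper bound via the lower semicontinuous regularisation, while $H^\HYL_\mu=H^\PMF_{\mu,a}+H_-$ with $H_-$ upper semicontinuous will be used to control the contribution of the negative counter term along the finite-dimensional approximations in the lower bound.
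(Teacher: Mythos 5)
Your proof is correct and amounts to essentially the same argument as the paper's: both exploit that on $\ell_1(\R_+)$ every summand in $\sum k^2 x_k^2$ and in $\sum_{j\ne k} jk\,x_j x_k$ is non-negative, which lets you pass the $\liminf$ inside the sum. The paper phrases this via coordinatewise convergence (using $|x^{(n)}_k-x_k|\le\|x^{(n)}-x\|_{\ell_1}$) followed by Fatou's lemma, whereas you phrase it as a monotone supremum of continuous truncations; these are interchangeable formulations of the same underlying fact.
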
 
\begin{proofsect}{Proof}
We shall show that $\sum_{\heap{j,k=1}{j\not= k}}^\infty jkx_jx_k $ and $  \sum_{k=1}^\infty k^2x_k^2 $ are both lower semicontinuous on $ \ell_1(\R_+) $. Suppose $ x^{\ssup{n}}\to x $ in $ \ell_1(\R_+) $. Clearly,
$$
\abs{x^{\ssup{n}}_k-x_k}\le \norm{x^{\ssup{n}}-x}_{\ell_1}\quad\mbox{ for all }k\in\N.
$$
Furthermore, due to the $ \ell_1 $-convergence and $ \ell_1\subset\ell_\infty $,  the term $ \abs{x^{\ssup{n}}_k-x_k} $ is bounded in both $k$ and $ n$. Hence
$$
\abs{(x^{\ssup{n}}_k)^2-x_k^2}=\abs{x^{\ssup{n}}_k-x_k}\abs{x^{\ssup{n}}_k+x_k}\to 0\; \mbox{ as } n\to\infty.
$$
Applying Fatou's Lemma here proves that $ H_- $ is upper semicontinuous. Similarly, for all $ (j,k)\in\N^2 $,
$$
\abs{x_j^{\ssup{n}}x_k^{\ssup{k}}-x_jx_k}\le \abs{x^{\ssup{n}}_j}\abs{x^{\ssup{n}}_k-x_k}+\abs{x_k}\abs{x^{\ssup{n}}_j-x_j}\to 0 \; \mbox{ as } n\to\infty.
$$
This convergence in conjunction with Fatou's Lemma shows that $ H_+$ is lower semicontinuous.
\qed
\end{proofsect}

\begin{lemma}\label{L:regHYL}
For $ a>b $, the $ \ell_1(\R_+) $ lower semicontinuous regularisation of $ H^\HYL_\mu $ is given by
\begin{equation}\label{lscHYL}
H^\HYL_{\mu,l.s.c.}(x)=H^\HYL_\mu(x) -\frac{\big(\mu-aD(x)\big)_+^2}{2(a-b)},\quad x\in\ell_1(\R_+).
\end{equation}
\end{lemma}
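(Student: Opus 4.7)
The strategy is to identify $H^\HYL_{\mu,l.s.c.}$ with the pointwise sequential liminf $\liminf_{y\to x}H^\HYL_\mu(y)$, which by general theory equals the largest $\ell_1$-lower semicontinuous minorant of $H^\HYL_\mu$. I will prove the claimed equality by matching upper and lower bounds on this liminf, restricting attention to $x$ with $D(x),T(x)<\infty$ (in the degenerate cases the formula reduces to $\pm\infty$ and equality with the l.s.c.\ regularisation follows from $T(y)\le D(y)^2$ and the lower semicontinuity of $D$).

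For the upper bound, if $D(x)\ge\mu/a$ or $\mu\le 0$ the constant sequence $y^{(n)}\equiv x$ suffices. Otherwise set $c^{*}:=(\mu-aD(x))/(a-b)>0$ and define $x^{(n)}_{k}:=x_{k}+(c^{*}/n)\mathbf{1}\{k=n\}\in\ell_1(\R_+)$. Then $x^{(n)}\to x$ in $\ell_1$, $D(x^{(n)})=D(x)+c^{*}$, and since $D(x)<\infty$ forces $nx_{n}\to 0$, a direct computation gives $T(x^{(n)})=T(x)+(c^{*})^2+2nx_{n}c^{*}\to T(x)+(c^{*})^2$. Inserting and simplifying with the definition of $c^{*}$ yields $H^\HYL_\mu(x^{(n)})\to H^\HYL_\mu(x)-(\mu-aD(x))^2/(2(a-b))$, matching the claimed value.

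For the matching lower bound, fix any $y^{(n)}\to x$ in $\ell_1(\R_+)$ and decompose $z^{(n)}:=y^{(n)}-x=z^{(n)+}-z^{(n)-}$ with $z^{(n)\pm}\ge 0$ and $z^{(n)-}_{k}\le x_{k}$. Passing to a subsequence, assume $\delta_n:=D(y^{(n)})-D(x)\to\delta\in[0,\infty]$; non-negativity follows from the l.s.c.\ of $D$. The heart of the argument is the estimate
$$\limsup_{n\to\infty}\bigl(T(y^{(n)})-T(x)\bigr)\le\delta^{2},$$
which I plan to establish via three ingredients: (a) since $D(x)=\sum_{k}kx_{k}<\infty$, the linear map $z\mapsto\sum kz_{k}$ is $\ell_1$-norm continuous at $0$ on the order interval $\{0\le z\le x\}$, so $\delta_n^{-}:=\sum k z_{k}^{(n)-}\to 0$ and $\delta_n^{+}=\delta_n+\delta_n^{-}\to\delta$; (b) for non-negative sequences $(a_{i})$ one has $\sum a_{i}^{2}\le(\sum a_{i})^{2}$, so $\sum k^{2}(z_{k}^{(n)\pm})^{2}\le(\delta_n^{\pm})^{2}$ and consequently $\sum k^{2}(z_{k}^{(n)})^{2}\le\delta^{2}+o(1)$; (c) the cross-term $\sum k^{2}x_{k}z_{k}^{(n)}=\sum(kx_{k})(kz_{k}^{(n)})$ vanishes in the limit by cutting off at some $K$, using $z_{k}^{(n)}\to 0$ pointwise for $k\le K$, $\sup_{k>K}kx_{k}\to 0$ as $K\to\infty$, and the uniform boundedness of $\sum k|z_{k}^{(n)}|=\delta_n^{+}+\delta_n^{-}$. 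Combining (a)--(c) inside the identity $T(y)-T(x)=2\sum k^{2}x_{k}z_{k}+\sum k^{2}z_{k}^{2}$ yields the key estimate.

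Plugging into the expansion $H^\HYL_\mu(y^{(n)})-H^\HYL_\mu(x)=\delta_n(aD(x)-\mu)+\tfrac{a}{2}\delta_n^{2}-\tfrac{b}{2}(T(y^{(n)})-T(x))$ and passing to the liminf gives
$$\liminf_{n\to\infty}\bigl(H^\HYL_\mu(y^{(n)})-H^\HYL_\mu(x)\bigr)\ge\delta(aD(x)-\mu)+\tfrac{a-b}{2}\delta^{2}\ge-\tfrac{(\mu-aD(x))_+^{2}}{2(a-b)},$$
the last infimum being attained at $\delta^{*}=(\mu-aD(x))_+/(a-b)$. The case $\delta=+\infty$ is harmless since $(a-b)/2>0$ drives the right-hand side to $+\infty$. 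Together with the upper bound this yields equality of the liminf with the formula. The main obstacle is the key estimate $\limsup(T(y^{(n)})-T(x))\le\delta^{2}$: unlike the PMF regularisation, which hinges on the l.s.c.\ functional $D$ alone, the HYL Hamiltonian contains the upper-semicontinuous term $-\tfrac{b}{2}T$, and controlling its growth along arbitrary $\ell_1$-approximating sequences requires simultaneously exploiting $kx_{k}\to 0$ (to kill the cross-term) and the order-interval continuity of the density functional (to control the removed-mass contribution).
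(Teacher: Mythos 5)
Your proof is correct and follows essentially the same route as the paper: you use the same test sequence $x^{\ssup{n}}_k = x_k + \tfrac{1}{n}(\mu-aD(x))_+/(a-b)\,\mathds{1}\{k=n\}$ for the attainment direction, and for the lower bound you exploit lower semicontinuity of $D$ together with a bound of the form $\limsup_n\bigl(T(y^{\ssup{n}})-T(x)\bigr)\le \delta^2$, finishing with a one-dimensional optimisation over $\delta\ge 0$. One small remark: your cross-term estimate, which splits $\sum k^2x_k z^{\ssup{n}}_k=\sum (kx_k)(kz^{\ssup{n}}_k)$ and uses only $\sup_k kx_k<\infty$ together with a tail cut-off, is actually a tighter argument than the corresponding step in the paper, which invokes $\sup_k k^2x_k<\infty$ — a bound that does not in general follow from $D(x)<\infty$.
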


\begin{proofsect}{Proof}
Denote the right hand side of \eqref{lscHYL} by $ h$. Clearly, $ h(x)\le H^\HYL_\mu(x) $ and $ H^\PMF_{\mu,(a-b),l.s.c.} (x) \le h(x)=H^\PMF_{\mu,(a-b),l.s.c.}(x)+H_+(x)\le  H^\HYL_\mu(x) $ for all $ x\in\ell_1(\R_+) $. We need to show that $ h $ is the greatest lower semicontinuous function less or equal to $ H^\HYL_\mu $.

Suppose that $ x\in\ell_1(\R_+) $ with $ D(x)=\infty $. Then since $ H^\PMF_{\mu,(a-b),l.s.c.}(x) =\infty $, we have $ h(x)=\infty $. Suppose now that $ x\in\ell_1(\R_+) $ with $ D(x) <\infty $. For any sequence $ (x_n)_{n\in\N} $ with $ x_n\to x $ as  $ n\to\infty $ there exists $ (\eps^{\ssup{n}})_{n\in\N}\subset\ell_1(\R) $ such that $ x_n=x+\eps^{\ssup{n}} $, $ \eps^{\ssup{n}}\to 0 $ as $ n\to\infty $,  and $ x+\eps^{\ssup{n}}\in\ell_1(\R_+) $. Furthermore, $ \liminf_{n\to\infty}\big(D(x^{\ssup{n}})-D(x)\big)\ge 0 $ and thus $ \liminf_{n\to\infty}D(\eps^{\ssup{n}})\ge 0$. We show that $ h $ is lower semicontinuous by proving that 
\begin{equation}\label{hlsc}
\begin{aligned}
\liminf_{n\to\infty} &\big(h(x^{\ssup{n}})-h(x)\big)=\liminf_{n\to\infty}\Big\{-\mu\big(D(x^{\ssup{n}})-D(x)\big)+\frac{a}{2}\big(D(x^{\ssup{n}})^2-D(x)^2\big)\\
&-\frac{b}{2}\Big(\sum_{k=1}^\infty k^2\big((x_k^{\ssup{n}})^2-x_k^2\big)\Big)+\frac{1}{2(a-b)}\Big(\big(\mu-aD(x)\big)_+^2 -\big(\mu-aD(x^{\ssup{n}})\big)_+^2\Big)\Big\}\ge 0 .
\end{aligned}
\end{equation}     

We write $ \eps_k^{\ssup{n}}=\eps^{+\ssup{n}}_k -\eps^{-\ssup{n}}_k $ with $ \eps^{+\ssup{n}}_k,\eps^{-\ssup{n}}_k\ge 0 $. Clearly, $ \eps^{-\ssup{n}}_k\le x_k $ for all $ k\in\N $.
We shall show that
\begin{equation}\label{limsupzero}
\limsup_{n\to\infty}\sum_{k=1}^\infty k^2 x_k\eps^{\ssup{n}}_k = 0,
\end{equation}
and that 
\begin{equation}\label{upperbmixed}
\limsup_{n\to\infty} \sum_{k=1}^\infty k^2(x_k+\eps^{\ssup{n}}_k)^2 \le \sum_{k=1}^\infty k^2x_k^2 +\limsup_{n\to\infty} \big(D(\eps^{\ssup{n}})\big)^2.
\end{equation}
As $ D(x) <\infty $, there exists $ C>0 $ such that  $ \sup_{k\in\N} k^2 x_k < C$, and thus \eqref{limsupzero} follows as
$$
\limsup_{n\to\infty} \sum_{k=1}^\infty k^2 x_k\eps^{\ssup{n}}_k\le C\limsup_{n\to\infty}\norm{\eps^{\ssup{n}}}_{\ell_1(\R)} =0.
$$
To obtain \eqref{upperbmixed} we just expand
$$
\begin{aligned}
\sum_{k=1}^\infty k^2(x_k+\eps^{\ssup{n}}_k)^2\le \sum_{k=1}^\infty k^2 x_k^2+2\sum_{k=1}^\infty k^2(x_k-\eps^{-\ssup{n}}_k)\eps^{+\ssup{n}}_k + \big(D(\eps^{+\ssup{n}})\big)^2.
\end{aligned}
$$
The middle term vanishes due to \eqref{limsupzero}. To show that 
$$ \limsup_{n\to\infty}\big(D(\eps^{+\ssup{n}})\big)^2\le \limsup_{n\to\infty}\big(D(\eps^{\ssup{n}})\big)^2 
$$ note that $ D(x)<\infty $ implies that
$$
D(\eps^{\ssup{n}})-D(\eps^{+\ssup{n}})=D(\eps^{-\ssup{n}})\le D(x)<\infty.
$$
Hence, for any $ \delta $ there exists $ K\in\N $ such that 
$$
\sum_{k=K+1}^\infty k\eps^{-\ssup{n}}_k\le\sum_{k=K+1}^\infty k x_k <\frac{\delta}{2}.
$$
On  the other hand, there exists for this $ \delta $ a $ n(K)\in\N $ such that
$$
\sum_{k=1}^K k\eps^{-\ssup{n}}_k <\frac{\delta}{2},\quad \mbox{ for all }  n\ge n(K),
$$
thus showing \eqref{upperbmixed}. We continue with
\begin{multline}\label{hlsc2}
\mbox{ r.h.s. of \eqref{hlsc} }  \ge \liminf_{n\to\infty}\Big\{  \frac{1}{2(a-b)}\Big(\big(\mu-aD(x)\big)_+^2 -\big(\mu-aD(x^{\ssup{n}})\big)_+^2\Big) -\big(\mu-aD(x)\big)D(\eps^{\ssup{n}})\big)\\ \qquad +\frac{a-b}{2}\big(D(\eps^{\ssup{n}})\big)^2\Big\}.
\end{multline}
Recall that $ \liminf_{n\to\infty}D(\eps^{\ssup{n}})\ge 0 $, and thus we know that $ (\mu-aD(x))< 0 $ implies that eventually $ (\mu-aD(x)-aD(\eps^{\ssup{n}}))<0 $ and $ (\mu-aD(x)-aD(\eps^{\ssup{n}}) +bD(\eps^{\ssup{n}}))< 0 $.
Suppose that $ \mu/a<D(x) $. Then
$$
\mbox{r.h.s. of \eqref{hlsc2} }=\liminf_{n\to\infty} -\big(\mu-aD(x)\big)D(\eps^{\ssup{n}})+\frac{a-b}{2}D(\eps^{\ssup{n}})^2\ge 0.
$$
Suppose $ \mu/a\ge D(x) $ and $ \mu-aD(x)-aD(\eps^{\ssup{n}}) \le 0 $. Then 
$$
\mbox{ r.h.s. of \eqref{hlsc2}  }\ge \frac{1}{2(a-b)}\liminf_{n\to\infty}\Big(\mu-aD(x)-aD(\eps^{\ssup{n}})+bD(\eps^{\ssup{n}})\Big)^2\ge 0,
$$ and likewise for $  \mu/a\ge D(x)   $ and $   \mu-aD(x)-aD(\eps^{\ssup{n}}) > 0  $,
$$
\begin{aligned}
\mbox{r.h.s. of \eqref{hlsc2} } & \ge \frac{1}{2(a-b)} \liminf_{n\to\infty}\Big\{\Big(\mu-aD(x)-aD(\eps^{\ssup{n}})+bD(\eps^{\ssup{n}})\Big)^2 \\ 
& \quad  - \Big(\mu-aD(x)-aD(\eps^{\ssup{n}})\Big)^2  \Big\}\ge 0.
\end{aligned}
$$

We have established \eqref{hlsc} and thus the lower semicontinuity of $ h $. We finally show that $ h$ is the largest lower semicontinuous function less or equal to $ H^\HYL_\mu $.  Using the lower semicontinuity of $ h$ and $ h\le H^\HYL_\mu $, we know that
$$
\liminf_{n\to\infty} H^\HYL_\mu(x^{\ssup{n}})\ge h(x)
$$ for any sequence $ x^{\ssup{n}} $ with $ x^{\ssup{n}}\to x $ as $ n\to\infty $. We pick now a particular sequence $ x^{\ssup{n}}=x+ \eps^{\ssup{n}} $ with
\begin{equation}\label{seqeps}
\eps^{\ssup{n}}_k=\1\{k=n\}\frac{\big(\mu-aD(x)\big)_+}{n(a-b)}
\end{equation}
to find that
$$
h(x)\le H^\HYL(x)-\frac{\big(\mu-aD(x)\big)_+}{2(a-b)}.
$$

\qed
\end{proofsect}

\begin{prop}[\textbf{Upper  bound HYL-model}]\label{prop-uboundHYL}
For all $ \mu\in\R, \alpha\le 0 $, and $ a > b\geq0 $,
\begin{equation}\label{uboundHYL}
\limsup_{N\to\infty}\frac{1}{\abs{\L_N}}\log \E_{\nu_{N,\alpha}}\Big[\ex^{-\beta\abs{\L_N} H^\HYL_\mu}\Big]\le -\inf_{x\in\ell_1(\R_+)}\Big\{ I(x)+\beta H^\HYL_{\mu+\alpha,l.s.c.}(x)\Big\}.
\end{equation}
\end{prop}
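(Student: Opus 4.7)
\begin{proofsect}{Proof proposal}
The plan is to follow the template of the PMF upper bound proved just above, the new ingredient being the negative counter-term $-\tfrac{b}{2}\sum_k k^2 x_k^2$ present in $H^\HYL_\mu$ which must be controlled. Following the remark after Theorem~\ref{THM-PMF}, I would first reduce without loss of generality to the case $\alpha=0$, replacing $\mu$ by $\mu+\alpha$: the Radon--Nikodym identity
$\nu_{N,\alpha}=\ex^{|\L_N|(\bar q^{\ssup{0}}-\bar q^{\ssup{\alpha}})}\ex^{\beta\alpha|\L_N|D(\cdot)}\nu_{N,0}$
between the two Poisson reference measures converts the exponential factor $\ex^{-\beta|\L_N|H^\HYL_\mu(x)}$ into $\ex^{-\beta|\L_N|H^\HYL_{\mu+\alpha}(x)}$ with reference $\nu_{N,0}$, up to an easily tracked prefactor.

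Once this reduction is in place, the proof is a direct application of the upper bound half of Varadhan's lemma \cite[Lem.~4.3.6]{DZ09} together with the ideal Bose gas LDP of Proposition~\ref{THM-Ideal} (rate function $I=I_0$). The three hypotheses to verify are: the pointwise inequality $H^\HYL_\mu\geq H^\HYL_{\mu,l.s.c.}$ on $\ell_1(\R_+)$, immediate from the definition; the lower semicontinuity of $H^\HYL_{\mu,l.s.c.}$, established in Lemma~\ref{L:regHYL}; and a uniform lower bound $H^\HYL_{\mu,l.s.c.}(x)\geq -C(\mu,a,b)$, which supplies the exponential tail condition needed in \cite[Lem.~4.3.6]{DZ09}. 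The first two are immediate; the third is where the strict inequality $a>b$ becomes essential.

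For the uniform lower bound, the key observation is the elementary estimate
\begin{equation*}
\sum_{k\in\N} k^2 x_k^2 \leq \Bigl(\sum_{k\in\N} k x_k\Bigr)^2 = D(x)^2 \qquad\text{for } x\in\ell_1(\R_+),
\end{equation*}
which together with $a>b$ yields
\begin{equation*}
H^\HYL_\mu(x) \geq -\mu D(x)+\tfrac{a-b}{2}D(x)^2 \geq -\tfrac{\mu^2}{2(a-b)}.
\end{equation*}
Since the subtracted term in $H^\HYL_{\mu,l.s.c.}(x)=H^\HYL_\mu(x)-\tfrac{1}{2(a-b)}(\mu-aD(x))_+^2$ is uniformly bounded above by $\tfrac{\mu_+^2}{2(a-b)}$, the required bound follows. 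Plugging into Varadhan's upper bound and restoring the original parameters gives the asserted inequality.

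The main obstacle, which distinguishes the HYL case from the PMF one, is precisely this uniform lower bound: in the PMF setting the bound $H^\PMF_{\mu,l.s.c.}\geq -\mu^2/(2a)$ was immediate, whereas here one must absorb the negative contribution $-\tfrac{b}{2}\sum k^2 x_k^2$ into the positive quadratic $\tfrac{a}{2}D(x)^2$ via the norm inequality above, crucially using $a>b$. This is also why the borderline case $a=b$ is excluded from the present proposition and is deferred to the companion paper~\cite{AD2018b}.
\end{proofsect}
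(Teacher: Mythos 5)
Your proposal is correct and follows essentially the same route as the paper: use monotonicity $H^\HYL_{\mu+\alpha}\ge H^\HYL_{\mu+\alpha,l.s.c.}$, the lower semicontinuity of the regularisation from Lemma~\ref{L:regHYL}, and a uniform lower bound on $H^\HYL_{\mu+\alpha,l.s.c.}$ to feed the tail condition in \cite[Lem.~4.3.6]{DZ09}. The only cosmetic difference is in the uniform bound: the paper passes directly through the chain $H^\HYL_{\mu+\alpha,l.s.c.}\ge H^\PMF_{\mu+\alpha,(a-b),l.s.c.}\ge -\frac{(\mu+\alpha)^2}{2(a-b)}$, while you derive a (slightly looser but equally sufficient) bound by first invoking the inequality $\sum_k k^2x_k^2\le D(x)^2$ on $\ell_1(\R_+)$ and then bounding the subtracted term separately — but that norm inequality is exactly the content of the nonnegativity of $H_+$ underlying the paper's chain, so the underlying idea is the same.
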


\begin{proofsect}{Proof}
The statement follows easily with the upper bound estimate in Varadhan's Lemma given in \cite[Lemma~4.3.6]{DZ09} using  the inequality $ H^\HYL_{\mu+\alpha}(x)\ge H^\HYL _{\mu+\alpha,l.s.c.}(x) \ge H^\PMF_{\mu+\alpha,(a-b),l.s.c.}(x)   $ for all $ x\in\ell_1(\R_+) $, the lower semicontinuity of $ H^\HYL_{\mu+\alpha,l.s.c.} $,  and the fact that $ H^\HYL_{\mu+\alpha,l.s.c.}(x)\ge -\frac{\left(\mu+\alpha\right)^2}{2(a-b)} $. The later estimate provides the tail-condition necessary to apply \cite[Lemma~4.3.6]{DZ09}.
\qed
\end{proofsect}

For the lower bound we are using the lower bound \eqref{lbound} for the PMF model and $ H^\HYL_\mu=H^\PMF_{\mu,a} + H_- $ with $ H_- $ being upper semicontinuous.

\begin{prop}[\textbf{Lower  bound HYL-model}]\label{prop-lboundHYL}
For all $ \mu\in\R, \alpha\le 0 $, and $ a > b\geq0 $,
\begin{equation}\label{lboundHYL}
\liminf_{N\to\infty}\frac{1}{\abs{\L_N}}\log \E_{\nu_{N,\alpha}}\Big[\ex^{-\beta\abs{\L_N} H^\HYL_\mu}\Big]\ge -\inf_{x\in\ell_1(\R_+)}\Big\{ I(x)+\beta H^\HYL_{\mu+\alpha,l.s.c.}(x)\Big\}.
\end{equation}
\end{prop}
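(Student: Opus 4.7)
The plan is to adapt the two-step strategy used in the PMF lower bound proof of Proposition~\ref{PMFlower}: first cut off the mark length at some $K\in\N$ to reduce to a finite-dimensional problem with a continuous Hamiltonian, then construct an explicit approximating sequence to remove the cut-off and recover the lower semicontinuous regularisation $H^\HYL_{\mu,l.s.c.}$. By the Remark after Theorem~\ref{Thm:HYL} I may assume $\alpha=0$ throughout and absorb $\alpha$ into the chemical potential, so that the reference LDP is Proposition~\ref{THM-Ideal} with rate function $I=I_0$. For the cut-off step, let $\pi_K\colon\ell_1(\R)\to\R^K$ be the coordinate projection, let $\iota_K\colon\R^K\to\ell_1(\R)$ extend by zeros, and write $\nu^{\ssup{K}}_{N,0}:=\nu_{N,0}\circ\pi_K^{-1}$. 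Inserting the indicator of the event $\{\blambda_N^{\ssup{k}}=0\ \forall k>K\}$, using independence of the Poisson components of different lengths, applying Lemma~\ref{Thm:LDPidealRJ} together with the lower bound in Varadhan's lemma (\cite[Lemma~4.3.4]{DZ09}) to the continuous truncation $H^\HYL_{\mu,K}$, and exploiting the identity $I^{\ssup{K}}(x)+\sum_{k>K}q^{\ssup{0}}_k=I(\iota_K x)$ to absorb the leftover Poisson mass, yields
\begin{equation*}
\liminf_{N\to\infty}\frac{1}{\abs{\L_N}}\log\E_{\nu_{N,0}}\big[\ex^{-\beta\abs{\L_N}H^\HYL_\mu}\big]\ge -\inf\big\{I(\tilde x)+\beta H^\HYL_\mu(\tilde x)\colon \tilde x\in\ell_1(\R_+),\ \tilde x_k=0\ \forall k>K\big\}
\end{equation*}
for every $K\in\N$.

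The remaining task is to show that the infimum over finitely supported sequences of $I+\beta H^\HYL_\mu$ converges, as $K\to\infty$, to the infimum of $I+\beta H^\HYL_{\mu,l.s.c.}$ on all of $\ell_1(\R_+)$. Fix $\tilde x\in\ell_1(\R_+)$ with finite right-hand objective; in particular $D(\tilde x)<\infty$, which forces $k\tilde x_k\to 0$ as $k\to\infty$ (any non-negative $\ell_1$-summable sequence tends to zero, applied to $k\tilde x_k$), and hence $\sum_k k^2\tilde x_k^2\le D(\tilde x)^2<\infty$. Set $C:=(\mu-aD(\tilde x))_+$ and, mirroring the direction \eqref{seqeps} that realises the lower semicontinuous regularisation in Lemma~\ref{L:regHYL}, define the test sequence
\begin{equation*}
\hat x^K_k:=\tilde x_k\ (k<K),\qquad \hat x^K_K:=\tilde x_K+\frac{C}{K(a-b)},\qquad \hat x^K_k:=0\ (k>K).
\end{equation*}
Since $\|\hat x^K-\iota_K\pi_K(\tilde x)\|_{\ell_1}=O(K^{-1})$ and $q^{\ssup{0}}_K$ decays polynomially, a direct calculation with the formula for $I$ gives $I(\hat x^K)=I(\tilde x)+\bigO(K^{-1}\log K)$.

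The energy calculation is the novel ingredient and drives the main obstacle. The PMF part satisfies $\sum_{k=1}^K k\hat x^K_k\to D(\tilde x)+C/(a-b)$, while the counter-term decomposes as
\begin{equation*}
-\tfrac{b}{2}\sum_{k=1}^K k^2(\hat x^K_k)^2=-\tfrac{b}{2}\sum_{k=1}^{K-1}k^2\tilde x_k^2-\tfrac{b}{2}\Big[K^2\tilde x_K^2+\frac{2CK\tilde x_K}{a-b}+\frac{C^2}{(a-b)^2}\Big],
\end{equation*}
whose limit is $-\tfrac{b}{2}\sum_k k^2\tilde x_k^2-\tfrac{bC^2}{2(a-b)^2}$ by $K\tilde x_K\to 0$. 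Assembling the terms and splitting into the cases $D(\tilde x)\ge\mu/a$ (where $C=0$ and $H^\HYL_{\mu,l.s.c.}=H^\HYL_\mu$) and $D(\tilde x)<\mu/a$, the identity $aD(\tilde x)-\mu=-C$ makes the surplus produced by the shifted PMF quadratic together with $-bC^2/(2(a-b)^2)$ from the counter-term collapse to precisely the correction $-C^2/(2(a-b))$ that distinguishes $H^\HYL_{\mu,l.s.c.}$ from $H^\HYL_\mu$, yielding $H^\HYL_\mu(\hat x^K)\to H^\HYL_{\mu,l.s.c.}(\tilde x)$. The main obstacle is exactly this upper semicontinuous counter-term $H_-$: the perturbation must be localised at a single high-index coordinate with amplitude $C/(K(a-b))$, so that $K^2(\hat x^K_K-\tilde x_K)^2$ converges to the finite limit $C^2/(a-b)^2$; any other scaling either violates $\ell_1$-closeness to $\tilde x$ or yields the wrong constant and thus fails to reproduce the lower semicontinuous regularisation.
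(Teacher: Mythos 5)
Your argument is correct, and it follows a genuinely different route from the paper's. You re-run the two-step cut-off scheme of the PMF lower bound directly on the HYL Hamiltonian: truncate to the first $K$ mark lengths, apply Varadhan's lower bound to the continuous, finite-dimensional $H^\HYL_{\mu,K}$ under the LDP of Lemma~\ref{Thm:LDPidealRJ}, absorb the leftover Poisson mass via $I^{\ssup{K}}(x)+\sum_{k>K}q^{\ssup{0}}_k=I(\iota_Kx)$, and then remove the cut-off by bumping the $K$-th coordinate by $(\mu-aD(\tilde x))_+/\big((a-b)K\big)$. The paper instead treats Theorem~\ref{THM-PMF} as a black box: it splits $H^\HYL_\mu=H^\PMF_{\mu,a}+H_-$, factorises the HYL expectation through the PMF measure, applies Varadhan's lower bound to the upper semicontinuous non-positive $H_-$ under the PMF LDP, and closes with Lemma~\ref{regularisation}, whose own proof hinges on exactly the perturbation direction $\eps^{\ssup{n}}_k=\1\{k=n\}(\mu-aD(x))_+/(n(a-b))$ that you rediscover. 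So the computational kernel is the same in both proofs --- the check that, with $C=(\mu-aD(\tilde x))_+$, the PMF quadratic shifted by $C/(a-b)$ together with the counter-term loss $-bC^2/(2(a-b)^2)$ collapses to the correction $-C^2/(2(a-b))$ --- but the paper's organisation is shorter because it reuses the already-established PMF result, whereas yours is more self-contained. Your case split, the observation that $D(\tilde x)<\infty$ forces $k\tilde x_k\to0$ and $\sum_k k^2\tilde x_k^2\le D(\tilde x)^2<\infty$, and the final algebra are all sound. One small over-claim to flag: $I(\hat x^K)=I(\tilde x)+\bigO(K^{-1}\log K)$ overstates the rate, since the truncation contributes a tail $\sum_{k>K}\tilde x_k\bigl(\log\tfrac{\tilde x_k}{q^{\ssup{0}}_k}-1\bigr)$ that vanishes at a $\tilde x$-dependent speed; only $I(\hat x^K)\to I(\tilde x)$ is guaranteed, but that is all your argument uses.
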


\begin{proofsect}{Proof}
Using
$$
 \E_{\nu_{N,\alpha}}\Big[\ex^{-\beta\abs{\L_N} H^\HYL_\mu}\Big]=\E_{\nu_{N,\mu,\alpha}^\PMF}\Big[\ex^{-\beta\abs{\L_N} H_-}\Big] Z^\PMF(\beta,\mu,\alpha)
$$ 
in conjunction with the LDP in Theorem~\ref{THM-PMF} and in particular the lower bound  \eqref{lbound}  we arrive at
$$
\begin{aligned}
\liminf_{N\to\infty}&\frac{1}{\abs{\L_N}}\log\Big( \E_{\nu_{N,\mu,\alpha}^\PMF}\Big[\ex^{-\beta \abs{\L_N}H_-}\Big] Z^\PMF(\beta,\mu,\alpha)\Big)\ge -\inf_{x\in\ell_1(\R)}\big\{I^\PMF_{\mu,\alpha}(x)+\beta H_-(x)\big\} -\inf_{x\in\ell_1(\R)}\big\{I^\PMF_{\mu,\alpha}(x)\big\} \\
& =-\inf_{x\in\ell_1(\R)}\big\{ I(x)+\beta H^\PMF_{\mu+\alpha,a,l.s.c.}(x)+\beta H_-(x)\big\}\\
&= -\inf_{x\in\ell_1(\R)}\big\{ I(x)+\beta H^\HYL_{\mu+\alpha,l.s.c.}(x)\big\},
\end{aligned}
$$
where the last equality follows from Lemma~\ref{regularisation} below.

\qed
\end{proofsect}

\begin{lemma}\label{regularisation}
$$
\inf_{x\in\ell_1(\R)}\big\{ I(x)+\beta H^\PMF_{\mu,a,l.s.c.}(x)+\beta H_-(x)\big\} =\inf_{x\in\ell_1(\R)}\big\{I(x)+\beta H^\HYL_{\mu,l.s.c.}(x)\big\} 
$$
\end{lemma}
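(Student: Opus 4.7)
\begin{proofsect}{Proof plan}
The plan is to first establish a pointwise identity comparing the two integrands and then show the infima coincide by exhibiting, for each admissible $x$, an approximating sequence along which the two sides have a common limit. The key observation is that
\begin{equation*}
H^\PMF_{\mu,a,l.s.c.}(x)+H_-(x)-H^\HYL_{\mu,l.s.c.}(x)=\Big(\frac{1}{2(a-b)}-\frac{1}{2a}\Big)\big(\mu-aD(x)\big)_+^2=\frac{b}{2a(a-b)}\big(\mu-aD(x)\big)_+^2\ge 0,
\end{equation*}
with equality if and only if $D(x)\ge \mu/a$. This follows by direct subtraction from the explicit formulae in Theorem~\ref{THM-PMF} and Lemma~\ref{L:regHYL} using $H^\HYL_\mu=H^\PMF_{\mu,a}+H_-$. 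Adding $I(x)$ and taking infima immediately yields the inequality $\ge$.

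For the reverse inequality $\le$, fix $x\in\ell_1(\R_+)$ with $I(x)+\beta H^\HYL_{\mu,l.s.c.}(x)<\infty$; if $D(x)\ge \mu/a$ the two functionals already agree at $x$ so there is nothing to prove. Otherwise $D(x)<\mu/a$, and we borrow the sequence constructed at the end of the proof of Lemma~\ref{L:regHYL}:
\begin{equation*}
x^{\ssup{n}}:=x+\eps^{\ssup{n}},\qquad \eps^{\ssup{n}}_k:=\1\{k=n\}\,\frac{\mu-aD(x)}{n(a-b)}.
\end{equation*}
A direct computation gives $D(x^{\ssup{n}})=D(x)+\frac{\mu-aD(x)}{a-b}=\frac{\mu-bD(x)}{a-b}\ge \frac{\mu}{a}$ (since $a>b$ and $D(x)\le \mu/a$), so $(\mu-aD(x^{\ssup{n}}))_+=0$. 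In view of the pointwise identity above, this means the functionals agree at $x^{\ssup{n}}$:
\begin{equation*}
\big(H^\PMF_{\mu,a,l.s.c.}+H_-\big)\!\big(x^{\ssup{n}}\big)=H^\HYL_{\mu,l.s.c.}\!\big(x^{\ssup{n}}\big)=H^\HYL_\mu\!\big(x^{\ssup{n}}\big).
\end{equation*}
The final paragraph of the proof of Lemma~\ref{L:regHYL} already shows $H^\HYL_\mu(x^{\ssup{n}})\to H^\HYL_{\mu,l.s.c.}(x)$. Combined with $I(x^{\ssup{n}})\to I(x)$ (see below), we get $(I+\beta(H^\PMF_{\mu,a,l.s.c.}+H_-))(x^{\ssup{n}})\to I(x)+\beta H^\HYL_{\mu,l.s.c.}(x)$, and passing to the infimum over $x$ completes the proof.

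The main obstacle is the continuity claim $I(x^{\ssup{n}})\to I(x)$, since $I$ is only lower semicontinuous and naively one only gets $\liminf\ge I(x)$. Because $x^{\ssup{n}}$ differs from $x$ only in coordinate $n$, the increment reduces to $f_n(x_n+\delta_n)-f_n(x_n)$ with $f_n(t)=t(\log(t/q_n^{\ssup{0}})-1)$ and $\delta_n=\frac{\mu-aD(x)}{n(a-b)}=O(1/n)$. Using $q_n^{\ssup{0}}\sim C n^{-1-d/2}$ and the bound $x_n\to 0$ (which follows from $D(x)<\infty$), one splits into two cases. In the regime $x_n\le\delta_n$, both $f_n(x_n)$ and $f_n(x_n+\delta_n)$ are $o(1)$ via the standard fact $t\log t\to 0$ as $t\downarrow 0$ together with $\delta_n|\log q_n^{\ssup{0}}|=O((\log n)/n)$. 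In the regime $x_n>\delta_n$, a Taylor expansion of $\log(1+\delta_n/x_n)$ yields
\begin{equation*}
f_n(x_n+\delta_n)-f_n(x_n)=\delta_n\log(x_n/q_n^{\ssup{0}})+O(\delta_n),
\end{equation*}
and the logarithm is $O(\log n)$ here because $-\log n+O(1)\le \log x_n\le 0$ for large $n$, so the whole expression is $O((\log n)/n)\to 0$. This case-by-case estimate is the only subtle point; the rest of the argument is algebra plus the sequence already isolated in Lemma~\ref{L:regHYL}.
\end{proofsect}
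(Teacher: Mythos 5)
Your proof is correct and follows the same underlying strategy as the paper's: the pointwise identity $H^\PMF_{\mu,a,l.s.c.}+H_--H^\HYL_{\mu,l.s.c.}=\frac{b}{2a(a-b)}\big(\mu-aD\big)_+^2\ge 0$ gives the $\ge$ direction, and the single-coordinate perturbation $\eps^{\ssup n}$ already isolated in Lemma~\ref{L:regHYL} gives the $\le$ direction. What you add, correctly, is the verification that $I(x^{\ssup n})\to I(x)$ along that sequence. The paper invokes that infima are preserved under lower semicontinuous regularisation and then reduces to proving only $\big(H^\PMF_{\mu,a,l.s.c.}+H_-\big)_{l.s.c.}=H^\HYL_{\mu,l.s.c.}$, which tacitly uses $\big(I+\beta G\big)_{l.s.c.}=I+\beta G_{l.s.c.}$. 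That identity is \emph{not} automatic when $I$ is merely lower semicontinuous, and indeed $I$ is not continuous on $\ell_1(\R_+)$: perturbing the $n$-th coordinate of any $x$ by $1/\log n$ changes $I$ by an amount bounded away from $0$, because $|\log q^{\ssup 0}_n|\sim(1+\tfrac d2)\log n$. Your two-regime estimate showing $I(x^{\ssup n})-I(x)=\bigO((\log n)/n)$, which works precisely because $\eps^{\ssup n}$ perturbs coordinate $n$ by only $\bigO(1/n)$, supplies the step the paper leaves implicit, and your identification of it as "the only subtle point" is accurate.
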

\begin{proofsect}{Proof}
The infimum of any function on an open set is equal to the infimum of its lower semicontinuous regularisation over the same set. Recall that $ \ell_1(\R) $ is open and that $ I_0\equiv \infty $ in $ \ell_1(\R)\setminus\ell_1(\R_+) $.
We thus need to show that
\begin{equation}\label{reg}
\big(H^\PMF_{\mu,a,l.s.c.}+H_-\big)_{l.s.c.}(x)=H^\HYL_{\mu,l.s.c.}(x) \quad \mbox{ for all } x\in\ell_1(\R_+).
\end{equation}
Note that
$$
H^\HYL_\mu(x)=H^\PMF_{\mu,a}(x) +H_-(x) \ge H^\PMF_{\mu,a,l.s.c.}(x)+H_-(x) \ge \big(H^\PMF_{\mu,a,l.s.c.}+H_-\big)_{l.s.c.}(x).
$$
To show \eqref{reg} we use the proof of Lemma~\ref{L:regHYL} and choose the sequence according to \eqref{seqeps}. We thus obtain 
$$
H^\HYL_{\mu,l.s.c.}(x)\ge \big(H^\PMF_{\mu,a,l.s.c.}+H_-\big)_{l.s.c.}(x).
$$
\qed
\end{proofsect}

We finally combine Proposition~\ref{prop-uboundHYL} and Proposition~\ref{prop-lboundHYL}  to finish the proof for Theorem~\ref{THM-PMF}.\qed

\subsection{Proof of Theorem~\ref{THM-densityLDP}}

The prove the first part of Theorem~\ref{THM-densityLDP} requires first the limiting logarithmic moment generating function in Proposition~\ref{logmomentdensity} and  an application of the G\"artner-Ellis theorem. In fact, the reference measure is an independent superposition of Poisson point processes but not identical distributed.

\begin{proofsect}{Proof of Proposition~\ref{logmomentdensity}}  For $ t\in \R $ we get, using the independence of the Poisson point processes with expectation $ {\tt E}^{\ssup{\bc}}_k $, 
$$
\begin{aligned}
{\tt E}^{\ssup{\bc}}\Big[\ex^{t\sum_{k+1}^\infty k\Ncal_k}\Big]=\prod_{k=1}^\infty {\tt E}^{\ssup{\bc}}_k\Big[\ex^{t\sum_{k=1}^\infty k\Ncal_k}\Big]=\prod_{k=1}^\infty \sum_{m=0}^\infty \ex^{tkm}\P(\Ncal_k=m)=\prod_{k=1}^\infty \ex^{\abs{\L_N}q_k^{\ssup{\alpha}}(\ex^{tk}-1)},
\end{aligned}
$$
and thus
$$
\L(t)=\lim_{N\to\infty}\frac{1}{\abs{\L_N}}\log {\tt E}^{\ssup{\bc}}\Big[\ex^{t\sum_{k+1}^\infty k\Ncal_k}\Big]=\sum_{k=1}^\infty q_k^{\ssup{\alpha}}\big(\ex^{tk}-1\big).
$$
\qed
\end{proofsect}

\begin{proofsect}{Proof of Theorem~\ref{THM-densityLDP}}
(a) This is a straightforward application of the G\"artner-Ellis theorem, see \cite{DZ09}. To ensure that $ 0 $ is in the domain of the logarithmic moment generating function we need to have $ \alpha<0 $. Then the rate function is giving as the Legendre-Fenchel transform
$$
\begin{aligned}
J_\alpha(x)&=\sup_{t\in\R}\Big\{t x-\sum_{k=1}^\infty q_k^{\ssup{\alpha}}\big(\ex^{tk}-1\big)\Big\}=\beta\Big(p(\beta,\alpha)+\sup_{t\in\R}\big\{x\frac{t}{\beta}-p(\beta,\alpha+\frac{t}{\beta})\big\} \Big)\\
&=\beta\Big(p(\beta,\alpha)+\sup_{t\in\R}\big\{(\frac{t}{\beta}+\alpha)x-p(\beta,\alpha+\frac{t}{\beta})\big\} -x\alpha\Big)\\
&=\beta\Big(p(\beta,\alpha)+f(\beta,x)-\alpha x\Big),
\end{aligned}
$$
where we used that for $ x\le \varrho_{\rm c} $,
$$
f(\beta,x)=\sup_{\alpha\in\R}\{\alpha x-p(\beta,\alpha)\}.
$$
Clearly, $ J_\alpha(x)=\infty $ when $ x<0 $ as the empirical density only takes positive values. Suppose that $ x>\varrho_{\rm c} $ fo dimensions $ d\ge 3 $ ($ \varrho_{\rm c}=\infty $ for $ d=1,2 $). Then
$$
\sup_{t\in\R}\big\{  (\frac{t}{\beta}+\alpha)\varrho_{\rm c} +(\frac{t}{\beta}+\alpha)(x-\varrho_{\rm c})-p(\beta,\alpha+\frac{t}{\beta})  \big\}\ge f(\beta,\varrho_{\rm c})+\sup_{t\in\R}\big\{(\frac{t}{\beta}+\alpha)(x-\varrho_{\rm c})\big\}=+\infty,
$$
and thus $ J_\alpha(x) $ for $ x\notin [0,\varrho_{\rm c}] $.

\medskip

\noindent (b) This is straightforward application of Varadhan's Lemma in \cite[Theorem~4.3.1]{DZ09} using that $ h(x):=x\mapsto -\mu x+\frac{a}{2}x^2 $ is continuous and the fact that the tail-condition is satisfied
$$
\lim_{M\to\infty}\limsup_{N\to\infty}\frac{1}{\abs{\L_N}}\log {\tt E}\Big[\ex^{-\beta \abs{\L_N}h(\brho_n)}\1\{h(\brho_n)\ge M\}\Big]=-\infty.
$$
\qed
\end{proofsect}

\section{Proofs:  variational analysis and Bose-Einstein condensation (BEC)}

\subsection{Zeroes of the rate functions}\label{sec-zeroLDP}
We collect all proofs for the zeroes o four rate functions.

\begin{proofsect}{Proof of Proposition~\ref{zeroIdeal}}  To find the zeroes of the ideal gas rate function, first let us find the critical points by setting the G{\^a}teaux derivative of the function to zero. That is, we find the set of points $\tilde{x}\in\ell_1\left(\R_+\right)$ such that
\begin{equation*}
    \d I_\mu\left(\tilde{x};y\right)=0\quad\forall y\in\ell_1\left(\R\right).
\end{equation*}
This yields a single equation for each element of the sequence $\tilde{x}$. This set of equations has the unique solution $ \tilde{x}=\xi $ given by
\begin{equation}
\xi_k = q^{\ssup{\mu}}_k, \quad k\in\N.
\end{equation}
Since the rate function $ I_\mu $ is strictly convex where it is finite, this critical point is the unique global minimiser.
\qed
\end{proofsect}

\begin{proofsect}{Proof of Proposition~\ref{zeroCMF}}
	For the existence of a minimiser, recall that $I^{\CMF}_\mu$ is lower semicontinuous and has compact level-sets. Also note that $I_\mu$ is strictly convex where it is finite and $H^{\CMF}$ is convex. Therefore $I^\CMF_\mu$ is strictly convex where it is finite (a non-empty set) and uniqueness of the minimiser follows.
	
	To calculate the minimiser, we search for stationary points. Since $I^\CMF$ is strictly convex where it is finite, if we find a stationary point then it is the global minimiser. By considering the coordinate derivatives, we know that the minimiser must satisfy all the following equations:
	\begin{equation*}
	\log \frac{x_k}{q^{\ssup{\mu}}_k} + a\beta \sum^\infty_{k=1}x_k  = 0, \quad k\in\N.
	\end{equation*}
	To make this more manageable, we introduce the dummy variable $\Gamma \in \R_+$ and corresponding equation $\Gamma = \sum^\infty_{k=1}x_k$. Our problem is then to solve
	\begin{align}
	\label{eqn:CMFstationary1}
	\log \frac{x_k}{q^{\ssup{\mu}}_k} + a\beta \Gamma &= 0, \quad k\in\N,\\
	\label{eqn:CMFstationary2}
	\Gamma - \sum^\infty_{k=1}x_k &= 0.
	\end{align}
	Given $\Gamma$, \eqref{eqn:CMFstationary1} is uniquely solved by $ x_k = q^{\ssup{\mu}}_k \exp\left( - a \beta \Gamma\right), k\in\N $, and therefore \eqref{eqn:CMFstationary2} becomes
	\begin{equation*}
	\Gamma = \exp\left( -a\beta \Gamma\right)\bar{q}^{\ssup{\mu}}.
	\end{equation*}
	This has the unique solution $ \Gamma = \frac{1}{a\beta}W_0\left(a\beta \bar{q}^{\ssup{\mu}}\right) = \frac{1}{a\beta}W_0\left(K\right) $,
	and so \eqref{eqn:CMFstationary1} and \eqref{eqn:CMFstationary2}  are uniquely jointly solved by $ x=\xi $ given by
	\begin{equation*}
	\xi_k = q^{\ssup{\mu}}_k \exp\left(-W_0\left(K\right)\right) = \frac{W_0\left(K\right)}{K}q^{\ssup{\mu}}_k, \quad k\in\N.
	\end{equation*}
	\qed
\end{proofsect}

\begin{proofsect}{Proof of Proposition~\ref{zeroPMF}} Without loss of generality we put $ \alpha\equiv 0 $, and write $ q_k^{\ssup{0}}=q_k, k\in\N $ and $ I_0=I $. To obtain the unique zero of the rate function we shall find the unique minimiser of the un-normalised rate function $ F(x):=I(x)+\beta H^\PMF_{\mu,l.s.c.}(x) $. For the existence of a minimiser, recall that $F$ is lower semicontinuous and has compact level-sets. Also note that $ I $ is strictly convex where it is finite, and $ H^\PMF_{\mu,l.s.c.} $ is also convex in the linear function $ D(x) $.  Therefore $F$ is strictly convex where it is finite ( a non-empty set) and uniqueness of the minimiser follows. To calculate the minimiser, we search for stationary points. Since $F$ is strictly convex where it is finite, if we find a stationary point then it is the global minimiser.  By considering again as in the proof of Proposition~\ref{zeroIdeal} the coordinate derivatives, we know that the minimiser must satisfy all the following equations
$$
\log\frac{x_k}{q_k}+\beta k\big(aD(x)-\mu\big)_+=0, \quad k\in\N.
$$
To make this more manageable, we introduce the dummy variable $ \delta\in\R_+ $ and corresponding equation $ \delta=D(x) $.
\begin{align}
\log\frac{x_k}{q_k}+\beta k\big(a \delta-\mu\big)_+&=0, \quad k\in\N,\label{1stcond}\\
\delta-D(x)&=0.\label{seccond}
\end{align}
Given the value $ \delta $, \eqref{1stcond} is uniquely solved by $x_k=q_k\exp\big(\beta k\big(\mu-a\delta\big)_-\big), k\in\N,
$ and therefore \eqref{seccond} becomes
\begin{equation}\label{solutionDelta}
\delta=\sum_{k=1}^\infty k q_k \exp\big(\beta k\big(\mu-a\delta\big)_-\big).
\end{equation}
Denote the right hand side in \eqref{solutionDelta} by $h(\delta) $,  and note that $ h(\delta)\to 0 $ as $ \delta\to\infty $. Furthermore,
$$
\lim_{\delta\to 0} h(\delta)=\begin{cases} \sum_{k\in\N} kq_k^{\ssup{\mu}} \in (0,\infty) & ,\mu<0,\\ \infty &,d=1,2 \mbox{ and } \mu\ge 0,\\ \varrho_{\rm c}\in (0,\infty) &,d\ge 3 \mbox{ and } \mu\ge 0,\end{cases}
$$
see Figure~\ref{fig:PMFconsistency}. In all cases there exists a unique solution  which we denote $ \delta^* $.

\begin{figure}
	\centering
	\begin{subfigure}[b]{0.45\textwidth}
		\begin{tikzpicture}[scale = 1.25]
		\draw[->] (0,0) node[below left]{$0$} -- (0,3) node[left]{$h\left(\delta\right)$};
		\draw[->] (0,0) -- (4,0) node[right]{$\delta$};
		\draw[dashed] (1,0) node[below]{$\frac{\mu+\alpha}{a}$} -- (1,3);
		\draw[thick] (1,3) to [out=270,in=175] (4,0.5);
		\end{tikzpicture}
		\caption{$d=1,2$}
		\label{fig:PMFconsistency12}
	\end{subfigure}
	\begin{subfigure}[b]{0.45\textwidth}
		\begin{tikzpicture}[scale =1.25]
		\draw[->] (0,0) node[below left]{$0$} -- (0,3)node[left]{$h\left(\delta\right)$};
		\draw[->] (0,0) -- (4,0) node[right]{$\delta$};
		\draw[dashed] (1,0) node[below]{$\frac{\mu+\alpha}{a}$} -- (1,3);
		\draw[thick] (0,2) node[left]{$\varrho_\mathrm{c}$} -- (1,2) to [out=270,in=175] (4,0.5);
		\end{tikzpicture}
		\caption{$d\geq 3$}
		\label{fig:PMFconsistency>=3}
	\end{subfigure}
	\caption{Sketch of $h\left(\delta\right)$. This shows $\mu+\alpha>0$, but the sketch translates with $\mu$.}
	\label{fig:PMFconsistency}
\end{figure}
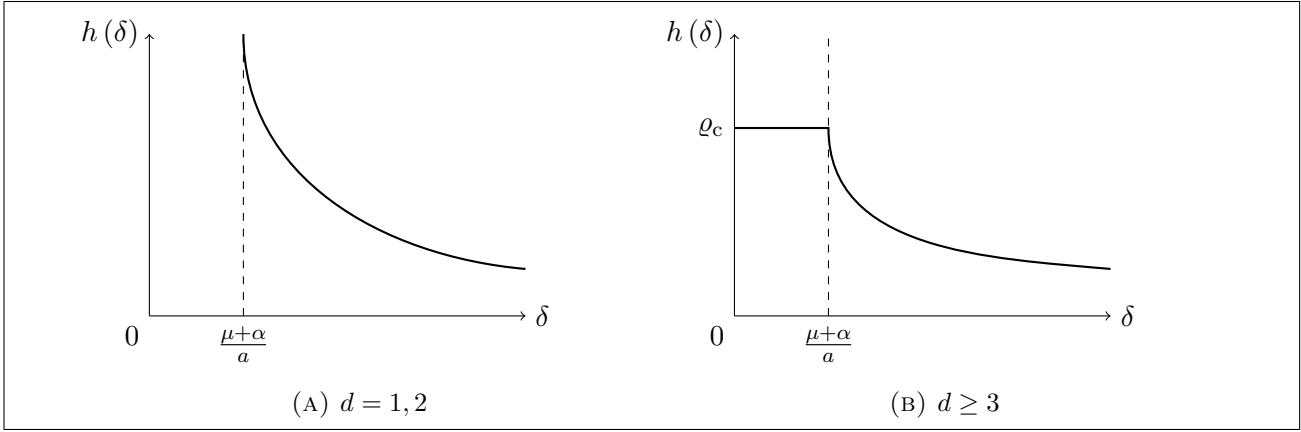

\qed
\end{proofsect}

\begin{proofsect}{Proof of Proposition~\ref{zeroHYL}}
	Without loss of generality we put $ \alpha\equiv 0 $, and write $ q_k^{\ssup{0}}=q_k, k\in\N $,  and write $ I^\HYL_{\mu,0}=I^\HYL_\mu $.	For the existence of such a minimiser $\xi$, recall that $I^\HYL_{\mu,\alpha}$ is lower semicontinuous and has compact level-sets.
	
	Suppose that the partial derivative $\frac{\partial I^\HYL_\mu}{\partial x_k}$ is defined and non-zero at $x\in \mathrm{Int}\left\{\ell_1\left(\R_+\right)\right\}$. Then $I^\HYL_\mu$ does not achieve its infimum at $x$. Since the boundary $\partial\ell_1\left(\R_+\right) = \left\{x\in \ell_1\left(\R_+\right) : \exists\, k \text{ s.t. }x_k=0\right\}$, and $\frac{\partial I^\HYL_\mu}{\partial x_k} = -\infty$ here, the infimum is not achieved here.
	
	For $x\in\mathrm{Int}\left\{\ell_1\left(\R_+\right)\right\}$ we have
	\begin{equation*}
	\frac{\partial I^\HYL_\mu}{\partial x_k} (x)= \log \frac{x_k}{q_k} - b\beta k^2 x_k - \beta k \left(\mu - a D\left(x\right)\right)
	\begin{Bmatrix}
	1&,aD\left(x\right) \geq \mu\\
	-\frac{b}{a-b}&,aD\left(x\right) \leq \mu
	\end{Bmatrix}, \qquad k\in\N,
	\end{equation*}
	which is defined everywhere in $\mathrm{Int}\left\{\ell_1\left(\R_+\right)\right\}$. Hence a solution $\xi$ must solve $\frac{\partial I^\HYL_\mu}{\partial x_k} (\xi)= 0$ for all $k\in\N $. To make this more manageable, we introduce the dummy variable $\delta\in\R_+$ and corresponding equation $\delta = D\left(x\right)$.  Our problem is then to solve
	\begin{align}
	\label{eqn:HYLstationary1}
	\log \frac{x_k}{q_k} - b\beta k^2 x_k - \beta k \left(\mu - a \delta\right)
	\begin{Bmatrix}
	1&,a\delta \geq \mu\\
	-\frac{b}{a-b}&,a\delta \leq \mu
	\end{Bmatrix} &= 0, \qquad k\in\N,\\[1.5ex]
	\label{eqn:HYLstationary2}
	\delta - D\left(x\right) &= 0.
	\end{align}
	Unfortunately - unlike in the corresponding PMF case - even when we are given $\delta$ we are not guaranteed to have a solution for \eqref{eqn:HYLstationary1}, or that such a solution would be unique. If we fix $\delta$, then the $k^{th}$ equation of \eqref{eqn:HYLstationary1} either has no solution or is solved by 
	\begin{equation*}
	x_k = -\frac{1}{b\beta k^2}W_{\chi_k}\Big(-b\beta k^2 q_k \exp\Big[\beta k \Big(\mu - a \delta\Big)
	\begin{Bmatrix}
	1&,a\delta \geq \mu\\
	-\frac{b}{a-b}&,a\delta \leq \mu
	\end{Bmatrix}
	\Big]\Big)
	\end{equation*}
	for all $\chi_k\in\left\{0,-1\right\}$, where $W_0$ and $W_{-1}$ are the two real branches of the Lambert W function. The `no solution' case corresponds precisely to $W_0$ and $W_{-1}$ not being defined for this input.
	
	Substituting these $x_k$ back into \eqref{eqn:HYLstationary2} gives the condition \eqref{eqn:HYLconsistency} as required.
\qed
\end{proofsect}

\begin{proofsect}{Proof of Proposition~\ref{HYLboringcase}}
	The essence of this proof is to find a set of parameters for which Proposition~\ref{zeroHYL} produces only one candidate zero. First note that where they are finite, $g^\chi$ are continuous. Also, for $\chi\ne0$ we have $g^\chi\left(\delta\right) \sim C_\chi \delta$ as $\delta\to+\infty$ where $C\chi \geq \frac{a}{b} > 1$. Because $g^\chi$ has a translational symmetry with $\mu + \alpha$, this means that for sufficiently negative $\mu+\alpha$ these branches will have no solution. In contrast, $g^0$ is continuous and decreasing for $\delta > \frac{\mu+\alpha}{a}$. This means that as we decrease $\mu+\alpha$ we are eventually guaranteed to have a unique solution for $\delta$.
	\qed 
\end{proofsect}

\subsection{Proofs: pressure representations, thermodynamic behaviour and  BEC}\label{sec-pressureBEC}
Theorem~\ref{THM-pressure} allows to analyse the thermodynamic limit.  Note that the partial derivative of the pressure with respect to the chemical potential is the expected physical particle density. This particle density depends on the thermodynamic systems, and its value can equal the density of the minimisers or it can exceed the density of the minimisers. In the latter case one experiences an excess density which is the particle mass condensing in the so-called BEC state.
The following result will be useful in calculating the derivatives of these pressures.

\begin{lemma}
	\label{diffPressureTech}
	Let $I\subset\R$ be an open interval, $F:\ell_1 \times I \to \R$, and $\xi\in C^1\left(I;\ell_1\right)$. Also define
$$
	\mathcal{G}:I\to\R;  s\mapsto F\left(\xi\left(s\right),s\right).
	$$
	Then if $F\left(x,s\right)$ is G{\^a}teaux differentiable in its first argument at $\xi\left(s\right)$ with $\left.\frac{\partial F}{\partial x_k}\right|_{\xi}=0$ $\forall k\in\N$, then
	\begin{equation*}
	\frac{\d \mathcal{G}}{\d s} = \left.\frac{\partial F}{\partial s}\right|_{\xi}.
	\end{equation*}
\end{lemma}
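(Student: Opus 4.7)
The plan is to prove this by a standard chain rule splitting, exploiting the vanishing of all coordinate derivatives of $F$ at $\xi(s)$. For fixed $s\in I$ and $h$ small, I would write
\begin{equation*}
\mathcal{G}(s+h)-\mathcal{G}(s) = \bigl[F(\xi(s+h),s+h)-F(\xi(s),s+h)\bigr] + \bigl[F(\xi(s),s+h)-F(\xi(s),s)\bigr] =: A_h + B_h.
\end{equation*}
The term $B_h/h$ is handled directly by the definition of the partial derivative in the second argument: $B_h/h\to \partial_s F|_{(\xi(s),s)}$ as $h\to 0$.

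For $A_h$, the idea is to use the $C^1$ regularity of $\xi$ in $\ell_1$ to write $\xi(s+h)=\xi(s)+h\,\xi'(s)+r(h)$ with $\|r(h)\|_{\ell_1}=o(h)$, and then to invoke Gâteaux differentiability of $F(\cdot,s+h)$ at $\xi(s)$. Introducing the one-parameter family $\varphi(t):=F(\xi(s)+t[\xi(s+h)-\xi(s)],\,s+h)$ for $t\in[0,1]$ and applying the mean value theorem along with the Gâteaux differentiability hypothesis, one expresses $A_h = \varphi(1)-\varphi(0)$ as a directional derivative of $F(\cdot,s+h)$ in the direction $h\,\xi'(s)+r(h)$, evaluated at some intermediate point $\xi_{\theta}=\xi(s)+\theta[\xi(s+h)-\xi(s)]$. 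By linearity of the Gâteaux derivative, this directional derivative equals $\sum_{k\in\N}\frac{\partial F}{\partial x_k}(\xi_\theta,s+h)\bigl(h\,\xi'_k(s)+r_k(h)\bigr)$, so that
\begin{equation*}
\frac{A_h}{h} = \sum_{k\in\N}\frac{\partial F}{\partial x_k}(\xi_\theta,s+h)\,\xi'_k(s) + \sum_{k\in\N}\frac{\partial F}{\partial x_k}(\xi_\theta,s+h)\,\frac{r_k(h)}{h}.
\end{equation*}
As $h\to 0$, $\xi_\theta\to\xi(s)$ in $\ell_1$ and $s+h\to s$, so if the partial derivatives of $F$ are continuous in a neighbourhood of $(\xi(s),s)$ (which is the standard setting in which this lemma is applied in the paper) each coordinate term $\frac{\partial F}{\partial x_k}(\xi_\theta,s+h)\to 0$ by hypothesis, and the remainder term is controlled via $\|r(h)\|_{\ell_1}/|h|\to 0$ together with uniform boundedness of $\sup_k|\partial_{x_k}F|$ on a neighbourhood (which follows from continuity of the Gâteaux derivative into $\ell_\infty$, as is the case for the rate functions in Section~\ref{sec-cycle}). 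Hence $A_h/h\to 0$ and the result follows.

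The main obstacle is precisely this last continuity / uniformity step: Gâteaux differentiability alone is not generally strong enough in infinite dimensions to justify interchanging the limit $h\to 0$ with the infinite sum over $k$, so I would need the implicit additional regularity (continuity of the partial derivatives, or equivalently joint continuity of the Gâteaux derivative on a neighbourhood of $(\xi(s),s)$) that is satisfied in every concrete application in this paper. Once that is granted, the splitting above and the hypothesis $\partial_{x_k}F|_{\xi}=0$ make both the leading and the remainder terms in $A_h/h$ vanish, leaving only $B_h/h\to \partial_s F|_{\xi}$, which is the claim.
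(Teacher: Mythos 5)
Your decomposition into $A_h+B_h$ and subsequent chain-rule analysis is essentially the same argument as the paper's, which simply writes $\frac{\d\mathcal{G}}{\d s}=\left.\frac{\partial F}{\partial s}\right|_\xi+\sum_k\xi_k'(s)\left.\frac{\partial F}{\partial x_k}\right|_\xi$ by "an application of the chain rule" and drops the second term because the partials vanish. What you have done differently, and usefully, is to unpack that formal chain rule via the mean value theorem at intermediate points $\xi_\theta$, and in doing so you have correctly identified a genuine gap in the lemma \emph{as stated}: in an infinite-dimensional space, Gâteaux differentiability of $F(\cdot,s)$ at the single point $\xi(s)$ is not enough to license the chain rule for the composition with a $C^1$ curve in $\ell_1$; one needs, as you say, something like continuity of the Gâteaux derivative near $(\xi(s),s)$ (or Hadamard/Fréchet differentiability) to control the error term uniformly in $k$ and pass the interchange of limits. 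The paper's two-sentence proof silently assumes this extra regularity. Your remark that it does hold at every point where the lemma is actually invoked in Section~5 (the rate functions have smooth partials away from the boundary) is exactly what rescues the argument, and flagging this is more honest than the paper's treatment; so your proof is correct modulo the same implicit hypothesis the paper also relies on.
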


\begin{proofsect}{Proof}
An application of the chain rule gives
\begin{equation*}
\frac{\d \mathcal{G}}{\d s} = \left.\frac{\partial F}{\partial s}\right|_{\xi} + \sum^\infty_{j=1}\frac{\d \xi_k}{\d s}\left.\frac{\partial F}{\partial x_k}\right|_{x=\xi}.
\end{equation*}
Since the partial derivatives of $F$ with respect to $x_k$ vanish at $\xi$, we only keep the first term.
\qed
\end{proofsect}
\smallskip

\noindent \textbf{Ideal Bose gas.}

\medskip

\begin{proofsect}{Proof of Proposition~\ref{P:ideal1}}
(1)  Clearly, $ p(\beta,0)=\frac{1}{\beta(4\pi\beta)^{\frac{d}{2}}}\sum_{k=1}^\infty\frac{1}{k^{1+d/2}}<\infty $ for all $ d\ge 1 $. Convexity follows from properties of the Bose functions, $g(1+\frac{d}{2},-\beta\mu) $, see \eqref{defBosefunctionapa} in Appendix~\ref{app-Bose}. (2) This follows from the Bose functions, $g(n,x)$, being differentiable for $x>0$, and
\begin{equation*}
		\frac{\d}{\d x}g(n,x) = -g(n-1,x), \qquad\forall x>0.
	\end{equation*}
	Then the first derivative follows from directly differentiating the representation \eqref{p}.
	\qed
\end{proofsect}

\begin{proofsect}{Proof of Proposition~\ref{densityIdeal}}
(1) This follows by direct computation. The exponential term ensures that the derivative of the finite-volume pressure is increasing in $ \mu $. As long as the box $ \L_N $ has finite volume one can give the average particle density any pre-assigned value by choosing a chemical potential.\\
(2) The limit in \eqref{critical} is obtained by direct calculation in conjunction with basic properties of the Bose function summarised in Appendix~\ref{app-Bose}. The convergence of the unique root is ensured as long as the expected particle density stays below the critical density which is finite only in dimensions $ d\ge 3 $.
\qed
\end{proofsect}

\begin{proofsect}{Proof of Proposition~\ref{IdealFreeEnergy}}
	Since $p\left(\beta,\mu\right)=+\infty$ for $\mu>0$, we only need to search $\mu\leq0$. On the interior of this region $p$ is differentiable, and we look for stationary points. If $\varrho\geq\varrho_\mathrm{c}$, then there are no stationary points for $\mu<0$ and $\mu\varrho-p\left(\beta,\mu\right)$ is increasing in $\mu$. Hence the supremum is achieved at $\mu = 0$. If $\varrho<\varrho_\mathrm{c}$, then there is a unique stationary point. This is also a local maximum and is given at $\mu = \alpha$ as required. This has the required limit as $\varrho\uparrow\varrho_\mathrm{c}$ implying the  continuity for $f$.    \qed
\end{proofsect}

\begin{proofsect}{Proof of Theorem~\ref{thm:IDEALcondensate}}
		Let us begin with the $\mu<0$ case, because a similar approach will be important for our approaches to the PMF and HYL cases. For $s\leq-\mu$, fixed $N$, and fixed $K$, define
		\begin{equation*}
		g^\ssup{K}_N\left(s\right) := \frac{1}{\beta\left|\Lambda_N\right|}\log\E_{\nu_{N,\mu}}\left[\exp\left(\left|\Lambda_N\right|s\beta\left(D-D_K\right)\right)\right].
		\end{equation*}
		Then
		\begin{equation*}
		\left.\frac{\d g^\ssup{K}_N}{\d s}\right|_{s=0} = \mathbb{E}_{\nu_{N,\mu}}\left[D-D_K\right] \quad \text{and}\quad \Delta\left(\beta,\mu\right) = \lim_{K\to\infty}\lim_{N\to\infty}\left.\frac{\d g^\ssup{K}_N}{\d s}\right|_{s=0}.
		\end{equation*}
		Since $g^\ssup{K}_N$ are all convex in $s$, we will use Griffith's Lemma to get the point-wise limit of the derivative from the derivative of the point-wise limit:
		\begin{equation*}
		\Delta\left(\beta,\mu\right) = \lim_{K\to\infty}\frac{\d}{\d s}\Big(\lim_{N\to\infty}g^\ssup{K}_N\left(s\right)\Big)\Big|_{s=0}.
		\end{equation*}
		
		To calculate the point-wise limit of $g^\ssup{K}_N$, we first rewrite $g^\ssup{K}_N$ as
		\begin{equation*}
		g^\ssup{K}_N\left(s\right) = \frac{1}{\beta\left|\Lambda_N\right|}\log\E_{\nu_{N,\mu+s}}\left[\exp\left(-\left|\Lambda_N\right|s\beta D_K\right)\right] + \frac{1}{\beta\left|\Lambda_N\right|}\log \frac{Z_{N}\left(\beta,\mu+s\right)}{Z_{N}\left(\beta,\mu\right)}.
		\end{equation*}
		Then we want to use Varadhan's Lemma with our LDP for the ideal Bose gas measure and the tilt $\phi = -s\beta D_K$. This $\phi$ is continuous, but we need to pay attention to the boundedness conditions. We will show
		\begin{equation}\label{IDEALVaradhanConstraint}
		\lim_{M\to\infty}\limsup_{N\to\infty}\frac{1}{\left|\L_N\right|}\E_{\nu_{N,\mu+s}}\left[\ex^{\left|\L_N\right|\phi}\mathds{1}\left\{\phi\geq M\right\}\right] = -\infty.
		\end{equation}
		For $0\leq s \leq -\mu$, we have $\phi\leq 0$ almost surely, so \eqref{IDEALVaradhanConstraint} holds trivially. For $s<0$ we have to work a little harder. Since $\phi$ is continuous, the set $\left\{\phi=m\right\}$ is closed (and measurable). Hence our LDP for the ideal Bose gas model gives us a bound on the probability of this set:
		\begin{equation*}
		\limsup_{N\rightarrow\infty}\frac{1}{\left|\Lambda_N\right|}\log \nu_{N,\mu+s}\left(\phi=m\right) \leq -\inf_{\phi=m}I_{\mu+s} \leq \bar{q}^\ssup{0} + \beta\left(\mu+s\right)\Big(\frac{m}{\left|s\right|\beta}\Big).
		\end{equation*}
		This means that for sufficiently large $N$ there exists a $m$ and $N$ independent constant $C>\bar{q}^\ssup{0}$, such that
		\begin{equation*}
		\ex^{m\left|\Lambda_N\right|}\nu_{N,\mu+s}\left(\phi=m\right) \leq \exp\Big(\abs{\Lambda_N}\big[C + \frac{\mu}{\abs{s}}m\big]\Big).
		\end{equation*}
		Since $\mu<0$, we have sufficiently fast decay in $m$ to prove that \eqref{IDEALVaradhanConstraint} holds even for $s<0$, and Varadhan gives us
		\begin{equation*}
		\lim_{N\rightarrow\infty}g^\ssup{K}_N\left(s\right) = -\inf_{\ell_1}\big\{\frac{1}{\beta}I_{\mu+s} + sD_J\big\} + p\left(\beta,\mu+s\right) -p\left(\beta,\mu\right),\qquad \forall s\leq-\mu.
		\end{equation*}
		
		In the style of Lemma \ref{zeroIdeal}, we can find that this infimum is achieved at $\xi\left(s\right)\in\ell_1\left(\R_+\right)$, where
		\begin{equation*}
		\xi_k=\begin{cases}
		q_k^{\ssup{\mu}} &,k\leq K,\\
		q_k^{\ssup{\mu+s}} &, k>K.
		\end{cases}
		\end{equation*}
		Hence
		\begin{equation*}
			\lim_{N\rightarrow\infty}g^\ssup{K}_N\left(s\right) = \frac{1}{\beta}\sum^\infty_{j=K+1}\big(q^\ssup{\mu+s}_j-q^\ssup{\mu}_j\big),\qquad \frac{\d}{\d s}\Big(\lim_{N\to\infty}g^\ssup{K}_N\left(s\right)\Big)\Big|_{s=0} = \sum^\infty_{j=K+1}jq^\ssup{\mu}_j.
		\end{equation*}
		Finally the sum vanishes as $K\to\infty$.
		
		For $\mu=0$ with $d=1,2$, we take a more direct approach. It is clear from our construction of the ideal Bose gas model that the point-wise limit $\lim_{N\to\infty}\E_{\nu_{N,\mu}}\left[jx_j\right] = jq^\ssup{\mu}_j$. Then for all $M\in\N$,
		\begin{align*}
		\liminf_{N\to\infty}\E_{\nu_{N,\alpha}}\Big[\sum^\infty_{j=K+1}jx_j\Big] \geq \lim_{N\to\infty}\E_{\nu_{N,\mu}}\Big[\sum^M_{j=K+1}jx_j\Big] = \sum^M_{j=K+1}jq^\ssup{\mu}_j.
		\end{align*}
		Since this lower bound diverges as $M\to\infty$ if $\mu=0$ and $d=1,2$, we have our result for this case.
		
		For $\mu=0$ and $d\geq3$, the behaviour changes depending upon the boundary condition. Nevertheless, by applying direct methods similar to the $\mu>0$ case we get the required results.
		\qed
	\end{proofsect}
\medskip

\noindent\textbf{The CMF model}

\medskip

\begin{proofsect}{Proof of Proposition~\ref{P:densityCMF}}
(1)  The continuity of $p^\CMF$ for $\mu\leq 0$ follows from \eqref{p-cmf} and the continuity of $K=a\beta \bar{q}^\ssup{\mu}$ and $W_0$. Convexity follows from considering the derivatives of $W_0\left(K\right)$ with respect to $\mu$ for $\mu<0$. Appendix~\ref{app-Lambert} is useful for this calculation.\\
(2) Smoothness follows from $W_0$ and the Bose functions being differentiable on the appropriate regions. The form of the first derivative can be found by either directly differentiating \eqref{p-cmf}, or by using Lemma~\ref{diffPressureTech} with the zero found in Proposition~\ref{zeroCMF}.\\
(3) We obtain \eqref{criticalCMF} from \eqref{critical} and the continuity of $ W_0 $ and $ \overline{q}^{\ssup{\mu}} $. 
\qed
\end{proofsect}

\begin{proofsect}{Proof of Proposition~\ref{P:LFT-CMF}}
	This is proven in the same way as Proposition~\ref{IdealFreeEnergy}.
	\qed
\end{proofsect}

\begin{proofsect}{Proof of Theorem~\ref{thm:CMFcondensate}}
	This proof begins identically to Theorem~\ref{thm:IDEALcondensate}. For $s\leq-\mu$, fixed $N$, and fixed $K$, define
	\begin{equation*}
	g^\ssup{K}_N\left(s\right) := \frac{1}{\beta\left|\Lambda_N\right|}\log\E_{\nu_{N,\mu}^\CMF}\left[\exp\left(\left|\Lambda_N\right|s\beta\left(D-D_K\right)\right)\right].
	\end{equation*}
	Then by rearranging terms and applying Varadhan, we find that for $s\leq -\mu$,
	\begin{equation*}
	\lim_{N\rightarrow\infty}g^\ssup{K}_N\left(s\right) = -\inf_{\ell_1}\big\{\frac{1}{\beta}I^\CMF_{\mu+s} + sD_J\big\} + p^\CMF\left(\beta,\mu+s\right) -p^\CMF\left(\beta,\mu\right),\qquad \forall s\leq-\mu.
	\end{equation*}
	
	In the style of Lemma \ref{zeroCMF}, we can find that this infimum is achieved at $\xi\left(s\right)\in\ell_1\left(\R_+\right)$, where
	\begin{equation*}
	\xi_k=\begin{cases}
	\frac{W_0\left(a\beta \bar{q}^\ssup{\mu}\right)}{a\beta \bar{q}^\ssup{\mu}}q_k^{\ssup{\mu}} &,  k\leq K,\\
	\frac{W_0\left(a\beta \bar{q}^\ssup{\mu+s}\right)}{a\beta \bar{q}^\ssup{\mu+s}}q_k^{\ssup{\mu+s}} &,  k>K.
	\end{cases}
	\end{equation*}
	Substituting this into the infimum, and then taking the derivative gives us
	\begin{equation*}
	\frac{\d}{\d s}\Big(\lim_{N\to\infty}g^\ssup{K}_N\left(s\right)\Big)\Big|_{s=0} = \frac{W_0\left(a\beta \bar{q}^\ssup{\mu}\right)}{a\beta \bar{q}^\ssup{\mu}}\sum^\infty_{j=K+1}jq^\ssup{\mu}_j.
	\end{equation*}
	Finally the sum vanishes as $K\to\infty$, and Griffith's Lemma gives us the result.
	\qed
\end{proofsect}

\bigskip

\noindent\textbf{The PMF model}

\medskip

\begin{proofsect}{Proof of Proposition~\ref{P:densPMF}}
	This follows from either directly differentiating \eqref{p-pmf}, or by using Lemma~\ref{diffPressureTech} with the zero found in Proposition~\ref{zeroPMF}. Convexity also follows from Proposition~\ref{zeroPMF}, noting that $D\left(\xi^\PMF\right)$ is continuous and increasing in $\mu$.
	\qed
\end{proofsect}

\begin{proofsect}{Proof Proposition~\ref{P:fPMF}}
The proof is obtained directly from our analysis above, or by application of \eqref{PMFpressure} in Remark~\ref{RemdensityLDP}.
\qed
\end{proofsect}

\begin{proofsect}{Proof of Theorem~\ref{thm:PMFcondensate}}
	Let us set $\alpha=0$ and omit it from the notation. Our proof begins similarly to Theorems~\ref{thm:IDEALcondensate} and \ref{thm:CMFcondensate}. For $\mu,s\in\R$, fixed $N$, and fixed $K$, define
	\begin{equation*}
		g^\ssup{K}_N\left(s\right) = \frac{1}{\beta\left|\Lambda_N\right|}\log\E_{\nu_{N,\mu}^\PMF}\left[\exp\left(\left|\Lambda_N\right|s\beta\left(D-D_K\right)\right)\right].
	\end{equation*}
	We once again rearrange terms to get
	\begin{equation*}
	g^\ssup{K}_N\left(s\right) = \frac{1}{\beta\left|\Lambda_N\right|}\log\E_{\nu_{N,\mu+s}^\PMF}\left[\exp\left(-\left|\Lambda_N\right|s\beta D_K\right)\right] + \frac{1}{\beta\left|\Lambda_N\right|}\log \frac{Z^\PMF_{N}\left(\beta,\mu+s\right)}{Z^\PMF_{N}\left(\beta,\mu\right)}.
	\end{equation*}
	Then we want to use Varadhan's Lemma with our LDP for the PMF measure and the tilt $\phi = -s\beta D_K$. This $\phi$ is continuous, but we need to pay attention to the boundedness conditions. We will show
	\begin{equation}\label{VaradhanConstraint}
		\lim_{M\to\infty}\limsup_{N\to\infty}\frac{1}{\left|\L_N\right|}\E_{\nu_{N,\mu+s}^\PMF}\left[\ex^{\left|\L_N\right|\phi}\mathds{1}\left\{\phi\geq M\right\}\right] = -\infty.
	\end{equation}
	For $s\geq0$, we have $\phi\leq 0$ almost surely, so \eqref{VaradhanConstraint} holds trivially. For $s<0$ we have to work a little harder. Our LDP for the PMF model gives us a bound on the probability of this set:
	\begin{align*}
		\limsup_{N\rightarrow\infty}\frac{1}{\left|\Lambda_N\right|}\log \nu^\PMF_{N,\mu+s}\left(\phi=m\right) &\leq -\inf_{\phi=m}I^\PMF_{\mu+s}\\
		\frac{m}{\left|s\right|\beta}\geq \frac{\mu+s}{a}\implies \qquad& \leq \inf_{\ell_1}\left\{I + \beta H^\PMF_{\mu+s,l.s.c.}\right\} + \beta(\mu+s)\left(\frac{m}{\abs{s}\beta}\right) - \frac{a\beta}{2}\left(\frac{m}{\abs{s}\beta}\right)^2.
	\end{align*}
	This means that given $m \geq \left|s\right|\beta \frac{\mu+s}{a}$, then for sufficiently large $N$ there exists a $m$ and $N$ independent constant $C>\inf_{\ell_1}\left\{I_0 + \beta H^\PMF_{\mu+s,l.s.c.}\right\}$, such that
	\begin{equation*}
		\ex^{m\abs{\Lambda_N}}\nu^\PMF_{N,\mu+s}\left(\phi=m\right) \leq \exp\Big(\abs{\Lambda_N}\big[C + \frac{\mu}{\abs{s}m} - \frac{a}{2\beta\abs{s}^2}m^2\big]\Big).
	\end{equation*}
	The very fast decay with $m$ proves that \eqref{VaradhanConstraint} holds even for $s<0$, and Varadhan gives us
	\begin{equation*}
		\lim_{N\rightarrow\infty}g^\ssup{K}_N(s) = -\inf_{\ell_1}\big\{\frac{1}{\beta}I^\PMF_{\mu+s} + sD_J\big\} + p^\PMF\left(\beta,\mu+s\right) -p^\PMF\left(\beta,\mu\right),\qquad \forall s\in\R.
	\end{equation*}
	
	In the style of Lemma \ref{zeroPMF}, we can find that this infimum is achieved at $\xi\left(s\right)\in\ell_1\left(\R_+\right)$, where
	\begin{equation*}
		\xi_k=q_k^{\ssup{0}} \exp\left(\beta k\left[\left(\mu + s-a\delta^*\right)_--s\mathds{1}\left\{k\leq J\right\}\right]\right),\quad k\in\N,
	\end{equation*}
	and $ \delta^*\left(s\right)$ is given implicitly as the unique solution to the equation
	\begin{equation}
		\delta^*=\begin{cases}
			\sum^K_{j=1}jq_j^{\ssup{0}} \exp\left(\beta j\left(\mu-a\delta^*\right)\right) + \sum^\infty_{j=K+1}jq_j^{\ssup{0}} \exp\left(\beta j\left(\mu + s-a\delta^*\right)\right) &, \delta^* \geq \frac{\mu+s}{a},\\
			\sum^K_{j=1}jq_j^{\ssup{0}} \exp\left(-s\beta j\right) + \sum^\infty_{j=K+1}jq_j^{\ssup{0}} &, \delta^* \leq \frac{\mu+s}{a}.
		\end{cases}
	\end{equation}
	If we denote
	\begin{equation}
		\delta^*_K=\begin{cases}
			\sum^K_{j=1}jq_j^{\ssup{0}} \exp\left(\beta j\left(\mu-a\delta^*\right)\right) &, \delta^* \geq \frac{\mu+s}{a},\\
			\sum^K_{j=1}jq_j^{\ssup{0}} \exp\left(-s\beta j\right)  &, \delta^* \leq \frac{\mu+s}{a},
		\end{cases}
	\end{equation}
	then Lemma \ref{diffPressureTech} tells us that
	\begin{align*}
		\frac{\d}{\d s}\Big(\lim_{N\rightarrow\infty}g^{\ssup{K}}_N\left(s\right)\Big) &= \left(\delta^* - \delta^*_K\right)\left(s\right) + \Big(\frac{\mu+s}{a}-\delta^*\left(s\right)\Big)_+\\
		\frac{\d}{\d s}\Big(\lim_{N\rightarrow\infty}g^\ssup{K}_N\left(s\right)\Big)\Big|_{s=0} &= \begin{cases}
			\left(\delta^* - \delta^*_K\right)\left(0\right) &, \mu\leq a\varrho_{\mathrm{c}},\\
			\frac{\mu}{a} - \delta^*_K\left(0\right) &, \mu \geq a\varrho_{\mathrm{c}},
		\end{cases}\\
		\lim_{K\to\infty}\frac{\d}{\d s}\Big(\lim_{N\rightarrow\infty}g^\ssup{K}_N\left(s\right)\Big)\Big|_{s=0} &= \left(\frac{\mu}{a}-\varrho_{\mathrm{c}}\right)_+.
	\end{align*}
	\qed
\end{proofsect}

\bigskip

\noindent \textbf{The HYL model}

\medskip

\begin{proofsect}{Proof of Proposition~\ref{P:HYLsub}}
	This is proven by using Lemma~\ref{diffPressureTech} with the zero found in Proposition~\ref{HYLboringcase}. Convexity also follows from Proposition~\ref{HYLboringcase}, noting that $D\left(\xi^\HYL\right)$ is continuous and increasing in $\mu$.
	\qed
\end{proofsect}

\begin{proofsect}{Proof of Proposition~\ref{HYLpressure}}
	Let us set $\alpha\equiv0$. Recall that $g^\chi\left(\delta\right)$ denotes the right hand side of \eqref{eqn:HYLconsistency} for a given $\chi$. Now $\beta\geq\beta^*$ ensures that $g^\chi$ can be defined for all $\delta\in\R$. For this proof we will only require $\mu\leq \bar{\mu}:= ag^0\left(\frac{\mu}{a}\right) = \frac{-1}{b\beta}\sum^\infty_{j=1}\frac{1}{j}W_{0}\left(-b\beta j^2q^{\ssup{0}}_j\right)$.
	
	Note that for $d\geq 3$, the arguments for the Lambert W functions are strictly increasing in the summation index $k$, approaching $0$. This means that since the difference $W_0\left(x\right) - W_{-1}\left(x\right)\geq 0$ is strictly increasing in $x$ and equals $0$ if and only if $x=-\ex^{-1}$, we only need to consider finitely many $\chi$ for a given $\mu$ (all of which are eventually $0$). Now since any non-convexity in $g^\chi$ can only arrive via the finitely many $\chi_k=-1$ terms, solutions to $\delta = g^\chi\left(\delta\right)$ are locally finite in $\R$. To complement this, note that $\lim_{\delta\to+\infty}g^0\left(\delta\right)=0$ whilst for $\chi\ne 0$ we have $g^\chi\left(\delta\right) \gg \delta$. Hence we only need to consider a finite range of $\delta$, and therefore for a given $\mu$ there are only finitely many solutions for $\delta$.
	
	Because $g^\chi$ is continuous for each $\chi$, we can collect solutions uniquely and maximally into continuous paths $\xi^j\left(\mu\right)$ defined on closed (possibly infinite) intervals $I^j$ with non-empty interior. We allow families to overlap at endpoints of these intervals. Because we are only considering $\mu\leq \bar{\mu}$ and there are only finitely many solutions for each $\mu$, we will only have finitely many families being relevant to our discussion.
	
	For each of these families, we will denote 
	\begin{equation*}
	D^j\left(\mu\right) := D\left(\xi^j\left(\mu\right)\right), \qquad
	P^j\left(\mu\right) := p\left(\beta,0\right)-\frac{1}{\beta}\left(I + \beta H^\HYL_{\mu,l.s.c.}\right)\left(\xi^j\left(\mu\right)\right),
	\end{equation*}
	defined on the interval $I^j$. From Proposition~\ref{zeroHYL} and Theorem~\ref{THM-pressure} we know that
	\begin{equation*}
	p^\HYL\left(\beta,\mu\right) = p\left(\beta,0\right) -\frac{1}{\beta}\inf_{\ell_1}\left\{I+\beta H^\HYL_{\mu,l.s.c.}\right\} = \max_{j}P^j\left(\mu\right).
	\end{equation*}
	Therefore for each $\mu$, there exists a $J$ such that $p^\HYL\left(\beta,\mu\right)=P^J\left(\mu\right)$.
	
	From the continuity of $g^\chi$ we know that all $D^j$ are continuous on their $I^j$. Then Lemma~\ref{diffPressureTech} tells us that each $P^j$ is differentiable on $\mathrm{Int}I^j$, with derivative
	\begin{equation*}
	\frac{\d P^j}{\d \mu} = D^j + \frac{\left(\mu - a D^j\right)_+}{a-b}.
	\end{equation*}
	Continuity of this derivative follows from the continuity of $D^j$.
	
	Let us now consider the $\chi=0$ solutions. Since $g^0$ is convex when restricted to $\delta\leq \frac{\mu}{a}$, $\frac{\d g^\chi}{\d \delta}\to+\infty$ as $\delta\uparrow\frac{\mu}{a}$, and $g^0$ is decreasing for $\delta \geq \frac{\mu}{a}$, there exists $\underline{\mu}<\bar{\mu}$ such that this branch has multiple solutions if and only if $\mu\in\left[\underline{\mu},\bar{\mu}\right]$. Let us label these $\xi^0$, $\xi^1$, and $\xi^2$ such that $D^0\geq D^1\geq D^2$. Note that $I^0=\left(-\infty,\bar{\mu}\right]$, $I^1=\left[\underline{\mu},\bar{\mu}\right]$, and $I^2=\left[\underline{\mu},+\infty\right)$. For a visualisation of these solutions, see Figure~\ref{fig:HYL3sols}.

	Since $\xi^0\left(\bar{\mu}\right) = \xi^1\left(\bar{\mu}\right)$ and $\xi^1\left(\underline{\mu}\right) = \xi^2\left(\underline{\mu}\right)$, we have $P^0\left(\bar{\mu}\right) = P^1\left(\bar{\mu}\right)$ and $P^1\left(\underline{\mu}\right) = P^2\left(\underline{\mu}\right)$. Because $D^0\geq \frac{\mu}{a}$ and $D^{1,2}\leq \frac{\mu}{a}$, we have
	\begin{equation*}
		\frac{\d P^0}{\d \mu} = D^0, \qquad \frac{\d P^{1}}{\d \mu} = \frac{b}{a-b}\left(\frac{\mu}{b} - D^{1}\right) < \frac{b}{a-b}\left(\frac{\mu}{b} - D^{2}\right) = \frac{\d P^{2}}{\d \mu},
	\end{equation*}
	on $\left(\underline{\mu},\bar{\mu}\right)$. Together these mean that $P^2\left(\bar{\mu}\right) > P^1\left(\bar{\mu}\right) = P^0\left(\bar{\mu}\right)$.
	
	Now extending our attention to all $\xi^j$ defined on some part of $\left(-\infty,\bar{\mu}\right]$, we define
	\begin{equation*}
		M:= \left\{\mu\leq\bar{\mu}: \exists j \text{ such that } P^j\left(\mu\right) > P^0\left(\mu\right)\right\}.
	\end{equation*}
	We have just shown that $M\ne\emptyset$, so $\hat{\mu}:=\inf M$ is finite. Since $P^2$ and $P^0$ are continuous, $\hat{\mu}<\bar{\mu}$.
	
	If $\hat{\mu} \in M$, then $\max_{j}P^j\left(\mu\right)$ is discontinuous at $\mu=\hat{\mu}$. In this case we are done.
	
	If $\hat{\mu} \notin M$, then since the $P^j$ are each continuous, $\exists J$ and $\epsilon>0$ such that $P^J\left(\mu\right) > P^0\left(\mu\right)$ for $\mu\in\left(\hat{\mu},\hat{\mu} + \epsilon\right)$.
	
	Now we have to show that the derivatives of $P^0$ and $P^J$ necessarily have different limits as we take $\mu\to\hat{\mu}$ from their respective sides. First note that
	\begin{equation*}
	\lim_{\mu\uparrow\hat{\mu}}\frac{\d P^0}{\d\mu} = D^0\left(\hat{\mu}\right).
	\end{equation*}
	Note that $g^\chi\left(\delta\right)\geq g^0\left(\delta\right)$ with equality only if $\chi=0$ or if we have both $\beta = \beta^*$ and $\delta = \frac{\mu}{a}$. Therefore $D^J\ne\frac{\mu}{a}$ and
	\begin{align*}
	D^J < \frac{\mu}{a} &\implies D^J\in\left[D^2,D^1\right]\\
	D^J > \frac{\mu}{a} &\implies D^J > D^0.
	\end{align*}
	This last inequality is strict because equality could only occur at $\mu = \bar{\mu}$, but $\hat{\mu}<\bar{\mu}$.
	
	If $D^J>\frac{\mu}{a}$, then
	\begin{equation*}
	\lim_{\mu\downarrow\hat{\mu}}\frac{\d P^J}{\d\mu} = D^J\left(\hat{\mu}\right) > D^0\left(\hat{\mu}\right) = \lim_{\mu\uparrow\hat{\mu}}\frac{\d P^0}{\d\mu}
	\end{equation*}
	and we are done.
	
	From the symmetry of $g^\chi$ about $\delta = \frac{\mu}{a}$ and from $g^0$ being decreasing for $\delta \geq \frac{\mu}{a}$, we have
	\begin{equation}
	\label{RelDensity}
	D^0 - \frac{\mu}{a} < \frac{b}{a-b}\left(\frac{\mu}{a}-D^1\right)
	\end{equation}
	for $\mu \in\left[\underline{\mu},\bar{\mu}\right)$. See Figure~\ref{fig:HYL-ineq}. This implies that if $D^J<\frac{\mu}{a}$,
	\begin{equation*}
	\frac{\d P^J}{\d \mu}  = \frac{b}{a-b}\left(\frac{\mu}{b} - D^J\right) \geq \frac{b}{a-b}\left(\frac{\mu}{b} - D^1\right) > D^0 = \frac{\d P^0}{\d \mu}.
	\end{equation*}
	Taking the limit to $\hat{\mu}$ gives our result.

	\begin{figure}
		\centering
		\begin{tikzpicture}
			\draw[dashed] (2,-0.5) -- (2,4);
			\draw[thick] (2,4) to [out=270,in=40] (0,0);
			\draw[thick] (2,4) to [out=270,in=170] (8,0);
			\draw[thick] (0,0.1) -- (8,4);
			\draw[dashed] (0,0.25) -- (8,0.25);
			\draw[<->] (0.3,-0.5) -- (2,-0.5);
			\draw[<->] (2,-0.5) -- (6.75,-0.5);
			\draw[<->] (2,4) -- (3.15,4);
			\node[draw] at (1,-1) {$\frac{\mu}{a}-D^1$};
			\node[draw] at (2.5,4.5) {$D^0-\frac{\mu}{a}$};
			\node[draw] at (3.5,-1) {$\frac{b}{a-b}\left(\frac{\mu}{a}-D^1\right)$};
			\fill (0.3,0.25) circle (0.1);
			\fill (6.75,0.25) circle (0.1);
			\fill (3.15,1.65) circle (0.1);
			\draw[dashed] (0.3,-0.5) -- (0.3,0.25);
			\draw[dashed] (6.75,-0.5) -- (6.75,0.25);
			\draw[dashed] (3.15,1.65) -- (3.15,4);
		\end{tikzpicture}
		\caption{\label{fig:HYL-ineq} Sketch of part of \eqref{eqn:HYLconsistency} to show the inequality \eqref{RelDensity}. Note $\alpha\equiv0$.}
	\end{figure}
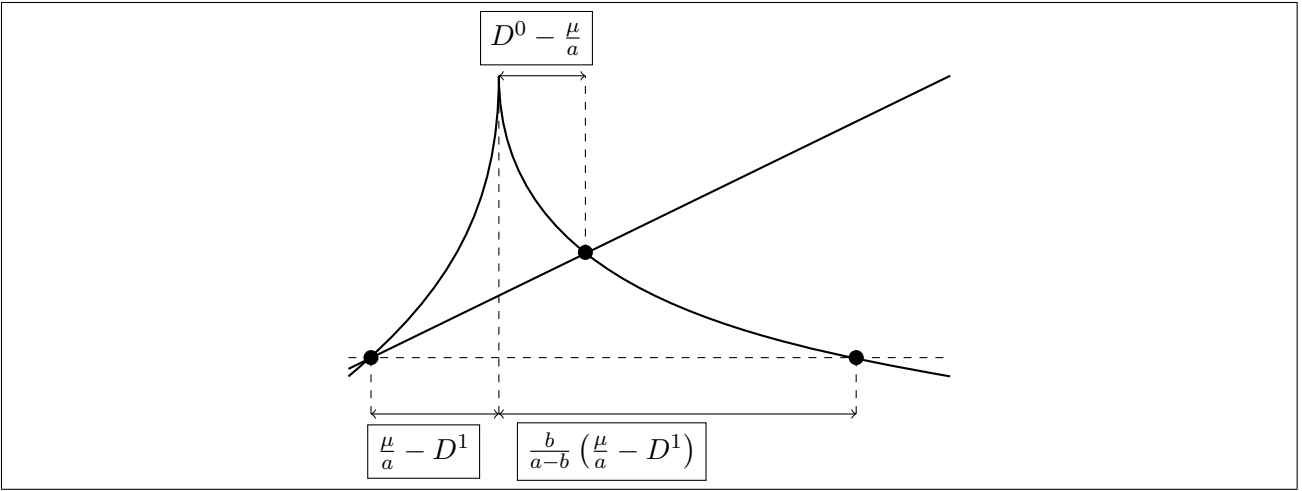
	
	\qed
\end{proofsect}

\begin{proofsect}{Proof of Theorem~\ref{THM:HYL-Cond}}
	The proof of \eqref{HYLcondenstate1} follows very similarly to the corresponding stage of the proof of Theorem \ref{thm:PMFcondensate}. Note that the HYL version of \eqref{VaradhanConstraint} follows because $H^\HYL_{\mu,l.s.c.}\geq H^\PMF_{\mu,a-b,l.s.c.}$ almost surely. The proof of \eqref{HYLcondenstate2} uses Lemma \ref{diffPressureTech}.
	\qed
\end{proofsect}

\section*{Appendices}
\begin{appendices}
\section{Processes for Dirichlet and periodic boundary conditions}\label{app-A}

\noindent For Dirichlet boundary condition, one restricts the Brownian bridges to not leaving the set $\L$. Consider the measure
\begin{equation}\label{nnBBMDir}
\mu^{\ssup{{\rm Dir},\beta}}_{x,y}(A)=\frac{\P_x(B\in A;B_\beta\in\d y)}{\d y},\qquad A\subset\Ccal^{\ssup{\rm Dir}}_{1,\L}\mbox{ measurable},
\end{equation}
which has total mass
\begin{equation}
g_\beta^{\ssup{\rm Dir}}(x,y)=\mu_{x,y}^{\ssup{\rm Dir,\beta}}(\Ccal^{\ssup{\rm Dir}}_{1,\L})=\frac{\P_x(B_{[0,\beta]}\subset\L; B_\beta\in\d y)}{\d y}.
\end{equation}

For periodic boundary condition, the marks are Brownian bridges on the  torus $\L=(\R \slash L \Z)^d$. The corresponding path measure is denoted by $\mu_{x,y}^{\ssup{\rm per,\beta}}$; its total mass is equal to 
\begin{equation}
g_\beta^{\ssup{\rm per}}(x,y)=\mu_{x,y}^{\ssup{\rm per,\beta}}(\Ccal^{\ssup{\rm per}}_\L)=\sum_{z\in\Z^d}g_{\beta}(x,y+zL)=(4\pi\beta)^{-d/2}\sum_{z\in\Z^d}{\rm e}^{-\frac{|x-y-zL|^2}{4\beta}}.
\end{equation}
For periodic and Dirichlet boundary conditions \eqref{q*def} is replaced by
\begin{equation}\label{qdefbc}
\overline{q}^{\ssup{\rm bc}}=\sum_{k=1}^N q_k^{\ssup{\rm bc}},\qquad\mbox{ with } q_k^{\ssup{\rm bc}}=\frac{1}{k|\L|}\int_\L\d x\,g_{k\beta}^{\ssup{\rm bc}}(x,x).
\end{equation}
Note that this weight depends on $ \L $ and on $N$. We introduce the Poisson point process $\omega_{\rm P} = \sum_{x \in \xi_{ \rm P}} \delta_{(x,B_x)}$ on $\L\times E^{\ssup{\rm bc}}$ with intensity measure $ \nu^{\ssup{\rm bc}} $ whose projections on $ \L\times \Ccal_{k,\L}^{\ssup{\rm bc}} $ with $k\leq N$ are equal to $ \nu_k^{\ssup{\rm bc}}(\d x,\d f)=\frac{1}{k}\Leb_\L(\d x)\otimes\mu_{x,x}^{\ssup{{\rm bc},k\beta}}(\d f)$ and are zero on this set for $k>N$. We do not label $\omega_{\rm P}$ nor $\xi_{ \rm P}$ with the boundary condition nor with $N$; $\xi_{ \rm P}$ is a Poisson process on $\L$ with intensity measure $\overline q^{\ssup{\rm bc}}$ times the restriction $\Leb_\L$ of the Lebesgue measure to $\L$. By ${\tt Q}^{\ssup{\rm bc}}$ and ${\tt E}^{\ssup{\rm bc}}$ we denote probability and expectation with respect to this process. Conditionally on $\xi_{ \rm P}$, the lengths of the cycles $B_x$ with $x\in\xi_{ \rm P}$ are independent and have distribution $(q_k^{\ssup{\rm bc}}/\overline q^{\ssup{\rm bc}})_{k\in\{1,\dots,N\}}$; this process has only marks with lengths $\leq N$. A cycle $B_x$ of length $k$ is distributed according to
\begin{equation}\label{pathprobmeasure}
\P_{x,x}^{\ssup{{\rm bc},k\beta}}(\d f)=\frac{\mu_{x,x}^{\ssup{{\rm bc},k\beta}}(\d f)}{g^{\ssup{\rm bc}}_{k\beta}(x,x)}.
\end{equation}

The above representations allows us to prove the large deviation principles for different boundary conditions. For details we refer to \cite{ACK} where these arguments are presented in detail for higher level large deviation principles. The independence of the thermodynamic limit of the average finite-volume pressure in Theorem~\ref{THM-pressure} follows using either the arguments in \cite{ACK} or in \cite{R71,AN73,BR97}.
\section{Bose function}\label{app-Bose}
The Bose functions are poly-logarithmic functions defined by 
\begin{equation}\label{defBosefunctionapa}
g(n,\alpha):=\Li_n(\ex^{-\alpha})=\frac{1}{\Gamma(n)}\int_0^\infty\frac{t^{n-1}}{{\rm e}^{t+\alpha}-1}\,\d t=\sum_{k=1}^\infty k^{-n}{\rm e}^{-\alpha k}\quad\mbox{ for all } n \mbox{ and } \alpha>0,
\end{equation}
and also for $ \alpha=0 $ and $ n>1 $. In the latter case,
\begin{equation}\label{zeta}
g(n,0)=\sum_{k=1}^\infty k^{-n}=\zeta(n),
\end{equation}
which is the zeta function of Riemann. The behaviour of the Bose functions about $ \alpha=0 $ is given by
\begin{equation}\label{boseexp}
g(n,\alpha)=\begin{cases}
\Gamma(1-n)\alpha^{n-1}+\sum_{k=0}^\infty\zeta(n-k)\frac{(-\alpha)^k}{k!} &, n\not= 1,2,3,\ldots, \\[1.5ex]
\frac{(-\alpha)^{n-1}}{(n-1)!}\Big[-\log\alpha +\sum_{m=1}^{n-1}\frac{1}{m}\Big]+\sum_{\heap{k=0}{k\not= n-1}}\zeta(n-k)\frac{(-\alpha)^k}{k!} &, n\in\N.                   
\end{cases}
\end{equation}
At $ \alpha=0 $, $ g(n,\alpha) $ diverges for $ n\le 1 $, indeed for all $n$ there is some kind of singularity at $ \alpha=0 $, such as a branch point. For further details see \cite{Gram25}. The expansions \eqref{boseexp} are in terms of $ \zeta(n) $, which for for $ n\le 1 $ must be found by analytic ally continuing \eqref{zeta}. With the asymptotic properties of the zeta function it can be shown that the $k$ series in \eqref{boseexp} are convergent for $ |\alpha|<2\pi $. Consequently \eqref{boseexp} also represents an analytic continuation of $ g(n,\alpha) $ for $ \alpha<0 $. When $ \alpha\gg 1 $ the series \eqref{defBosefunctionapa} itself is rapidly convergent, and as $ \alpha\to\infty $, $ g(n,\alpha)\sim\ex^{-\alpha} $ for all $n$. Some plots for the Bose functions are given in Figure~\ref{BoseFCT}.

\begin{figure}[H]
\includegraphics[scale=1]{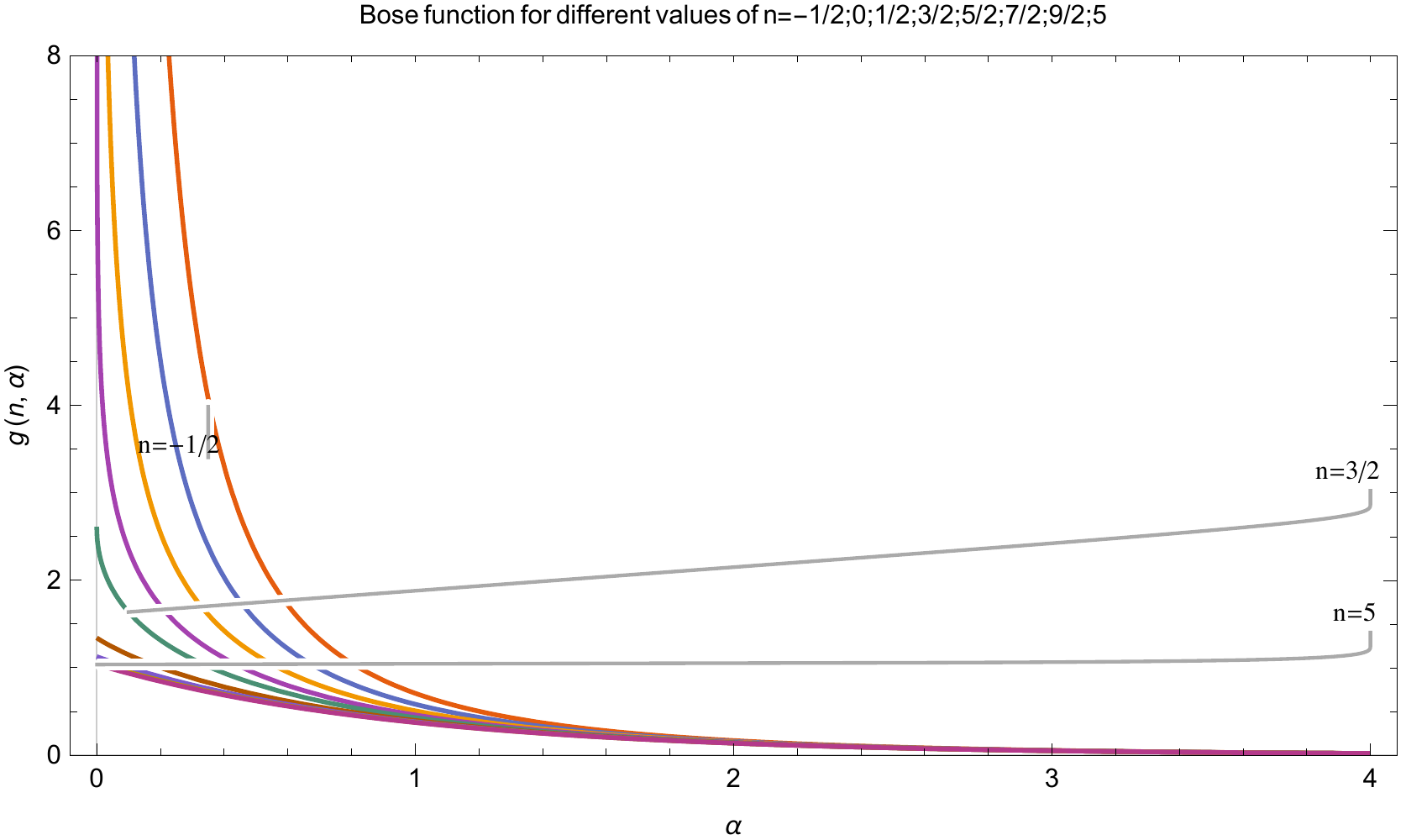}
\caption{}\label{BoseFCT}
\end{figure}

\section{Lambert W function}\label{app-Lambert}

The Lambert W function (sometimes called elsewhere the \emph{Omega function}) is defined as the multi-valued inverse of the $\mathbb{C}\rightarrow\mathbb{C}$ function $w\mapsto w\ex^{w}$. We shall only be concerned with the two branches on $\R$. Figure \ref{fig:lambertW} shows these two real branches, denoted $W_0$ and $W_{-1}$. The $W_0$ branch is defined on $\left[-\ex^{-1},\infty\right)$, whereas the $W_{-1}$ branch is only defined on $\left[-\ex^{-1},0\right)$. Given a branch $W_l$ with $l\in\left\{0,-1\right\}$, we can find its (real) derivative $W'_l$ by differentiating the equation $W_l\left(x\right)\ex^{W_l\left(x\right)} = x$. This gives us
\begin{equation*}
W'_l\left(x\right) = \frac{1}{x}\frac{W_l\left(x\right)}{1+ W_l\left(x\right)}.
\end{equation*}
Taking further derivatives and applying induction shows that the branches are smooth on the interior of their respective domains, and gives expressions for each order of the derivative. We make use of some asymptotic expansions of $W_0$ and $W_1$:
\begin{equation*}
\begin{alignedat}{2}
W_0\left(x\right) &= x - x^2 + o\left(x^2\right)  & & \quad\text{as }x\to 0,\\
W_0\left(x\right) &= \log x - \log \left(\log x\right) + o\left(1\right) & &\quad\text{as }x\to+\infty,\\
W_{-1}\left(x\right) &= \log\left(-x\right) - \log\left(-\log\left(-x\right)\right) + o\left(1\right) & & \quad\text{as }x\uparrow 0.\\
\end{alignedat}
\end{equation*}
For more details, see \cite{CGHJK96}.

\begin{figure}
	\centering
	\begin{tikzpicture}[xscale=1.5]
	\draw[->] (-1.1,0) -- (3.1,0) node[below]{$x$};
	\draw[<-] (0,1.6) node[left]{$W\left(x\right)$} -- (0,-4.1);
	\draw[very thick] (-0.37,-1) to [out=90,in=225] (0,0) to [out=45,in=190] (3,1.5) node[right]{$W_0$};
	\draw[very thick, dashed] (-0.37,-1) to [out=270,in=90] (-0.05,-4) node[left]{$W_{-1}$};
	\draw[dashed] (-0.37,0) node[above]{$-1/e$} -- (-0.37,-1) -- (0,-1) node[right]{$-1$};
	\end{tikzpicture}
	\caption{\label{fig:lambertW} The two real branches of $W$: $W_0$ and $W_{-1}$.}
\end{figure}
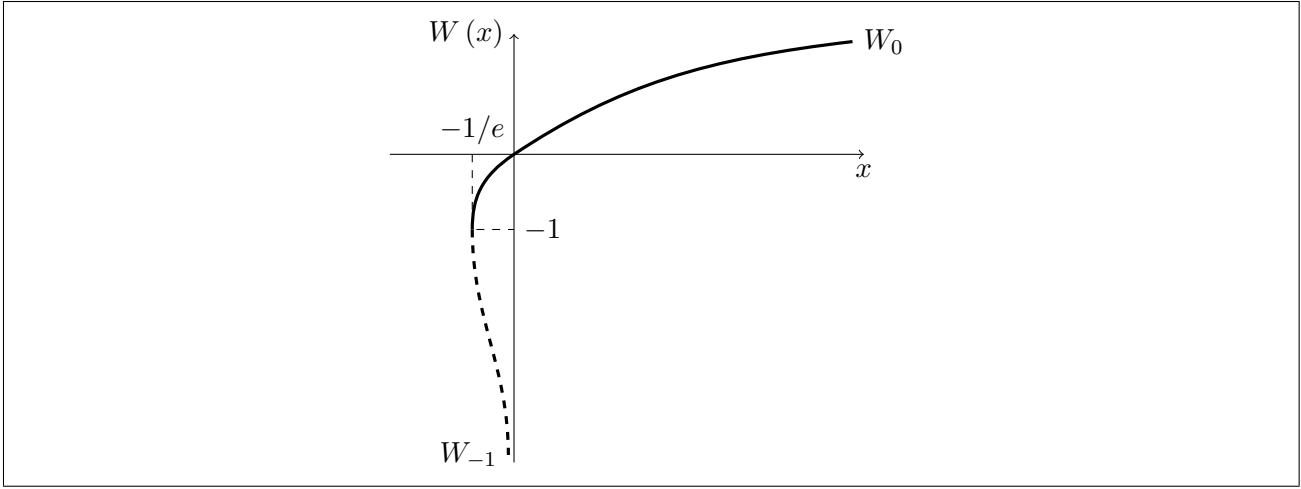
\end{appendices}

\end{document}